\numberwithin{equation}{subsection}
\definecolor{darkblue}{rgb}{0.0,0.0,0.3}
\theoremstyle{plain}
\newtheorem{thm}{Theorem}[subsection]
\newtheorem{cor}[thm]{Corollary}
\newtheorem{prop}[thm]{Proposition}
\newtheorem{lem}[thm]{Lemma}
\theoremstyle{definition}
\newtheorem{defn}[thm]{Definition}
\newtheorem{example}[thm]{Example}
\newtheorem{rem}[thm]{Remark}
\newcommand{\bB}{{\mathbb{B}}}
\newcommand{\bC}{{\mathbb{C}}}
\newcommand{\bD}{{\mathbb{D}}}
\newcommand{\bF}{{\mathbb{F}}}
\newcommand{\bN}{{\mathbb{N}}}
\newcommand{\bR}{{\mathbb{R}}}
\newcommand{\bT}{{\mathbb{T}}}
\newcommand{\B}{{\mathcal{B}}}
\newcommand{\C}{{\mathcal{C}}}
\renewcommand{\H}{{\mathcal{H}}}
\newcommand{\I}{{\mathcal{I}}}
\newcommand{\K}{{\mathcal{K}}}  
\renewcommand{\L}{{\mathcal{L}}}
\newcommand{\M}{{\mathcal{M}}}
\newcommand{\cM}{{\mathcal{M}}}
\renewcommand{\O}{{\mathcal{O}}}
\newcommand{\U}{{\mathcal{U}}}
\newcommand{\fA}{{\mathfrak{A}}}
\newcommand{\fB}{{\mathfrak{B}}}
\newcommand{\rA}{{\mathrm{A}}}
\newcommand{\rB}{{\mathrm{B}}}
\newcommand{\rBI}{{\mathrm{B^\infty}}}
\newcommand{\rBM}{{\mathrm{BM}}}
\newcommand{\rC}{{\mathrm{C}}}
\newcommand{\rP}{{\mathrm{P}}}
\newcommand{\rU}{{\mathrm{U}}}
\newcommand{\ep}{\varepsilon}
\renewcommand{\phi}{\varphi}
\newcommand{\ca}{\mathrm{C}^*}
\newcommand{\conv}{\operatorname{conv}}
\newcommand{\ncconv}{\operatorname{ncconv}}
\newcommand{\id}{\operatorname{id}}
\newcommand{\ip}[1]{\langle #1 \rangle}
\newcommand{\mt}{\emptyset}
\newcommand{\ol}{\overline}
\newcommand{\one}{\mathbf{1}}
\newcommand{\Ran}{\operatorname{Ran}}
\newcommand{\Rep}{\operatorname{Rep}}
\newcommand{\spn}{\operatorname{span}}
\newenvironment{sbmatrix}{\left[\begin{smallmatrix}}{\end{smallmatrix}\right]}
\newcommand{\qand}{\quad\text{and}\quad}
\newcommand{\qfor}{\quad\text{for}\quad}
\newcommand{\qforal}{\quad\text{for all}\quad}
\newcommand{\qif}{\quad\text{if}\quad}
\newcommand{\FOR}{\ \text{for}\ }
\DeclareMathOperator{\graph}{Graph}
\DeclareMathOperator{\epi}{Epi}
\DeclareMathOperator{\re}{Re}
\DeclareMathOperator{\cbmaps}{\text{CB}}
\DeclareMathOperator{\ncpmaps}{CP_{nor}}
\DeclareMathOperator{\ucpmaps}{\text{UCP}}
\DeclareMathOperator{\bor}{Bor}
\DeclareMathOperator*{\wotlim}{\textsc{wot}--lim}
\newcommand{\env}[1]{\overline{#1}}
\newcommand{\cmin}{\mathrm{C}_{\textup{min}}^*}
\newcommand{\cmax}{\mathrm{C}_{\textup{max}}^*}
\def\widebreve#1{\mathop{\vbox{\m@th\ialign{##\crcr\noalign{\kern\p@}%
  \brevefill\crcr\noalign{\kern0.1\p@\nointerlineskip}%
  $\hfil\displaystyle{#1}\hfil$\crcr}}}\limits}
\def\brevefill{$\m@th \setbox\z@\hbox{}%
 \hfill\scalebox{1.1}{\rotatebox[origin=c]{90}{(}} \kern4pt $}
\begin{document}

\title[Noncommutative Choquet theory]{Noncommutative Choquet theory}

\author[K.R. Davidson]{Kenneth R. Davidson}
\address{Department of Pure Mathematics\\ University of Waterloo\\Waterloo, ON, N2L 3G1, Canada}
\email{krdavids@uwaterloo.ca}

\author[M. Kennedy]{Matthew Kennedy}
\address{Department of Pure Mathematics\\ University of Waterloo\\Waterloo, ON, N2L 3G1, Canada}
\email{matt.kennedy@uwaterloo.ca}

\begin{abstract}
We introduce a new and extensive theory of noncommutative convexity along with a corresponding theory of noncommutative functions. We establish noncommutative analogues of the fundamental results from classical convexity theory, and apply these ideas to develop a noncommutative Choquet theory that generalizes much of classical Choquet theory.

The central objects of interest in noncommutative convexity are noncommutative convex sets. The category of compact noncommutative sets is dual to the category of operator systems, and there is a robust notion of extreme point for a noncommutative convex set that is dual to Arveson's notion of boundary representation for an operator system.

We identify the C*-algebra of continuous noncommutative functions on a compact noncommutative convex set as the maximal C*-algebra of the operator system of continuous noncommutative affine functions on the set. In the noncommutative setting, unital completely positive maps on this C*-algebra play the role of representing measures in the classical setting.

The role of noncommutative convex functions is crucial to our theory, and this is a new notion in the theory of noncommutative functions.
The convex noncommutative functions determine an order on the set of unital completely positive maps that is analogous to the classical Choquet order on probability measures. We characterize this order in terms of the extensions and dilations of the maps, providing a powerful new perspective on the structure of completely positive maps on operator systems.

Finally, we establish a noncommutative generalization of the Choquet-Bishop-de Leeuw theorem asserting that every point in a compact noncommutative convex set has a representing map that is supported on the extreme boundary. In the separable case, we obtain a corresponding integral representation theorem. 
\end{abstract}

\subjclass[2010]{Primary 46A55, 46L07, 47A20; Secondary 46L52, 47L25}
\keywords{noncommutative convexity, noncommutative Choquet theory, noncommutative functions, operator systems, completely positive maps}
\thanks{First author supported by NSERC Grant Number 2018-03973.}
\thanks{Second author supported by NSERC Grant Number 50503-10787.}
\maketitle
\clearpage

\setcounter{tocdepth}{2}
\tableofcontents

\section{Introduction} \label{S:introduction}

Classical Choquet theory is now a fundamental part of infinite-dimensional analysis. The integral representation theorem of Choquet-Bishop-de Leeuw, which has found numerous applications throughout mathematics, is undoubtedly the most well known result in the theory. It asserts that every point in a compact convex set can be represented by a probability measure supported on the extreme points of the set. However, this result is just one piece of classical Choquet theory, which is now a very powerful framework for the analysis of convex sets. 

Many objects in mathematics, especially in the theory of operator algebras, exhibit ``higher order'' convex structure. Various attempts have been made to capture this structure within an abstract framework, most notably in Wittstock's \cite{Wit1984} theory of matrix convexity. However, each of these frameworks suffers from the same serious issue: the non-existence of a suitable notion of extreme point.

In this paper we introduce a new theory of noncommutative convexity that we believe finally resolves this issue. The central objects of interest in the theory are noncommutative convex sets, for which there is a robust notion of extreme point. Working within this framework, we establish analogues of the fundamental results from classical convexity theory, along with a corresponding theory of noncommutative functions. We then apply these ideas to develop a corresponding noncommutative Choquet theory that generalizes much of classical Choquet theory. For example, we obtain a noncommutative generalization of the Choquet-Bishop-de Leeuw integral representation theorem for points in compact noncommutative convex sets.

An nc (noncommutative) convex set over an operator space $E$ is a graded set $K = \coprod_n K_n$, with each $K_n$ consisting of $n \times n$ matrices over $E$. The graded components $K_n$ are related by requiring that $K$ be closed under direct sums and compressions by isometries. The union is taken over all cardinal numbers $n \leq \kappa$, where $\kappa$ is a fixed infinite cardinal number depending on $E$. The fact that $n$ is permitted to be infinite here is an essential part of the theory, since even if $K$ is completely determined by its finite dimensional part, the finite part $\coprod_{n \in \bN} K_n$ of $K$ may not contain any extreme points at all.

For example, if $A$ is a separable unital C*-algebra, then the nc state space $K$ of $A$ is a (compact) nc convex set over $A^*$ defined by $K = \coprod_{n \leq \aleph_0} K_n$ with $K_n = \{ \phi : A \to \B(H_n) \text{ unital and completely positive} \}$, where $H_n$ is a fixed Hilbert space of dimension $n$ and $\B(H_n)$ denotes the C*-algebra of bounded operators on $H_n$. The extreme points $\partial K$ of $K$ are precisely the irreducible representations of $A$, and they completely determine $K$ in the sense that every point in $K$ is a limit of nc convex combinations of points in $\partial K$. Yet if $A$ is simple and infinite dimensional, e.g., if $A$ is the Cuntz algebra $\O_2$, then it has no finite dimensional representations. So in this case $\partial K$ has empty intersection with the finite part of $K$. 

This marks the key point of divergence from the theory of matrix convexity which, on the surface, resembles the theory of noncommutative convexity, but does not allow points corresponding to infinite matrices. As the previous example demonstrates, it is for precisely this reason that there is no suitable notion of extreme point in the matrix convex setting. We will see that this results in major differences between the theory of noncommutative convexity and the theory of matrix convexity.

The fundamental idea underlying classical Choquet theory is the dual equivalence between the category of compact convex sets and the category of function systems. The functor implementing this duality maps a compact convex set $C$ to the corresponding function system $\rA(C)$ of continuous affine functions on $C$, while the inverse functor maps a function system to its state space. This result is Kadison's \cite{Kad1951} representation theorem.

An analogous result holds in the noncommutative setting. The category of compact nc convex sets is dually equivalent to the category of operator systems, which are closed unital self-adjoint subspaces of C*-algebras. The functor implementing this duality maps a compact nc convex set $K$ to the corresponding operator system $\rA(K)$ of continuous affine nc functions on $K$. The inverse functor maps an operator system $S$ to its noncommutative state space $K = \coprod K_n$, where as above, $K_n = \{\phi : S \to \B(H_n) \text{ unital and completely positive} \}$. In particular, $S$ is completely order isomorphic to the operator system $\rA(K)$ of continuous affine nc functions on $K$, providing a noncommuative analogue of Kadison's representation theorem. A similar result was obtained in the matrix convex setting by Webster and Winkler \cite{WebWin1999}*{Proposition~3.5}.

For a compact convex set $C$, the C*-algebra $\rC(C)$ of continuous functions on $C$ is generated by the function system $\rA(C)$. A probability measure $\mu$ on $C$ is said to represent a point $x \in C$ and $x$ is said to be the barycenter of $\mu$ if the restriction $\mu|_{\rA(C)}$ satisfies $\mu|_{\rA(C)} = x$. Since the point mass $\delta_x$ represents $x$, every point in $C$ has at least one representing measure. The points $x \in C$ for which $\delta_x$ is the unique representing measure are precisely the extreme points of $C$. This interplay between the function system $\rA(C)$ and the C*-algebra $\rC(C)$ plays an essential role in classical Choquet theory.

Something similar is true in the noncommutative setting, and it is here that major differences begin to appear between the theory of noncommutative convexity and the theory of matrix convexity.

For a compact nc convex set $K$, we introduce a notion of nc function on $K$. The space $\rB(K)$ of all bounded nc functions is a von Neumann algebra. The space $\rC(K)$ of continuous nc functions on $K$ is a C*-subalgebra  of $\rB(K)$ which is generated by the space $\rA(K)$ of continuous affine nc functions on $K$. By analogy with the Takesaki--Bichteler's noncommutative Gelfand theorem \cites{Tak1967,Bic1969}, we show that $\rB(K) = \rC(K)^{**}$ is the universal von Neumann algebra of $\rC(K)$, and clarify the sense in which elements of $\rC(K)$ are continuous. Finally, we identify $\rC(K)$ with the maximal C*-algebra, $\cmax(\rA(K))$, of $\rA(K)$ introduced by Kirchberg and Wasserman \cite{KirWas1998}.

Motivated by the classical setting, we say that a unital completely positive map $\mu : \rC(K) \to \B(H_n)$ represents a point $x \in K_n$, and that $x$ is the barycenter of $\mu$, if the restriction $\mu|_{\rA(K)}$ satisfies $\mu|_{\rA(K)} = x$. The corresponding point evaluation $\delta_x : \rC(K) \to \B(H_n)$ represents $x$, so every point in $K$ has at least one representing map. As in the classical setting, the points in $x \in K$ for which $\delta_x$ is both irreducible and the unique representing map for $x$ are precisely the extreme points of $K$. 

In fact, this characterization of the extreme points of a compact nc convex set $K$ implies that they are dual to the boundary representations of the operator system $\rA(K)$ in the sense of Arveson \cite{Arv1969}. Hence viewed from the perspective of noncommutative convexity, Arveson's conjecture about the existence of boundary representations is equivalent to the existence of extreme points in compact nc convex sets. This conjecture was resolved only recently, by Arveson himself \cite{Arv2008} in the separable case, and by the authors \cite{DK2015} in complete generality. As an application of the ideas in this paper, we obtain a new proof of this result that is conceptually much different. 

We establish a noncommutative Krein-Milman theorem asserting that a compact nc convex set is the closed nc convex hull of its extreme points, as well as an analogue of Milman's partial converse to the Krein-Milman theorem. In the matrix convex setting, Webster and Winkler \cite{WebWin1999} obtained variants of these results for ``matrix extreme points.'' However, we will see that even in the special case that a matrix convex set is generated by points that are extreme in the sense of noncommutative convexity, there are generally many more matrix extreme points, meaning that our results are much stronger.

A key technical tool in classical Choquet theory is the notion of convex envelope of a  continuous real-valuedfunction. For a compact convex set $C$ and a real-valued continuous function $f \in \rC(C)$, the convex envelope $\bar{f}$ of $f$ is defined by $\bar{f} = \sup \{a  \in \rA(C)_{sa} : a \leq f \}$. It is the best approximation of $f$ from below by a real-valued lower semicontinuous convex function. In particular, $\bar{f} = f$ if and only if $f$ is convex and lower semicontinuous. 

In the noncommutative setting, we introduce a notion of convex nc function along with a corresponding notion of convex envelope of a self-adjoint continuous nc function. As in the classical setting, the convex envelope is a key technical tool. For a compact nc convex set $K$, the convex envelope $\bar{f}$ of an nc function $f \in \rC(K)$ is the best approximation from below by a lower semicontinuous convex nc function. However, since $\rC(K)$ is generally not a lattice, $\bar{f}$ is necessarily a multivalued function, and this introduces some technical difficulties. It is a non-trivial theorem that continuous convex nc functions can be approximated from below by the continuous affine nc functions that they dominate.

For example, if $I \subseteq \bR$ is a compact interval and $K = \coprod_{n \leq \aleph_0} K_n$ is the compact nc convex set defined by letting $K_n$ denote the set of self-adjoint operators in $\rB(H_n)$ with spectrum in $I$, then the convex nc functions on $K$ can be identified with the operator convex functions on $I$. We obtain a noncommutative analogue of Jensen's inequality that specializes in this case to the Hansen-Pedersen-Jensen inequality \cite{HanPed2003}.

For a compact convex set $C$, the classical Choquet order on the space of probability measures on $C$ is a generalization of the even more classical majorization order considered by Hardy, Littlewood and P\`{o}lya and others. For probability measures $\mu$ and $\nu$ on $C$, $\nu$ is said to dominate $\mu$ in the Choquet order if $\int_C f\, d\mu \leq \int_C f\, d\nu$ for every convex function $f \in \rC(C)$. A probability measure is maximal in the Choquet order precisely when it is supported on the extreme boundary $\partial C$ in an appropriate sense.

For a compact nc convex set $K$, we introduce two orders on the unital completely positive maps on $\rC(K)$. The nc Choquet order is analogous to the classical Choquet order. It is determined by comparing the values of the maps on the set of convex nc functions in $\rC(K)$. As in the classical case, a map is maximal in the nc Choquet order precisely when it is supported on the extreme boundary $\partial K$ in an appropriate sense. 

The nc dilation order, determined by comparing the set of dilations of the maps, has no classical counterpart. However, using  the theory of convex envelopes of convex nc functions, we show that it coincides with the nc Choquet order. This result has a number of interesting consequences. For example, we obtain an intrinsic characterization of unital completely positive maps on operator systems that have a unique completely positive extension to the C*-algebra generated by the operator system. A version of this order in the commutative setting was used in \cite{DK2021}.

The culmination of this paper is a noncommutative analogue of the integral representation theorem of Choquet-Bishop-de Leeuw \cites{Ch1956,BdL1959} We show that if $K$ is a compact nc convex set, then every point $x \in K$ has a representing map $\mu$ that is supported on the extreme boundary of $K$ in an appropriate sense. As in the classical setting, if $\rA(K)$ is non-separable, then the extreme boundary $\partial K$ of $K$ may not be a Borel set. In this case we show that if $f$ is nc function contained in the Baire-Pedersen enveloping C*-algebra of $\rC(K)$ that vanishes on the extreme points of $K$, then $\mu(f) = 0$. In the separable case, we obtain an integral representation theorem expressing a unital completely positive map on $\rC(K)$ as an integral against a unital completely positive map-valued probability measure supported on the extreme boundary of $K$.

\section*{Acknowledgements}

The authors thank Ben Passer, Adam Humeniuk, Gregory Patchell and Eli Shamovich for their feedback during the writing of this paper. They also thank Bojan Magajna and the working group at IMPAN run by Tatiana Shulman and Adam Skalski for a number of helpful comments and corrections. They also thank the referee for a careful reading and a number of useful comments.

\section{Noncommutative convex sets}

\subsection{Operator spaces, cardinality, dimension and topology} \label{sec:cardinality-dimension-topology}

We will work with operator spaces and operator systems throughout this paper. In this section, we briefly review some of the relevant technical details and introduce some notation and conventions. For detailed references on operator spaces we direct the reader to the books of Effros and Ruan \cite{ER2000} and Pisier \cite{P2003}. In particular, the details on infinite matrices over operator systems are contained in \cite{ER2000}*{Section 10.1}. For a detailed reference on operator systems, we direct the reader to the book of Paulsen \cite{Paulsen}. 

Let $E$ be an operator space. For nonzero cardinal numbers $m$ and $n$, we let $\cM_{m,n}(E)$ denote the operator space consisting of $m \times n$ matrices over $E$ with uniformly bounded finite submatrices. If $m=n$, then we let $\cM_n(E) = \cM_{n,n}(E)$. If $E = \bC$, then we let $\cM_{m,n} = \cM_{m,n}(\bC)$ and $\cM_n = \cM_{n,n}$.

For each $n$, we fix a Hilbert space $H_n$ of dimension $n$ and identify $\cM_{m,n}$ with the space $\B(H_n,H_m)$ of bounded operators from $H_n$ to $H_m$.

Let $m,n,p$ be nonzero cardinal numbers. For $x = [x_{ij}] \in \cM_n(E)$, $\alpha = [\alpha_{ij}] \in \cM_{m,n}$ and $\beta = [\beta_{ij}] \in \cM_{n,p}$, the products $\alpha x \in \cM_{m,n}(E)$, $x \beta \in \cM_{n,p}(E)$ and $\alpha x \beta \in \cM_{k,n}(E)$ can be defined as compositions under appropriate operator space embeddings. They can also be defined intrinsically by the formulas
\[
[\alpha x]_{ij} = \sum_{k} \alpha_{ik} x_{kj}, \quad [x \beta]_{ij} = \sum_{k} x_{ik} \beta_{kj},
\]
since the above series converge unconditionally, and
\[
[\alpha x \beta]_{ij} = \lim_F \sum_{k \in F} \alpha_{ik} (x\beta)_{kj} = \lim_F \sum_{k \in F} (\alpha x)_{ik} \beta_{kj},
\]
where the limits are taken over finite subsets $F$ of $n$.

We let $\cM(E) = \coprod_n \cM_n(E)$, where the disjoint union is taken over all nonzero cardinal numbers $n \leq \kappa$, where $\kappa$ is a sufficiently large cardinal number. If $E = \bC$, then we let $\cM = \cM(\bC)$. For a subset $X \subseteq \cM(E)$ and a nonzero cardinal $n \leq \kappa$, we will write $X_n = X \cap \cM_n(E)$.

The existence of an upper bound $\kappa$ is necessary to ensure that $\cM(E)$ is a set. However, it will be convenient to allow $\kappa$ to vary depending on the context. If we are considering finitely many operator spaces $E_1,\ldots,E_n$, then it will suffice to take $\kappa = \dim H$, where $H$ is a Hilbert space of minimal infinite dimension such that $E_1,\ldots,E_n$ embed completely isometrically into $\B(H)$. In particular, if $E_1,\ldots,E_n$ are separable, then it will suffice to take $\kappa = \aleph_0$. In practice, we will work with the understanding that $\kappa$ exists and simply write e.g. ``for all $n$'' instead of ``for all $n \leq \kappa$.'' 

If $E$ is a dual operator space with a distinguished predual $E_*$, then there is a natural operator space isomorphism
\[
\cM_n(E) \cong \cbmaps(E_*,\cM_n),
\]
where $\cbmaps(E_*,\cM_n)$ denotes the space of completely bounded maps from $E_*$ to $\cM_n$. The space $\cbmaps(E_*,\cM_n)$ is a dual operator space and the corresponding weak* topology is the point-weak* topology. We identify $\cM_n(E)$ and $\cbmaps(E_*,\cM_n)$ and equip $\cM_n(E)$ with the point-weak* topology. Note that this is the usual weak* topology on $\cM_n$.

Unless otherwise specified, the convergence of a net or a series in $\cM_n(E)$ will always be with respect to the point-weak* topology. For example, we will frequently use the fact that for any bounded family $\{x_i \in \cM_{n_i}(E)\}$ and any family $\{\alpha_i \in \cM_{n_i,n}\}$ satisfying $\sum \alpha_i^* \alpha_i = 1_n$, the sum $\sum \alpha_i^* x_i \alpha_i$ converges in $\cM_n(E)$.
 
If $E$ and $F$ are operator spaces, then the product $E \times F$ is an operator space. If $E$ and $F$ are dual operator spaces with distinguished preduals $E_*$ and $F_*$ respectively, then $E \times F = (E_* \times_1 F_*)^*$, so that $E \times F$ is a dual operator space with the distinguished predual $E_* \times_1 F_*$.

At various points throughout this paper we will review results from classical convexity theory and classical Choquet theory. In particular, we will discuss function systems, also known as Archimedean order unit spaces, which are classical precursors to operator systems. For a detailed reference on classical Choquet theory, we refer the reader to the books of Alfsen \cite{Alfsen}, Phelps \cite{Phelps} and Luke\v{s}-Mal\'{y}-Netuka-Spurn\'{y} \cite{LMNS}. For a modern perspective on function systems, we refer the reader to the paper of Paulsen and Tomforde \cite{PT2009}.

\subsection{Noncommutative convex sets}

\begin{defn} \label{defn:nc-convex-set}
An {\em nc convex set} over an operator space $E$ is a graded subset $K = \coprod K_n \subseteq \cM(E)$ that is closed under direct sums and compressions, meaning that
\begin{enumerate}

\item $\sum \alpha_i x_i \alpha_i^* \in K_n$ for every bounded family $\{x_i \in K_{n_i}\}$ and every family of isometries $\{\alpha_i \in \cM_{n,n_i}\}$ satisfying $\sum \alpha_i \alpha_i^* = 1_n$,

\item $\beta^* x \beta \in K_m$ for every $x \in K_n$ and every isometry $\beta \in \cM_{n,m}$.

\end{enumerate}
We will say that $K$ is {\em closed} if $E$ is a dual operator space and each $K_n$ is closed in the topology on $\cM_n(E)$. Similarly, we will say that $K$ is {\em compact} if each $K_n$ is compact in the topology on $\cM_n(E)$.
\end{defn}

\begin{rem}
Condition (1) is equivalent to the assertion that any unitary that conjugates $\oplus \cM_{n_i}$ into $\cM_n$ necessarily conjugates $\oplus K_{n_i}$ into $K_n$. So we say that (1) means that $K$ is closed under taking infinite direct sums. Condition (2) implies that $K$ is closed under compression to subspaces, and in particular that each $K_n$ is closed under unitary conjugation. Note that each $K_n$ is an (ordinary) convex set.

Infinite direct sums are necessary, as otherwise one could have $K_n = \mt$ for $n \ge \aleph_0$ but $K \ne \mt$.
However, if $K$ is closed, closed under compressions, finite direct sums and infinite ampliations of elements in $K_1$, then it is nc convex.
Indeed, suppose that $\{x_i \in \cM_{n_i}(E) : i \in \I \}$ and $n = \sum_\I n_i$.
Fix $x_0 \in K_1$. For $F \Subset \I$ a finite set, let $n_F = \sum_{i \not\in F} n_i$.
Thus $y_F := \sum_F^\oplus x_{n_i} \oplus (x_0 \otimes 1_{n_F}) \in K_n$, 
where we require the ampliation of $x_0$ to complete the term.
The net $(y_F)_{F \Subset \I}$ is a net in $K_n$ which converges weak-$*$ to $y := \sum_\I^\oplus x_{n_i}$.
Since $K_n$ is closed in the weak-$*$ topology, $y \in K_n$; whence $K$ is nc convex.
\end{rem}

\begin{rem}
As discussed in Section \ref{sec:cardinality-dimension-topology}, there is an infinite cardinal number $\kappa$ such that $K = \coprod_{n \leq \kappa} K_n$. However, it will be convenient to work with the understanding that $\kappa$ exists without necessarily mentioning it explicitly.
\end{rem}

\begin{example}
A simple example of a compact nc convex set is a compact operator interval. Fix $c,d \in \bR$ with $c < d$. For $n \in \bN$, let $K_n = [c1_n, d1_n]$, where
\[
[c1_n, d1_n] = \{ \alpha \in (\cM_n)_{sa} : c 1_n \leq \alpha \leq d 1_n \}. 
\]
Then $K = \coprod_{n \in \bN} K_n$ is a compact matrix convex set over $\bC$. It is not difficult to show that if $L = \coprod_{n \in \bN} L_n$ is a compact nc convex set with $L_1 = [c,d]$, then $L = K$.
\end{example}

\begin{example}
Let $E$ be a dual operator space. The space $\cM(E)$ is a closed nc convex set. For each $n$, let $\bB_n(E)$ denote the unit ball of $\cM_n(E)$ and let $\bB(E) = \coprod_n \bB_n(E)$, where the union is taken over cardinal numbers $n \leq \kappa$ for a sufficiently large infinite cardinal number $\kappa$ as discussed in Section \ref{sec:cardinality-dimension-topology}. Each $\bB_n(E)$ is compact in $\cM_n(E)$, so $\bB(E)$ is a compact nc convex set.
\end{example}

\begin{example} \label{ex:nc-state-space}
Let $S$ be an operator system, i.e. a unital self-adjoint subspace of a unital C*-algebra. The {\em nc state space} of $S$ is the nc convex set $K = \coprod_n K_n$, where $K_n = \ucpmaps(S,\cM_n)$, the set of all unital completely positive maps of $S$ into $\cM_n$. The union is taken over cardinal numbers $n \leq \kappa$ for a sufficiently large infinite cardinal number $\kappa$ as discussed in Section \ref{sec:cardinality-dimension-topology}. Recall that for each $n$, we have identified $\cM_n(S^*)$ with the space $\cbmaps(S,\cM_n)$. Hence the inclusion $K_n = \ucpmaps(S,\cM_n) \subseteq \cbmaps(S,\cM_n)$ implies the inclusion $K_n \subseteq \cM_n(S^*)$. Moreover, $K_n$ is compact in the point-weak* topology. So $K$ is a compact nc convex set over $S^*$. 
\end{example}

\begin{defn}
Let $E$ be a dual operator space. For a bounded family $\{x_i \in \cM_{n_i}(E) \}$ 
and a family $\{ \alpha_i \in \M_{n_i,n} \}$ satisfying $\sum \alpha_i^* \alpha_i = 1_n$, 
we will refer to the element $\sum \alpha_i^* x_i \alpha_i \in \cM_n$ as an {\em nc convex combination} of elements in $K$. 
We will say that a subset $K \subseteq \cM(E)$ is closed under nc convex combinations if every nc convex combination of elements in $K$ belongs to $K$.
\end{defn}

\begin{prop}\label{P:nc combinationss}
Let $E$ be a dual operator space. A subset $K \subseteq \cM(E)$ is an nc convex set if and only if it is closed under nc convex combinations.
\end{prop}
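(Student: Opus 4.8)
The plan is to prove the two implications separately, with the forward direction being essentially a bookkeeping exercise and the reverse direction containing the only real content.

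First I would handle the easy direction: suppose $K$ is closed under nc convex combinations. I need to verify conditions (1) and (2) of Definition~\ref{defn:nc-convex-set}. For (2), given $x \in K_n$ and an isometry $\beta \in \cM_{n,m}$, note that $\beta^* \beta = 1_m$, so $\beta^* x \beta$ is a one-term nc convex combination of an element of $K$, hence lies in $K_m$. For (1), given a bounded family $\{x_i \in K_{n_i}\}$ and isometries $\{\alpha_i \in \cM_{n,n_i}\}$ with $\sum \alpha_i \alpha_i^* = 1_n$, I want to exhibit $\sum \alpha_i x_i \alpha_i^*$ as an nc convex combination. The point is that $\alpha_i^*$ has the right shape: setting $\gamma_i = \alpha_i^* \in \cM_{n_i,n}$, the condition $\sum \alpha_i \alpha_i^* = 1_n$ becomes $\sum \gamma_i^* \gamma_i = 1_n$, and $\sum \gamma_i^* x_i \gamma_i = \sum \alpha_i x_i \alpha_i^*$ is exactly an nc convex combination of elements of $K$ as in the definition, so it lies in $K_n$. (Convergence of this sum in the point-weak* topology is guaranteed by the boundedness hypothesis together with the remarks in Section~\ref{sec:cardinality-dimension-topology}.)

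For the reverse direction, suppose $K$ is an nc convex set, i.e. closed under direct sums and compressions, and let $\sum \alpha_i^* x_i \alpha_i$ be an nc convex combination with $x_i \in K_{n_i}$, $\alpha_i \in \cM_{n_i,n}$, and $\sum \alpha_i^* \alpha_i = 1_n$. The idea is the standard trick of encoding a convex combination of compressions as a single compression of a direct sum. Form the direct sum $x = \bigoplus_i x_i \in \cM_{\sum n_i}(E)$; since $K$ is closed under direct sums, $x \in K_{\sum n_i}$ — here I should be slightly careful and phrase direct sums in terms of the isometries $e_i : H_{n_i} \hookrightarrow \bigoplus_j H_{n_j}$ with $\sum e_i e_i^* = 1$, so that $x = \sum e_i x_i e_i^*$, which is of the form in condition (1). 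Next, assemble the $\alpha_i$ into a single column operator $\beta = \sum_i e_i \alpha_i \in \cM_{\sum n_i, n}$, i.e. the operator whose $i$-th block is $\alpha_i$. The hypothesis $\sum \alpha_i^* \alpha_i = 1_n$ says precisely that $\beta^* \beta = 1_n$, so $\beta$ is an isometry. Then a direct computation gives $\beta^* x \beta = \sum_i \alpha_i^* e_i^* \big(\sum_j e_j x_j e_j^*\big) e_i \alpha_i \cdot$ — more carefully, $\beta^* x \beta = \sum_{i} \alpha_i^* x_i \alpha_i$ using $e_i^* e_j = \delta_{ij} 1_{n_i}$. Since $K$ is closed under compressions by isometries (condition (2)), $\beta^* x \beta \in K_n$, which is what we wanted.

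The main obstacle, such as it is, is purely technical: making sure that the infinite direct sum $x = \bigoplus_i x_i$ and the column $\beta$ actually define legitimate elements of $\cM_{\sum n_i}(E)$ and $\cM_{\sum n_i, n}$ within the cardinality bound $\kappa$, and that all the rearrangements of the (possibly infinite, unconditionally convergent) sums are justified in the point-weak* topology. The boundedness of the family $\{x_i\}$ and the relation $\sum \alpha_i^* \alpha_i = 1_n$ (which forces $\|\alpha_i\| \le 1$) give the needed norm control, and the unconditional convergence discussed in Section~\ref{sec:cardinality-dimension-topology} handles the rest; one should also note that if $\sum n_i$ exceeds the ambient $\kappa$ one simply enlarges $\kappa$, as the paper allows. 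Beyond that, both directions reduce to the single observation that compression by a column isometry is the same operation as taking an nc convex combination of compressions of a direct sum.
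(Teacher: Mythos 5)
Your proof is correct and follows essentially the same route as the paper's: the forward direction by observing that conditions (1) and (2) are special cases of nc convex combinations, and the converse by realizing $\sum \alpha_i^* x_i \alpha_i$ as the compression $\gamma^* y \gamma$ of the direct sum $y = \sum e_i x_i e_i^*$ by the column isometry $\gamma = \sum e_i \alpha_i$, using $e_i^* e_j = \delta_{ij}$. The paper writes $\beta_i$ for your $e_i$ but the argument is identical.
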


\begin{proof}
Since the expressions in conditions (1) and (2) in Definition \ref{defn:nc-convex-set}  are special cases of nc convex combinations, if $K$ is closed under nc convex combinations, then it is clearly an nc convex set.

Conversely, suppose that $K$ is an nc convex set and consider a bounded family $\{x_i \in K_{n_i}\}$ 
and a family $\{ \alpha_i \in \cM_{n_i,n} \}$ satisfying $\sum \alpha_i^* \alpha_i = 1_n$. 
Let $m = \sum n_i$ and let $\beta_i \in \cM_{m,n_i}$ be isometries such that $\sum \beta_i \beta_i^* = 1_m$. 
Let $\gamma = \sum \beta_i \alpha_i \in \cM_{m,n}$. Then $\gamma^*\gamma = \sum \alpha_i^* \alpha_i = 1_n$, so $\gamma$ is an isometry. 
By (1), $y = \sum \beta_i x_i \beta_i^* \in K_m$. Hence by (2),
\[ 
 x = \gamma^* y \gamma = \sum_i \alpha_i^* \beta_i^* \Big( \sum_j \beta_j x_j \beta_j^*\Big) \sum_k \beta_k \alpha_k  = \sum_i \alpha_i^* x_i \alpha_i  \in K_n.
\]
Here we used the fact that $\beta_i^* \beta_j = 0$ if $i \ne j$.
\end{proof}

%

The next result shows that while closed nc convex sets are closed on each level, they are also closed in a stronger sense.

\begin{prop} \label{prop:convergent-nets}
Let $K$ be a closed nc convex set over a dual operator space $E$. Suppose there is a net $\{ x_i \in K_{n_i} \}$ and a net of isometries $\{ \alpha_i \in \cM_{n,n_i} \}$ satisfying $\lim \alpha_i \alpha_i^* = 1_n$ such that $\lim \alpha_i x_i \alpha_i^* = x \in \cM_n(E)$. Then $x \in K_n$.
\end{prop}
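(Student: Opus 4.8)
The plan is to reduce the statement to the two defining closure properties of an nc convex set---closure under direct sums and closure under compressions by isometries (Definition \ref{defn:nc-convex-set})---together with the level-wise closedness of $K$. The subtlety is that the isometries $\alpha_i$ only satisfy $\lim \alpha_i \alpha_i^* = 1_n$, not $\alpha_i \alpha_i^* = 1_n$, so we cannot directly glue them into a single isometry onto $\cM_n$ as in the proof of Proposition \ref{P:nc combinationss}. Instead, for each fixed index $i$ I would like to treat $\alpha_i x_i \alpha_i^*$ as being close to an honest element of $K_n$, obtained by padding.

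First I would pass to the net in a form where, for each $i$, we can produce an element $y_i \in K_{n_i'}$ with $n_i' \geq n$ and an isometry $\beta_i \in \cM_{n, n_i'}$ with $\beta_i \beta_i^* = 1_n$ (a genuine coisometry onto level $n$) such that $\beta_i^* y_i \beta_i$ is close to $\alpha_i x_i \alpha_i^*$. Concretely: fix any point $z \in K_n$ (note $K_n$ is nonempty since $x$ is a limit of elements conjugated into it, hence for $i$ large $\alpha_i x_i \alpha_i^* \in \cM_n(E)$ is within the net---but to be safe one may instead argue on a cofinal subnet). Set $p_i = \alpha_i \alpha_i^* \in \cM_n$, a projection (since $\alpha_i$ is an isometry, $\alpha_i^* \alpha_i = 1_{n_i}$, so $p_i$ is the range projection), with $\lim p_i = 1_n$. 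The element $\alpha_i x_i \alpha_i^* = p_i (\alpha_i x_i \alpha_i^*) p_i$ lives in the corner $p_i \cM_n(E) p_i$, which is a copy of $\cM_{n_i}(E)$ via $\alpha_i$; so $\alpha_i x_i \alpha_i^* \in K_n$ precisely on the corner but we need it in all of $K_n$. Form $w_i := \alpha_i x_i \alpha_i^* + (1_n - p_i) z (1_n - p_i)$. Using compression of $x_i \oplus z'$ for a suitable $z' \in K_{\operatorname{rank}(1_n-p_i)}$ (take $z' = \gamma^* z \gamma$ for an isometry $\gamma$ onto the range of $1_n - p_i$, which lies in $K$ by condition (2)), condition (1) gives $w_i \in K_n$. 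Since $\lim (1_n - p_i) = 0$ and the family is bounded, $\lim \|w_i - \alpha_i x_i \alpha_i^*\| \to 0$ in the relevant sense, so $\lim w_i = \lim \alpha_i x_i \alpha_i^* = x$.

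The main obstacle I anticipate is making the ``$\lim (1_n - p_i) = 0 \Rightarrow \lim (1_n-p_i)z(1_n-p_i) = 0$'' step rigorous in the point-weak* topology rather than in norm: point-weak* convergence $p_i \to 1_n$ does not automatically give $p_i T p_i \to T$ point-weak* for bounded $T$ uniformly over the net, because multiplication is only separately (not jointly) weak*-continuous. I would handle this by working in the dual picture $\cM_n(E) \cong \cbmaps(E_*, \cM_n)$ and testing against a fixed $\xi \in E_*$: we need $(1_n - p_i)\, w(\xi)\, (1_n - p_i) \to 0$ weak* in $\cM_n$ for a fixed weak*-bounded vector, and since $p_i \to 1_n$ in the weak* topology of $\cM_n$ while the relevant operators lie in a bounded set, one can use that on bounded sets the weak* and weak operator topologies of $\cM_n$ agree and that $a_i \to 1$, $b_i \to 1$ in WOT with $a_i, b_i$ contractions forces $a_i T b_i \to T$ in WOT (a standard $\varepsilon/3$ argument: $\|(1-a_i)\zeta\| \to 0$ and similarly on the other side, because $1 - p_i \geq 0$ and $\langle (1-p_i)\zeta,\zeta\rangle \to 0$ gives $\|(1-p_i)^{1/2}\zeta\|\to 0$). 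Once this is in place, $\lim w_i = x$ with each $w_i \in K_n$ and $K_n$ closed yields $x \in K_n$. I would also remark that the nonemptiness of $K_n$ needed for the padding element $z$ follows from the hypothesis: for $i$ in the tail, $\alpha_i \alpha_i^*$ is invertible in $\cM_n$, so $\alpha_i x_i \alpha_i^*$ together with a correction already exhibits points of $\cM_n(E)$ of the required form---but cleanest is simply to observe $K_n \neq \emptyset$ since $x$ is a limit of such and, for large $i$, one may rescale; alternatively restrict to the (nonvacuous) case $K_n \neq \emptyset$, the empty case being trivial as then no such net can converge into $\cM_n(E)$ with the stated properties when... here I would just invoke that $K_n$ contains some point, e.g. by applying condition (2) to a fixed $x_{i_0} \in K_{n_{i_0}}$ after embedding $n$ suitably, which may require $n_{i_0} \geq n$; if not, pass further down the net or enlarge via condition (1).
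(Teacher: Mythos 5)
Your proposal is correct and is essentially the paper's own proof: both pad $\alpha_i x_i \alpha_i^*$ on the complementary corner $1_n-\alpha_i\alpha_i^*$ with a fixed element of $K$ (the paper uses $y\otimes 1_{m_i}$ for $y\in K_1$, which also disposes of your nonemptiness worry), invoke conditions (1) and (2) of Definition \ref{defn:nc-convex-set} to land in $K_n$, and use levelwise closedness. Your extra care about why $(1_n-p_i)z(1_n-p_i)\to 0$ in the point-weak* topology (projections converging weak* to $1_n$ converge strongly, so the corner terms vanish in WOT on bounded sets) is a correct justification of a step the paper leaves implicit.
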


\begin{proof}
Let $\{\beta_i  \in \cM_{n,m_i}\}$ be a net of isometries satisfying $\beta_i \beta_i^* = 1_n - \alpha_i \alpha_i^*$. Then $\lim \beta_i \beta_i^* = 0$. Fix $y \in K_1$ and let $z_i = \alpha_i x_i \alpha_i^* + \beta_i (y \otimes 1_{m_i}) \beta_i^*$. Then by (1) of Definition \ref{defn:nc-convex-set}, $z_i \in K_n$, and by construction, $\lim z_i = x$. Hence $x \in K_n$ since $K$ is closed.
\end{proof}

The next result shows that closed nc convex sets are completely determined by their finite levels.

\begin{prop} \label{prop:equal-finite-levels}
Let $K$ and $L$ be closed nc convex sets over an operator system $E$. If $K_n = L_n$ for $n < \infty$, then $K = L$. 
\end{prop}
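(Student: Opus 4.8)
The plan is to show that each infinite level of a closed nc convex set is determined by its finite levels, by approximating an arbitrary point by its compressions to finite-dimensional subspaces. By symmetry it suffices to prove $K_n \subseteq L_n$ for every infinite cardinal $n$, since the finite levels agree by hypothesis. So fix such an $n$ and a point $x \in K_n$, and let $\mathcal{F}$ be the directed set of nonzero finite-dimensional subspaces of $H_n$, ordered by inclusion. For $F \in \mathcal{F}$ choose an isometry $\beta_F \in \cM_{n,\dim F}$ with range $F$, so that $\beta_F\beta_F^* = P_F$ is the orthogonal projection of $H_n$ onto $F$. Since $K$ is closed under compressions (condition (2) of Definition \ref{defn:nc-convex-set}), the compression $\beta_F^* x \beta_F$ lies in $K_{\dim F}$; as $\dim F$ is finite we have $K_{\dim F} = L_{\dim F}$, so in fact $\beta_F^* x \beta_F \in L_{\dim F}$.

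The crux is to verify the two limits $\lim_F \beta_F\beta_F^* = \lim_F P_F = 1_n$ in $\cM_n$ and $\lim_F \beta_F(\beta_F^* x \beta_F)\beta_F^* = \lim_F P_F x P_F = x$ in the point-weak* topology on $\cM_n(E)$. The first is immediate: the net $\{P_F\}$ increases to $1_n$ in the strong operator topology, and a bounded net converging in the weak operator topology converges weak*. For the second, unwinding the definition of the point-weak* topology it suffices to check for each $\phi \in E_*$ that $P_F\,x(\phi)\,P_F \to x(\phi)$ weak* in $\cM_n = \B(H_n)$. This net is bounded by $\|x\|\,\|\phi\|$, and for all $\xi,\eta \in H_n$ one has $\langle P_F x(\phi) P_F \xi, \eta\rangle = \langle x(\phi) P_F\xi, P_F\eta\rangle \to \langle x(\phi)\xi,\eta\rangle$ because $P_F\xi \to \xi$ and $P_F\eta \to \eta$ in norm; hence the net converges to $x(\phi)$ weakly, and since it is bounded and the weak operator and weak* topologies agree on bounded subsets of $\B(H_n)$, it converges weak*. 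With these two facts in hand, Proposition \ref{prop:convergent-nets} applied to the closed nc convex set $L$ — with the net $\{\beta_F^* x \beta_F \in L_{\dim F}\}$ and the isometries $\{\beta_F \in \cM_{n,\dim F}\}$ — gives $x \in L_n$. Thus $K_n \subseteq L_n$, and reversing the roles of $K$ and $L$ yields $K = L$.

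I expect the only point requiring genuine care to be the verification that the compressions $P_F x P_F$ converge to $x$ in the point-weak* topology: this rests on identifying that topology level by level as the usual weak* topology on $\cM_n$, together with the standard fact that weak operator and weak* convergence coincide for bounded nets in $\B(H_n)$. Everything else — closure under compression and the passage to the limit — is a formal application of the results already established, in particular Proposition \ref{prop:convergent-nets}.
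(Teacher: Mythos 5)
Your proof is correct and follows essentially the same route as the paper: compress $x$ to an increasing net of finite-dimensional subspaces, note that the compressions land in the (coinciding) finite levels, and invoke Proposition \ref{prop:convergent-nets} to recover $x$ in $L_n$. The only difference is that you spell out the verification that $P_F x P_F \to x$ in the point-weak* topology, which the paper asserts without detail; your verification is accurate.
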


\begin{proof}
For arbitrary $n$ and $x \in K_n$, choose a net of finite rank isometries $\{\alpha_i \in \cM_{n,n_i} \}$ such that $\lim \alpha_i \alpha_i^* = 1_n$ and let $x_i = \alpha_i^* x \alpha_i \in K_{n_i}$. Then $\lim \alpha_i x_i \alpha_i^* = x$. Since $K_{n_i} = L_{n_i}$ for each $i$, Proposition \ref{prop:convergent-nets} implies $x \in L$. Hence $K \subseteq L$. By symmetry, $K = L$. 
\end{proof}

\subsection{Matrix convexity} \label{sec:matrix-convexity}

In this section we briefly pause to discuss the relationship between the theory of noncommutative convexity and the theory of matrix convexity introduced by Wittstock \cite{Wit1981}. We will also briefly mention the theory of C*-convexity introduced by Hoppenwasser, Moore and Paulsen \cite{HopMooPau1981}.

At least on the surface, the definition of a matrix convex set is similar to the definition of an nc convex set. The key distinction is that matrix convex sets do not contain points corresponding to infinite matrices. Specifically, a matrix convex $M$ set over an operator space $E$ is a graded subset $M = \coprod_{n \in \bN} M_n \subseteq \cM(E)$. If $E$ is a dual operator space, then $M$ is said to be closed (resp. compact) if each $M_n$ is closed (resp. compact) in the topology on $\cM_n(E)$.

If $K$ is an nc convex set, then the finite part $K_f := \coprod_{n \in \bN} K_n$ is a matrix convex set. On the other hand, if $M$ is a closed matrix convex set, then Proposition \ref{prop:equal-finite-levels} implies that $M$ determines a unique closed nc convex set. In fact, we will obtain results in Section \ref{sec:categorical-duality} that imply the category of compact matrix convex sets is equivalent to the category of compact nc convex sets.

Nevertheless, we will see that there are major differences between the theory of noncommutative convexity and the theory of matrix convexity. This will become particularly apparent when we begin to develop noncommutative Choquet theory, where it will be essential to consider points corresponding to infinite matrices as first class objects.

There are two key reasons for this. First, beginning in Section \ref{sec:nc-functions}, a major part of the noncommutative theory will involve the study of functions on nc convex sets. We will see that, even for reasonably nice functions, the restriction to the finite part of the set will not necessarily completely determine the function.

Second, when we introduce the notion of an extreme point for an nc convex set in Section \ref{sec:extreme-points}, we will establish a noncommutative Krein-Milman theorem, along with an analogue of Milman's partial converse to the Krein-Milman theorem, showing that the set of extreme points in a compact nc convex set is a minimal generating set in a very strong sense. However, we will also see that the finite part of a compact nc convex set, even for simple examples, may not contain any extreme points at all.

To be more specific, in classical convexity theory, the set of extreme points in a compact convex set is a minimal generating set in a sense that can be made precise using the Krein-Milman theorem and Milman's partial converse to the Krein-Milman theorem. If $C$ is a compact convex set and $\partial C$ denotes the extreme points of $C$, then the Krein-Milman theorem asserts that $C$ is the closed convex hull of $\partial C$. If $D \subseteq C$ is a closed subset with the property that the closed convex hull of $D$ is $C$, then Milman's partial converse to the Krein-Milman theorem asserts that $\partial C \subseteq D$. This property of minimality underlies much of classical convexity theory, and is absolutely essential for the development of classical Choquet theory.

There is a notion of ``matrix extreme point'' in the theory of matrix convexity for which a Krein-Milman theorem holds. This was proved by Webster and Winkler \cite{WebWin1999} (see \cite{Far2004} for another proof), along with an analogue of Milman's partial converse to the Krein-Milman theorem. Their result extended a Krein-Milman theorem for C*-extreme points in the matrix state space of a C*-algebra proved earlier by Morenz \cite{Mor1994} and Farenick-Morenz \cite{FarMor1997}.

However, the set of matrix extreme points in a compact matrix convex set is generally not a minimal generating set in any meaningful sense. Simple examples show that if $M$ is a compact matrix convex set and $\partial M$ denotes the set of matrix extreme points in $M$, then it is possible for the closed convex hull of a much smaller subset of $\partial M$ to be equal to $M$. The main problem is that it is possible for the matrix convex hull of a single matrix extreme point in $M_n$ to contain matrix extreme points in $M_m$ for $m < n$. 

There have been attempts to work with a more restricted notion of extreme point in the matrix convex setting. For example, Kleski \cite{Kle2014} defined a notion of ``absolute extreme point'' for matrix convex sets, and proved a corresponding Krein-Milman theorem for state spaces of operator systems that can be represented on finite dimensional Hilbert space. More recently, Evert, Helton, Klep and McCullough \cite{EveHel2019} proved a similar result for a special class of compact matrix convex sets called real spectrahedra. 

In fact, we will see that these results are a special case of the noncommutative Krein-Milman theorem for extreme points in compact nc convex sets. In particular, the fact that the finite part of a compact nc convex set does not necessarily contain any extreme points implies that no general version of these results can hold within the framework of matrix convexity. Instead, it is necessary to work with the framework of noncommutative convexity.

\subsection{Noncommutative separation theorem}

In this section we prove a separation theorem for nc convex sets that extends the separation theorem for matrix convex sets of Effros and Winkler \cite{EffWin1997}.

Let $E$ and $F$ be operator spaces and let $\phi : E \to F$ be a linear map. We write $\phi_n$ for the induced map $\phi_n : \cM_n(E) \to \cM_n(F)$ defined by
\[
\phi_n([e_{ij}]) = [\phi(e_{ij})], \qfor [e_{ij}] \in \cM_n(E).
\]
If $E$ and $F$ are operator systems, then the adjoint $\phi^* : E \to F$ is defined by $\phi^*(e) = \phi(e^*)^*$. We say that $\phi$ is self-adjoint if $\phi = \phi^*$. If $\phi$ is self-adjoint then it maps self-adjoint elements to self-adjoint elements.

\begin{thm}[Noncommutative separation theorem] \label{thm:separation} \strut \\
Let $K$ be a closed nc convex set over a dual operator space $E$ with $0_E \in K$. Suppose there is $n$ and $y \in \cM_n(E)$ such that $y \notin K_n$. Then there is a normal completely bounded linear map $\phi : E \to \cM_n$ such that
\[
\re \phi_n(y) \not \leq 1_n \otimes 1_n \quad \text{but} \quad \re \phi_p(x) \leq 1_p \otimes 1_n
\]
for all $p$ and $x \in K_p$. Moreover, if $E$ is an operator system and $K \cup \{y\}$ consists of self-adjoint elements, then $\phi$ can be chosen self-adjoint.
\end{thm}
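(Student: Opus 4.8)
The plan is to deduce this from a bipolar-type theorem for nc convex sets, following the strategy of Effros and Winkler \cite{EffWin1997} for matrix convex sets: the Hahn--Banach theorem will supply a scalar separating functional, and the nc structure of $K$ (closure under direct sums and compressions, together with $0_E\in K$) is what promotes it to an $\cM_n$-valued map that separates at every level. I would begin by recording the dictionary between weak*-continuous linear functionals $\lambda$ on $\cM_n(E)$ and normal completely bounded maps $\phi\colon E\to\cM_n$: with $\Omega=\sum_k e_k\otimes e_k\in H_n\otimes H_n$, every such $\lambda$ has the form $\lambda(w)=\langle\phi_n(w)\Omega,\Omega\rangle$ for a unique $\phi$, which is normal because its coordinate functionals lie in $E_*$. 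A short computation then yields the identity
\[
\langle\phi_p(x)\zeta,\zeta\rangle=\lambda\bigl(\Theta_\zeta^{*}\,x\,\Theta_\zeta\bigr),\qquad x\in\cM_p(E),\quad\zeta\in H_p\otimes H_n,
\]
where $\Theta_\zeta\in\cM_{p,n}$ is the coordinate matrix of $\zeta$, so that $\|\Theta_\zeta\|_{\mathrm{HS}}=\|\zeta\|$. Under this dictionary, ``$\operatorname{Re}\phi_p(x)\le 1_p\otimes 1_n$ for all $p$ and all $x\in K_p$'' translates into ``$\operatorname{Re}\lambda(\Theta^{*}x\Theta)\le\|\Theta\|_{\mathrm{HS}}^2$ for all $p$, all $x\in K_p$ and all $\Theta\in\cM_{p,n}$'', while ``$\operatorname{Re}\phi_n(y)\not\le 1_n\otimes 1_n$'' becomes ``$\operatorname{Re}\lambda(\Theta^{*}y\Theta)>\|\Theta\|_{\mathrm{HS}}^2$ for some $\Theta\in\cM_n$''. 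In the self-adjoint case one runs the argument over $\bR$ on $\cM_n(E)_{sa}$ and $\phi$ comes out self-adjoint; the reduction of the general operator space case to the operator system case is routine, since only real parts are involved.

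The first and easier half is the inequality at all levels. Because $0_E\in K$, closure under direct sums and isometric compressions forces $K$ to be closed under compression by arbitrary contractions: dilate a contraction $\Psi\in\cM_{p,n}$ to an isometry into a larger space and compress $x\oplus 0$. Hence $\Theta^{*}x\Theta\in\|\Theta\|_{\mathrm{op}}^2K_n\subseteq\|\Theta\|_{\mathrm{HS}}^2K_n$ for every $x\in K_p$ and $\Theta\in\cM_{p,n}$, so any weak*-continuous $\lambda$ with $\operatorname{Re}\lambda\le 1$ on $K_n$ automatically satisfies the required inequality at every level, and the associated $\phi$ has $\operatorname{Re}\phi_p(x)\le 1_p\otimes 1_n$ throughout $K$.

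It then remains to arrange, simultaneously, the strict inequality at level $n$. Since $K_n$ is weak*-closed, convex and contains $0$, the classical bipolar theorem shows that, for a fixed $\Theta\in\cM_n$, a weak*-continuous $\lambda$ with $\operatorname{Re}\lambda\le 1$ on $K_n$ and $\operatorname{Re}\lambda(\Theta^{*}y\Theta)>\|\Theta\|_{\mathrm{HS}}^2$ exists exactly when $\Theta^{*}y\Theta\notin\|\Theta\|_{\mathrm{HS}}^2K_n$; normalizing $\|\Theta\|_{\mathrm{HS}}=1$, this reads $\Theta^{*}y\Theta\notin K_n$. So the whole theorem comes down to the assertion that $y\notin K_n$ forces $\Theta^{*}y\Theta\notin K_n$ for some $\Theta\in\cM_n$ with $\|\Theta\|_{\mathrm{HS}}=1$ --- equivalently, that an element of $\cM_n(E)$ all of whose Hilbert--Schmidt-normalized compressions lie in $K_n$ must itself lie in $K_n$. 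This is the crux, and I expect it to be where the real work lies: proving it should draw on closure of $K$ under nc convex combinations and direct sums, on the closedness of $K$ (so that Propositions \ref{prop:convergent-nets} and \ref{prop:equal-finite-levels} allow one to pass to finite levels), and on a careful accounting of the discrepancy between the operator norm and the Hilbert--Schmidt norm on $\cM_n$. Granting it, Hahn--Banach separation of the normalized compression $\Theta^{*}y\Theta$ from $K_n$ inside $\cM_n(E)$ (respectively inside the real space $\cM_n(E)_{sa}$) produces the functional $\lambda$, a suitable normalization using $0\in K_n$ delivers the strict inequality, and the dictionary returns the desired $\phi$.
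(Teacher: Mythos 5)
Your reduction stalls precisely at the step you flag as ``the crux,'' and unfortunately that crux, as you have formulated it, is false. The paper does not attempt to reprove the separation from first principles at all: for finite $n$ it invokes the Effros--Winkler theorem \cite{EffWin1997}*{Theorem 5.4} as a black box, upgrades the resulting inequality from finite levels to all levels by compressing with finite-rank isometries, and gets complete boundedness from Smith's theorem; for infinite $n$ it observes that some finite-dimensional compression of $y$ must already fail to lie in $K$ (else Proposition \ref{prop:convergent-nets} would force $y\in K_n$), separates that compression, and amplifies. So the real content you are trying to supply is exactly the content of the Effros--Winkler theorem itself, and your proposal does not supply it: everything before the crux (the dictionary $\lambda(w)=\langle\phi_n(w)\Omega,\Omega\rangle$, the identity $\langle\phi_p(x)\zeta,\zeta\rangle=\lambda(\Theta_\zeta^*x\Theta_\zeta)$, and the ``easy half'') is correct, but the theorem is then made to rest on an unproved assertion.

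The assertion --- that $y\notin K_n$ forces $\Theta^*y\Theta\notin K_n$ for some $\Theta\in\cM_n$ with $\|\Theta\|_{\mathrm{HS}}=1$ --- fails. Take $E=\bC$ and $K=\bB(\bC)$ the nc unit ball, so $K_p$ is the unit ball of $\cM_p$ and $0\in K_1$. Let $n=2$ and $y=\tfrac32 E_{12}$, so $\|y\|=\tfrac32$ and $y\notin K_2$. For any $\Theta\in\cM_2$ with $\|\Theta\|_{\mathrm{HS}}=1$ one has $\Theta^*y\Theta=\tfrac32(\Theta^*e_1)(\Theta^*e_2)^*$, a rank-one matrix of norm $\tfrac32\|\Theta^*e_1\|\,\|\Theta^*e_2\|\le\tfrac34\bigl(\|\Theta^*e_1\|^2+\|\Theta^*e_2\|^2\bigr)=\tfrac34<1$, so every Hilbert--Schmidt-normalized compression of $y$ lies in $K_2$. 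The diagnosis is that you have replaced a necessary-and-sufficient condition by a merely sufficient one: the condition ``$\operatorname{Re}\phi_p(x)\le 1_p\otimes 1_n$ for all $p$ and $x\in K_p$'' is equivalent to ``$\operatorname{Re}\lambda\le 1$ on the set $W=\{\Theta^*x\Theta: x\in K_p,\ \Theta\in\cM_{p,n},\ \|\Theta\|_{\mathrm{HS}}\le 1\}$,'' and $W$ is a weak*-closed convex subset of $K_n$ that is in general much smaller than $K_n$ (in the example above, every element of $W$ has trace norm at most $1$). Demanding $\operatorname{Re}\lambda\le 1$ on all of $K_n$ therefore over-constrains $\lambda$, and the bipolar theorem then correctly tells you that the strict inequality can only be achieved when some compression $\Theta^*y\Theta$ escapes $K_n$ --- which need not happen. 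The genuine Effros--Winkler argument separates (a compression of) $y$ from $\ol{\conv}(W)$, not from $K_n$; in the example, $\lambda(w)=2w_{12}$ satisfies $\operatorname{Re}\lambda\le 1$ on $W$ but not on $K_2$, and witnesses $\operatorname{Re}\lambda\bigl(\Theta_0^*y\Theta_0\bigr)=\tfrac32>1$ for $\Theta_0=\tfrac1{\sqrt2}1_2$ even though $\Theta_0^*y\Theta_0=\tfrac34E_{12}\in K_2$. Separately, your dictionary needs repair when $n$ is infinite, since $\Omega=\sum_k e_k\otimes e_k$ is not then a vector in $H_n\otimes H_n$; the paper's reduction of infinite $n$ to a finite compression is the cleanest way around this.
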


\begin{proof}
First suppose $n \in \bN$. Then by the Effros-Winkler separation theorem \cite{EffWin1997}*{Theorem 5.4}, there is a continuous linear map $\phi : E \to M_n$ such that $\re \phi_m(x) \leq 1_m \otimes 1_n$ for all $m \in \bN$ and $x \in K_p$ but $\re \phi_n(y) \not \leq 1_n \otimes 1_n$. For arbitrary $p$ and $x \in K_p$, it follows from above that for $m \in \bN$ and an isometry $\alpha \in \cM_{p,m}$, 
\[
\alpha^* \phi_p(x) \alpha = \phi_m(\alpha^* x \alpha) \leq 1_m \otimes 1_n.
\]
Hence $\phi_p(x) \leq 1_p \otimes 1_n$.

Since $\phi$ is continuous with respect to the weak* topology on $E$, it is bounded. Furthermore, since $n$ is finite, it follows from a result of Smith \cite{Smi1983}*{Theorem 2.10} that $\phi$ is completely bounded.

For infinite $n$, we can consider $y$ as the point-weak-$*$ limit of the net of finite dimensional compressions. 
If each of these compressions were in $K$, then arguing as in Proposition \ref{prop:convergent-nets}, this would imply that $y \in K$. 
Since this is not the case, there is $m \in \bN$ and a compression $z \in \cM_m$ such that $z \notin K_m$.
Applying the above construction to $z$, we obtain a map $\psi : E \to M_m$ such that $\re \psi_p(x) \leq 1_p \otimes 1_m$ 
for all $p$ and $x \in K_p$, but $\re \psi_m(z) \not \leq 1_m \otimes 1_m$. 
Then $\re \psi_n(y) \not \leq 1_n \otimes 1_m$ since $\psi_m(z)$ is a compression of $\psi_n(y)$. 
Hence we can take $\phi$ to be an infinite amplification of $\psi$. 

If $E$ is self-adjoint and $K \cup \{y\}$ consist of self-adjoint elements, then we can replace $\phi$ with $\frac{1}{2}(\phi + \phi^*)$.
\end{proof}

The next result follows immediately from Theorem \ref{thm:separation} by applying a translation.

\begin{cor} \label{cor:separation}
Let $K$ be a closed nc convex set over a dual operator space $E$. Suppose there is $n$ and $y \in \cM_n(E)$ such that $y \notin K_n$. Then there is a normal completely bounded linear map $\phi : E \to \cM_n$ and self-adjoint $\gamma \in \cM_n$ such that
\[
\re \phi_n(y) \not \leq \gamma \otimes 1_n \quad \text{but} \quad 
\re \phi_p(x) \leq \gamma \otimes 1_p
\]
for all $p$ and $x \in K_p$. Furthermore, if $E$ is an operator system and $K \cup \{y\}$ consists of self-adjoint elements, then $\phi$ can be chosen self-adjoint.
\end{cor}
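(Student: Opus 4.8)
The plan is to reduce to Theorem \ref{thm:separation} by a translation that moves an interior point of $K$ (or at least some point of $K$) to the origin. More precisely, fix any point $x_0 \in K_1$, which exists since $K$ is nonempty (it contains $y \notin K_n$, hence $K_n \neq \emptyset$, and then by compression $K_1 \neq \emptyset$). Actually, since we want the hypothesis $0_E \in K$ of Theorem \ref{thm:separation}, I would instead work with the translated set $K' = K - x_0$, meaning $K'_p = \{ z - x_0 \otimes 1_p : z \in K_p \}$, where $x_0 \in K_1$ is chosen arbitrarily. One checks immediately that $K'$ is again a closed nc convex set over $E$ (translation by $x_0 \otimes 1_p$ on each level commutes with direct sums and isometric compressions, using that $\alpha(x_0 \otimes 1_{n_i})\alpha^* = x_0 \otimes 1_n$ when $\sum \alpha_i\alpha_i^* = 1_n$, and similarly for compressions), and that $0_E \in K'_1 \subseteq K'$. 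Set $y' = y - x_0 \otimes 1_n$; then $y' \notin K'_n$ since $y \notin K_n$.

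Next I would apply Theorem \ref{thm:separation} to $K'$ and $y'$, obtaining a normal completely bounded linear map $\phi : E \to \cM_n$ with $\operatorname{Re}\phi_n(y') \not\leq 1_n \otimes 1_n$ but $\operatorname{Re}\phi_p(x') \leq 1_p \otimes 1_n$ for all $p$ and all $x' \in K'_p$. Substituting $x' = x - x_0 \otimes 1_p$ for $x \in K_p$ and $y' = y - x_0 \otimes 1_n$, and using linearity of $\phi_p$, the conclusions become
\[
\operatorname{Re}\phi_n(y) \not\leq 1_n \otimes 1_n + \operatorname{Re}\phi_n(x_0 \otimes 1_n), \qquad \operatorname{Re}\phi_p(x) \leq 1_p \otimes 1_n + \operatorname{Re}\phi_p(x_0 \otimes 1_p).
\]
Now $\phi_p(x_0 \otimes 1_p) = \phi(x_0) \otimes 1_p$, so writing $c = \operatorname{Re}\phi(x_0) \in \cM_n^{sa}$ and $\gamma = 1_n + c$, we get $\operatorname{Re}\phi_n(y) \not\leq \gamma \otimes 1_n$ and $\operatorname{Re}\phi_p(x) \leq \gamma \otimes 1_p$ for all $p$ and $x \in K_p$. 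Here I am using the elementary identity $\operatorname{Re}\phi_p(x_0 \otimes 1_p) = (\operatorname{Re}\phi(x_0)) \otimes 1_p = c \otimes 1_p$, which follows from $\phi_p(x_0 \otimes 1_p) = \phi(x_0) \otimes 1_p$ and the fact that the real part operation is computed entrywise and commutes with tensoring by $1_p$. This $\gamma$ is self-adjoint by construction, and $\phi$ is still normal and completely bounded since it is literally the same map produced by Theorem \ref{thm:separation}.

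Finally, for the self-adjoint addendum: if $E$ is an operator system and $K \cup \{y\}$ consists of self-adjoint elements, then one can choose $x_0 \in K_1$ self-adjoint (any element of $K_1$ works, but note $K_1$ consists of self-adjoint elements under this hypothesis), so that $K' \cup \{y'\}$ again consists of self-adjoint elements; Theorem \ref{thm:separation} then yields $\phi$ self-adjoint, and the same translation argument goes through verbatim, with $c = \phi(x_0)$ already self-adjoint. I do not anticipate a serious obstacle here — the only points requiring a line of verification are that translation preserves the closed nc convex set axioms and that it preserves self-adjointness of the elements; everything else is bookkeeping with the tensor identities $\phi_p(x_0 \otimes 1_p) = \phi(x_0)\otimes 1_p$ and $\gamma \otimes 1_p$ as the translate of $1_p \otimes 1_n$ after re-identification.
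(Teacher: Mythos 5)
Your proof is correct and is precisely the argument the paper intends: the paper's entire ``proof'' of this corollary is the one-sentence remark that it follows from Theorem \ref{thm:separation} by applying a translation, and your write-up simply carries out that translation (checking that $K - x_0$ is again a closed nc convex set containing $0_E$, and absorbing $\operatorname{Re}\phi(x_0)\otimes 1_p$ into the constant $\gamma$). The only blemish is the parenthetical justification that $K$ is nonempty --- ``it contains $y \notin K_n$, hence $K_n \neq \emptyset$'' is a non sequitur --- but nonemptiness of $K$ is an implicit standing hypothesis here, and once some $K_m$ is nonempty your compression argument does give $K_1 \neq \emptyset$, so the rest of the proof is sound.
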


\subsection{Noncommutative affine maps}

\begin{defn} \label{defn:nc-affine-map}
Let $K$ and $L$ be nc convex sets over operator spaces $E$ and $F$ respectively. We say that a map $\theta : K \to L$ is an {\em affine nc map} if it is graded, respects direct sums and is equivariant with respect to isometries, meaning that
\begin{enumerate}
\item $\theta(K_n) \subseteq L_n$ for all $n$,
\item $\theta(\sum \alpha_i x_i \alpha_i^*) = \sum \alpha_i \theta(x_i) \alpha_i^*$ for every bounded family $\{x_i \in K_{n_i}\}$ and every family of isometries $\{\alpha_i \in \cM_{n_i,n}\}$ satisfying $\sum \alpha_i \alpha_i^* = 1_n$,
\item $\theta(\alpha^* x \alpha) = \alpha^* \theta(x) \alpha$ for every $x \in K_m$ and every isometry $\alpha \in \cM_{m,n}$.
\end{enumerate}
We say that $\theta$ is {\em continuous} if the restriction $\theta|_{K_n}$ is continuous for every $n$, and we say that $\theta$ is {\em bounded} if $\|\theta\|_\infty < \infty$, where $\|\theta\|_\infty$ is the {\em uniform norm} defined by

 \[
\| \theta\|_\infty = \sup_{x \in K} \|\theta(x)\|.
\]
We say that $\theta$ is a {\em homeomorphism} and that $K$ and $L$ are {\em affinely homeomorphic} if $\theta$ has a continuous affine inverse. We let $\rA(K, L)$ denote the space of continuous affine nc maps from $K$ to $L$. We let $\rA(K) = \rA(K,\cM)$, and we refer to $\rA(K)$ as the space of {\em continuous affine nc functions} on $K$.
\end{defn}

\begin{rem}
Arguing as in Proposition~\ref{P:nc combinationss}, we see that continuous affine nc maps between closed nc convex sets respect nc convex combinations. Specifically, if $K$ and $L$ are closed nc convex sets and $\theta : K \to L$ is a continuous affine nc map, then
 \[
\theta \left( \sum \alpha_i^* x_i \alpha_i \right) = \sum \alpha_i^* \theta(x_i) \alpha_i.
\]
for a bounded family $\{x_i \in K_{n_i}\}$ and a family $\{ \alpha_i \in \cM_{n_i,n} \}$ satisfying $\sum \alpha_i^* \alpha_i = 1_n$.
\end{rem}

\begin{prop} \label{P:uniform bound}
Let $K$ and $L$ be compact nc convex sets and let $\theta : K \to L$ be a continuous affine nc map. Then $\theta$ is bounded with
\[
\|\theta\| = \|\theta|_{K_{\aleph_0}}\| = \sup_{n < \infty} \|\theta|_{K_n}\|.
\]
\end{prop}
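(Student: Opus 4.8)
The plan is to establish the chain of inequalities
\[
\|\theta\| \;\le\; \sup_{n<\infty}\|\theta|_{K_n}\| \;\le\; \|\theta|_{K_{\aleph_0}}\| \;\le\; \|\theta\|,
\]
where $\|\theta\| = \|\theta\|_\infty = \sup_{x\in K}\|\theta(x)\|$ is the uniform norm, and then to observe separately that these quantities are finite; the asserted equalities follow immediately. Two elementary properties of the operator space norm on $\cM_n(F)$ will be used throughout. First, compression by an isometry is contractive, and for $z \in \cM_n(F)$ one has $\|z\| = \sup\{\|\alpha^* z\alpha\| : m<\infty,\ \alpha\in\cM_{n,m}\ \text{an isometry}\}$, since by definition the norm of an infinite matrix over $F$ is the supremum of the norms of its finite submatrices (\cite{ER2000}*{Section~10.1}) and a finite submatrix is precisely a compression by a coordinate isometry. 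Second, if $\{\alpha_i\in\cM_{N,n}\}$ is a family of isometries with mutually orthogonal ranges satisfying $\sum_i\alpha_i\alpha_i^* = 1_N$, then $\sum_i\alpha_i z\alpha_i^*$ is unitarily equivalent to a block-diagonal operator all of whose blocks equal $z$, so $\|\sum_i\alpha_i z\alpha_i^*\| = \|z\|$ for every $z \in \cM_n(F)$.

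For the first inequality, fix $x\in K_n$. For finite $m$ and an isometry $\alpha\in\cM_{n,m}$, the equivariance condition (3) of Definition~\ref{defn:nc-affine-map} gives $\alpha^*\theta(x)\alpha = \theta(\alpha^* x\alpha)$, and $\alpha^* x\alpha\in K_m$ by condition (2) of Definition~\ref{defn:nc-convex-set}; taking the supremum over all such $\alpha$ and invoking the first property above yields $\|\theta(x)\|\le\sup_{m<\infty}\|\theta|_{K_m}\|$, hence $\|\theta\|\le\sup_{m<\infty}\|\theta|_{K_m}\|$. For the second inequality, fix $x\in K_n$ with $n$ finite, pick a countable family of isometries $\alpha_i\in\cM_{\aleph_0,n}$ with mutually orthogonal ranges summing to $1_{\aleph_0}$ (possible since $\aleph_0\cdot n=\aleph_0$), and form the infinite amplification $\tilde x = \sum_i\alpha_i x\alpha_i^*$. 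The constant family $\{x_i = x\}$ is bounded, so $\tilde x\in K_{\aleph_0}$, and the direct-sum condition (2) of Definition~\ref{defn:nc-affine-map} gives $\theta(\tilde x) = \sum_i\alpha_i\theta(x)\alpha_i^*$, which has norm $\|\theta(x)\|$ by the second property above. Therefore $\|\theta(x)\| = \|\theta(\tilde x)\|\le\|\theta|_{K_{\aleph_0}}\|$, and taking the supremum over $n<\infty$ and $x\in K_n$ gives $\sup_{n<\infty}\|\theta|_{K_n}\|\le\|\theta|_{K_{\aleph_0}}\|$.

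Finiteness is the one step that requires real care, and I expect it to be the main obstacle: one cannot simply claim that $x\mapsto\|\theta(x)\|$ attains a maximum on the compact set $K_{\aleph_0}$, because for the point-weak* topology this function is only lower semicontinuous. Instead I would argue that $\theta(K_{\aleph_0})$, being the continuous image of the compact set $K_{\aleph_0}$ under $\theta|_{K_{\aleph_0}}$, is a weak*-compact subset of the dual Banach space $\cM_{\aleph_0}(F)\cong\cbmaps(F_*,\cM_{\aleph_0})$, and every weak*-compact subset of a dual Banach space is norm-bounded (by the uniform boundedness principle applied to the elements of the predual). Hence $\|\theta|_{K_{\aleph_0}}\|<\infty$, and combining this with the two inequalities above gives $\|\theta\| = \sup_{n<\infty}\|\theta|_{K_n}\| = \|\theta|_{K_{\aleph_0}}\|<\infty$, so $\theta$ is bounded with the stated uniform norm.
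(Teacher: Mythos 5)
Your chain of inequalities is correct, and the mechanism is genuinely different from the paper's in the one place where real work is needed. The paper proves $\|\theta|_{K_n}\| \le \sup_{m<\infty}\|\theta|_{K_m}\|$ for infinite $n$ by approximating $x \in K_n$ with a net of padded finite-rank compressions $z_i = \alpha_i(\alpha_i^* x\alpha_i)\alpha_i^* + \beta_i(y\otimes 1_{m_i})\beta_i^*$, applying continuity of $\theta$, and (implicitly) using lower semicontinuity of the norm on the limit. You instead use the identity $\|z\| = \sup_\alpha\|\alpha^* z\alpha\|$ over finite-rank isometries together with the equivariance axiom, which delivers $\|\theta\|\le\sup_{m<\infty}\|\theta|_{K_m}\|$ with no limiting argument and without even invoking continuity of $\theta$ at that step; this is cleaner. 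Your infinite-amplification argument for $\sup_{m<\infty}\|\theta|_{K_m}\|\le\|\theta|_{K_{\aleph_0}}\|$ is the explicit version of the paper's one-line appeal to monotonicity of $\|\theta|_{K_n}\|$ in $n$. You are also more careful than the paper in flagging that $x\mapsto\|\theta(x)\|$ is only lower semicontinuous in the point-weak* topology, so the supremum cannot simply be ``attained'' by compactness.

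The one step to tighten is the boundedness argument at level $\aleph_0$. The norm on $\cM_{\aleph_0}(F)\cong\cbmaps(F_*,\cM_{\aleph_0})$ is the completely bounded norm, and the functionals you actually test against --- evaluations at points of $F_*$ paired with normal functionals on $\cM_{\aleph_0}$ --- span only a dense subspace of the predual. Pointwise boundedness on those, plus the uniform boundedness principle, bounds the ordinary operator norms of the maps $\theta(x)\colon F_*\to\cM_{\aleph_0}$, not automatically their cb norms; and point-weak* compactness alone does not control cb norms (a point-weak* null sequence can have cb norms tending to infinity, e.g.\ suitably placed transpose maps). Your argument is therefore valid exactly to the extent that the point-weak* topology coincides with the weak* topology of the dual operator space, which is what Section 2.1 of the paper asserts and what the paper's own proof also relies on when it deduces $\|\theta|_{K_n}\|<\infty$ from compactness at infinite levels --- so this is not a defect peculiar to your write-up, but it is the one place a reader could push back. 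Note that for finite $m$ the issue disappears, since the predual of $\cM_m(F)$ is algebraically spanned by elementary tensors, so compactness of $\theta(K_m)$ genuinely yields $\|\theta|_{K_m}\|<\infty$ there; what your argument must supply at level $\aleph_0$ is only the uniformity of these finite-level bounds.
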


\begin{proof}
For each $n$, since $K_n$ is compact and $\theta|_{K_n}$ is continuous and affine, $\|\theta|_{K_n}\| < \infty$. Moreover, it is clear that $\|\theta|_{K_n}\|$ is an increasing function of $n$. Hence $\sup_{m < \infty} \|\theta|_{K_m}\| \leq \|\theta|_{K_{\aleph_0}}\| \leq \|\theta\|$.

To obtain the reverse inequalities, we argue as in the proof of Proposition \ref{prop:convergent-nets}. Fix arbitrary $n$ and $x \in K_n$. Let $\{\alpha_i \in \cM_{n,n_i}\}$ be a net of finite rank isometries satisfying $\lim \alpha_i \alpha_i^* = 1_n$. Let $\{\beta_i \in \cM_{n,m_i}\}$ be a net of isometries satisfying $\beta_i \beta_i^* = 1_n - \alpha_i \alpha_i^*$. Then $\lim \beta_i \beta_i^* = 0$.

Let $x_i = \alpha_i^* x \alpha_i \in K_{n_i}$. Fix $y \in K_0$ and let $z_i = \alpha_i x_i \alpha_i^* + \beta_i (y \otimes 1_{m_i}) \beta_i^*$. Then by (1) of Definition \ref{defn:nc-convex-set}, $z_i \in K_n$, and from above, $\lim z_i = x$. Hence by (2) of Definition \ref{defn:nc-affine-map},
\[
\theta(x) = \lim \theta(z_i) = \lim \alpha_i \theta(x_i) \alpha_i^* + \beta_i \theta(y \otimes 1_{m_i}) \beta_i^* = \lim \alpha_i \theta(x_i) \alpha_i^*.
\]
Therefore, $\|\theta|_{K_n}\| \leq \sup_{m < \infty} \|\theta|_{K_m}\| \leq \|\theta|_{K_{\aleph_0}}\|$.
\end{proof}

We will now make the assumption that $K \subset \cM(E)$ where $E$ is a dual operator system, 
and that $K$ is compact with respect to the weak-$*$ topology induced from $\cM(E)$.

\begin{lem}\label{L:affine separate}
Let $K$ be a compact nc convex set. Then $\rA(K)$ contains $E_*$, and thus separates the points of $K$. 
\end{lem}

\begin{proof}
By the remarks in Section~\ref{sec:cardinality-dimension-topology}, the weak-$*$ topology corresponds to the point-weak-$*$
topology induced by $\cM(E_*)$ obtained by using the identification between $\cM_n(E)$ and $\cbmaps(E_*,\cM_n)$.
In particular, we identify $E_*$ with the affine nc function in $\rA(K)$ given by $\phi(x) := x(\phi)$ for $x \in K_n$.
Clearly $E_*$ separates points in $\cM_n(E)$, and hence in $K_n$ for all $n$.
\end{proof}

We will need to consider two natural topologies on $K$ induced by the functions in $\rA(K)$.

\begin{defn} \label{D:point-weak* topology}
The \emph{point-weak* topology} on $K$ is the weakest topology that makes every affine nc function $a \in \rA(K)$ continuous.
Since each $\cM_n$ is equipped with the weak-$*$ topology, this is the weakest topology that makes the maps $K_n \to \bC : x \to \ip{a(x) \xi, \eta}$ continuous on $K_n$ for all $a \in \rA(K)$, $n \le\kappa$ and $\xi, \eta \in H_n$. 

The \emph{point-strong topology} on $K$ is the weakest topology that makes the maps $K_n \to H_n : x \to a(x) \xi$ continuous on $K_n$ for all $a \in \rA(K)$, $n \le\kappa$ and $\xi \in H_n$. 
\end{defn}

\begin{rem} \label{rem:point-strong-point-ultrastrong}
Since $K$ is bounded,  the point-weak* and point-weak operator topologies will coincide. Similarly, the point-strong topology will coincide with the point-ultrastrong topology; and since $\rA(K)$ is self-adjoint, the point-SOT* and point-ultrastrong* topologies cooincide. 
The point-ultrastrong* topology will come up when we discuss the C*-algebra generated by $\rA(K)$.
\end{rem}

\begin{lem}\label{lem:weak*-point-weak*}
Let $K$ be a compact nc convex set over a dual operator space $E$. The topology on $K$ coincides with the point-weak* topology.
\end{lem}

\begin{proof}
Since $\rA(K)$ consists of continuous functions on $(K,\text{weak*})$ and the point-weak* topology is the weakest topology making these functions continuous, the identity map on $K$ is continuous from the weak* topology to the point-weak* topology. The continuous affine nc functions separate points of $K$ by Lemma~\ref{L:affine separate}, and thus the point-weak* topology is Hausdorff. Since $K$ is compact in the weak* topology, it follows that this map is a homeomorphism.
\end{proof}

We now make a useful observation about $\rA(K)$.

\begin{thm} \label{T:affine cnts on K1}
If $K$ is a compact nc convex set, then an affine nc function $a:K \to \M$ is continuous if it is continuous on $K_1$;
and it is positive if it is positive on $K_1$.
Moreover, $\|a\| \le 2 \|a|_{K_1}\|$. 
The restriction map $\rho: a \mapsto a|_{K_1}$ is a unital order isomorphism of $\rA(K)$ onto $A(K_1)$.
\end{thm}

\begin{proof}
By Lemma~\ref{lem:weak*-point-weak*}, $a$ is continuous on $K_n$ if $x \to \ip{a(x)\xi,\eta}$ is continuous for all $\xi, \eta \in H_n$,
and it is positive if $\ip{a(x)\xi,\xi} \ge 0$ for all $\xi \in H_n$.
We may suppose that $\|\xi\|=1$, and consider $\xi$ as an isometry from $\bC$ into $H_n$.
Then
\[
 \ip{a(x)\xi,\xi} = \xi^* a(x) \xi = a(\xi^* x \xi) .
\]
Since $\xi^* x \xi \in K_1$, the continuity of $a|_{K_1}$ ensures the continuity of $x \to \ip{a(x)\xi,\xi}$ on $K_n$,
and the positivity of $a|_{K_1}$ ensures the positivity of $a$.
The polarization identity shows that
\[
 \ip{a(x)\xi,\eta} = \frac14 \sum_{k=0}^3 i^k \ip{a(x)\xi + i^k \eta, \xi + i^k \eta}
\]
is continuous. Moreover, this identity shows that
\begin{align*}
 \|a(x)\| &= \sup_{\|\xi\|=\|\eta\|=1} | \ip{a(x)\xi,\eta}| \\&
 \le \frac14 \|a|_{K_1}\|  \sup_{\|\xi\|=\|\eta\|=1} \sum_{0 \le k \le 3} \| \xi + i^k \eta \|^2 \\&
 = \|a|_{K_1}\|  \sup_{\|\xi\|=\|\eta\|=1} \|\xi\|^2 + \|\eta\|^2
 = 2 \|a|_{K_1}\| . 
\end{align*}
In particular, $a$ is determined by $a|_{K_1}$.

The restriction map $\rho$ is bounded below, and hence the range is a closed subspace of $A(K_1)$.
The range contains $\bC\one + E_*$, where $\one$ is the constant function $\one(x)=1$.
Thus by \cite{Alfsen}*{Corollary I.1.5}, $\bC\one + E_*$ is dense in $A(K_1)$; and hence the range is the entire $A(K_1)$.
The map $\rho$ is evidently unital, positive and bijective.
The first paragraph shows that $\rho^{-1}$ is also positive.
Therefore $\rho$ is an order isomorphism.
\end{proof}

\begin{rem}
This inequality  is sharp. Let 
\[ K_n = \{ T \in M_n : W(T) \subset \ol{\bD} \} , \]
where $W(T) = \ol{ \{ \ip{T\xi,\xi} : \|\xi\|=1\}}$ is the numerical range of $T$.
Then the identity map $\id$ is affine and $\|\id|_{K_1}\|=1$ while $A = \begin{sbmatrix} 0&2\\0&0\end{sbmatrix} \in K_2$ and $\|A\|=2$.
\end{rem}

\section{Categorical duality} \label{sec:categorical-duality}

\subsection{Convex sets and function systems}

If $C$ is a compact convex set, then the space $\rA(C)$ of complex-valued continuous affine functions on $C$ is a function system, also referred to as an Archimedean order unit space in the literature. This means that it is an ordered complex $*$-vector space with a distinguished Archimedean order unit \cite{PT2009}. Specifically, the order on $\rA(C)$ is determined by the positive cone $\rA(C)^+$ consisting of positive continuous affine functions on $C$. The *-operation on $\rA(C)$ is defined by conjugation and the order unit on $\rA(C)$ is the constant function $1_{\rA(C)}$. The state space of $\rA(C)$, consisting of positive unital functionals in the dual of $\rA(C)$, is compact with respect to the weak* topology and affinely homeomorphic to $C$ via the evaluation map.

On the other hand, let $V$ be a (closed) function system with state space $C$ equipped with the weak* topology. For $v \in V$, the function $\hat{v} : C \to \bC$ defined by $\hat{v}(x) = x(v)$ is a continuous affine function on $C$. Kadison's representation theorem \cite{Kad1951} asserts that the unital map $V \to \rA(C) : v \to \hat{v}$ is an order isomorphism. Hence every function system is order isomorphic to a function system of continuous affine functions on a compact convex set. 

The above results can be conveniently expressed in the language of category theory. Let $\mathrm{Conv}$ denote the category of compact convex sets with morphisms consisting of continuous affine maps and let $\mathrm{FuncSys}$ denote the category of function systems with morphisms consisting of unital order homomorphisms. The above results are equivalent to the statement that $\mathrm{Conv}$ and $\mathrm{FuncSys}$ are dually equivalent via the contravariant functor $A : \mathrm{Conv} \to \mathrm{FuncSys}$.

For a compact convex set $C$, $\rA(C)$ is the function system of continuous affine functions on $C$ as above. If $D$ is a compact convex set and $\phi : C \to D$ is a continuous affine map, then the unital order homomorphism $A\phi : \rA(D) \to \rA(C)$ is defined by $A\phi(b)(x) = b(\phi(x))$ for $b \in \rA(D)$ and $x \in C$. 

The inverse functor $A^{-1} : \mathrm{FuncSys} \to \mathrm{Conv}$ is defined similarly. For a function system $V$ with state space $C$, $A^{-1}V = C$. If $W$ is a function system with state space $D$ and $\psi : W \to V$ is a unital order homomorphism, then the continuous affine map $A^{-1}\psi : C \to D$ is defined by $b((A^{-1}(\psi)(x)) = \psi(b)(x)$ for $b \in W$ and $x \in C$.

\subsection{Noncommutative convex sets and operator systems}

In this section we will show that the category of compact nc convex sets is dually equivalent to the category of operator systems.

The arguments in this section are similar to arguments of Webster and Winkler \cite{WebWin1999}*{Proposition 3.5}. They proved that the category of compact matrix convex sets is dually equivalent to the category of operator systems. This is not surprising in light of Proposition \ref{prop:equal-finite-levels}, which implies that a compact nc convex set is determined by its finite levels. Major differences between the theory of noncommutative convexity and the theory of matrix convexity will only begin to appear in the next section.

For a compact nc convex set $K$, the space $\rA(K)$ of continuous affine nc functions on $K$ is an operator system. This means that it is a matrix ordered complex $*$-vector space with a distinguished Archimedean matrix order unit \cite{ChoiEff1977}. To see this, it will be convenient for $n \in \bN$ to identify the space $\cM_n(\rA(K))$ with the space of continuous affine nc maps $\rA(K, \cM_n(\cM))$, so that elements in $\cM_n(\rA(K))$ can be viewed as functions taking values in $\cM_n(\cM)$.

For $a \in \cM_n(\rA(K))$, the {\em adjoint} $a^* \in \cM_n(\rA(K))$ is defined by $a^*(x) = a(x)^*$ for $x \in K$. We say that $a$ is {\em self-adjoint} if $a = a^*$. If $a$ is self-adjoint, then we say that it is {\em positive} and write $a \geq 0$ if $a(x) \geq 0$ for all $x \in K$. Letting $\cM_n(\rA(K))^+$ denote the positive elements in $\cM_n(\rA(K))$, the sequence of positive cones $(\cM_n(\rA(K))^+)_{n \in \bN}$ determines the matrix order on $\rA(K)$. Together, this gives $\rA(K)$ the structure of a matrix ordered $*$-vector space.

Since $K$ is compact, elements in $\cM_n(\rA(K))$ are bounded by Proposition~\ref{P:uniform bound}. This implies that the constant function $1_{\rA(K)} \in \rA(K)$ defined by $1_{\rA(K)}(x) = 1_n$ for $x \in K_n$ is an Archimedean matrix order unit for $\rA(K)$.

The operator system structure on $\rA(K)$ induces a matrix norm on $\rA(K)$, i.e. a norm on $\cM_n(\rA(K))$ for each $n \in \bN$. This norm agrees with the uniform norm on $\cM_n(\rA(K))$.

\begin{defn} \label{defn:os-continuous-affine-fcns}
Let $K$ be a compact nc convex set. The {\em operator system of continuous affine nc functions on $K$} is the space $\rA(K)$ equipped with the operator system structure defined above.
\end{defn}

\begin{thm}
Let $K$ be a compact nc convex set and let $L$ denote the nc state space of $\rA(K)$. Then $K$ and $L$ are affinely homeomorphic via the affine nc map $\theta : K \to L$ defined by
\[
\theta(x)(a) = a(x), \qfor x \in K.
\]
\end{thm}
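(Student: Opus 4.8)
The plan is to establish that $\theta$ is a well-defined continuous affine nc map, that it is a bijection of the graded sets $K$ and $L$, and that $\theta^{-1}$ is again a continuous affine nc map; by Definition~\ref{defn:nc-affine-map} this exhibits $\theta$ as an affine homeomorphism.

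I would dispose of the formal properties first. For $x\in K_n$, the map $\theta(x) : a\mapsto a(x)$ is unital because $1_{\rA(K)}(x)=1_n$, and completely positive because, under the identification $\cM_m(\rA(K))=\rA(K,\cM_m(\cM))$, the $m$-fold amplification of $\theta(x)$ carries a positive $a\in\cM_m(\rA(K))$ to $a(x)\ge 0$; hence $\theta(x)\in L_n$. That $\theta$ is graded and respects direct sums and compressions is immediate from the same properties of the elements of $\rA(K)$, e.g.\ $\theta(\alpha^*x\alpha)(a)=a(\alpha^*x\alpha)=\alpha^*a(x)\alpha=(\alpha^*\theta(x)\alpha)(a)$, and continuity of $\theta|_{K_n}$ is immediate as well, since $x_\nu\to x$ in $K_n$ means $a(x_\nu)\to a(x)$ for every $a\in\rA(K)$, which is exactly convergence of $\theta(x_\nu)$ to $\theta(x)$ in the point-weak* topology of $L_n$. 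For injectivity I would use the coordinate functions: each normal functional $f\in E_*$ determines an element $\hat f\in\rA(K)$ by the levelwise formula $\hat f([z_{ij}])=[f(z_{ij})]$ (equivariance because $f$ commutes with scalar amplifications, continuity because $\cM_m(E)=\cbmaps(E_*,\cM_m)$ carries the point-weak* topology); since $E_*$ separates the points of $E$, the functions $\hat f$ separate the points of $K$, so $\theta(x)=\theta(y)$ implies $x=y$.

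The real work is surjectivity. Fix $n$ and suppose, for contradiction, that some $\mu\in L_n$ lies outside $\theta(K_n)$. Since $\theta$ is a continuous affine nc map and each $K_m$ is compact, $\theta(K)$ is a closed nc convex subset of $\cM(\rA(K)^*)$: it is nc convex because $\theta$ respects direct sums and compressions, and each $\theta(K_m)$ is compact, hence closed, in the point-weak* topology. Corollary~\ref{cor:separation} then yields a normal completely bounded map $\psi : \rA(K)^*\to\cM_n$ and a self-adjoint $\gamma\in\cM_n$ with
\[
\re\psi_n(\mu)\not\leq\gamma\otimes 1_n \qquad\text{but}\qquad \re\psi_p(\theta(x))\leq\gamma\otimes 1_p \quad\text{for all }p\text{ and }x\in K_p.
\]
Since $\psi$ is weak*-continuous, it is represented by an element $a\in\cM_n(\rA(K))$ via $\psi(\lambda)=[\lambda(a_{ij})]$, and unwinding the identifications gives $\psi_p(\theta(x))=a(x)$ and $\psi_n(\mu)=\mu^{(n)}(a)$, where $\mu^{(n)}$ denotes the $n$-fold amplification of $\mu$. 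The second inequality above says precisely that $\gamma\otimes 1_{\rA(K)}-\re a\ge 0$ in the operator system $\cM_n(\rA(K))$, where $\re a=\tfrac12(a+a^*)$. Applying the positive map $\mu^{(n)}$ and using that $\mu$ is unital and self-adjoint yields $\gamma\otimes 1_n-\re\mu^{(n)}(a)\ge 0$, i.e.\ $\re\psi_n(\mu)\leq\gamma\otimes 1_n$, contradicting the first inequality. Hence $\theta(K_n)=L_n$ for every $n$. The only delicate point is the bookkeeping in this argument --- identifying the normal completely bounded map from the separation theorem with an honest element of $\cM_n(\rA(K))$ and checking that the two pairings $\psi_p(\theta(x))=a(x)$ and $\psi_n(\mu)=\mu^{(n)}(a)$ come out as claimed --- and I expect this to be the main obstacle.

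Finally, $\theta^{-1} : L\to K$ is graded, and writing elements of $L$ as images under $\theta$ shows it respects direct sums and compressions, so it is affine and nc; and for each $n$ the restriction $\theta|_{K_n}$ is a continuous bijection from the compact space $K_n$ onto the Hausdorff space $L_n$, hence a homeomorphism. Therefore $\theta$ is an affine homeomorphism of $K$ onto $L$.
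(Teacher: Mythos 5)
Your proposal is correct and follows essentially the same route as the paper: the formal affine/continuity properties are routine, injectivity comes from the separating family of affine functions induced by $E_*$, and surjectivity is obtained by separating a putative point of $L_n\setminus\theta(K)_n$ via Corollary~\ref{cor:separation}, identifying the normal completely bounded map with an element of $\cM_n(\rA(K))$, and contradicting the resulting inequality by applying the unital completely positive map $\mu$. The extra bookkeeping you supply (well-definedness of $\theta(x)$ as an nc state, the two pairings $\psi_p(\theta(x))=a(x)$ and $\psi_n(\mu)=\mu^{(n)}(a)$) is exactly what the paper leaves implicit.
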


\begin{proof}
The nc state space $L$ of $\rA(K)$ is a compact nc convex set over $\rA(K)^*$ as in Example \ref{ex:nc-state-space}. It is clear that $\theta$ is a continuous affine nc map by the definition of $\rA(K)$. We must show that $\theta$ is a homeomorphism. Since each $K_n$ is compact, it suffices to show that $\theta$ is a bijection.

The injectivity of $\theta$ follows from the fact that $E_*$ separates points in $K$ by Lemma~\ref{L:affine separate}.

For the surjectivity of $\theta$, first note that $\theta(K) \subseteq L$ is a compact nc convex set. 
Suppose for the sake of contradiction that $\theta(K) \ne L$.
By Proposition~\ref{prop:equal-finite-levels}, there is a finite $n$ and a point $y_0 \in L_n \setminus \theta(K)_n$. 
Then by Corollary \ref{cor:separation}, there is a normal completely bounded linear map $\phi : A(K)^* \to \cM_n$ 
and self-adjoint $\gamma \in \cM_n$ such that
\[
\re \phi_n(y_0) \not \leq \gamma \otimes 1_n \quad \text{but} \quad \re \phi_p(y) \leq \gamma \otimes 1_p
\]
for every $p$ and $y \in \theta(K)_p$. Since $\phi$ is normal, we can identify $\phi$ with a continuous affine nc function $a \in \cM_n(\rA(K))$. By the definition of the operator system structure on $\rA(K)$, the second inequality implies $\re a \leq \gamma \otimes 1_{\rA(K)}$. Since $y_0$ is unital and completely positive, this implies
\[
\re y_0(a) \leq y_0(a) \leq \gamma \otimes 1_n,
\]
giving a contradiction.
\end{proof}

The next result is a noncommutative analogue of Kadison's representation theorem.

\begin{thm} \label{thm:nc-kadison-rep}

Let $S$ be a closed operator system with nc state space $K$. For $s \in S$, the function $\hat{s} : K \to \cM$ defined by
\[
\hat{s}(x) = x(s), \qfor s \in S,\ x \in K
\]
is a continuous affine nc function on $K$. The map $S \to \rA(K) : s \to \hat{s}$ is a complete order isomorphism.
\end{thm}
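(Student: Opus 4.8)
The plan is to show that the unital self-adjoint linear map $\iota\colon S\to\rA(K)$, $\iota(s)=\hat s$, is (i) well defined with values in $\rA(K)$, (ii) a complete order embedding, and (iii) surjective; taken together these say precisely that $\iota$ is a complete order isomorphism. Steps (i) and the easy half of (ii) are immediate from the definition of the operator system structure on $\rA(K)$ (Definition~\ref{defn:os-continuous-affine-fcns}): for $[s_{ij}]\in\cM_n(S)$ the element $[\hat s_{ij}]\in\cM_n(\rA(K))$ is the continuous affine nc function whose value at $x\in K_p$ is $x_n([s_{ij}])$, and linearity, unitality ($\hat 1_S=1_{\rA(K)}$) and self-adjointness ($\widehat{s^*}=\hat s^{\,*}$) are clear. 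If $[s_{ij}]\ge 0$ in $\cM_n(S)$, then complete positivity of every $x\in K$ gives $x_n([s_{ij}])\ge 0$, so $[\hat s_{ij}]\ge 0$ in $\cM_n(\rA(K))$.

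For the reverse half of (ii), realize $S$ as an operator subsystem of $\B(H)$ (a complete order embedding), so that positivity in $\cM_n(S)$ is positivity in $\cM_n(\B(H))=\B(H^{(n)})$. Suppose $[s_{ij}]$ is self-adjoint with $[\hat s_{ij}]\ge 0$, and fix $\xi=(\xi_1,\dots,\xi_n)\in H^{(n)}$. Let $V=\spn\{\xi_1,\dots,\xi_n\}$, a finite-dimensional subspace with orthogonal projection $P$. The compression $s\mapsto PsP|_V$ restricts to a unital completely positive map $S\to\B(V)$, i.e.\ a point of $K_m$ with $m=\dim V<\infty$ (so the cardinal bound is no obstruction). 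Positivity of $[\hat s_{ij}]$ at this point says $[Ps_{ij}P|_V]\ge 0$ in $\cM_n(\B(V))$, and evaluating at $(\xi_1,\dots,\xi_n)$ and using $P\xi_j=\xi_j$ gives $\sum_{i,j}\ip{s_{ij}\xi_j,\xi_i}\ge 0$. As $\xi$ was arbitrary, $[s_{ij}]\ge 0$. Hence $\iota$ is a complete order embedding, so it is injective (states separate the points of $S$), and being a unital complete order isomorphism onto its range it is a complete isometry; since $S$ is complete, $T:=\iota(S)$ is a norm-closed unital self-adjoint subspace of $\rA(K)$.

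For surjectivity, let $L$ be the nc state space of $\rA(K)$. By the preceding theorem $\theta\colon K\to L$, $\theta(x)(a)=a(x)$, is a bijection. Consider the restriction map $\rho\colon L\to K$, $\rho(\mu)=\mu\circ\iota$, which is well defined since a composite of unital completely positive maps is unital completely positive. For $x\in K$ and $s\in S$,
\[
\rho(\theta(x))(s)=\theta(x)(\hat s)=\hat s(x)=x(s),
\]
so $\rho\circ\theta=\id_K$; since $\theta$ is bijective, $\rho=\theta^{-1}$ is bijective, and in particular $\rho$ is injective on $L_1$. Now suppose $T\ne\rA(K)$. Pick $a_0\in\rA(K)\setminus T$; as $T$ is norm-closed, Hahn--Banach gives a nonzero bounded functional on $\rA(K)$ vanishing on $T$, and since $T$ is self-adjoint we may pass to its real or imaginary part to get a nonzero self-adjoint bounded functional $g$ with $g|_T=0$. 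Extending $g$ to a C*-algebra containing $\rA(K)$, taking a Jordan decomposition and restricting back gives positive functionals $g_\pm$ with $g=g_+-g_-$; then $g_+(1_{\rA(K)})=g_-(1_{\rA(K)})=:t$, and $t>0$ since otherwise $g_+=g_-=0$. The states $\omega_1=g_+/t$ and $\omega_2=g_-/t$ are distinct (as $g\ne0$) and agree on $T$, hence $\rho(\omega_1)=\omega_1\circ\iota=\omega_2\circ\iota=\rho(\omega_2)$ with $\omega_1,\omega_2\in L_1$, contradicting injectivity of $\rho|_{L_1}$. Thus $\iota$ is onto, and combined with (i)--(ii) it is a complete order isomorphism. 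The genuine content here is surjectivity: it is extracted from the preceding theorem (which rests on the nc separation theorem) via the identity $\rho\circ\theta=\id_K$, and the one place needing care is the closedness of $\iota(S)$ — which is why the completeness of $S$ and the isometric nature of unital complete order embeddings are invoked before the Hahn--Banach step; steps (i)--(ii) are routine once $S$ is realized on a Hilbert space and one compresses to finite-dimensional subspaces.
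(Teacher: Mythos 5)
Your proof is correct, but it takes a genuinely different route from the paper's. The paper's argument is two sentences: it invokes Kadison's classical representation theorem to conclude that $s \mapsto \hat{s}$ is an order isomorphism (this is where surjectivity is absorbed, implicitly using that a continuous affine nc function is determined by its restriction to $K_1$ via rank-one compressions), and then observes that the matrix order is preserved because every point of $K$ is completely positive. You avoid Kadison's theorem entirely: you get surjectivity from the immediately preceding duality theorem ($\theta\colon K \to L$ is a bijection onto the nc state space of $\rA(K)$) by checking $\rho\circ\theta = \id_K$ for the restriction map $\rho$, and then running a Hahn--Banach/Jordan-decomposition argument to show that a proper closed unital self-adjoint subspace of $\rA(K)$ would admit two distinct states with the same restriction, contradicting injectivity of $\rho|_{L_1}$. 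You also prove the reverse implication $\hat{s}\geq 0 \Rightarrow s \geq 0$ explicitly by compressing a concrete realization $S\subseteq \B(H)$ to finite-dimensional subspaces; the paper's proof only states the forward implication and leaves the converse (and the self-adjointness bookkeeping) to the reader, so your version is more complete on this point. What the paper's approach buys is brevity by leaning on the classical theory at level one; what yours buys is a self-contained argument that makes visible exactly which ingredients (the nc separation theorem via the duality $K\cong L$, and the finite levels of $K$ for detecting matrix positivity) are actually doing the work.
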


\begin{proof}
For $s \in S$, it is clear that $\hat{s}$ is a continuous affine function on $K$. Kadison's representation theorem implies that the map $S \to \rA(K) : s \to \hat{s}$ is an order isomorphism, so it remains to show that it is a complete order isomorphism. For this, it suffices to show that it preserves the matrix order, meaning that for $n \in \bN$ and $s \in \cM_n(S)$, if  $s \geq 0$ then $\hat{s} \geq 0$. But this follows immediately from the fact that $K$ consists of completely positive maps on $S$.
\end{proof}

\begin{defn}
We let $\mathrm{NCConv}$ denote the category with objects consisting of compact nc convex sets and morphisms consisting of continuous affine nc maps. We will refer to this as the category of {\em compact nc convex sets}. We let $\mathrm{OpSys}$ denote the category with objects consisting of closed operator systems and morphisms consisting of unital complete positive maps. We will refer to this as the category of {\em closed operator systems}. 
\end{defn}

We now define the functor $\rA : \mathrm{NCConv} \to \mathrm{OpSys}$ implementing the dual equivalence between $\mathrm{NCConv}$ and $\mathrm{OpSys}$. For a compact nc convex set $K$, $\rA(K)$ is the operator system of continuous affine nc functions on $K$ as in Definition \ref{defn:os-continuous-affine-fcns}. For compact nc convex sets $K$ and $L$ and a continuous affine map $\theta : K \to L$, $\rA(\theta): \rA(L) \to \rA(K)$ is a unital completely positive map defined by
\[
\rA(\theta)(b)(x) = b(\theta(x)), \qfor b \in \rA(L),\ x \in K.
\]

The functor $\rA$ has an inverse $\rA^{-1} : \mathrm{OpSys} \to \mathrm{NCConv}$. For an operator system $S$ with nc state space $K$, $\rA^{-1}(S) = K$. For operator systems $S$ and $T$ with nc state spaces $K$ and $L$ respectively and a unital completely positive map $\phi : T \to S$, $\rA^{-1}(S) : K \to L$ is a continuous affine nc map defined by
\[
b(\rA^{-1}(\phi)(x)) = \phi(b)(x), \qfor b \in T,\ x \in K.
\]

\begin{thm} \label{thm:dually-equivalent}
The map $A : \mathrm{NCConv} \to \mathrm{OpSys}$ is a contravariant functor with inverse $A^{-1} : \mathrm{OpSys} \to \mathrm{NCConv}$. In particular, the categories $\mathrm{NCConv}$ and $\mathrm{OpSys}$ are dually equivalent.
\end{thm}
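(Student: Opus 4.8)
The plan is to observe that the object-level content has already been established, so what remains is bookkeeping. The noncommutative Kadison representation theorem (Theorem~\ref{thm:nc-kadison-rep}) supplies, for each closed operator system $S$ with nc state space $K$, a complete order isomorphism $\iota_S : S \to \rA(K) = \rA(\rA^{-1}(S))$, $s \mapsto \hat s$; and the preceding theorem supplies, for each compact nc convex set $K$, an affine homeomorphism $\theta_K : K \to \rA^{-1}(\rA(K))$ given by $\theta_K(x)(a) = a(x)$. Thus it suffices to check (i) that $\rA$ and $\rA^{-1}$ genuinely carry morphisms to morphisms and respect composition contravariantly, and (ii) that $\iota$ and $\theta$ are natural transformations; the conclusion then follows since $\iota_S$ and $\theta_K$ are isomorphisms in the respective categories.

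First I would verify well-definedness on morphisms. Given a continuous affine nc map $\theta : K \to L$ and $b \in \rA(L)$, the composite $b \circ \theta$ is again a continuous affine nc map into $\cM$, hence lies in $\rA(K)$ (boundedness by Proposition~\ref{P:uniform bound}), so $\rA(\theta) : \rA(L) \to \rA(K)$, $b \mapsto b \circ \theta$, is well defined; it is unital, and if $b \in \cM_n(\rA(L))^+$ then $(b \circ \theta)(x) = b(\theta(x)) \geq 0$ for all $x \in K$, so $\rA(\theta)$ is completely positive, i.e.\ a morphism in $\mathrm{OpSys}$. Dually, given a ucp map $\phi : T \to S$ with $K, L$ the nc state spaces of $S, T$, the assignment $x \mapsto x \circ \phi$ carries $\ucpmaps(S, \cM_n)$ into $\ucpmaps(T, \cM_n)$; it is an affine nc map because precomposition with $\phi$ commutes with the operations $\{y_i\} \mapsto \sum \alpha_i y_i \alpha_i^*$ and $y \mapsto \beta^* y \beta$ that define the nc convex structure on the state space (these act on the values of the maps), and it is point-weak*-continuous on every level since $x \mapsto x(\phi(t))$ is weak*-continuous for each fixed $t \in T$. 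Hence $\rA^{-1}(\phi) : K \to L$ is a morphism in $\mathrm{NCConv}$. Functoriality in each case reduces to $b \circ \id = b$, $x \circ \id = x$ together with $b \circ (\theta \circ \psi) = (b \circ \theta) \circ \psi$ and $x \circ (\phi \circ \chi) = (x \circ \phi) \circ \chi$, giving $\rA(\theta \circ \psi) = \rA(\psi) \circ \rA(\theta)$ and $\rA^{-1}(\phi \circ \chi) = \rA^{-1}(\chi) \circ \rA^{-1}(\phi)$.

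Next I would verify naturality by two short diagram chases. For $\iota$: given a ucp map $\phi : T \to S$, evaluating $\rA(\rA^{-1}(\phi)) \circ \iota_T$ and $\iota_S \circ \phi$ at $t \in T$ and then at a state $x$ in the nc state space of $S$ both yield $x(\phi(t))$, so the naturality square commutes. For $\theta$: given a continuous affine nc map $\psi : K \to K'$, evaluating $\rA^{-1}(\rA(\psi)) \circ \theta_K$ and $\theta_{K'} \circ \psi$ at $x \in K$ and then at $b \in \rA(K')$ both yield $b(\psi(x))$, so that square commutes as well. Since each $\iota_S$ is a complete order isomorphism and each $\theta_K$ an affine homeomorphism, the natural transformations $\iota : \id_{\mathrm{OpSys}} \Rightarrow \rA \circ \rA^{-1}$ and $\theta : \id_{\mathrm{NCConv}} \Rightarrow \rA^{-1} \circ \rA$ are natural isomorphisms, which is exactly the statement that $\rA$ and $\rA^{-1}$ implement a dual equivalence between $\mathrm{NCConv}$ and $\mathrm{OpSys}$.

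I do not expect a serious obstacle: all analytic input is already packaged in Theorem~\ref{thm:nc-kadison-rep}, the preceding theorem, and Proposition~\ref{P:uniform bound}, so the argument is a sequence of routine verifications. The only points warranting a moment's care are the well-definedness claims on morphisms — that $\rA(\theta)$ is \emph{completely} positive rather than merely positive, and that $\rA^{-1}(\phi)$ is continuous on \emph{every} level, including the infinite ones — and both fall out immediately once one unwinds the operator system structure on $\rA(K)$ and the point-weak* topology on the nc state space.
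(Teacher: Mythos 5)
Your proposal is correct and matches the paper's (implicit) argument: the paper states Theorem~\ref{thm:dually-equivalent} without a written proof, treating it as a routine consequence of the two preceding results — the affine homeomorphism $K \cong \rA^{-1}(\rA(K))$ and the noncommutative Kadison theorem $S \cong \rA(\rA^{-1}(S))$ — exactly the bookkeeping you carry out. The details you supply (complete positivity of $\rA(\theta)$, continuity of $\rA^{-1}(\phi)$ on all levels, contravariant functoriality, and the two naturality squares) are the right ones and are verified correctly.
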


The next result follows immediately from Theorem \ref{thm:dually-equivalent}.

\begin{cor}
Let $K$ and $L$ be compact nc convex sets. Then $\rA(K)$ and $\rA(L)$ are isomorphic if and only if $K$ and $L$ are affinely homeomorphic. Hence two operator systems are unitally completely order isomorphic if and only if their nc state spaces are affinely homeomorphic. 
\end{cor}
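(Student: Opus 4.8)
The plan is to deduce both statements formally from the dual equivalence of Theorem \ref{thm:dually-equivalent}, using the elementary fact that an equivalence of categories preserves and reflects isomorphisms.

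First I would note that any functor carries isomorphisms to isomorphisms. So if $K$ and $L$ are affinely homeomorphic — that is, isomorphic in $\mathrm{NCConv}$ — then applying the contravariant functor $\rA$ shows that $\rA(K)$ and $\rA(L)$ are isomorphic in $\mathrm{OpSys}$, which is precisely the assertion that they are unitally completely order isomorphic. Conversely, suppose $\rA(K)$ and $\rA(L)$ are isomorphic in $\mathrm{OpSys}$. Applying the inverse functor $\rA^{-1}$ shows that $\rA^{-1}(\rA(K))$ and $\rA^{-1}(\rA(L))$ are isomorphic in $\mathrm{NCConv}$. By the dual equivalence of Theorem \ref{thm:dually-equivalent}, $\rA^{-1}(\rA(K))$ is affinely homeomorphic to $K$ and $\rA^{-1}(\rA(L))$ is affinely homeomorphic to $L$; composing these two homeomorphisms with the isomorphism just obtained produces an affine homeomorphism $K \to L$. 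This establishes the first equivalence.

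For the second (``hence'') clause, let $S$ and $T$ be closed operator systems and let $K = \rA^{-1}(S)$ and $L = \rA^{-1}(T)$ be their nc state spaces. By the noncommutative Kadison representation theorem (Theorem \ref{thm:nc-kadison-rep}), $S$ is unitally completely order isomorphic to $\rA(K)$ and $T$ to $\rA(L)$. Hence $S$ and $T$ are unitally completely order isomorphic if and only if $\rA(K)$ and $\rA(L)$ are, and by the first part this holds if and only if $K$ and $L$ — the nc state spaces of $S$ and $T$ — are affinely homeomorphic.

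There is no genuine obstacle here: the corollary is a formal consequence of Theorems \ref{thm:dually-equivalent} and \ref{thm:nc-kadison-rep}. The only point needing a moment's care is to transport the operator system isomorphism back through $\rA^{-1}$ using the natural affine homeomorphisms $K \cong \rA^{-1}(\rA(K))$ furnished by the equivalence, rather than a strict identity. Alternatively one could argue concretely: a unital complete order isomorphism $\varphi : \rA(L) \to \rA(K)$ induces, via $\rA^{-1}$ and the canonical identifications, an explicit affine homeomorphism $K \to L$ with inverse induced by $\varphi^{-1}$; but the categorical formulation is cleaner and I would present it that way.
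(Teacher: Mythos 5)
Your proposal is correct and matches the paper's approach: the paper simply remarks that the corollary ``follows immediately from Theorem \ref{thm:dually-equivalent},'' and your argument is exactly that formal deduction (an equivalence of categories preserves and reflects isomorphisms), spelled out in more detail together with the appeal to Theorem \ref{thm:nc-kadison-rep} for the second clause.
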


\section{Noncommutative functions} \label{sec:nc-functions}

\subsection{Functions on compact convex sets} \label{sec:classical-min-max-cstar-alg}

An essential component of classical Choquet theory is the interplay between the space $\rA(C)$ of continuous affine functions on a compact convex set $C$ and the C*-algebra $\rC(C)$ of continuous functions on $C$. 

The Stone-Weierstrass theorem implies that the C*-algebra $\rC(C)$ of continuous functions on $C$ is generated by the function system $\rA(C)$ of continuous affine functions on $C$. In fact, $\rC(C)$ is uniquely determined by the following universal property: $\rC(C)$ is generated by $\rA(C)$ and for any unital commutative C*-algebra $\rC(X)$ and unital order embedding $\phi : \rA(C) \to \rC(X)$ satisfying $\ca(\phi(\rA(C))) = \rC(X)$, there is a surjective homomorphism $\pi : \rC(C) \to \rC(X)$ such that $\pi|_{\rA(C)} = \phi|_{\rA(C)}$.
\[
\begin{tikzcd}
\rA(C) \arrow[rd, "\phi", hook] \arrow[r, "\id", hook] & \rC(C) \arrow[d, "\pi", two heads] \\
& \rC(X) = \ca(\phi(\rA(C)))
\end{tikzcd}
\]
This says that $\rC(C)$ is the maximal commutative C*-algebra generated by a unital order embedding of $\rA(C)$. 

The Riesz-Markov-Kakutani representation theorem implies that the state space of $\rC(C)$ can be identified with the space $\rP(C)$ of regular Borel probability measures on $C$. For a point $x \in C$, a measure $\mu \in \rP(C)$ is said to represent $x$, and $x$ is said to be the barycenter of $\mu$, if $\mu|_{\rA(C)} = x$. Since the point mass $\delta_x \in \rP(C)$ represents $x$, every point in $C$ has at least one representing probability measure. Moreover, $x$ has a unique representing measure if and only if $x \in \partial C$, where $\partial C$ denotes the extreme boundary of $C$. More generally, the Choquet-Bishop-de Leeuw integral representation theorem, which we will review later, asserts that for any $x \in C$, it is always possible to choose a representing measure that is supported on $\partial C$ in an appropriate sense.

The closure of the extreme boundary $\ol{\partial C}$ is the Shilov boundary of the function system $\rA(C)$. This means that the restriction map $\rho : \rC(C) \to \rC(\ol{\partial C})$ is a unital order embedding, and the C*-algebra $\rC(\ol{\partial C})$ is uniquely determined by the following universal property: for any unital commutative C*-algebra $\rC(X)$ and any unital order embedding $\phi : \rA(C) \to \rC(X)$ satisfying $\ca(\phi(\rA(C))) = \rC(X)$, there is a surjective homomorphism $\pi : \rC(X) \to \rC(\ol{\partial C})$ satisfying $\pi \circ \phi = \rho$. 
\[
\begin{tikzcd}
                                        & \rC(X) = \ca(\phi(\rA(C))) \arrow[d, "\pi", two heads] \\
\rA(C) \arrow[r, "\rho", hook] \arrow[ru, "\phi", hook] & \rC(\ol{\partial C})
\end{tikzcd}
\]
This says that $\rC(\ol{\partial C})$ is the minimal commutative C*-algebra generated by a unital order embedding of $\rA(C)$.

\subsection{Noncommutative functions}

In this section we will introduce a definition of nc function on a compact nc convex set. We will associate a C*-algebra of nc functions to every compact nc convex set that plays a role in the noncommutative setting analogous to the role in the classical setting of the C*-algebra of continuous functions on a compact convex set. In Section \ref{S:C*max}, we will see that the elements in this C*-algebra are, in fact, precisely the continuous nc functions on $K$, when continuity is defined in an appropriate sense.

\begin{defn}
Let $K$ be a compact nc convex set and let $f : K \to \cM$ be a function. We say that $f$ is an {\em nc function} if it is graded, respects direct sums and is unitarily equivariant, meaning that
\begin{enumerate}
\item $f(K_n) \subseteq \cM_n$ for all $n$,
\item $f(\sum \alpha_i x_i \alpha_i^*) = \sum \alpha_i f(x_i) \alpha_i^*$ for every family $\{ x_i \in K_{n_i} \}$ and every family of isometries $\{ \alpha_i \in \cM_{n_i,n} \}$ satisfying $\sum \alpha_i \alpha_i^* = 1_n$,
\item $f(\beta x \beta^*) = \beta f(x) \beta^*$ for every $x \in K_n$ and every unitary $\beta \in \cM_n$.
\end{enumerate}
We say that $f$ is {\em bounded} if $\|f\|_\infty < \infty$, where $\|f\|_\infty$ denotes the {\em uniform norm} defined by
\[
\|f\|_\infty = \sup_{x \in K} \|f(x)\|.
\]
We let $\rB(K)$ denote the space of all bounded nc functions on $K$.
\end{defn}

\begin{rem}
It is clear that affine nc functions on $K$ are in particular nc functions on $K$. Moreover, by Proposition \ref{P:uniform bound}, functions in the space $\rA(K)$ of continuous affine nc functions on $K$ are bounded nc functions. Therefore, $\rB(K)$ contains $\rA(K)$.
\end{rem}

\begin{rem}
The theory of analytic nc functions has gained a large following in recent years.
The book \cite{KKV2014} lays out the fundamentals of this theory, which has its roots in 
Taylor's work \cite{Tay1972} on a functional calculus for multivariable functions in non-commutating variables, as well as in Voiculescu's work \cite{Voi2008} on free probability. In the analytic setting, nc functions are similarity invariant (in an appropriate restricted sense), and not just unitarily invariant as in our setting.

In fact, the notion of an nc function has even older roots in the work of Takesaki \cite{Tak1967} on a Gelfand representation theorem for noncommutative C*-algebras. Takesaki considers admissible operator fields, which are maps $T : \Rep(A,H) \to \B(H)$ that respect direct sums and are unitarily equivariant. A modified definition due to Bichteler \cite{Bic1969} allowed degenerate representations and equivalence by partial isometries, which enabled him to extend Takesaki's representation theorem beyond the separable case.

We will see that a compact nc convex set $K$ can be identified, at least as a set, with the representation space of a C*-algebra $\rC(K)$, meaning that Takesaki's formulation is rather close to our notion of an nc function. While we will not directly require the theorem of Takesaki and Bichteler, we will adapt their proof to our setting.
\end{rem}

\begin{prop} \label {P:B(K)_is_a_vNalg}
Let $K$ be a compact nc convex set. Then $\rB(K)$ is a von Neumann algebra.
\end{prop}

\begin{proof}
The space $\rB(K)$ contains all affine nc functions, and in particular contains the constant function $\one$. For $f \in \rB(K)$,
the adjoint $f^*$ is given by $f^*(x) = f(x)^*$ for $x \in K$. It is clear that $f^*$ is graded and preserves direct sums.
For $x \in K_n$ and a unitary $\beta \in M_n$,
\[
 f^*(\beta x \beta^*) = f(\beta x \beta^*)^* = (\beta f(x) \beta^*)^* = \beta f(x)^* \beta^* = \beta f^*(x) \beta^*,
\]
So $f^*$ is an nc function. 
The product on $\rB(K)$ is defined pointwise: for $f,g \in \rB(K)$, $(fg)(x) = f(x) g(x)$ for $x \in K$.
This is easily seen to be an nc function.
It is also easy to check that the norm limit of nc functions is an nc function, so $\rB(K)$ is a Banach $*$-algebra.

There is a natural $*$-representation of $\rB(K)$, given by
\[
 \pi_u:\rB(K) \to \prod_n \prod_{x\in K_n} M_n ,\quad \pi_u(f)(x) = \bigoplus_{x\in K} f(x) = f(x_u),
\]
where $x_u = \bigoplus_{x\in K} x$.
This representation is isometric by definition, preserves the adjoint, and is linear and multiplicative.
Thus $\pi_u$ is a faithful $*$-representation, and $\rB(K)$ is a C*-algebra.
To check that $\pi_u(\rB(K))$ is WOT-closed, suppose  $(f_\lambda)_\Lambda$ is a net such that $T = \wotlim_\Lambda \pi(f_\lambda)$.
Then $T = \bigoplus_{x\in K} T(x)$, and hence determines a graded function.
Since each $f_\lambda$ preserves direct sums and is unitarily equivariant, the same is true for the WOT-limit.
Therefore $\pi_u(\rB(K))$ is a von Neumann algebra.

By a theorem of Dixmier \cite{Dixmier53}, $\pi_u(\rB(K))$ has a unique predual. 
So $\rB(K)$ also has a unique predual.
By Sakai's Theorem~\cite{Sak1956}, $\rB(K)$ is weak-$*$--weak-$*$ homeomorphic to $\pi_u(\rB(K))$.
Thus $\rB(K)$ is an abstract von Neumann algebra with faithful normal representation $\pi_u$.
\end{proof}

\begin{defn} \label{defn:nc-continuous-functions}
Let $K$ be a compact nc convex set. 
For $x \in K_n$, let $\delta_x : \rB(K) \to \B(H_n)$ denote the normal representation corresponding to evaluation at $x$.
Let $\rC(K)$ denote the C*-algebra subalgebra of $\rB(K)$ generated by $\rA(K)$.
\end{defn}

The analogy with the classical setting suggests that the C*-algebra $\rC(K)$ should consist precisely of the nc functions that are continuous on $K$. In Section \ref{S:C*max}, we will see that this is true when $K$ is equipped with the point-strong topology, for which we will adapt ideas from Takesaki \cite{Tak1967} and Bichteler's \cite{Bic1969} noncommutative Gelfand representation of a C*-algebra. However, it turns out that elements in $\rC(K)$ are not necessarily continuous on $K$ with respect to the point-weak* topology, as we will now discuss.

Observe that for $n < \infty$, an nc function in $\rA(K)$ is weak*-to-norm continuous on $K_n$ since the weak* and norm topologies agree on $\cM_n$. Since $\rC(K)$ is the C*-algebra generated by $\rA(K)$, and since multiplication on $\cM_n$ is jointly continuous in the norm topology, it follows that for $n < \infty$, elements in $\rC(K)$ are continuous on $K_n$ with respect to the point-weak* topology. However, the next example shows that for infinite $n$, elements in $\rC(K)$ are not necessarily continuous on $K_n$ with respect to the point-weak* topology.

\begin{example} \label{ex:continuity-may-fail-infinite-levels}
Consider the function system $S = \spn\{ 1, z, \bar{z}\}$ in $ \rC(\bT)$. For each $n$, every unital completely positive map $\phi : S \to \cM_n$ is determined by $\alpha := \phi(z) \in \cM_n$. Clearly $\|\alpha\| \leq 1$. On the other hand, for $\alpha \in \cM_n$ with $\|\alpha\| \leq 1$, von Neumann's inequality implies the existence of a unital completely positive map $\phi : S \to \cM_n$ satisfying $\phi(z) = \alpha$.

Let $K$ denote the nc state space of $S$, so that $S$ is isomorphic to $\rA(K)$. Then it follows from above that for each $n$, $K_n$ is affinely homeomorphic to the unit ball of $\cM_n$. Let $a \in \rA(K)$ denote the nc function corresponding to $z \in S$. Then $a^*$ corresponds to $\bar{z} \in S$. For an nc state $x \in K_n$ corresponding to a contraction $\alpha \in \cM_n$ as above, $a(x) = \alpha$ and $a^*(x) = \alpha^*$. Let $a^* a$ denote the pointwise product of $a^*$ and $a$. Then $a^* a$ is an nc function on $K$ with $(a^* a)(x) = a^*(x) a(x)$.

For $t \in (0,1]$, identify $\cM_{\aleph_0}$ with $\B(L^2[0,1])$ and let $x_t \in K_{\aleph_0}$ denote the nc state corresponding to the isometry $\alpha_t \in \cM_{\aleph_0}$ defined by
\[ \alpha_t(f)(x) = \begin{cases} t^{-1/2} f(t^{-1}x) & 0 < x < t,\\ 0 & t \le x < 1. \end{cases} \]
Let $x_0 \in K_{\aleph_0}$ denote the point corresponding to $0_{\aleph_0}$. Then 
\[ \lim_{t \to 0} a(x_t) = \lim_{t \to 0} \alpha_t = 0 .\]
Similarly, $\lim_{t \to 0} a^*(x_t) = 0$. So $\lim_{t \to 0} x_t = x_0$. However,
\[
\lim_{t \to 0} (a^* a)(x_t) = \lim_{t \to 0} \alpha_t^* \alpha_t = 1_{\aleph_0} \ne 0 = (a^* a)(x_0).
\]
It follows that $a^* a$ is not continuous as a function on $K_{\aleph_0}$.
\end{example}

The next example shows that bounded nc functions are not necessarily determined by their values on finite levels.

\begin{example} \label{ex:ncfunctions-not-determined-by-finite-levels}
Consider the Cuntz operator system
\[
S = \spn\{1,s_1,s_2,s_1^*,s_2^*\} \subseteq \O_2,
\]
where $s_1,s_2$ are the standard generators of the Cuntz C*-algebra $\O_2$ (see Example~\ref{ex:cuntz} for more details). Let $K$ denote the nc state space of $S$ so that $S$ is completely order isomorphic to $\rA(K)$. For each $n$, a point $x \in K_n$ determines a contractive $1 \times 2$ matrix $[x(s_1)\ x(s_2)]$ with entries in $\cM_n$ (a row contraction). Conversely, a row contraction $[\alpha_1\ \alpha_2]$ with entries in $\cM_n$ determines a point in $x \in K_n$. So $K_n$ is affinely homeomorphic to the compact convex set of row contractions with entries in $\cM_n$ (which we can identify with a subset of $\cM_{n,2n}$). Since $\O_2$ is simple and infinite dimensional, every representation is infinite dimensional. The corresponding representation $\delta_x$ of $\rC(K)$ factors through $\O_2$ if and only if $[\alpha_1\ \alpha_2]$ is a unitary. 

Every row contraction decomposes uniquely as $[\alpha_1\ \alpha_2] \oplus [\beta_1\ \beta_2]$ where $[\alpha_1\ \alpha_2]$ is a row unitary
and $[\beta_1\ \beta_2]$ has no row unitary summand. This property of having no row unitary summand is preserved by direct sums.
Define a function $f : K \to \cM$ by
\[
 f(x) = 1_n \oplus 0_m \qif [x(s_1)\ x(s_2)] = [\alpha_1\ \alpha_2] \oplus [\beta_1\ \beta_2]
\]
where $[\alpha_1\ \alpha_2]$ is a row unitary and $[\beta_1\ \beta_2]$ has no row unitary summand.
Then $f$ is evidently bounded, graded, preserves direct sums and is unitarily equivariant. 
So it is a bounded nc function (in fact a projection) in $\rB(K)$. 
The significance of this example is that $f$ vanishes at all finite levels, but is nonzero. 
Therefore nc functions are not necessarily determined by their values on finite levels. 
Moreover this function is continuous on all finite levels but is not continuous on $K_{\aleph_0}$.
\end{example}

\subsection{$\rC(K)$ as a C*-algebra} \label{S:C(K)_as_C*alg}

Our goal is to identify $\rB(K)$ as the bidual of $\rC(K)$, so that it is the universal enveloping von Neumann algebra.
Our approach is inspired by Takesaki's \cite{Tak1967} Gelfand duality theorem for noncommutative C*-algebras, which was extended to non-separable C*-algebras by Bichteler \cite{Bic1969}.
In an earlier draft of this paper, we directly used the theorem of Takesaki and Bichteler. We will now give a slightly simpler direct proof, but our proof is clearly based on theirs. 

We will assume that the cardinal $\kappa$ is at least as large as the largest dimension of a Hilbert space on which $\rC(K)$ has a cyclic representation.
Let $\Rep(\rC(K)) = \coprod_{n \le\kappa} \Rep(\rC(K), H_n)$, where $\Rep(\rC(K), H_n)$ is the space of all non-degenerate
$*$-representations of $\rC(K)$ into $\B(H_n)$.
Each $f\in\rB(K)$ determines a bounded function on $\Rep(\rC(K))$ by $\hat f(\pi) = \pi(f)$.

Put the topology of point-weak-$*$ convergence on $\Rep(\rC(K))$; i.e., the weakest topology such that for each $f \in \rC(K)$, $n \le \kappa$ and 
a weak-$*$-open set $U \subset \B(H_n)$, the set $\{ \pi \in  \Rep(\rC(K), H_n) : \pi(f) \in U \}$ is open.
If $f \in \rC(K)$, then $\hat f$ is continuous by definition of the topology.
It is a well-known result that the point-weak-$*$, point-WOT, point-SOT, point-ultrastrong and point-ultrastrong-$*$ topologies all coincide on $\Rep(\rC(K))$.
See \cite{Bic1969}*{Section II, Lemma}.

On the other hand, we can consider the point-ultrastrong-$*$ topology $\tau_{us^*}$ on $K$, the weakest topology such that each $f \in \rA(K)$ is point-ultrastrong* continuous on $K$. This does not generally coincide with the weak-$*$ topology, and in particular $K$ will not generally be compact with respect to $\tau_{us^*}$. We will write $(K, \tau_{us^*})$ for this space.

\begin{rem}
For $n \ge \aleph_0$, the space $\Rep(\rC(K), H_n)$ is not generally compact, although it naturally sits as a subset of the nc state space of $\rC(K)$, which is compact.
Indeed, let $\pi \in \Rep(\rC(K), H_n)$ be any non-degenerate non-irreducible representation with image not consisting only of scalars.
Then there is a projection $P \in \B(H_n)$ with infinite rank which is not in $\pi(\rC(K))''$. 
Then $\phi(f) = P \pi(f) |_{PH_n}$ is a unital completely positive map which is not a representation.
Let $\alpha$ be an isometry of $H_n$ onto $PH_n$.
Then $\alpha^* \phi \alpha \in \ucpmaps(\rC(K), H_n) \setminus \Rep(\rC(K), H_n)$.
Let $Q_n \in \B(H_n)$ be an increasing sequence of finite rank projections converging strongly to the identity, and choose unitaries $u_n$ such that $u_nQ_n = \alpha Q_n$. Then $u_n$ converges strongly to $\alpha$ and hence $u_n^* \pi u_n \in \Rep(\rC(K), H_n)$ converges point-weak-$*$ to $\phi$. Hence $\Rep(\rC(K), H_n)$ is not closed in $ \ucpmaps(\rC(K), H_n)$.
\end{rem}

\begin{prop} \label {P:reps of C(K)}
The restriction map $\rho: \Rep(\rC(K)) \to K$ given by $\rho(\pi) = \pi|_{\rA(K)}$ is a continuous, bijective graded function which preserves direct sums and is unitarily equivariant, i.e. $\rho(u\pi u^*) = u \rho(\pi) u^*$ for $\pi \in  \Rep(\rC(K), H_n)$ and $u \in \U(H_n)$.
However the inverse map is generally not continuous.
\end{prop}

\begin{proof}
The topology on $\Rep(\rC(K))$ and $K$ are the point-weak-$*$ topologies with respect to $\rC(K)$ and $\rA(K)$ respectively.
Hence $\rho$ is continuous.
For $\pi \in \Rep(\rC(K),H_n)$, $ \pi|_{\rA(K)}$ is a unital completely positive map into $\B(H_n)$, and thus belongs to $K_n$, so $\rho$ is graded.
Also $\rho$ clearly preserves direct sums and is unitarily equivariant.

Each $x\in K$ gives rise to the representation $\delta_x$, and $\rho(\delta_x) = x$. So $\rho$ is surjective.
Since $\rC(K)$ is the C*-algebra generated by $\rA(K)$, there is at most one way to extend any unital completely positive map on $\rA(K)$ to a $*$-homomorphism.
Thus $\rho$ is a bijection.

The nc convex set $K$ is compact, but $\Rep(\rC(K))$ is not generally compact.
Hence $\rho^{-1}$ cannot be continuous in general.
\end{proof}

\begin{thm} \label {T:B(K) is second dual}
$\rB(K)$ is the universal von Neumann algebra $\rC(K)^{**}$ of $\rC(K)$.
The weak-$*$ closure of $\rA(K)$ in $\rB(K)$ is isometrically isomorphic to $\rA(K)^{**}$,
and it coincides with the space of all bounded affine nc functions on $K$.
\end{thm}

\begin{proof}
We work in $\pi_u(\rB(K))$.
Suppose that $P$ is a projection in $\pi_u(\rC(K))'$. 
Then $PH_u$ reduces $\pi_u$, so that $\pi_u = \pi_1 \oplus \pi_2$ with respect to $H_u = PH_u \oplus P^\perp H_u$.
By the previous proposition, there are points $x_1$ and $x_2$ in $K$ so that $\pi_i = \delta_{x_i}$ and $x_u = x_1 \oplus x_2$.
In particular, for each $i$, $\pi_i$ extends to a normal representation $\delta_{x_i}$ of $\rB(K)$.
Therefore, for $f\in\rB(K)$, 
\[
 \pi_u(f) = f(x_1 \oplus x_2) = f(x_1) \oplus f( x_2 ) = \pi_1(f) \oplus \pi_2(f).
\]
In particular, $\pi_u(f)$ commutes with $P$.
Since the von Neumann algebra $\pi_u(\rC(K))'$ is spanned by its projections, $\pi_u(\rB(K)) \subset \pi_u(\rC(K))''$.
Now $ \pi_u(\rC(K))''$ is the WOT-closure of $\pi_u(\rC(K))$ by the double commutant theorem.
Since $\rC(K) \subset \rB(K)$ and $\pi_u(\rB(K))$ is a von Neumann algebra, $\pi_u(\rB(K)) = \pi_u(\rC(K))''$.

Every representation of $\rC(K)$ is of the form $\delta_x$ for $x\in K$ by Proposition~\ref{P:reps of C(K)},
and $\delta_x$ extends to a normal representation of $\rB(K)$.
This is the property which defines the universal enveloping von Neumann algebra of $\rC(K)$.
By the characterization of the bidual of a C*-algebra \cites{Sherman, Takeda}, $\rB(K)$ is isomorphic to the bidual of $\rC(K)$.

Since $\rA(K)$ is a closed subspace of $\rC(K)$, the weak-$*$ closure of $\rA(K)$ in $\rB(K)$ is isometrically isomorphic to $\rA(K)^{**}$.
By Theorem~\ref{T:affine cnts on K1}, $\rA(K)$ is isomorphic to $A(K_1)$.
By \cite{AS2001}*{Propositions 2.128 and 2.3}, $A(K_1)^{**}$ is the space of all bounded affine functions on $K_1$.
Therefore the restriction map $\rho$ from $\rA(K)^{**}$ to $K_1$ must map onto the space of all bounded affine functions on $K_1$.
By Theorem~\ref{T:affine cnts on K1}, this restriction is bounded below, is positive and unital, and has a positive inverse.
Therefore it is an order isomorphism, and $\rA(K)^{**}$ coincides with the space of all bounded affine nc functions on $K$.
\end{proof}

\begin{rem}
Von Neumann algebras have a unique predual, and conversely by Sakai's theorem, 
a C*-algebra with a predual has a faithful normal representation as a von Neumann algebra.
The predual of $\rB(K)$ is given by the weak-$*$ topology on $\pi_u(\rB(K)) \subset \B(H_u)$.
Indeed, every linear functional has the form $\phi(f) = \ip{ \pi_u(f) \xi, \eta}$ for $\xi,\eta \in H_u$.
\end{rem}

\subsection{Continuity and $\rC(K)$} \label{S:C(K) continuity}

We can now deal with the issue of continuity of functions in $\rC(K)$. This section also relies on ideas from Takesaki and Bichteler,

\begin{lem} \label {L:tau us*}
Every $f\in \rC(K)$ is continuous on $(K, \tau_{us*})$. 
The restriction map $\rho: \Rep(\rC(K), \text{point-ultrastrong}^*) \to (K, \tau_{us*})$ is a homeomorphism.
\end{lem}

\begin{proof}
As in Proposition~\ref{P:reps of C(K)}, $\rho$ is a continuous bijection.

Every $f \in \rA(K)$ is continuous on $(K, \tau_{us*})$ by definition.
However multiplication and adjoints are continuous in the ultrastrong-$*$ topology.
Thus polynomials of elements of $\rA(K)$ are also continuous from $K_n$ to $(\M_n, \text{ultrastrong-}*)$. 
It follows that every element of $\rC(K)$ is point-ultrastrong-$*$ continuous on $(K, \tau_{us*})$.
This means that $\rho^{-1}$ is continuous. Hence the restriction map is a homeomorphism.
\end{proof}

\begin{thm} \label {T:continuity of C(K)}
Let $f \in \rB(K)$. The following are equivalent:
\begin{enumerate}[labelwidth=5mm,align=left]
\item $f \in \rC(K)$.
\item[$(2a)$] $\hat f$ is continuous on $(\Rep(\rC(K)), \text{point-ultrastrong-}*)$.
\item[$(2b)$] $\hat f$ is continuous on $(\Rep(\rC(K)), \text{point-SOT})$.
\item[$(2c)$] $\hat f$ is continuous on $(\Rep(\rC(K)), \text{point-weak-$*$})$.
\stepcounter{enumi}
\item $f$ is continuous on $(K, \tau_{us*})$.
\item $f$ is continuous on $(L_1, \tau_{\rC(K)})$, where $L_1$ be the scalar state space of $\rC(K)$ with its weak-$*$ topology.
\end{enumerate}
\end{thm}

\begin{proof}
Let $L$ be the nc state space of $\rC(K)$.
Then $L_1$ is the (classical) state space of $\rC(K)$.
Define a map $\theta$ on $\rB(K)$ by $\theta(f) (s) = f(s)$ for $s \in L_1$.
Every $s \in \rC(K)^* = \rB(K)_*$ is a weak-$*$ continuous linear functional on $\rB(K) = \rC(K)^{**}$.
$L_1$ is a compact convex set in the $\tau_{\rC(K)}$ topology, and $\theta(f)$ is a bounded affine function on $L_1$.
Every unit vector $\xi \in H_u$ determines a state $s(f) = \ip{\pi_u(f) \xi,\xi}$ in $L_1$.
It follows that $\| \theta(f) \| = \|f\|$ when $f=f^*$, and $\frac12 \|f\| \le \| \theta(f) \| \le \|f\|$ in general.

By categorical duality, $\rC(K)$ is completely order isomorphic to $\rA(L)$.
The map $\theta: \rC(K) \to A(L_1)$ is a linear isomorphism by Theorem~\ref{T:affine cnts on K1}.
Since $\theta$ is injective, $\theta(f)$ is continuous on $L_1$ if and only if $f\in\rC(K)$.
So (1) and (4) are equivalent.

As mentioned above, \cite{Bic1969}*{Section II, Lemma} shows that (2a), (2b) and (2c) are equivalent.
Also (1) implies that $\hat f$ is continuous on $\Rep(\rC(K))$ in all of these topologies. 
Lemma~\ref{L:tau us*} shows that (2a) and (3) are equivalent.

It remains to establish that if $\hat f$ is continuous as a function on $\Rep(\rC(K))$ in the point-SOT topology, 
then $f$ must be continuous on $L_1$. 
Let $(\phi_\lambda)_\Lambda$ be a net in $L_1$ converging to $\phi$.
We will construct a cofinal subnet such that $\big( f(\phi_{\lambda'}) \big)_{\Lambda'}$ converges to $f(\phi)$.
The same argument then shows that every cofinal subnet has a cofinal subnet which converges to $f(\phi)$,
which implies that the original net converges to $f(\phi)$; whence $f$ is continuous on $L_1$.

Fix an irreducible representation $\pi_0$ in $\Rep(\fA, H_0)$.
Let $(\pi_\phi, \xi_\phi)$ denote the GNS representation of $\phi$.
Set  $\pi = \pi_\phi \oplus (\pi_0\otimes 1_\kappa)$ in $\Rep(\fA, H_\kappa)$ and set $\xi = \xi_\phi \oplus 0 \in H_\kappa = H_\phi \oplus K_0$,
where $K_0 = H_0 \otimes H_\kappa$.
Clearly $(\pi,\xi)$ represents $\phi$.

Let $\Lambda'$ be the net consisting of finite subsets 
\[ F = \{ (f_i,\xi_i) : f_i \in b_1(\rC(K)), \ \xi_i \in b_1(H_\kappa) : 1 \le i \le n \} \]
ordered by inclusion. 
To each $F$, we associate the open set $U_F \times V_F$ in $\Rep(\fA, H_\kappa) \times H_\kappa$ given by 
\[
 U_F = \big\{ \sigma \in \Rep(\fA, H_\kappa) : \| \sigma(f_i) \xi_i - \pi(f_i) \xi_i \| < \ep \ \FOR 1 \le i \le n  \big\}  
\]
and $V_F = b_{1/n}(\xi)$.
By Bichteler \cite{Bic1969}*{section II, Proposition 4}, there is a neighbourhood $W_F$ of $\phi$ in $L_1$ 
so that every $\psi\in W_F$ has a representative $(\sigma,\eta)$ in $U_F \times V_F$; i.e., $\psi(f) = \ip{ \sigma(f) \eta, \eta}$.
We construct our subnet $\psi_F$ recursively, defining it for $|F|=n$ at the $n$th stage.
Assuming that $\psi_G = \phi_{\lambda_G}$ has been defined for all $G \subsetneq F$, 
select $\lambda > \lambda_G$ for all $G\subsetneq F$ with $\phi_\lambda \in W_F$; and set $\lambda_F = \lambda$.
Let $(\pi_F, \xi_F) \in U_F \times V_F$ represent $\psi_F$.
By construction, $(\pi_F, \xi_F)$ converges in $(\Rep(\rC(K)), \text{point-SOT})$ to $(\pi,\xi)$.
Since $\hat f$ is continuous, 
\[
 \lim_F \psi_F(f) = \lim_F \ip{\pi_F(f) \xi_F,\xi_F} = \ip{\pi(f)\xi,\xi} = \phi(f) .
\]
Therefore (2b) implies (4), completing the proof.
\end{proof}

The proof of Proposition~\ref{P:uniform bound} can be applied verbatim to prove the next result.

\begin{prop}
Let $K$ be a compact nc convex set and let $f  \in \rC(K)$ be a continuous nc function. Then $f$ is bounded with
\[
\|f\| = \|f|_{K_{\aleph_0}}\| = \sup_{n < \infty} \|f|_{K_n}\|.
\]
\end{prop}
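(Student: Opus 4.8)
The plan is to transcribe the proof of Proposition~\ref{P:uniform bound} almost word for word, with the point-weak* topology replaced everywhere by the point-ultra\-strong* topology, and invoking Theorem~\ref{T:C*max}, which identifies $\rC(K)$ with the algebra of point-ultra\-strong* continuous nc functions on $K$. Boundedness of $f$ requires no argument: by Definition~\ref{defn:nc-continuous-functions}, $\rC(K)$ is a subalgebra of $\rB(K)$, so $\|f\|_\infty < \infty$. The monotonicity $\|f|_{K_m}\| \le \|f|_{K_n}\|$ for $m \le n$, which already gives $\sup_{m<\infty}\|f|_{K_m}\| \le \|f|_{K_{\aleph_0}}\| \le \|f\|$, follows exactly as in the affine case: for $x \in K_m$ the element $x \oplus (y \otimes 1) \in K_n$ (with $y$ a fixed point of a finite level, say $K_1$) satisfies $f\big(x \oplus (y\otimes 1)\big) = f(x) \oplus (f(y)\otimes 1)$ by direct-sum invariance of $f$, of norm $\max(\|f(x)\|,\|f(y)\|) \ge \|f(x)\|$. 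So the real content is the reverse inequality $\|f\| \le \sup_{m<\infty}\|f|_{K_m}\|$.

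To establish it, fix $n$ and $x \in K_n$ and run the dilation argument of Propositions~\ref{prop:convergent-nets} and~\ref{P:uniform bound}. Choose a net of finite-rank isometries $\alpha_i \in \cM_{n,n_i}$ with $\alpha_i\alpha_i^* \to 1_n$, isometries $\beta_i$ with $\beta_i\beta_i^* = 1_n - \alpha_i\alpha_i^*$, and a fixed $y \in K_1$, and put $x_i = \alpha_i^* x \alpha_i \in K_{n_i}$ and $z_i = \alpha_i x_i \alpha_i^* + \beta_i (y \otimes 1_{m_i})\beta_i^* \in K_n$. Since $f$ respects direct sums, $f(z_i) = \alpha_i f(x_i)\alpha_i^* + \beta_i f(y\otimes 1_{m_i})\beta_i^*$ is an orthogonal block sum, so $\|f(z_i)\| = \max\big(\|f(x_i)\|, \|f(y)\|\big) \le \sup_{m<\infty}\|f|_{K_m}\|$ because $n_i$ is finite. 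The one point that genuinely differs from Proposition~\ref{P:uniform bound} --- where ordinary point-weak* continuity of the affine map sufficed --- is that we must verify $z_i \to x$ in the \emph{point-ultra\-strong*} topology on $K_n$. For $a \in \rA(K)$, affinity gives $a(z_i) = (\alpha_i\alpha_i^*)\,a(x)\,(\alpha_i\alpha_i^*) + \beta_i\big(a(y)\otimes 1_{m_i}\big)\beta_i^*$; the projections $\alpha_i\alpha_i^*$ converge weak*, hence --- as projections converging to the projection $1_n$ --- strongly, hence ultra\-strongly (strong and ultra\-strong agree on bounded sets), to $1_n$, and $\beta_i\beta_i^* = 1_n - \alpha_i\alpha_i^* \to 0$ likewise; joint ultra\-strong* continuity of multiplication on bounded sets then forces $a(z_i) \to a(x)$ in the ultra\-strong* topology. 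This holds for every $a \in \rA(K)$, which is precisely the assertion that $z_i \to x$ point-ultra\-strong*.

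Now point-ultra\-strong* continuity of $f$ gives $f(z_i) \to f(x)$ in $\cM_n$, hence weak*; since the operator norm on $\cM_n$ is weak*-lower-semicontinuous, $\|f(x)\| \le \liminf_i \|f(z_i)\| \le \sup_{m<\infty}\|f|_{K_m}\|$. As $x \in K_n$ and $n$ were arbitrary, $\|f\| = \sup_n \|f|_{K_n}\| \le \sup_{m<\infty}\|f|_{K_m}\|$, and combined with the monotonicity the three quantities coincide. The only real obstacle is the topological upgrade in the middle paragraph: checking that the dilating net converges point-ultra\-strong* and not merely point-weak*, which is where one must note that weak convergence of the projections $\alpha_i\alpha_i^*$ to $1_n$ improves automatically to strong convergence; everything else is a literal copy of the earlier argument.
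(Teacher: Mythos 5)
Your proof is correct and is essentially the argument the paper intends, since the paper's own ``proof'' is the single remark that the proof of Proposition~\ref{P:uniform bound} applies verbatim. In fact you are slightly more careful than the paper: you correctly observe that ``verbatim'' glosses over the need to upgrade the convergence $z_i \to x$ from point-weak* to point-ultra\-strong* (since $f \in \rC(K)$ is only continuous for the latter topology), and your verification of this via the strong convergence of the projections $\alpha_i\alpha_i^*$ and joint ultrastrong* continuity of multiplication on bounded sets is exactly the right way to close that gap.
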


For the remainder of this paper, we refer to elements in $\rC(K)$ as continuous nc functions.

\subsection{Maximal C*-algebra} \label{S:C*max}

In this section we will show that $\rC(K)$ is uniquely determined by an important universal property.

Kirchberg and Wassermann \cite{KirWas1998} introduced the maximal C*-algebra $\cmax(S)$ of an operator system $S$. 
This C*-algebra is uniquely determined up to isomorphism by the following universal property: 
there is a unital complete order embedding $\iota : S \to \cmax(S)$ such that $\ca(\iota(S)) = \cmax(S)$ 
and for any C*-algebra $A$ and unital complete order embedding $\phi : S \to A$ satisfying $\ca(\phi(S)) = A$, 
there is a unique homomorphism $\pi : \cmax(S) \to A$ satisfying $\pi \circ \iota = \phi$.
\[
\begin{tikzcd}
S \arrow[rd, "\phi", hook] \arrow[r, "\iota", hook] & \cmax(S) \arrow[d, "\pi", two heads] \\
& A = \ca(\phi(S))
\end{tikzcd}
\]

An easy consequence of the existence of $\cmax(S)$ is the fact that every unital completely positive map $\phi:S \to \B(H)$ extends to a (unique) $*$-homomorphism of $\cmax(S)$.
This property also characterizes $\cmax(S)$.
We have observed that $\rC(K)$ has this property relative to $\rA(K)$.
Therefore we obtain the following result.

\begin{thm} \label{T:C*max}
Let $K$ be a compact nc convex set. Then $\rC(K) \simeq \cmax(\rA(K))$ via a $*$-isomorphism which is the identity on $\rA(K)$.
\end{thm}

\subsection{Representing maps} \label{SS:representing maps}

For a compact nc convex set $K$, unital completely positive maps $\mu : \rC(K) \to \cM_n$ play the role of probability measures in the classical setting. In this section we will introduce a natural notion of representing maps for points in $K$.

\begin{defn} \label{defn:representing-map}
Let $K$ be a compact nc convex set. For $x \in K_n$, we say that a unital completely positive map $\mu : \rC(K) \to \cM_n$ {\em represents} $x$, and that $x$ is the {\em barycenter} of $\mu$, if $\mu$ restricts to $x$ on the function system $\rA(K)$ of continuous affine functions on $K$, i.e. if $\mu|_{\rA(K)} = x$.
If $\delta_x$ is the unique representing map for $x$, then we will say that $x$ has a {\em unique representing map}.
\end{defn}

It will be important to determine the points in $K$ that have unique representing maps. We will revisit this in Section \ref{sec:representations-maps}.

Because of the identification of $\rB(K)$ with the enveloping von Neumann algebra of $\rC(K)$ in Section \ref{S:C*max}, every unital completely positive map $\mu : \rC(K) \to \cM_n$ has a unique weak*-continuous extension from $\rB(K)$ to $\cM_n$. We will continue to denote this extension by $\mu$.

\subsection{Minimal C*-algebra} \label{sec:cmin}

In this section we will review the notion of the Shilov boundary of an operator system along with the corresponding notion of minimal C*-algebra of an operator system which, as in the classical setting with the C*-algebra of continuous functions on the Shilov boundary, satisfies an important universal property.

The existence of a noncommutative analogue of the Shilov boundary was conjectured by Arveson \cite{Arv1969}, and the existence and uniqueness was proved by Hamana \cite{Ham1979}. For an operator system $S$, the {\em minimal C*-algebra} $\cmin(S)$ is uniquely determined up to isomorphism by the following universal property: there is a unital complete order embedding $\iota : S \to \cmin(S)$ such that $\ca(\iota(S)) = \cmin(S)$ and for any unital C*-algebra $A$ and unital complete order embedding $\phi : S \to A$ satisfying $\ca(\phi(S)) = A$, there is a surjective homomorphism $\pi : A \to \cmin(S)$ satisfying $\pi \circ \phi = \iota$.
\[
\begin{tikzcd}
                                        & A = \ca(\phi(S)) \arrow[d, "\pi", two heads] \\
S \arrow[r, "\iota", hook] \arrow[ru, "\phi", hook] & \cmin(S)                          
\end{tikzcd}
\]
In the literature, $\cmin(S)$ is often referred to as the {\em C*-envelope} of $S$. 

The minimal C*-algebra has been computed for many operator systems in the literature. For now, we give two simple examples.  We will consider more examples in Section \ref{sec:examples}.

\begin{example}
If $A$ is a unital C*-algebra, then it is clear that $\cmin(A) = A$.
\end{example}

\begin{example}
Let $A$ be a simple unital C*-algebra and let $S \subseteq A$ be an operator system such that $\ca(S) = A$. Since $\cmin(S)$ is a quotient of $A$, the simplicity of $A$ implies that $\cmin(S) = A$.
\end{example}

Let $K$ be a compact nc convex set. Then it follows from the universal properties of the maximal C*-algebra $\rC(K)$ and the minimal C*-algebra $\cmin(\rA(K))$ that there is a unique surjective homomorphism $\pi : \rC(K) \to \cmin(\rA(K))$ such that $\pi|_{\rA(K)} = \iota$, where $\iota : \rA(K) \to \cmin(\rA(K))$ denotes the canonical unital complete order embedding. We will say more about the relationship between $K$ and the structure of $\cmin(\rA(K))$ in Section \ref{sec:extreme-pts-minimal-c-star-alg}.

\section{Dilations of points and representations of maps} \label{sec:dilations-representations}

\subsection{Dilations, compressions and maximal points}

For a compact nc convex set $K$, unital completely positive maps on $\rC(K)$ play the role of probability measures in the classical setting. The nc state space of $\rC(K)$ is a compact nc convex set, and relationships between the graded components of this space provide it with a rich structure that has no classical counterpart.

\begin{defn} \label{defn:dilation}
Let $K$ be an nc convex set. We will say that a point $x \in K_m$ is {\em dilated} by a point $y \in K_n$ and refer to $y$ as a {\em dilation} of $x$ if there is an isometry $\alpha \in \cM_{n,m}$ such that $x = \alpha^* y \alpha$. In this case we will say that $x$ is a {\em compression} of $y$. If $y$ decomposes with respect to the range of $\alpha$ as $y = y_1 \oplus y_2$ for some $y_i \in K$, then we will say that the dilation is {\em trivial}. We will say that $x$ is {\em maximal} if it has no non-trivial dilations.
\end{defn}

\begin{rem}
If $y$ is a trivial dilation of $x$, then $\alpha$ implements a unitary equivalence between $x$ and $y_1$; so $y \cong x \oplus y_2$.

Suppose that $x \in K_n$ can be written as a finite nc convex combination $x = \sum \alpha_i^* x_i \alpha_i$ for $\{x_i \in K_{n_i}\}$ and $\{\alpha_i \in \cM_{n_i,n}\}$ satisfying $\sum \alpha_i^* \alpha_i = 1_n$. Let $y = \oplus_{i=1}^k x_i$ and let $\alpha = [\alpha_1 \ \cdots \ \alpha_k]^t$. Then $\alpha$ is an isometry and $x = \alpha^* y \alpha$, so $x$ is a compression of $y$. Hence if $x$ is maximal, then $y \simeq x \oplus z$ for some $z \in K$.
\end{rem}

The next result is a restatement of an important result of Dritschel and McCullough \cite{DriMcC2005}*{Theorem 1.2}.

\begin{restatable}{thm}{MaximalDilationTheorem} \label{thm:dritschel-mccullough}
Let $K$ be a compact nc convex set. Then every point in $K$ has a maximal dilation.
\end{restatable}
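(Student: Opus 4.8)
The plan is to reduce the statement to the classical dilation result of Dritschel and McCullough via the categorical duality established in Section~\ref{sec:categorical-duality}, since a point in a compact nc convex set $K$ is, after translating through the nc state space picture, a unital completely positive map on the operator system $S = \rA(K)$. Concretely, fix $x \in K_n$. Under the affine homeomorphism of $K$ with the nc state space of $S$, the point $x$ corresponds to a unital completely positive map $\phi \colon S \to \cM_n$. The notion of dilation in Definition~\ref{defn:dilation} matches the classical notion: $y \in K_m$ dilates $x$ via an isometry $\alpha \in \cM_{m,n}$ precisely when the corresponding ucp map $\psi \colon S \to \cM_m$ satisfies $\phi = \alpha^* \psi(\cdot)\alpha$, i.e. $\psi$ dilates $\phi$; and the dilation is trivial exactly when $\psi \simeq \phi \oplus (\text{something})$. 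So $x$ is maximal in $K$ if and only if $\phi$ is a maximal ucp map in the sense of Dritschel--McCullough.

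The main step is then to invoke \cite{DriMcC2005}*{Theorem 1.2}, which asserts that every ucp map $\phi \colon S \to \B(H)$ from an operator system admits a maximal dilation $\psi \colon S \to \B(\tilde H)$, where $\tilde H \supseteq H$ and $\psi$ restricts (compresses) to $\phi$. Applying this to our $\phi \colon S \to \cM_n$ produces a ucp map $\psi \colon S \to \B(\tilde H)$ with no nontrivial dilations, and $\tilde H$ can be taken of dimension at most that of the ambient cardinal $\kappa$ appearing in the definition of $K$ — one needs only note that building a maximal dilation can be done by a transfinite iteration that terminates, and the Hilbert space produced has dimension bounded by a cardinal depending only on $|S|$ and $n$, hence $\le\kappa$ after enlarging $\kappa$ if necessary (which is permitted, as discussed in Section~\ref{sec:cardinality-dimension-topology}). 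Translating $\psi$ back through the duality gives a point $y \in K_{m}$ with $m = \dim \tilde H$, an isometry $\alpha \in \cM_{m,n}$ with $x = \alpha^* y\alpha$, and $y$ maximal because any nontrivial dilation of $y$ would, on the operator system side, be a nontrivial dilation of $\psi$, contradicting maximality of $\psi$.

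The one point requiring genuine care — and the likely main obstacle — is the cardinality bookkeeping: one must confirm that the maximal dilation $\psi$ lands on a Hilbert space small enough to correspond to a point of $K$, i.e. of dimension $\le \kappa$. This is handled by observing that the Dritschel--McCullough construction (or equivalently, iterating any procedure that replaces a map by a strictly larger dilation when one exists) stabilizes after at most $|S|^+$ steps, so $\dim\tilde H \le n \cdot |S|^+$, which is $\le\kappa$ once $\kappa$ is chosen sufficiently large relative to $S = \rA(K)$; since $\rA(K)$ is determined by $K$ and we are always free to take $\kappa$ as large as needed, this causes no circularity. Everything else is a routine transport of definitions across the dual equivalence of Theorem~\ref{thm:dually-equivalent}. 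I therefore expect the proof to be short, essentially: restate via the nc state space, apply \cite{DriMcC2005}, check the cardinal bound, and transport back.
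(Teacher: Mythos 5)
Your argument is sound in substance, but it is not the route the paper takes, and the difference matters. You reduce the statement to \cite{DriMcC2005}*{Theorem 1.2} by transporting the definitions of dilation and maximality across the duality of Section~\ref{sec:categorical-duality}; since the paper itself introduces the theorem in Section~\ref{sec:dilations-representations} as ``a restatement of'' the Dritschel--McCullough result, this identification is legitimate and your proof is essentially a correct unwinding of that remark. The paper, however, explicitly promises and then gives (in Section~\ref{sec:new-proof-extreme}) a \emph{new} proof internal to its own machinery: by a Zorn's lemma argument there is a representing map $\mu : \rC(K) \to \cM_n$ for $x$ that is maximal in the dilation order of Definition~\ref{defn:dilation-order}; a minimal Stinespring representation $(y,\alpha)$ of $\mu$ then exhibits $y$ as a dilation of $x$, and Theorem~\ref{thm:equivalence-dilation-maximal-unique-representing-map} shows that $y$ is a maximal point. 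What the paper's route buys is independence from the transfinite dilation construction of \cite{DriMcC2005} and a demonstration that the dilation order is the right organizing tool for these questions; what your route buys is brevity, at the cost of importing the hardest step from the literature rather than reproving it. One correction to your cardinality discussion: for a fixed compact nc convex set $K$ the cardinal $\kappa$ is fixed, so ``enlarging $\kappa$ if necessary'' is not actually available --- replacing $\kappa$ replaces $K$. The issue is instead resolved by the paper's standing convention (Section~\ref{S:C*max}) that $\kappa$ is already large enough that every nondegenerate cyclic representation of $\cmax(\rA(K))$ acts on a Hilbert space of dimension at most $\kappa$; since a minimal dilation of a point of $K_n$ decomposes into at most $n$ cyclic summands, it automatically lands inside $K$. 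With that adjustment your argument closes, but it remains a citation of the original theorem rather than the paper's self-contained proof.
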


We will give a new proof of Theorem \ref{thm:dritschel-mccullough} in Section \ref{sec:max-in-diln-order} using ideas from this paper.

\subsection{Representations of maps} \label{sec:representations-maps}

Stinespring's dilation theorem asserts that completely positive maps on C*-algebras dilate to representations. However, understanding the dilation theory of completely positive maps on more general operator systems is a much more difficult problem. The framework of noncommutative convexity provides a powerful new perspective on this issue.

Let $K$ be a compact nc convex set. In this section we will begin to see how questions about unital completely positive maps on $\rC(K)$ can be reduced to questions about points in $K$.

If $\pi : \rC(K) \to \cM_n$ is a representation, then there is an nc state $x \in K_n$ such that $\pi = \delta_x$. Specifically, $x = \pi|_{\rA(K)}$ is the barycenter of $\pi$. Therefore, if $\mu : \rC(K) \to \cM_m$ is a unital completely positive map, then Stinespring's theorem implies there is a point $x \in K_n$ and an isometry $\alpha \in \cM_{m,n}$ such that $\mu = \alpha^* \delta_x \alpha$. Considered as points in the nc state space of $\rC(K)$, $\mu$ is dilated by $\delta_x$ in the terminology of Definition \ref{defn:dilation}. 

\begin{defn}
Let $K$ be a compact nc convex set and let $\mu : \rC(K) \to \cM_m$ be a unital completely positive map. We will say that a pair $(x,\alpha)$ consisting of a point $x \in K_n$ and an isometry $\alpha \in \cM_{n,m}$ {\em is a representation of $\mu$} if $\mu = \alpha^* \delta_x \alpha$. We will say that the representation $(x,\alpha)$ of $\mu$ is {\em minimal} if $\{ f(x) \alpha H_m : f \in \rC(K) \}$ is dense in $H_n$. 
\end{defn}

\begin{rem}
By Stinespring's theorem, a minimal representation $(x,\alpha) \in K_n \times \cM_{n,m}$ of $\mu$ is unique in the sense that if $(y,\beta) \in K_p \times \cM_{p,m}$ is another minimal representation of $\mu$, then $n = p$ and there is a unitary $\gamma \in \cM_n$ such that $x = \gamma y \gamma^*$ and $\alpha = \gamma \beta$. 
\end{rem}

In Section \ref{SS:representing maps}, we observed that every unital completely positive map $\mu : \rC(K) \to \cM_m$ extends to a unital completely positive map $\mu : \rB(K) \to \cM_m$ using the fact that $\rB(K)$ is the enveloping von Neumann algebra of $\rC(K)$. This extension can be described more concretely in the following way: Let $(x,\alpha) \in K_n \times \cM_{n,m}$ be a minimal representation of $\mu$. Then $\mu$ can be extended by defining
\[
\mu(f) = \alpha^* f(x) \alpha, \qfor f \in \rB(K).
\]
To see that this extension is well defined, let $(y,\beta) \in K_p \times \cM_{p,m}$ be another minimal representation. Then from above, there is a unitary $\gamma \in \cM_{n}$ such that $x = \gamma y \gamma^*$ and $\alpha = \gamma \beta$. Then by the unitary equivariance of $f$,
\[
\beta^* f(y) \beta = \beta^* f(\gamma^* x \gamma) \beta = \beta^* \gamma^* f(x) \gamma \beta = \alpha^* f(x) \alpha.
\]
The map $\delta_x$ is normal on $\rB(K)$, so $\mu$ is the composition of normal maps, and hence is itself normal. The fact that this definition of $\mu$ agrees with the previous definition now follows from the uniqueness of the normal extension of a unital completely positive map to the enveloping von Neumann algebra.

Using the notion of maximal points, we can now characterize points with unique representing maps in the sense of Section \ref{SS:representing maps}.

\begin{prop} \label{prop:criterion-unique-rep-map}
Let $K$ be a compact nc convex set. A point in $K$ has a unique representing map if and only if it is maximal.
\end{prop}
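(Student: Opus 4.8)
The plan is to run everything through Stinespring representations of unital completely positive maps on $\rC(K)$, as set up in Section~\ref{sec:representations-maps}, using two facts recorded there: every representation of $\rC(K)$ into $\cM_n$ equals $\delta_y$ for its barycenter $y = \pi|_{\rA(K)} \in K_n$, and every u.c.p.\ map on $\rC(K)$ admits a Stinespring representation. The only genuinely non-formal ingredient will be the passage between ``a subspace reduces a point $y$, viewed as a u.c.p.\ map on $\rA(K)$'' and ``that subspace reduces the representation $\delta_y$ of all of $\rC(K)$''; this works because $\rC(K) = \cmax(\rA(K))$ is generated as a C*-algebra by $\rA(K)$ (Theorem~\ref{T:C*max}) and $\delta_y$ is a $*$-representation.

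For the implication ``maximal $\Rightarrow$ unique representing map'', I would take an arbitrary representing map $\mu : \rC(K) \to \cM_n$ of $x \in K_n$, and apply Stinespring's theorem to obtain a point $y \in K_p$ and an isometry $\beta \in \cM_{p,n}$ with $\mu = \beta^* \delta_y \beta$. Restricting to $\rA(K)$ and using $\delta_y|_{\rA(K)} = y$ gives $x = \mu|_{\rA(K)} = \beta^* y \beta$, so $y$ is a dilation of $x$. Maximality forces this dilation to be trivial: the range of $\beta$ reduces $y$ as a u.c.p.\ map on $\rA(K)$, and $y \simeq x \oplus z$ for some $z \in K$. Since $\rA(K)$ generates $\rC(K)$, the range of $\beta$ then reduces the representation $\delta_y$ as well, so the compression $\beta^* \delta_y \beta$ is again a representation of $\rC(K)$; its barycenter is $\beta^* y \beta = x$, whence $\beta^* \delta_y \beta = \delta_x$ and therefore $\mu = \delta_x$.

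For the converse, suppose $x \in K_n$ has a unique representing map, and let $y \in K_p$ dilate $x$ via an isometry $\alpha \in \cM_{p,n}$, so $x = \alpha^* y \alpha$. The idea is to build a representing map of $x$ directly from $\delta_y$: the map $\mu := \alpha^* \delta_y \alpha : \rC(K) \to \cM_n$ is u.c.p.\ and restricts on $\rA(K)$ to $\alpha^* y \alpha = x$, so it represents $x$; by hypothesis $\mu = \delta_x$. Thus the compression $\alpha^* \delta_y \alpha$ is multiplicative. Writing $P = \alpha\alpha^*$, one compares $P\delta_y(fg)P$ with $(P\delta_y(f)P)(P\delta_y(g)P)$ for $f,g \in \rC(K)$ (using $\alpha^*\alpha = 1_n$ and multiplicativity of $\delta_x$ and $\delta_y$) to get $P\delta_y(f)(1-P)\delta_y(g)P = 0$; specializing $g = f^*$ and using the C*-identity gives $P\delta_y(f)(1-P) = 0$ for all $f$, i.e.\ $P$ commutes with $\delta_y(\rC(K))$, in particular with $\delta_y|_{\rA(K)} = y$. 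Hence the range of $\alpha$ reduces $y$, so $y$ decomposes with respect to that range as $x \oplus z$ with $z \in K$ a compression of $y$ (Definition~\ref{defn:nc-convex-set}(2)); the dilation is trivial. As $y$ was arbitrary, $x$ is maximal.

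The main obstacle is really just the two reducibility transitions flagged above — upgrading a reducing subspace for $\rA(K)$ to one for $\rC(K)$ in the first direction, and extracting a reducing projection from multiplicativity of a compressed representation in the second. Both are short C*-algebra facts (a subspace reducing a generating set reduces the generated C*-algebra; a projection under which a $*$-representation becomes multiplicative is central), so once they are in place the rest is bookkeeping with Stinespring representations and the identification $\rC(K) = \cmax(\rA(K))$.
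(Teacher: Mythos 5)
Your proof is correct, and both directions rest on the same core mechanism as the paper's: Stinespring plus the observation that a projection under which a $*$-representation compresses multiplicatively must be reducing, together with the upgrade from ``reduces $y$ on $\rA(K)$'' to ``reduces $\delta_y$ on $\rC(K)$'' via generation. The ``maximal $\Rightarrow$ unique'' direction is essentially identical to the paper's. The interesting divergence is in ``unique $\Rightarrow$ maximal'': the paper first invokes Theorem~\ref{thm:dritschel-mccullough} to produce a \emph{maximal} dilation $y$ of $x$, shows $\mu=\alpha^*\delta_y\alpha=\delta_x$ forces $y\cong x\oplus z$, and then concludes $x$ is maximal because summands of maximal points are maximal. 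You instead take an \emph{arbitrary} dilation $y$ of $x$ and run the multiplicativity argument on $\alpha^*\delta_y\alpha=\delta_x$ to show that this particular dilation is trivial; since $y$ was arbitrary, $x$ is maximal by definition. Your version buys a cleaner dependency structure: it makes the proposition independent of the Dritschel--McCullough dilation theorem, which the paper cites here as an external result and only re-proves later (Section~\ref{sec:new-proof-extreme}) using machinery that itself leans on this circle of ideas. The paper's route, by contrast, gets the triviality of the dilation ``for free'' from the heredity of maximality under summands, at the cost of the extra input. Both are sound; yours is the more self-contained of the two.
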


\begin{proof}
Suppose $x \in K_m$ has a unique representing map. Let $y \in K_n$ be a maximal dilation of $x$. Then there is an isometry $\alpha \in \cM_{n,m}$ such that $x = \alpha^* y \alpha$. Define a unital completely positive map $\mu : \rC(K) \to \cM_m$ by $\mu = \alpha^* \delta_y \alpha$. Then $\mu$ has barycenter $x$. Since $x$ has a unique representing map, it follows that $\mu = \delta_x$. Therefore, $\delta_y \cong \delta_x \oplus \delta_z$ for some $z \in K_p$, where the decomposition is taken with respect to the range of $\alpha$. In particular, $y \cong x \oplus z$. Since the summands of a maximal point in $K$ are maximal, it follows that $x$ is maximal.

Conversely, suppose that $x \in K_m$ is maximal. Let $\mu : \rC(K) \to \cM_m$ be a unital completely positive map with barycenter $x$. Let $(y,\alpha) \in K_n \times \cM_{n,m}$ be a representation of $\mu$. Then $x = \alpha^* y \alpha$, so $y$ is a dilation of $x$. The fact that $x$ is maximal implies that $y \cong x \oplus z$ for some $z \in K_p$, where the decomposition is taken with respect to the range of $\alpha$. Hence $\delta_y \cong \delta_x \oplus \delta_z$, and so $\mu = \delta_x$.
\end{proof}

\begin{prop} \label{prop:maximal-reps-factor-through-cmin}
Let $K$ be a compact nc convex set. If $x \in K_n$ is maximal, then the corresponding representation $\delta_x : \rC(K) \to \cM_n$ factors through $\cmin(\rA(K))$. Conversely, if the only representing map for $x$ that factors through $\cmin(\rA(K))$ is $\delta_x$, then $x$ is maximal.
\end{prop}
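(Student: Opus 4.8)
The plan is to prove the two assertions separately, establishing the direct implication first and then bootstrapping it to obtain the partial converse. Throughout, write $S = \rA(K)$, so that $\rC(K) = \cmax(S)$, and let $\pi : \rC(K) \to \cmin(S)$ be the canonical surjection with $\pi|_{S} = \iota$, the unital complete order embedding of $S$ into $\cmin(S)$; in particular $\pi$ is completely isometric on $S$.

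For the first assertion, suppose $x \in K_n$ is maximal. By Proposition \ref{prop:criterion-unique-rep-map}, maximality of $x$ says precisely that $\delta_x$ is the \emph{only} unital completely positive map $\rC(K) \to \cM_n$ whose restriction to $S$ is $x$. The idea is to exhibit such a map that visibly factors through $\cmin(S)$, and then conclude by uniqueness. Since $\pi|_{S}$ is a complete order isomorphism onto $\pi(S) \subseteq \cmin(S)$, the assignment $\pi(a) \mapsto x(a)$ is a well-defined unital completely positive map on $\pi(S)$, and by Arveson's extension theorem (using injectivity of $\cM_n$) it extends to a unital completely positive map $\eta : \cmin(S) \to \cM_n$. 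Then $\eta \circ \pi : \rC(K) \to \cM_n$ is unital completely positive and restricts to $x$ on $S$, so by the uniqueness above $\eta \circ \pi = \delta_x$. Hence $\delta_x$ annihilates $\ker\pi$, i.e. it factors through $\cmin(S)$ (and the induced map, being the quotient of a $*$-homomorphism, is a representation of $\cmin(S)$).

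For the second assertion I would argue the contrapositive: assuming $x \in K_m$ is not maximal, I would produce a representing map of $x$, distinct from $\delta_x$, that factors through $\cmin(S)$. By Theorem \ref{thm:dritschel-mccullough}, $x$ has a maximal dilation $y \in K_n$, say $x = \alpha^* y \alpha$ for an isometry $\alpha \in \cM_{n,m}$. This dilation must be non-trivial: if $y \cong x \oplus z$, then $x$ would be a summand of a maximal point, hence maximal, contrary to assumption. Put $\mu = \alpha^* \delta_y \alpha : \rC(K) \to \cM_m$. Since $y$ is maximal, the first assertion shows $\delta_y$ factors through $\cmin(S)$, hence so does $\mu$; and $\mu|_{S} = \alpha^* y \alpha = x$, so $\mu$ represents $x$. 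Finally $\mu \neq \delta_x$: if they were equal, then $\mu$ would be multiplicative, and comparing $\mu(f f^*) = \alpha^* \delta_y(f)\delta_y(f)^* \alpha$ with $\mu(f)\mu(f)^* = \alpha^*\delta_y(f)\alpha\alpha^*\delta_y(f)^*\alpha$ for all $f \in \rC(K)$ forces $\alpha^*\delta_y(f)(1-\alpha\alpha^*)\delta_y(f)^*\alpha = 0$, hence $(1-\alpha\alpha^*)\delta_y(f)^*\alpha = 0$; since $\rC(K)$ is self-adjoint this makes the range of $\alpha$ reducing for $\delta_y(\rC(K))$, so $\delta_y \cong \delta_x \oplus \delta_z$ and $y \cong x \oplus z$, contradicting non-triviality of the dilation.

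The first part is essentially formal once one notices that $\pi$ is completely isometric on $\rA(K)$, so a representing map of $x$ factoring through $\cmin(\rA(K))$ can be manufactured by extension, after which the uniqueness of representing maps for maximal points (Proposition \ref{prop:criterion-unique-rep-map}) finishes the job. I expect the only genuinely delicate point to be the last step of the second part: translating ``$y$ is a non-trivial dilation of $x$'' into ``$\mu = \alpha^*\delta_y\alpha$ fails to be a $*$-homomorphism,'' via the standard Stinespring-type computation identifying multiplicativity of a compression of a representation with the compressing subspace being reducing.
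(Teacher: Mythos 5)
Your proof is correct and follows essentially the same route as the paper: the forward direction via Arveson's extension theorem applied through the quotient $\pi:\rC(K)\to\cmin(\rA(K))$ combined with the uniqueness of representing maps for maximal points, and the converse via a maximal dilation $y$ of $x$ and the map $\mu=\alpha^*\delta_y\alpha$. The only cosmetic difference is that you phrase the converse as a contrapositive and spell out the multiplicativity-forces-reducing-subspace computation that the paper delegates to the proof of Proposition \ref{prop:criterion-unique-rep-map}.
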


\begin{proof}
Suppose $x \in K_n$ is maximal. Let $\iota : \rA(K) \to \cmin(\rA(K))$ denote the canonical embedding and define $\phi : \iota(\rA(K)) \to \cM_n$ by $\phi = x \circ \iota^{-1}$. By Arveson's extension theorem we can extend $\phi$ to a unital completely positive map $\psi : \cmin(\rA(K)) \to \cM_n$. Let $q : \rC(K) \to \cmin(\rA(K))$ denote the canonical quotient map. Then $(\phi \circ q)|_{\rA(K)} = x$. Since $x$ has a unique representing map, it follows that $\phi \circ q = \delta_x$. In particular, $\ker \delta_x \supseteq \ker q$. 

Conversely,  suppose that the only representing map for $x$ that factors through $\cmin(\rA(K))$ is $\delta_x$. Let $y \in K_p$ be a maximal dilation of $x$ and let $\alpha  \in \cM_{p,n}$ be an isometry such that $x = \alpha^* y \alpha$. Define a unital completely positive map $\mu : \rC(K) \to \cM_n$ by $\mu = \alpha^* \delta_y \alpha$. Then $\mu$ has barycenter $x$. From above, $\delta_y$ factors through $\cmin(\rA(K))$. Hence $\mu$ also factors through $\cmin(\rA(K))$. Therefore, by assumption $\mu = \delta_x$ and arguing as in the proof of Proposition \ref{prop:criterion-unique-rep-map} implies that $x$ is maximal.
\end{proof}

\section{Extreme points} \label{sec:extreme-points}

\subsection{Extreme points}

In this section we will introduce the definition of extreme point for an nc convex set. The basic idea is that there should be no way of expressing an extreme point as a non-trivial nc convex combination.

\begin{defn} \label{defn:extreme-point}
Let $K$ be an nc convex set. We will say that a point $x \in K_n$ is {\em extreme} if whenever $x$ is written as a finite nc convex combination $x = \sum \alpha_i^* x_i \alpha_i$ for $\{x_i \in K_{n_i}\}$ and nonzero $\{\alpha_i \in \cM_{n_i,n}\}$ satisfying $\sum \alpha_i^* \alpha_i = 1_n$, then each $\alpha_i$ is a positive scalar multiple of an isometry $\beta_i \in \cM_{n_i,n}$ satisfying $\beta_i^* x_i \beta_i = x$ and each $x_i$ decomposes with respect to the range of $\alpha_i$ as a direct sum $x_i = y_i \oplus z_i$ for $y_i,z_i \in K$ with $y_i$ unitarily equivalent to $x$. The set of all extreme points is denoted $\partial K = \coprod_n (\partial K)_n$.
\end{defn}

We will occasionally be interested in the (classical) extreme points of the compact convex set $K_n$ for some $n$, which we will denote by $\partial K_n$.

We also define a notion of pure point, which more closely resembles the classical notion of extreme point. We are grateful to Bojan Magajna for suggesting a definition that is preserved by affine nc homeomorphisms (see Proposition \ref{prop:preservation-extreme-points}) inspired by \cite{Maj2016}.

\begin{defn}\label{defn:pure-point}
Let $K$ be an nc convex set. We will say that a point $x \in K_n$ is {\em pure} if whenever $x$ is written as a finite nc convex combination $x = \sum \alpha_i^* x_i \alpha_i$ for $\{x_i \in K_{n_i}\}$ and nonzero $\{\alpha_i \in \cM_{n_i,n}\}$ satisfying $\sum \alpha_i^* \alpha_i = 1_n$, then each $\alpha_i$ is a positive scalar multiple of an isometry $\beta_i \in \cM_{n_i,n}$ satisfying $\beta_i^* x_i \beta_i = x$.
\end{defn}

\begin{rem} \label{rem:pure}
A pure point $x \in K_n$ is a (classical) extreme point of the compact convex set $K_n$. However, we will see in the next proposition that a (classical) extreme point of $K_n$ is not necessarily pure. If $x$ is pure, then it cannot be decomposed as a (non-trivial) direct sum, so the corresponding representation $\delta_x : \rC(K) \to \cM_n$ is irreducible. Note however that even if $\delta_x$ is irreducible, it is not necessarily true that $x$ is pure. For example, for any $x \in K_1$, $\delta_x$ is a character on $\rC(K)$, and in particular is irreducible. 
\end{rem}

\begin{prop} \label{prop:extreme-iff-pure-maximal}
Let $K$ be an nc convex set. A point $x\in K$ is extreme if and only if it is both pure and maximal.
\end{prop}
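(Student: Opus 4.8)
The plan is to observe that Definition~\ref{defn:extreme-point} is, essentially word for word, the conjunction of the defining property of a pure point (Definition~\ref{defn:pure-point}) together with an additional direct-sum clause, and then to identify that additional clause with maximality (Definition~\ref{defn:dilation}). So the proof will amount to carefully matching up three definitions rather than to any substantive computation.

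For the forward direction, if $x \in K_n$ is extreme then it is pure by direct comparison of the two definitions, since the hypothesis ``$x = \sum \alpha_i^* x_i \alpha_i$'' is identical in both and extremality in particular yields that each $\alpha_i$ is a positive scalar multiple of an isometry $\beta_i$ with $\beta_i^* x_i \beta_i = x$. To see that $x$ is also maximal, I would apply the definition of extreme to a \emph{one-term} nc convex combination: given a dilation $x = \alpha^* y \alpha$ with $\alpha$ an isometry, this is the nc convex combination $x = \alpha^* x_1 \alpha$ with $x_1 = y$ and $\alpha^*\alpha = 1_n$, so extremality forces $\alpha$ to be a (necessarily unit) positive scalar multiple of an isometry $\beta$ with $\beta^* y \beta = x$ and forces $y$ to decompose with respect to the range of $\alpha$ as $y = y_1 \oplus z$ with $y_1$ unitarily equivalent to $x$. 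Since $y_1 = \alpha^* y \alpha = x$, the dilation is trivial; hence $x$ has no non-trivial dilations, i.e. $x$ is maximal.

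For the reverse direction, suppose $x \in K_n$ is pure and maximal, and let $x = \sum_{i=1}^k \alpha_i^* x_i \alpha_i$ be a finite nc convex combination with each $\alpha_i \in \cM_{n_i,n}$ nonzero. Purity provides scalars $c_i > 0$ and isometries $\beta_i \in \cM_{n_i,n}$ with $\alpha_i = c_i \beta_i$ and $\beta_i^* x_i \beta_i = x$; thus each $x_i$ is a dilation of $x$ along $\beta_i$. Invoking maximality of $x$, each such dilation is trivial, so $x_i$ decomposes with respect to the range of $\beta_i$ as $x_i = y_i \oplus z_i$ with $y_i \in K$ equal to (hence unitarily equivalent to) $x$ and $z_i \in K$. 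Since the range of $\beta_i$ coincides with the range of $\alpha_i = c_i \beta_i$, this is exactly the extra clause demanded by Definition~\ref{defn:extreme-point}, so $x$ is extreme.

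I do not expect any real obstacle here; the argument is entirely a matter of matching up the definitions. The only points that need a little care are that a one-term nc convex combination is a legitimate instance to feed into the definition of extreme point (this is precisely what powers the implication extreme $\Rightarrow$ maximal), and that passing from $\alpha_i$ to the isometry $\beta_i = c_i^{-1}\alpha_i$ does not change the subspace appearing in the phrase ``with respect to the range of $\alpha_i$,'' so that maximality delivers exactly the decomposition required by the definition of extreme point.
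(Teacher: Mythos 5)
Your proof is correct and takes essentially the same route as the paper, whose own proof is just a terser assertion that the first clause of Definition~\ref{defn:extreme-point} is purity and that the second clause, combined with the first, is maximality. You supply exactly the details the paper leaves implicit — the one-term nc convex combination realizing an arbitrary dilation, and the observation that $\alpha_i$ and $\beta_i = c_i^{-1}\alpha_i$ have the same range — and both are handled correctly.
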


\begin{proof}
Suppose $x$ can be written as a finite nc convex combination $x = \sum \alpha_i^* x_i \alpha_i$ for $\{x_i \in K_{n_i}\}$ and nonzero $\{\alpha_i \in \cM_{n_i,n}\}$ satisfying $\sum \alpha_i^* \alpha_i = 1_n$. The condition that each $\alpha_i$ is a positive scalar multiple of an isometry $\beta_i \in \cM_{n_i,n}$ satisfying $\beta_i^* x_i \beta_i = x$ is equivalent to $x$ being pure. The condition that each $x_i$ decomposes with respect to the range of $\alpha_i$ as a direct sum $x_i = y_i \oplus z_i$ for $y_i,z_i \in K$ with $y_i$ unitarily equivalent to $x$, combined with the preceding condition, is equivalent to the maximality of $x$.
\end{proof}

The next result will be (implicitly) invoked when we apply the dual equivalence between compact nc convex sets and operator systems from Section \ref{sec:categorical-duality}.

\begin{prop} \label{prop:preservation-extreme-points}
Let $K$ and $L$ be nc convex sets and let $\theta : K \to L$ be an affine nc homeomorphism. Then $\theta$ maps pure points in $K$ to pure points in $L$ and maximal points in $K$ to maximal points in $L$. Hence $\theta$ maps extreme points in $K$ to extreme points in $L$.
\end{prop}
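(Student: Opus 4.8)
The plan is to verify directly that each of the three defining conditions — pure, maximal, and hence extreme — is transported along the affine nc homeomorphism $\theta$, using only the definitions and the fact that $\theta$ and $\theta^{-1}$ are continuous affine nc maps and therefore respect nc convex combinations (as noted in the remark following Definition~\ref{defn:nc-affine-map}).

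First I would treat purity. Suppose $x \in K_n$ is pure and write $\theta(x) = \sum \alpha_i^* w_i \alpha_i$ as a finite nc convex combination in $L$ with each $\alpha_i \in \cM_{n_i,n}$ nonzero and $\sum \alpha_i^* \alpha_i = 1_n$. Set $x_i = \theta^{-1}(w_i) \in K_{n_i}$. Applying $\theta^{-1}$, which respects nc convex combinations, gives $x = \sum \alpha_i^* x_i \alpha_i$. Since $x$ is pure, each $\alpha_i = t_i \beta_i$ for some positive scalar $t_i$ and isometry $\beta_i \in \cM_{n_i,n}$ with $\beta_i^* x_i \beta_i = x$. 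Applying $\theta$ and using that it respects compressions by isometries (condition (3) of Definition~\ref{defn:nc-affine-map}), we get $\beta_i^* w_i \beta_i = \beta_i^* \theta(x_i) \beta_i = \theta(\beta_i^* x_i \beta_i) = \theta(x)$. Thus the same $\beta_i$ and $t_i$ witness purity of $\theta(x)$.

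Next I would treat maximality. Suppose $x \in K_n$ is maximal and let $w \in L_p$ dilate $\theta(x)$, say $\theta(x) = \alpha^* w \alpha$ for an isometry $\alpha \in \cM_{p,n}$. Put $y = \theta^{-1}(w) \in K_p$; then $x = \theta^{-1}(\alpha^* w \alpha) = \alpha^* y \alpha$, so $y$ dilates $x$. By maximality of $x$, the dilation is trivial: $y \simeq x \oplus z$ with respect to the range of $\alpha$, for some $z \in K$. Writing this decomposition in terms of isometries $\beta$ (with range the range of $\alpha$) and $\gamma$ (onto the complement), we have $y = \beta x \beta^* + \gamma z \gamma^*$, and applying $\theta$ (which respects direct sums, condition (2)) gives $w = \beta\, \theta(x)\, \beta^* + \gamma\, \theta(z)\, \gamma^* \simeq \theta(x) \oplus \theta(z)$ with respect to the range of $\alpha$. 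Hence the dilation of $\theta(x)$ by $w$ is trivial, so $\theta(x)$ is maximal.

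Finally, by Proposition~\ref{prop:extreme-iff-pure-maximal} a point is extreme iff it is pure and maximal, so combining the two cases above shows $\theta$ maps extreme points to extreme points. I do not anticipate a genuine obstacle here; the only point requiring a little care is bookkeeping with the isometries that implement the direct-sum decompositions in the maximality argument — one must be explicit that ``decomposes with respect to the range of $\alpha$'' is an equivariant notion, so that conjugating by the relevant isometries commutes with $\theta$. Everything else is a direct unwinding of definitions using the two equivariance properties of $\theta$ together with the symmetric properties of $\theta^{-1}$.
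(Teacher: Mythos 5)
Your proposal is correct and follows essentially the same route as the paper's proof: apply $\theta^{-1}$ to pull a decomposition of $\theta(x)$ back to one of $x$, invoke purity (resp.\ maximality) of $x$, and push the resulting isometries/direct-sum decomposition forward with $\theta$ using its equivariance, then conclude via Proposition~\ref{prop:extreme-iff-pure-maximal}. The only difference is that you spell out the direct-sum decomposition in the maximality step with explicit isometries, which the paper leaves implicit.
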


\begin{proof}
Let $K$ and $L$ be nc convex sets and let $\theta : K \to L$ be an affine nc homeomorphism. Then there is an inverse affine nc homeomorphism $\theta^{-1} : L \to K$.

Let $x \in K_n$ be a pure point and suppose that $\theta(x)$ can be written as a finite nc convex combination $\theta(x) = \sum \alpha_i^* y_i \alpha_i$ for $\{y_i \in L_{n_i}\}$ and nonzero $\{\alpha_i \in \cM_{n_i,n}\}$ satisfying $\sum \alpha_i^* \alpha_i = 1_n$. Then applying $\theta^{-1}$ to both sides implies $x = \sum \alpha_i^* \theta^{-1}(y_i) \alpha_i$. Since $x$ is pure, each $\alpha_i$ is a positive scalar multiple of an isometry $\beta_i \in \cM_{n_i,n}$ satisfying $\beta_i^* \theta^{-1}(y_i) \beta_i = x$, say $\alpha_i = \gamma_i \beta_i$ for $\gamma_i > 0$. Applying $\theta$ to both sides implies $\beta_i^* y_i \beta_i = \theta(x)$. Hence $\theta(x)$ is pure. 

Now let $z \in K_m$ be a maximal point and let $u \in L_n$ be a dilation of $\theta(z)$. Then there is an isometry $\xi \in \cM_{n,m}$ such that $\theta(z) = \xi^* u \xi$. Applying $\theta^{-1}$ to both sides implies $z = \xi^* \theta^{-1}(u) \xi$. Hence $\theta^{-1}(u)$ is a dilation of $z$. Since $z$ is maximal, $\theta^{-1}(u)$ decomposes with respect to the range of $\xi$ as $\theta^{-1}(u) = z \oplus w$ for some $w \in K$. Since $\theta$ respects direct sums, applying $\theta$ to both sides implies $u$ decomposes with respect to the range of $\xi$ as $u = \theta(z) \oplus \theta(v)$. Hence $\theta(z)$ is maximal.

The fact that $\theta$ maps extreme points in $K$ to extreme points in $L$ now follows from Proposition \ref{prop:extreme-iff-pure-maximal}. 
\end{proof}

\begin{rem}
Say that a compact nc convex set $K$ over a dual operator space $E$ is {\em regularly embedded} if there is an {\em nc hyperplane} $H \subseteq \cM(E)$ of the form
\[
H_n = \{x \in E_n : \theta(x) = \gamma 1_n \}
\] 
for a continuous affine nc map $\theta : E \to \cM$ and a constant $\gamma \in \bR$ such that $K \subseteq H$ and $0_n \notin H_n$ for all $n$. This is a noncommutative analogue of the notion of a regular embedding of a compact convex set (see \cite{Alfsen}*{Chapter 2}).

If $K$ is regularly embedded, then a point $x \in K_n$ is pure in the sense of Definition \ref{defn:pure-point} if and only if whenever $x$ is written as a finite nc convex combination $x = \sum \alpha_i^* x_i \alpha_i$ for $\{x_i \in K_{n_i}\}$ and nonzero $\{\alpha_i \in \cM_{n_i,n}\}$ satisfying $\sum \alpha_i^* \alpha_i = 1_n$, then each $\alpha_i^* x_i \alpha_i$ is a positive scalar multiple of $x$. This is analogous to the definition of a pure unital completely positive map (see e.g. \cite{DK2015}).

To see this, suppose that $\alpha_i^* x_i \alpha_i = \delta_i x$ for $\delta_i > 0$ and let $\beta_i = \delta_i^{-1/2} \alpha_i$. Then $\beta_i^* x_i \beta_i = x$. Applying $\theta$ to both sides implies $\gamma \beta_i^* \beta_i = \gamma 1_n$, i.e. $\beta_i^* \beta_i = 1_n$. Hence $\beta_i$ is an isometry satisfying $\beta_i^* x_i \beta_i = x$.

The canonical affine nc homeomorphism from $K$ to the nc state space of the operator system $\rA(K)$ of affine nc functions on $K$ is a regular embedding of $K$ into the dual operator space $\rA(K)^*$ with respect to the nc hyperplane $H \subseteq \cM(\rA(K)^*)$ defined by
\[
H_n = \{x \in \cM_n(\rA(K)^*) : 1_{\rA(K)}(x) = 1_n \},
\]
where $1_{\rA(K)} \in \rA(K)$ denotes the unit. Hence in this case, the points in $K$ that are pure in the sense of Definition \ref{defn:pure-point} are precisely the points in $K$ that are pure unital completely positive maps.
\end{rem}

\begin{example} \label{ex:arveson-pure-c-star-alg}
Let $A$ be a C*-algebra with nc state space $K$. Arveson \cite{Arv1969}*{Corollary 1.4.3} showed that a point $x \in K_n$ is pure if and only if $x$ is a compression of an irreducible representation of $A$. In particular, if $A$ is commutative so that every irreducible representation of $A$ is a character, then for $n \geq 2$ no point of $K_n$ is pure.
\end{example}

\begin{example}\label{ex:c-star-alg-extreme-pts}
Let $A$ be a C*-algebra with nc state space $K$ so that $A$ is completely order isomorphic to $\rA(K)$. If $x \in K$ is a representation of $A$, then it is clear that $x$ is necessarily maximal. On the other hand, if $x$ is maximal, then by Proposition \ref{prop:criterion-unique-rep-map}, the representation $\delta_x$ is the unique representing map for $x$. Moreover, by Proposition \ref{prop:maximal-reps-factor-through-cmin}, $\delta_x$ factors through $\cmin(\rA(K)) = A$. So $x$ is a representation of $A$. Therefore, $x$ is maximal precisely when it is a representation of $A$. If $x \in K$ is a representation, then Example \ref{ex:arveson-pure-c-star-alg} implies that it is pure if and only if it is irreducible. It follows that the extreme points $\partial K$ of $K$ are precisely the irreducible representations of $A$.
\end{example}

\begin{thm}\label{thm:extreme}
Let $K$ be a compact nc convex set.
A point $x \in K_n$ is an extreme point if and only if the representation $\delta_x : \rC(K) \to M_n$ is both irreducible and the unique representing map for $x$.
\end{thm}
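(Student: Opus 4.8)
The plan is to deduce the theorem from the two immediately preceding propositions. By Proposition~\ref{prop:extreme-iff-pure-maximal} a point $x\in K_n$ is extreme precisely when it is pure and maximal; by Proposition~\ref{prop:criterion-unique-rep-map} it is maximal precisely when $\delta_x$ is its unique representing map; and by Remark~\ref{rem:pure}, if $x$ is pure then $\delta_x$ is irreducible. Together these give the forward implication immediately, and they reduce the converse to the single claim: \emph{if $x\in K_n$ is maximal and $\delta_x:\rC(K)\to\cM_n$ is irreducible, then $x$ is pure.} Indeed, assuming $\delta_x$ irreducible and the unique representing map, Proposition~\ref{prop:criterion-unique-rep-map} gives maximality, the claim gives purity, and Proposition~\ref{prop:extreme-iff-pure-maximal} then gives extremeness.

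To prove the claim, take a finite nc convex combination $x=\sum_{i=1}^{k}\alpha_i^*x_i\alpha_i$ with $x_i\in K_{n_i}$, nonzero $\alpha_i\in\cM_{n_i,n}$ and $\sum_i\alpha_i^*\alpha_i=1_n$, and assemble it into the single dilation $y=\bigoplus_{i=1}^k x_i\in K_m$ with isometry $\alpha=\sum_i E_i\alpha_i\in\cM_{m,n}$, where $E_i\in\cM_{m,n_i}$ are the coordinate isometries, so that $x=\alpha^*y\alpha$. Maximality of $x$ forces $y$ to decompose with respect to $V:=\alpha H_n$ as $y=x\oplus z$, which by the universal property of $\rC(K)=\cmax(\rA(K))$ (and the fact that $\rA(K)$ generates $\rC(K)$) means that $V$ reduces $\delta_y$ with $\alpha^*\delta_y(\cdot)\alpha=\delta_x$; likewise $\delta_y=\bigoplus_i\delta_{x_i}$, so the coordinate projections $P_i=E_iE_i^*$ lie in $\delta_y(\rC(K))'$. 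Now irreducibility of $\delta_y|_V\cong\delta_x$ makes the compressed commutant $P_V\delta_y(\rC(K))'P_V$ equal to $\bC P_V$, where $P_V=\alpha\alpha^*$, so $P_VP_iP_V=\lambda_iP_V$ for scalars $\lambda_i\geq 0$; multiplying by $\alpha^*$ on the left and $\alpha$ on the right yields $\alpha_i^*\alpha_i=\lambda_i1_n$, and $\lambda_i>0$ since $\alpha_i\neq 0$, so $\beta_i:=\lambda_i^{-1/2}\alpha_i$ is an isometry and $\alpha_i$ is a positive scalar multiple of it. Finally, $u_i:=\lambda_i^{-1/2}P_VP_i=\alpha(E_i\beta_i)^*$ is a product of elements of $\delta_y(\rC(K))'$, hence commutes with $\delta_y$; multiplying the relation $u_i\delta_y(f)=\delta_y(f)u_i$ on the left by $\alpha^*$ and on the right by $E_i\beta_i$, and using $\alpha^*\alpha=1_n$, $(E_i\beta_i)^*(E_i\beta_i)=1_n$ and $\alpha^*\delta_y(f)\alpha=\delta_x(f)$, gives $(E_i\beta_i)^*\delta_y(f)(E_i\beta_i)=\delta_x(f)$ for all $f\in\rC(K)$. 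Evaluating on $\rA(K)$ and using the equivariance of affine nc functions (Definition~\ref{defn:nc-affine-map}), this reads $\beta_i^*x_i\beta_i=x$, since $\rA(K)$ separates the points of $K$. Hence $x$ is pure.

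The only genuinely nonroutine point is the last step of the converse, the identity $\beta_i^*x_i\beta_i=x$ rather than merely ``$\alpha_i$ is a scalar multiple of an isometry'': the example of a character (where $\delta_x$ is always irreducible) shows that irreducibility alone is far too weak, so the argument must actually use maximality. The right device is to package the whole nc convex combination into the single dilation $y=\bigoplus x_i$, so that maximality of $x$ bites on a reducing subspace $V$ of $\delta_y$ with $\delta_y|_V\cong\delta_x$, and the coordinate projections $P_i$ automatically sit in the commutant of $\delta_y$; after that, everything reduces to a short commutant computation, with the two ``translation'' facts (that the decomposition $y=x\oplus z$ with respect to $V$ is the same as $V$ reducing $\delta_y$ with $\delta_y|_V\cong\delta_x$, and that $\delta_y=\bigoplus\delta_{x_i}$) coming from the universal property of $\cmax(\rA(K))$ together with the fact that $\rA(K)$ generates $\rC(K)$.
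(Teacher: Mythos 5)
Your proof is correct. The forward implication is exactly the paper's: extreme $\Leftrightarrow$ pure and maximal (Proposition~\ref{prop:extreme-iff-pure-maximal}), maximal $\Leftrightarrow$ unique representing map (Proposition~\ref{prop:criterion-unique-rep-map}), and pure $\Rightarrow$ irreducible (Remark~\ref{rem:pure}). For the converse, however, you take a genuinely different route. The paper forms the unital completely positive map $\mu = \sum \alpha_i^* \delta_{x_i} \alpha_i$, notes that it represents $x$ so that uniqueness forces $\mu = \delta_x$, and then invokes Arveson's characterization of pure points in the nc state space of a C*-algebra (Example~\ref{ex:arveson-pure-c-star-alg}, i.e.\ Corollary~1.4.3 of \cite{Arv1969}) to conclude that each $\alpha_i$ is a positive multiple of an isometry $\beta_i$ with $\beta_i^* \delta_{x_i} \beta_i = \delta_x$; purity of $x$ follows by restricting to $\rA(K)$. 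You instead isolate the claim ``maximal $+$ $\delta_x$ irreducible $\Rightarrow$ pure'' and prove it from scratch: packaging the convex combination as the single dilation $y = \oplus x_i$, using maximality to get the reducing subspace $V = \alpha H_n$ with $\delta_y|_V \cong \delta_x$, and then running a commutant computation ($P_V \delta_y(\rC(K))' P_V = \bC P_V$ by irreducibility, so $P_V P_i P_V = \lambda_i P_V$, whence $\alpha_i^*\alpha_i = \lambda_i 1_n$ and the partial isometries $u_i = \alpha(E_i\beta_i)^*$ in the commutant transport $\delta_x$ onto $\beta_i^*\delta_{x_i}\beta_i$). I checked the computation and it is sound, including the step $\beta_i^* x_i \beta_i = x$, which is the genuinely nonroutine point you correctly flag. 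What your approach buys is self-containedness: you do not need the black box from \cite{Arv1969}, at the cost of essentially reproving the special case of it that is needed here; the paper's argument is shorter but leans on that external result and on the slightly slicker observation that the whole convex combination of the $\delta_{x_i}$ is itself a representing map for $x$.
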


\begin{proof}
If $x$ is extreme, then by Proposition \ref{prop:extreme-iff-pure-maximal} it is pure and maximal. In this case, Remark \ref{rem:pure} implies that $\delta_x$ is irreducible and Proposition \ref{prop:criterion-unique-rep-map} implies that $\delta_x$ is the unique representing map for $x$.

For the converse, suppose that $\delta_x$ is both irreducible and the unique representing map for $x \in K_n$. By Proposition \ref{prop:extreme-iff-pure-maximal}, to show that $x$ is extreme it suffices to show that $x$ is pure and maximal. Proposition \ref{prop:criterion-unique-rep-map} implies that $x$ is maximal.

To see that $x$ is pure, suppose that $x$ can be written as a finite nc convex combination $x = \sum \alpha_i^* x_i \alpha_i$ for $\{x_i \in K_{n_i}\}$ and nonzero $\{\alpha_i \in \cM_{n_i,n}\}$ satisfying $\sum \alpha_i^* \alpha_i = 1_n$. Define a unital completely positive map $\mu : \rC(K) \to \cM_n$ by $\mu = \sum \alpha_i^* \delta_{x_i} \alpha_i$. Then $\mu$ has barycenter $x$, and hence represents $x$. Since $x$ has a unique representing map, this implies $\mu = \delta_x$. Since $\delta_x$ is irreducible, it follows from Example \ref{ex:arveson-pure-c-star-alg} that it is a pure point in the nc state space of $\rC(K)$. Hence each $\alpha_i$ is a scalar multiple of an isometry $\beta_i$ satisfying $\beta_i^* \delta_{x_i} \beta_i = \delta_x$, implying $\beta_i^* x_i \beta_i = x$. Hence $x$ is pure. 
\end{proof}

\begin{example}\label{Ex:commutative_extreme}
Let $C$ be a compact convex set and let $\rA(C)$ denote the function system of continuous affine functions on $C$, considered as an operator subsystem of the C*-algebra $\rC(C)$ of continuous functions on $C$. Let $K$ denote the nc state space of $\rA(C)$, so that $\rA(C)$ is completely order isomorphic to $\rA(K)$. Then $K_1 = C$ and $\cmin(\rA(K)) = \rC(\ol{\partial C})$ (see the beginning of Section \ref{sec:cmin}). We will show that $\partial K = \partial C$.

For $x \in \partial K$, Theorem \ref{thm:extreme} implies that the representation $\delta_x$ is both irreducible and maximal. In this case, Proposition \ref{prop:maximal-reps-factor-through-cmin} implies that $\delta_x$ factors through $\rC(\ol{\partial C})$. Since $\rC(\ol{\partial C})$ is commutative, it follows that $x \in K_1$. Hence $x \in (\partial K)_1$ and it is clear that $x \in \partial C$.

On the other hand, suppose $x \in \partial C$. If $y \in K_n$ dilates $x$, then there is an isometry $\alpha \in \cM_{n,1}$ such that $x = \alpha^* y \alpha$. Define a state $\mu : \rC(K) \to \bC$ by $\mu = \alpha^* \delta_y \alpha$. By Proposition \ref{prop:maximal-reps-factor-through-cmin}, $\delta_y$ factors through $\rC(\ol{\partial C})$. Hence $\mu$ factors through $\rC(\ol{\partial C})$. So by the Riesz-Markov-Kakutani representation theorem, $\mu$ can be identified with a probability measure on $\ol{\partial C}$ with barycenter $x$. Since $x$ is an extreme point in $C$, it follows that $\mu = \delta_x$. Hence $y$ is a trivial dilation of $x$, implying that $x$ is maximal. Since $\delta_x$ is irreducible, it follows from Theorem \ref{thm:extreme} that $x \in (\partial K)_1$. Therefore $\partial K = \partial C$.
\end{example}

\subsection{Existence of extreme points}

The fact that every compact nc convex set has extreme points is highly non-trivial. In fact, it is equivalent to a conjecture of Arveson \cite{Arv1969} about the existence of boundary representations for operator systems, which was open for over 45 years. The conjecture was eventually verified by Arveson himself \cite{Arv2008} in the separable case and by the authors \cite{DK2015} in the general case. 

Let $S$ be an operator system. An irreducible representation $\pi : \cmin(S) \to \B(H)$ is said to be a boundary representation for $S$ if whenever $\phi : \cmin(S) \to \B(H)$ is a unital completely positive map satisfying $\phi|_S = \pi|_S$, then $\phi = \pi$. In other words, $\pi$ is a boundary representation for $S$ if the restriction $\pi|_S$ has a unique extension to a unital completely positive map on $\cmin(S)$. The next result is an immediate consequence of Theorem \ref{thm:extreme}.

\begin{cor}\label{cor:extreme}
Let $S$ be an operator system with nc state space $K$. The extreme points $\partial K$ of $K$ are precisely the restrictions of boundary representations of $S$.
\end{cor}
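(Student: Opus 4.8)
The plan is to deduce the statement directly from Theorem~\ref{thm:extreme}, using Propositions~\ref{prop:criterion-unique-rep-map} and~\ref{prop:maximal-reps-factor-through-cmin} to move between the two C*-algebras $\rC(K) = \cmax(\rA(K))$ and $\cmin(\rA(K)) = \cmin(S)$ attached to the operator system. Throughout I identify $S$ with $\rA(K)$ via the canonical complete order isomorphism (so $\rC(K) = \cmax(S)$), write $q : \rC(K) \to \cmin(S)$ for the canonical surjective $*$-homomorphism with $q|_{\rA(K)} = \iota$, and recall that for $x \in K_n$ the map $\delta_x : \rC(K) \to \cM_n$ is the unique unital $*$-homomorphism with $\delta_x|_{\rA(K)} = x$. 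Since a boundary representation of $S$ is an irreducible, hence cyclic, representation of the quotient $\cmin(S)$ of $\cmax(\rA(K))$, it acts on a Hilbert space of dimension $n \le \kappa$, so I may assume it acts on $H_n$ and regard its restriction to $S$ as a point of $K_n$.

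For the inclusion of $\partial K$ in the set of restrictions of boundary representations, I would take $x \in (\partial K)_n$. Theorem~\ref{thm:extreme} gives that $\delta_x$ is irreducible and is the unique representing map for $x$; in particular $x$ is maximal by Proposition~\ref{prop:extreme-iff-pure-maximal}, so Proposition~\ref{prop:maximal-reps-factor-through-cmin} lets me write $\delta_x = \rho \circ q$ for a unital $*$-homomorphism $\rho : \cmin(S) \to \cM_n$. Surjectivity of $q$ makes $\rho$ irreducible, and $\rho|_S = \rho \circ q|_{\rA(K)} = \delta_x|_{\rA(K)} = x$. To see $\rho$ is a boundary representation, I would take an arbitrary u.c.p.\ map $\psi : \cmin(S) \to \cM_n$ with $\psi|_S = \rho|_S$; then $\psi \circ q$ is a u.c.p.\ map on $\rC(K)$ restricting to $x$ on $\rA(K)$, hence a representing map for $x$, so $\psi \circ q = \delta_x = \rho \circ q$, and surjectivity of $q$ forces $\psi = \rho$. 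Thus $x = \rho|_S$ is the restriction of a boundary representation.

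For the reverse inclusion, I would start with a boundary representation $\pi : \cmin(S) \to \cM_n$ and set $x := \pi|_S \in K_n$. Then $\pi \circ q$ is a unital $*$-homomorphism on $\rC(K)$ restricting to $x$, so $\pi \circ q = \delta_x$ by uniqueness, and $\delta_x$ is irreducible because $\pi$ is and $q$ is surjective. It remains to show $\delta_x$ is the unique representing map for $x$; by Proposition~\ref{prop:criterion-unique-rep-map} this is equivalent to $x$ being maximal, and by the converse half of Proposition~\ref{prop:maximal-reps-factor-through-cmin} it suffices to check that the only representing map for $x$ factoring through $\cmin(S)$ is $\delta_x$. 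So suppose $\mu = \tilde\mu \circ q$ with $\tilde\mu : \cmin(S) \to \cM_n$ u.c.p.\ and $\mu|_{\rA(K)} = x$; then $\tilde\mu|_S = x = \pi|_S$, so the boundary property of $\pi$ gives $\tilde\mu = \pi$ and hence $\mu = \delta_x$. Then Theorem~\ref{thm:extreme} gives $x \in \partial K$.

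The argument is essentially bookkeeping, so I do not expect a genuine obstacle; the one point requiring care is the consistent transfer of completely positive maps between $\cmin(S)$ and $\rC(K) = \cmax(\rA(K))$ through $q$ — Theorem~\ref{thm:extreme} is phrased entirely in terms of $\rC(K)$, whereas the boundary-representation hypothesis lives on $\cmin(S)$ — together with the cardinality bookkeeping ensuring that every boundary representation is unitarily equivalent to one whose restriction is a legitimate point of $K$.
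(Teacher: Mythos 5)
Your proof is correct and follows the route the paper intends: the paper gives no separate argument, stating only that the corollary is an immediate consequence of Theorem~\ref{thm:extreme}, and the translation you carry out between representing maps on $\rC(K)=\cmax(\rA(K))$ and u.c.p.\ extensions to $\cmin(S)$ via Propositions~\ref{prop:criterion-unique-rep-map} and~\ref{prop:maximal-reps-factor-through-cmin} is exactly the bookkeeping being suppressed. No issues.
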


The next result is a restatement of \cite{DK2015}*{Theorem 2.4}. It asserts that the existence of extreme points in a compact nc convex set, 

\begin{restatable}{thm}{PureDilationTheorem} \label{thm:dilation-pure-to-extreme}
Let $K$ be a compact nc convex set. Then every pure point in $K$ has an extreme dilation.
\end{restatable}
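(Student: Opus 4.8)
The plan is to combine the Dritschel--McCullough maximal dilation theorem (Theorem~\ref{thm:dritschel-mccullough}) with a purification step, and to recognize the output as extreme via Theorem~\ref{thm:extreme}, which identifies extreme points as exactly the maximal points whose point evaluation is an irreducible representation. So the strategy is: dilate $x$ to a maximal point (getting maximality for free), then cut that dilation down until its point evaluation becomes irreducible, being careful that every cut preserves maximality.

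First I would reduce to a maximal, cyclic dilation. Let $x \in K_n$ be pure. By Theorem~\ref{thm:dritschel-mccullough} there is a maximal $y \in K_m$ and an isometry $\alpha \in \cM_{m,n}$ with $x = \alpha^* y \alpha$. Compressing $y$ to the closed subspace spanned by $\{\, f(y)\alpha h : f \in \rC(K),\ h \in H_n \,\}$ --- which is reducing for $\delta_y$, being a cyclic subspace of a representation, hence gives a direct summand of $y$, which is again maximal --- we may assume $(y,\alpha)$ is a minimal representation of $\mu := \alpha^* \delta_y \alpha$, i.e.\ $\alpha H_n$ is cyclic for $\delta_y$.

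The core of the argument is that purity of $x$ rigidly constrains the commutant $\mathfrak{M} := \delta_y(\rA(K))' \subseteq \B(H_m)$. For a projection $e \in \mathfrak{M}$ the subspace $eH_m$ reduces $\delta_y$; if $0 \neq e \neq 1$, cyclicity forces $e\alpha \neq 0 \neq (1-e)\alpha$, so the decomposition $\delta_y = e\delta_y e \oplus (1-e)\delta_y(1-e)$ exhibits
\[
x = (e\alpha)^*\, y\, (e\alpha) + \bigl((1-e)\alpha\bigr)^*\, y\, \bigl((1-e)\alpha\bigr)
\]
as a genuine nc convex combination with nonzero coefficients. Purity of $x$ then forces each coefficient to be a positive scalar times an isometry; in particular $\alpha^* e \alpha$ is a nonnegative scalar multiple of $1_n$, and the corresponding isometry $\beta_e$ satisfies $\beta_e^* y_e \beta_e = x$, where $y_e$ is the compression of $y$ to $eH_m$ --- a direct summand of the maximal point $y$, hence itself a maximal dilation of $x$, again via a minimal representation, with commutant the reduced algebra $e\mathfrak{M}e$. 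Running over all projections, $T \mapsto \alpha^* T \alpha$ defines a state $\omega$ on $\mathfrak{M}$ with $\alpha^* T \alpha = \omega(T)1_n$, which is normal (a compression) and faithful (cyclicity of $\alpha H_n$ makes it separating for $\mathfrak{M}$).

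It remains to cut down until $\mathfrak{M} = \bC 1$: then $\delta_y$ is irreducible and, $y$ being maximal, Theorem~\ref{thm:extreme} gives $y \in \partial K$ dilating $x$. When $\mathfrak{M}$ is atomic one finishes at once by taking a minimal projection $e$, so that $e\mathfrak{M}e = \bC e$. I expect the main obstacle to be exactly the case where $\mathfrak{M}$ is diffuse: there is then no literal irreducible subrepresentation, a naive Zorn's lemma argument on a descending chain of reducing projections may converge to $0$, and one cannot localize at a point of the spectrum of a masa. This is the genuinely hard case --- it is where \cite{DK2015} improves on Arveson's separable argument \cite{Arv2008} --- and I would handle it by the maximality/selection argument of \cite{DK2015}, working with a suitable well-founded order on minimal-representation maximal dilations of $x$ (controlled by the faithful normal state $\omega$ together with the size of the residual commutant) to produce a minimal element whose commutant has no nontrivial projections, hence equals $\bC 1$; the previous paragraph then finishes the proof.
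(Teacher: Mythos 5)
Your local steps are mostly sound: the reduction to a minimal (cyclic) representation, the observation that purity of $x$ forces $\alpha^* T \alpha = \omega(T)1_n$ for $T$ in the commutant $\fM = \delta_y(\rC(K))'$ (with $\omega$ normal and faithful), and the fact that cutting by a nontrivial projection $e \in \fM$ produces another maximal, minimally represented dilation of $x$ with commutant $e\fM e$. But the proof has a genuine hole exactly where you flag one, and it is not patchable within your framework. When $\fM$ is diffuse, the fixed dilation $y$ has \emph{no} irreducible subrepresentation at all: $\delta_y$ is a direct integral over a non-atomic measure, no compression of $y$ by a projection in $\fM$ is irreducible, a decreasing net of such projections can have infimum $0$ even though $\omega$ is faithful, and there is no reason any fiber of the direct integral should dilate $x$. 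So no well-founded order on sub-dilations of the fixed $y$ can terminate at a trivial commutant; the extreme dilation must be produced by a different mechanism rather than extracted from $y$. Deferring this case to ``the maximality/selection argument of \cite{DK2015}'' is deferring essentially the entire content of the theorem.

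The paper's proof avoids commutants altogether and is, as the authors note, independent of Theorem \ref{thm:dritschel-mccullough} (which you invoke at the outset). It works in the nc state space $L$ of $\rC(K)$: the set $F = \{\mu \in L_n : \mu|_{\rA(K)} = x\}$ is a closed face (this is where purity of $x$ enters) which is hereditary for the dilation order, and Zorn's lemma yields a minimal closed hereditary face $F_0 \subseteq F$. If $F_0$ contained two points, Proposition \ref{P:agree-on-convex} would give a convex nc function $f$ separating them, and $\{\mu \in F_0 : \mu(f) = A\}$, for $A$ a maximal element of the compact convex set $\{\mu(f) : \mu \in F_0\} \subseteq (\cM_n)_{sa}$, would be a strictly smaller closed hereditary face --- hereditary precisely because $f$ is convex and $\prec_d$ implies $\prec_c$. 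Hence $F_0 = \{\mu_0\}$ with $\mu_0$ pure and dilation-maximal; then \cite{Arv1969}*{Corollary I.4.3} makes the minimal Stinespring dilation $\delta_y$ of $\mu_0$ irreducible, and Theorems \ref{thm:equivalence-dilation-maximal-unique-representing-map} and \ref{thm:extreme} make $y$ extreme. That convex-function/hereditary-face argument is the replacement you would need for your missing step.
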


We will give a new proof of Theorem \ref{thm:dilation-pure-to-extreme} using ideas from this paper in Section \ref{sec:max-in-diln-order}.

The next result is a restatement of \cite{DK2015}*{Theorem 3.1}. It asserts that the extreme points in a compact nc convex set completely norm the affine nc functions on the set. 

\begin{thm} \label{thm:extreme-pts-norming}
Let $K$ be a compact nc convex set. For every $n \in \bN$ and $a \in \cM_n(\rA(K))$ there is an extreme point $x \in \partial K$ such that $\|a\| = \|a(x)\|$.
\end{thm}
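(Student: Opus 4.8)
The plan is to reduce the statement to the existence of a \emph{pure} point of $K$ at which $a$ attains its norm, and then to promote that point to an extreme point using Theorem~\ref{thm:dilation-pure-to-extreme}; the compression inequality for affine nc functions then guarantees the norm is preserved.

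First I would check that $\|a\|$ is attained at some point of $K$. Viewing $a$ as an element of $\rA(K,\cM_n(\cM))$, its uniform norm agrees with its norm in the operator system $\cM_n(\rA(K))$, and since $\iota\colon\rA(K)\to\rC(K)=\cmax(\rA(K))$ is a complete order embedding this equals $\|a\|_{\cM_n(\rC(K))}$. Now $\|a\|^2=\|a^{*}a\|_{\cM_n(\rC(K))}$ with $a^{*}a\ge 0$, so by Krein--Milman applied to the face of the state space of the C*-algebra $\cM_n(\rC(K))$ on which $\|a\|^2 1-a^{*}a$ vanishes, there is a pure state $\omega$ with $\omega(a^{*}a)=\|a\|^2$. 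Its GNS representation is irreducible and attains $\|a\|$ on $a$; since every irreducible representation of $\cM_n(\rC(K))$ is the $n$-fold amplification of an irreducible representation $\pi$ of $\rC(K)$ (which, being cyclic, acts on a Hilbert space of cardinality at most $\kappa$), the point $x:=\pi|_{\rA(K)}\in K$ satisfies $\|a(x)\|=\|a\|$, and $\delta_x=\pi$ is irreducible.

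The crux is to upgrade $x$ to a \emph{pure} point $x'$ of $K$ with $\|a(x')\|=\|a\|$: in the noncommutative setting irreducibility of $\delta_x$ does not force $x$ to be pure (Remark~\ref{rem:pure}), so real work is needed here. One route: with $r=\|a\|$, the self-adjoint element $p=\begin{sbmatrix} r 1_n & a \\ a^{*} & r 1_n \end{sbmatrix}\in\cM_{2n}(\rA(K))$ is positive but satisfies $p\not\ge\varepsilon 1$ for every $\varepsilon>0$, so by Hahn--Banach there is a state of $\cM_{2n}(\rA(K))$ annihilating $p$, and by Krein--Milman on the closed face of such states one may take it pure; translating a pure state of $\cM_{2n}(\rA(K))$ back into a pure point of $K$ — the operator-system analogue of Arveson's description recalled in Example~\ref{ex:arveson-pure-c-star-alg}, via a compression-and-renormalization argument — produces the desired $x'$. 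This step is essentially the technical heart of the boundary representation theorem; alternatively, since Theorem~\ref{thm:extreme-pts-norming} is a restatement of \cite{DK2015}*{Theorem~3.1}, one may simply invoke that result to obtain a boundary representation $\pi$ of $\rA(K)$ with $\|\pi_n(a)\|=\|a\|$, whose restriction $\pi|_{\rA(K)}$ is an extreme point of $K$ by Corollary~\ref{cor:extreme}.

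Finally, by Theorem~\ref{thm:dilation-pure-to-extreme} the pure point $x'$ has an extreme dilation $y\in\partial K$, say $x'=\beta^{*}y\beta$ for an isometry $\beta$. Because $a$ is a continuous affine nc function it is equivariant under compressions, so $a(x')=(1_n\otimes\beta)^{*}a(y)(1_n\otimes\beta)$ is a compression of $a(y)$; hence $\|a\|=\|a(x')\|\le\|a(y)\|\le\|a\|_\infty=\|a\|$, which forces $\|a(y)\|=\|a\|$, as required. The main obstacle is the passage to a \emph{pure} (equivalently, boundary-representation) norming point in the third paragraph: the first paragraph is a routine Krein--Milman argument on C*-algebra state spaces, and the last is the elementary compression inequality together with Theorem~\ref{thm:dilation-pure-to-extreme}.
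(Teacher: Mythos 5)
The paper does not actually prove Theorem~\ref{thm:extreme-pts-norming}: it is presented as a restatement of \cite{DK2015}*{Theorem 3.1} with no internal argument, so what is really required is a reconstruction of that proof, and your outline is the right one. Your first paragraph is correct but ends up unused (the irreducible representation it produces plays no role in the later steps), and your final paragraph is exactly right: once a pure point $x'$ with $\|a(x')\|=\|a\|$ is in hand, Theorem~\ref{thm:dilation-pure-to-extreme} gives an extreme dilation $y=\beta x' \beta^*$-wise, and since $a$ is affine, $a(x')=(1_n\otimes\beta)^*a(y)(1_n\otimes\beta)$ is a compression of $a(y)$, whence $\|a\|=\|a(x')\|\le\|a(y)\|\le\|a\|$.

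The step you have not actually supplied is the conversion of a pure state $\omega$ of $\cM_{2n}(\rA(K))$ annihilating $p=\begin{sbmatrix} r1_n & a\\ a^* & r1_n \end{sbmatrix}$ into a pure point of $K$ that norms $a$; this is the technical core of the theorem, and ``a compression-and-renormalization argument'' does not describe it. What is needed is Farenick's structure theorem \cite{Far2004}: every pure state of $\cM_{2n}(\rA(K))$ has the form $\omega([s_{ij}])=\sum_{i,j}\ip{\phi(s_{ij})\eta_j,\eta_i}$ for a pure unital completely positive map $\phi$ on $\rA(K)$ and vectors $\eta_1,\dots,\eta_{2n}$ with $\sum\|\eta_i\|^2=1$. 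Writing $\eta'=(\eta_1,\dots,\eta_n)$ and $\eta''=(\eta_{n+1},\dots,\eta_{2n})$, the identity $0=\omega(p)=r+2\re\ip{\phi_n(a)\eta'',\eta'}$ together with the estimate $|\ip{\phi_n(a)\eta'',\eta'}|\le\|\phi_n(a)\|\cdot\tfrac12(\|\eta'\|^2+\|\eta''\|^2)\le r/2$ forces equality throughout and hence $\|\phi_n(a)\|=r=\|a\|$; and $\phi$ is a pure point of $K$ by the identification of pure points of an nc state space with pure unital completely positive maps noted after Definition~\ref{defn:pure-point}. With this lemma supplied, your argument closes. Note finally that your proposed fallback of ``simply invoking \cite{DK2015}*{Theorem 3.1}'' is circular: that theorem is verbatim the statement under proof, and citing it is exactly what the paper already does.
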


The next result is a restatement of \cite{DK2015}*{Theorem 3.4}. It asserts that the extreme points in a compact nc convex set give rise to a faithful representation of the minimal C*-algebra of the corresponding operator system of continuous affine nc functions.

\begin{thm} \label{thm:extreme-pts-complete-order-monomorphism}
Let $K$ be a compact nc convex set. 
Define a Hilbert space $H = \oplus_n \oplus_{x \in (\partial K)_n} H_n$ and let $\pi : \rC(K) \to \B(H)$ denote the representation defined by $\pi = \oplus_{x \in \partial K} \delta_x$. Then the restriction $\pi|_{\rA(K)}$ is a complete order monomorphism and the image $\pi(\rC(K))$ is isomorphic to $\cmin(\rA(K))$.
\end{thm}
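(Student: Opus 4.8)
The plan is to verify the two assertions in turn. Note first that $\pi$ is a unital $*$-homomorphism of $\rC(K)=\cmax(\rA(K))$: each $\delta_x$ is a representation by the discussion in Section~\ref{sec:representations-maps}, and it is irreducible when $x\in\partial K$ by Theorem~\ref{thm:extreme}. For the first assertion I would show that $\pi|_{\rA(K)}$ reflects positivity at every matrix level; for the second I would identify $\ker\pi$ with the boundary ideal $J:=\ker\bigl(\rC(K)\to\cmin(\rA(K))\bigr)$.

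\emph{Complete order monomorphism.} Since $\pi|_{\rA(K)}$ is unital completely positive, it is enough to prove: if $a\in\cM_n(\rA(K))$ is self-adjoint and $\pi_n(a)\ge 0$, then $a\ge 0$. Here $\pi_n(a)=\bigoplus_{x\in\partial K}a(x)$, so the hypothesis says $a(x)\ge 0$ for every $x\in\partial K$. I would use the elementary fact that for a self-adjoint element $a$ of an operator system one has $a\ge 0$ if and only if $\bigl\|\,\|a\|\,1-a\,\bigr\|\le\|a\|$ (from $-\|a\|1\le a\le\|a\|1$). Applying Theorem~\ref{thm:extreme-pts-norming} to $c:=\|a\|\,1_{\rA(K)}-a\in\cM_n(\rA(K))$ produces an extreme point $x_1\in\partial K$ with $\|c\|=\|c(x_1)\|$. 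Since evaluation at $x_1$ is a complete contraction, $\|a(x_1)\|\le\|a\|$, and combining $a(x_1)\ge 0$ with $a(x_1)\le\|a(x_1)\|1\le\|a\|1$ gives $0\le c(x_1)=\|a\|1-a(x_1)\le\|a\|1$, whence $\|c\|=\|c(x_1)\|\le\|a\|$ and therefore $a\ge 0$. Applying the $n=1$ case to $\pm\re a$ and $\pm\im a$ for any $a$ with $\pi(a)=0$ shows $\pi|_{\rA(K)}$ is injective, so $\pi|_{\rA(K)}$ is a complete order monomorphism.

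\emph{Identifying the image.} Let $q:\rC(K)\to\cmin(\rA(K))$ be the canonical surjection from the end of Section~\ref{sec:cmin}, with kernel $J$. I claim $\ker\pi=J$. For $\ker\pi\subseteq J$: because $\pi$ is a $*$-homomorphism, $\pi(\rC(K))=\ca(\pi(\rA(K)))$, so $\pi|_{\rA(K)}:\rA(K)\to\pi(\rC(K))$ is a unital complete order embedding whose image generates $\pi(\rC(K))$ as a C*-algebra; the universal property of $\cmin(\rA(K))$ then supplies a surjective $*$-homomorphism $\rho:\pi(\rC(K))\to\cmin(\rA(K))$ with $\rho\circ\pi|_{\rA(K)}=\iota$, and $\rho\circ\pi:\rC(K)\to\cmin(\rA(K))$ is a surjective homomorphism restricting to $\iota$ on $\rA(K)$, hence equals $q$ by the uniqueness clause of Section~\ref{sec:cmin}; thus $\ker\pi\subseteq\ker q=J$. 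For $J\subseteq\ker\pi$: each $x\in\partial K$ is maximal by Proposition~\ref{prop:extreme-iff-pure-maximal}, so by Proposition~\ref{prop:maximal-reps-factor-through-cmin} the representation $\delta_x$ factors through $\cmin(\rA(K))$, i.e. $J=\ker q\subseteq\ker\delta_x$; intersecting over $x\in\partial K$ yields $J\subseteq\bigcap_{x\in\partial K}\ker\delta_x=\ker\pi$. Hence $\ker\pi=J$, and $\pi$ factors through an isomorphism $\cmin(\rA(K))=\rC(K)/J\xrightarrow{\ \sim\ }\pi(\rC(K))$.

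\emph{Where the difficulty lies.} Granting Theorems~\ref{thm:dilation-pure-to-extreme} and~\ref{thm:extreme-pts-norming}, which carry the real weight, the argument above is essentially formal; the only points requiring care are the elementary order characterization used in the norm estimate and the verification that $\pi(\rA(K))$ generates $\pi(\rC(K))$ as a C*-algebra, so that the universal property of $\cmin$ applies. The genuinely hard step is that the extreme points norm $\cM_n(\rA(K))$, which is precisely Theorem~\ref{thm:extreme-pts-norming}, and whose proof in turn rests on the existence of extreme dilations in Theorem~\ref{thm:dilation-pure-to-extreme}.
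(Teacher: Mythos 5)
Your proof is correct. Note that the paper itself gives no argument for this statement: it is presented as a restatement of \cite{DK2015}*{Theorem 3.4}, just as Theorem~\ref{thm:extreme-pts-norming} is a restatement of \cite{DK2015}*{Theorem 3.1}. Your derivation --- reflecting positivity at each matrix level via the norming theorem applied to $\|a\|1_{\rA(K)}-a$, and then trapping $\ker\pi$ between the two inclusions $\ker\pi\subseteq\ker q$ (from the universal property and uniqueness of the surjection $q:\rC(K)\to\cmin(\rA(K))$) and $\ker q\subseteq\ker\pi$ (from Propositions~\ref{prop:extreme-iff-pure-maximal} and~\ref{prop:maximal-reps-factor-through-cmin}) --- is essentially the standard argument used in that reference, so you have correctly filled in the citation rather than diverged from it. Your closing assessment is also accurate: all the analytic content sits in Theorems~\ref{thm:dilation-pure-to-extreme} and~\ref{thm:extreme-pts-norming}, and what remains is formal. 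One cosmetic remark: injectivity of $\pi|_{\rA(K)}$ (and hence of each amplification $\pi_n$, which acts entrywise) already follows from the level-one case as you say, but you could shortcut the whole first paragraph by observing that Theorem~\ref{thm:extreme-pts-norming} makes $\pi|_{\rA(K)}$ a unital complete isometry, and a unital completely isometric map of operator systems is automatically a complete order monomorphism.
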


\subsection{Accessibility of extreme points} \label{sec:accessibility-extreme-points}

Let $K$ be a compact nc convex set. It was shown in \cite{DK2015}*{Theorem 3.4} that the map restricting functions in $\rA(K)$ to $\partial K$ is completely isometric. In particular, this implies that the minimal C*-algebra $\cmin(\rA(K))$ is completely determined by this restriction. However, the set of points in $\partial K$ that can be obtained by dilating pure points in finite components of $K$ as in Theorem \ref{thm:dilation-pure-to-extreme} can be a proper subset of $\partial K$. The corresponding boundary representations are called accessible in \cite{Kri2019}. We thank Ben Passer for providing some examples, of which the following is a variant.

\begin{example}\label{Ex:Passer}
Let $\bF_2 = \langle u,v \rangle$ denote the free group on two generators and let $S = \spn\{1,u,u^*,v,v^*\}$ in $\ca(\bF_2 )$. Let $K$ denote the nc state space of $S$, so that $\rA(K)$ is completely order isomorphic to $S$. Let $a,b \in \rA(K)$ denote the continuous affine nc functions corresponding to $u,v \in S$ respectively. 

A point $x \in K_m$ is completely determined by the pair of contractions $(a(x),b(x))$. If $(a(x),b(x))$ is a pair of unitaries, then $x$ is maximal, since it does not have non-trivial dilations. Since unitaries are extreme points in the unit ball of $\cM_m$, it follows that if $(a(x),b(x))$ is an irreducible pair of unitaries, in the sense that they do not have any common non-trivial invariant subspaces, then $x$ is an extreme point. There are many such examples for any $m$.

For $m \in \bN$, the extreme points of the unit ball of $\cM_m$ are precisely the unitaries. Hence if $x \in K_m$ is pure, then $(a(x),b(x))$ is an irreducible pair of unitaries, implying that $x$ is an extreme point. In particular, pure points in $K_m$ do not have non-trivial pure dilations.

It follows that for $m$ infinite, an extreme point in $K_m$ cannot be obtained as the limit of an increasing sequence of finite dimensional pure compressions.
\end{example}

\subsection{Noncommutative Krein-Milman theorem} \label{sec:krein-milman}

In this section we will prove a noncommutative analogue of the Krein-Milman theorem asserting that every compact nc convex set is the closed nc convex hull of its extreme points. We will also prove an analogue of Milman's partial converse to the Krein-Milman theorem.

\begin{defn}
For a dual operator space $E$ and a subset $X \subseteq \cM(E)$, the {\em closed nc convex hull} $\ol{\ncconv}(X)$ of $X$ is the intersection of all closed nc convex sets over $E$ that contain $X$. Equivalently, the closed nc convex hull of $X$ is the closure of the set of all nc convex combinations of elements in $X$. 
\end{defn}

\begin{thm}[Noncommutative Krein-Milman theorem] \label{thm:nc-krein-milman} \strut \\
A compact nc convex set is the closed nc convex hull of its extreme points.
\end{thm}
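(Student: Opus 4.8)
The plan is to run the classical Krein--Milman argument in the noncommutative setting: separate a hypothetical point lying outside the closed nc convex hull of the extreme boundary by a continuous affine nc function, and then use that positivity of affine nc functions is already detected on $\partial K$ to produce a contradiction. Write $L := \ol{\ncconv}(\partial K)$. Since $K$ is compact (hence closed) and nc convex and contains $\partial K$, the set $L$ is a closed nc convex subset of $K$, and each $L_n$ is closed in the compact set $K_n$, so $L$ is a compact nc convex set with $L \subseteq K$; the content of the theorem is the reverse inclusion. If $K = \mt$ there is nothing to prove, so assume $K \neq \mt$; then $\rA(K) \neq 0$, and Theorem \ref{thm:extreme-pts-complete-order-monomorphism} (or Theorem \ref{thm:extreme-pts-norming}) forces $\partial K \neq \mt$, hence $L \neq \mt$.

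Suppose, for contradiction, that $L \neq K$. First I would reduce to a finite level: choosing $y \in K_n \setminus L_n$ for some $n$ and arguing exactly as in the proof of Proposition \ref{prop:equal-finite-levels} (via Proposition \ref{prop:convergent-nets}), some finite-rank compression of $y$ fails to lie in $L$, so there is a \emph{finite} $m$ and a point $z \in K_m \setminus L_m$. Now apply the noncommutative separation theorem, Corollary \ref{cor:separation}, to the compact nc convex set $L$ --- regarded inside $\cM(\rA(K)^*)$ via the affine nc homeomorphism coming from Theorem \ref{thm:dually-equivalent} --- and the point $z \notin L_m$. Passing from the resulting normal completely bounded map $\rA(K)^* \to \cM_m$ to its real part, exactly as in the proof that $K$ is affinely homeomorphic to the nc state space of $\rA(K)$, I would obtain a self-adjoint $a \in \cM_m(\rA(K))$ and a self-adjoint $\gamma \in \cM_m$ with $a(x) \le \gamma \otimes 1_p$ for all $p$ and all $x \in L_p$, but $a(z) \not\le \gamma \otimes 1_m$. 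Set $b := \gamma \otimes 1_{\rA(K)} - a \in \cM_m(\rA(K))_{sa}$; then $b(x) \ge 0$ for every $x \in L$ while $b(z) \not\ge 0$.

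To finish, note that $\partial K \subseteq L$, so $b(x) \ge 0$ for every extreme point $x \in \partial K$. By Theorem \ref{thm:extreme-pts-complete-order-monomorphism}, the representation $\pi = \bigoplus_{x \in \partial K} \delta_x$ restricts to a complete order monomorphism on $\rA(K)$, and (up to a reshuffling of direct summands) $\pi_m(b) = \bigoplus_{x \in \partial K} b(x)$ is a positive operator; since $m < \infty$, it follows that $b \ge 0$ in the operator system $\cM_m(\rA(K))$, i.e.\ $b(w) \ge 0$ for \emph{every} $w \in K$. Taking $w = z$ contradicts $b(z) \not\ge 0$, so $L = K$. An alternative endgame uses only the norming statement of Theorem \ref{thm:extreme-pts-norming}: with $r := \|b\|$, the element $c := r\cdot 1_{\cM_m(\rA(K))} - b$ satisfies $\|c(x)\| \le r$ for all $x \in \partial K$, because $0 \le b(x) \le r\cdot 1$ there, yet $\|c(z)\| > r$ because $b(z)$ has a strictly negative eigenvalue --- contradicting that $c$ attains its operator-system norm at some extreme point.

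I expect the only genuinely hard ingredient to be Theorem \ref{thm:extreme-pts-complete-order-monomorphism} (equivalently Theorem \ref{thm:extreme-pts-norming}, equivalently the existence of extreme points), which is imported from \cite{DK2015}; granting it, the one routine-but-fiddly point is the bookkeeping in the separation step --- reducing to a finite level, and turning the normal completely bounded map of Corollary \ref{cor:separation} into a self-adjoint continuous affine nc function by taking real parts and matching the tensor-factor orderings $\cM_m(\cM_p) \cong \cM_p(\cM_m)$ --- all of which proceeds exactly as in the earlier identification of $K$ with the nc state space of $\rA(K)$.
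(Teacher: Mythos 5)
Your proof is correct and takes essentially the same route as the paper's: separate a hypothetical point outside $\ol{\ncconv}(\partial K)$ by a self-adjoint continuous affine nc function obtained from Corollary \ref{cor:separation}, then derive a contradiction from the fact (Theorem \ref{thm:extreme-pts-complete-order-monomorphism}) that $\bigoplus_{x \in \partial K} \delta_x$ restricts to a complete order monomorphism on $\rA(K)$. Your preliminary reduction to a finite level and the alternative endgame via the norming statement of Theorem \ref{thm:extreme-pts-norming} are harmless variations on the argument given in the paper.
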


\begin{proof}
Let $L$ be a compact nc convex set. By the results in Section \ref{sec:categorical-duality}, we can identify $L$ with the nc state space of the operator system $\rA(L)$ of continuous affine nc functions on $L$. In particular, $L$ is a compact nc convex set over the dual operator space $\rA(L)^*$. 

Let $K$ denote the closed nc convex hull of the extreme points of $L$. Clearly $K \subseteq L$. Suppose for the sake of contradiction there is $n$ and $y \in L_n \setminus K_n$. Then by Corollary \ref{cor:separation}, there is a normal completely bounded linear map $\phi : A(L)^* \to \cM_n$ and self-adjoint $\gamma \in \cM_n$ such that
\[
\re \phi_n(y) \not \leq \gamma \otimes 1_n \quad \text{but} \quad \re \phi_p(x) \leq \gamma \otimes 1_p
\]
for every $p$ and $x \in K_p$.

Since $\phi$ is normal, there is $a \in \cM_n(\rA(L))$ such that $\phi_n(x) = a(x)$ for all $x \in L_n$. Let $b = (a + a^*)/2$. Then from above, $b(y) \not \leq \gamma \otimes 1_n$ but $b(x) \leq \gamma \otimes 1_p$ for all $p$ and $x \in K_p$.

Let $H = \oplus_n \oplus_{x \in (\partial L)_n} H_n$ and let $\pi : \rC(L) \to \B(H)$ denote the representation defined by $\pi = \oplus_{x \in \partial L} \delta_x$. Then by Theorem \ref{thm:extreme-pts-complete-order-monomorphism}, the restriction $\pi|_{\rA(L)}$ is a complete order monomorphism. Hence from above, $\pi(b) \leq \gamma \otimes 1_H$.  It follows that $b \leq \gamma \otimes 1_{\rA(L)}$, so in particular $b(y) \leq \gamma \otimes 1_n$, giving a contradiction.
\end{proof}

The next result is a noncommutative analogue of Milman's partial converse to the Krein-Milman theorem.

\begin{thm}\label{thm:milman-converse}
Let $K$ be a compact nc convex set. Let $X \subseteq K$ be closed in the sense of Proposition~\ref{prop:convergent-nets} and closed under compressions, meaning that 
$\alpha^* X_n \alpha \subseteq X_m$ for every isometry $\alpha \in \cM_{n,m}$. If the closed nc convex hull of $X$ is $K$, then $\partial K \subseteq X$.
\end{thm}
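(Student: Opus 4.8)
The plan is to transpose to the noncommutative setting the proof of classical Milman's converse that runs through representing measures: a point of a compact convex set $C=\overline{\conv}(D)$ has a representing measure carried by $\overline D$, and an extreme point, having a unique representing measure, must lie in $\overline D$. The noncommutative replacement for ``representing measure carried by $X$'' will be a representing map obtained as a point-weak* limit of nc convex combinations of point-evaluations at $X$, and the crucial extraction of the extreme point from such a map will be reduced to the \emph{classical} Milman theorem applied to pure states of a matrix amplification of $\rC(K)$.

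Fix $x\in(\partial K)_n$. By the dual equivalence of Section~\ref{sec:categorical-duality} we may assume $K$ is the nc state space of $\rA(K)$; then $\delta_x:\rC(K)\to\B(H_n)$ is irreducible and is the unique representing map for $x$ (Theorem~\ref{thm:extreme}, Proposition~\ref{prop:criterion-unique-rep-map}). Since $\overline{\ncconv}(X)=K$, write $x=\lim_j y_j$ in the point-weak* topology with $y_j=\sum_i\alpha_{ij}^*x_{ij}\alpha_{ij}$ an nc convex combination of points $x_{ij}\in X$, and put $\mu_j:=\sum_i\alpha_{ij}^*\delta_{x_{ij}}\alpha_{ij}:\rC(K)\to\cM_n$, a unital completely positive map with $\mu_j|_{\rA(K)}=y_j$ that is itself an nc convex combination of the point-evaluations $\delta_{x_{ij}}$. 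The $n$-th level of the nc state space of $\rC(K)$ is compact, so after passing to a subnet $\mu_j\to\mu$; then $\mu|_{\rA(K)}=x$, so $\mu$ represents $x$ and hence $\mu=\delta_x$.

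To recover $x$ I would localize to finite dimensions: a basic point-weak* neighbourhood of $x$ involves only finitely many finite-rank functionals on $H_n$, hence only the restriction of $x$ to some finite-dimensional subspace $H_0\subseteq H_n$. So it suffices to show $\beta^*x\beta\in X$ for every isometry $\beta$ of a finite-dimensional space onto a subspace of $H_n$, since then, writing $x=\lim_\lambda P_\lambda xP_\lambda$ for an increasing net of finite-rank projections $P_\lambda\nearrow 1_n$ and invoking the closedness of $X$ under the limits of Proposition~\ref{prop:convergent-nets}, we get $x\in X_n$. Fix such $H_0$ of dimension $d$ and let $\Omega_0=d^{-1/2}\sum_k e_k\otimes e_k\in H_0\otimes H_0\subseteq H_n\otimes H_n$ be maximally entangled. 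Since $\delta_x\otimes\id_n$ is an irreducible representation of $\rC(K)\otimes\cM_n=\cM_n(\rC(K))$, the vector state $\omega:=\langle(\delta_x\otimes\id_n)(\cdot)\Omega_0,\Omega_0\rangle$ is a pure state of $\cM_n(\rC(K))$ which, under the Choi correspondence, encodes the completely positive map $a\mapsto d^{-1}P_0x(a)P_0$. Amplifying the previous step, $\omega=\lim_j\langle(\mu_j\otimes\id_n)(\cdot)\Omega_0,\Omega_0\rangle$, and each of these functionals is a finite convex combination of vector states of the representations $\delta_z\otimes\id_n$ with $z\in X$; letting $\W$ denote the closure, in the compact state space of $\cM_n(\rC(K))$, of the set of all such vector states, we obtain $\omega\in\overline{\conv}(\W)$. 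As $\omega$ is pure it is an extreme point of $\overline{\conv}(\W)$, so the classical Milman theorem gives $\omega\in\W$: there are $z_k\in X$ and unit vectors $\zeta_k\in H_{m_k}\otimes H_n$ ($m_k$ the level of $z_k$) whose vector states of $\delta_{z_k}\otimes\id_n$ converge to $\omega$. Passing back through the Choi correspondence, $T_k^*z_k(a)T_k\to d^{-1}P_0x(a)P_0$ for $a\in\rA(K)$, where $T_k\in\cM_{m_k,n}$ corresponds to $\zeta_k$; with $a=1$ this gives $T_k^*T_k\to d^{-1}P_0$, so $d^{1/2}T_k|_{H_0}$ is asymptotically isometric and its polar part $W_k:H_0\to H_{m_k}$ satisfies $W_k^*z_k(a)W_k\to P_0x(a)P_0$. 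Hence the compressions $W_k^*z_kW_k\in X_d$ converge to $\beta^*x\beta$, and since $X_d$ is closed, $\beta^*x\beta\in X_d$. As $H_0$ was arbitrary, this finishes the proof.

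The main obstacle is precisely the passage through infinite levels: when $n$ is infinite no single vector recovers all of $\delta_x$, which is why the reduction to finite-dimensional subspaces $H_0$—together with the use of closedness of $X$ in the strong sense of Proposition~\ref{prop:convergent-nets} rather than mere level-wise topological closedness—is essential (without the strong sense the statement would fail, e.g. for $X$ the finite part of $K$, whose closed nc convex hull is all of $K$ by Proposition~\ref{prop:equal-finite-levels}). Secondary technical points are the verification that the approximating functionals are genuine convex combinations of vector states, that $\delta_x\otimes\id_n$ is irreducible so that $\omega$ is pure, and the handling of the polar decomposition when $m_k$ may be smaller than $n$.
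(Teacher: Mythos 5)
Your argument is correct and follows the same overall strategy as the paper's proof: realize the extreme point $x$ as the barycenter of a limit $\mu$ of nc convex combinations of point evaluations at $X$, conclude $\mu=\delta_x$ from uniqueness of the representing map, identify a finite-dimensional compression of $\delta_x$ with a pure state $\omega$ on a matrix amplification, approximate $\omega$ by states coming from $X$, and finish using closedness of $X$ under compressions. The one substantive difference is the approximation step: the paper deduces from $\ker\delta_x\supseteq\bigcap_{z\in X}\ker\delta_z$ and \cite{Dixmier}*{Proposition 3.4.2 (ii)} that every state factoring through $\delta_x\otimes\id$ is a limit of pure states supported on the $\delta_z\otimes\id$, whereas you obtain the same conclusion by applying the classical Milman theorem to the closed set $\W$ of vector states of the $\delta_z\otimes\id$, $z\in X$, using only that the pure state $\omega$ lies in $\ol{\conv}(\W)$. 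This is a legitimate, self-contained substitute for the weak-containment citation, with the small advantage that the approximants need not themselves be pure. Two further points in your favour: you spell out the polar-decomposition step converting the approximate compressions $T_k^*z_k(\cdot)T_k$ into genuine isometric compressions $W_k^*z_kW_k\in X_d$, which the paper leaves implicit; and your observation that the final passage from ``all finite-dimensional compressions of $x$ lie in $X$'' to ``$x\in X$'' requires $X$ to be closed in the strong sense of Proposition~\ref{prop:convergent-nets}, not merely levelwise, is exactly right --- the finite part of the Cuntz nc state space of Example~\ref{Ex:Cuntz} is levelwise closed, closed under compressions, and generates $K$, yet contains no extreme points --- a point the paper's proof elides with the phrase ``arguing as in the proof of Proposition~\ref{prop:convergent-nets}.''
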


\begin{proof}
Let $L$ denote the nc state space of $\rC(K)$ and let 
\[ Z = \ol{\ncconv} \{\delta_x : x \in X\} \subseteq L .\]
Since $K = \ol{\ncconv}(X)$ and the barycenter map from $L$ onto $K$ is continuous and affine, it follows that for every $y \in K$ there is $\mu \in Z$ with barycenter $y$.

Fix $y \in \partial K$ and $\mu \in Z$ with barycenter $y$. Since $y$ is extreme, Theorem \ref{thm:extreme} implies that $\mu = \delta_y$. For $n \in \bN$, there is a standard trick to identify an $n$-dimensional compression of $\delta_y$ with a state supported on the representation $\delta_y \otimes \id_n$ on $\cM_n(\rC(K))$. Since $\delta_y$ is irreducible, so is $\delta_y \otimes \id_n$. In particular, every state supported on $\delta_y \otimes \id_n$ is a pure state. By construction $\ker \delta_y \supseteq \cap_{x \in X} \ker \delta_x$, so $\ker \delta_y \otimes \id_n \supseteq \cap_{x \in X} \ker \delta_x \otimes \id_n$. Hence by \cite{Dixmier}*{Proposition 3.4.2 (ii)}, every state that factors through $\delta_y \otimes \id_n$ is a limit of a net of pure states, each of which is supported on some $\delta_x \otimes \id_n$ for $x \in X$. 

This translates to the statement that every $n$-dimensional compression of $\delta_y$ is a point-weak* limit of compressions of $\{\delta_x : x \in X \}$. Since the barycenter map is continuous and affine, and since $X$ is both closed and closed under compressions, arguing as in the proof of Proposition \ref{prop:convergent-nets} implies that $y \in X$. 
\end{proof}

\begin{rem}
Simple examples demonstrate that the assumption that $X$ is closed under compressions is necessary. For instance, consider Example~\ref{Ex:commutative_extreme}. Fix any point $y \in K_n$ and let $X$ denote the closure of the set $\{ x \oplus y : x \in \partial K  \}$. 
This is contained in $K_{n+1}$, so it is disjoint from $\partial K = \partial C \subset K_1$. Nevertheless, it follows from Theorem \ref{thm:nc-krein-milman} that $K$ is the closed nc convex hull of $X$.

This trick fails for certain infinite dimensional examples like the Cuntz system of Examples~\ref{ex:ncfunctions-not-determined-by-finite-levels} and \ref{ex:cuntz}. It follows from a version of Voiculescu's noncommutative Weyl-von Neumann theorem (see \cite{BrownOzawa}*{Corollary 1.7.7}) that for any $y\in K_n$ with $n < \infty$,  the point-norm closure of $\{ x \oplus y : x \in \partial K  \}$ contains all representations of $\O_n$ (restricted to $S$). 
So in particular, the point-weak-$*$ closure contains $\partial K$.
\end{rem}

\subsection{Extreme points and the minimal C*-algebra} \label{sec:extreme-pts-minimal-c-star-alg}

In this section we relate the extreme points of a compact nc convex set to the nc state space of the corresponding minimal C*-algebra. 

For a compact nc convex set $K$, the universal properties of $\rC(K)$ and $\cmin(\rA(K))$ imply the existence of a unique surjective homomorphism $q : \rC(K) \to \cmin(\rA(K))$ such that $q|_{\rA(K)} = \iota$, where $\iota : \rA(K) \to \cmin(\rA(K))$ denotes the canonical unital complete order embedding. It follows from the results in Section \ref{sec:categorical-duality} that the nc state space of $\cmin(\rA(K))$ is affinely homeomorphic to a closed subset of nc states on $\rC(K)$; namely, the set of nc states on $\rC(K)$ that factor through $\cmin(\rA(K))$.

For $x \in \partial K$, Proposition \ref{prop:criterion-unique-rep-map} implies that the corresponding representation $\delta_x$ factors through $\cmin(\rA(K))$. Hence the extreme boundary $\partial K$ corresponds to a subset of the irreducible representations of $\cmin(\rA(K))$. However, we have seen that, as in the classical setting, this subset will often be proper (see e.g. Example \ref{Ex:free semicircular}).

Motivated by the classical setting, we can think of the set of irreducible representations of $\cmin(\rA(K))$ as the Shilov boundary of $\rA(K)$, and $\partial K$ as the Choquet boundary of $\rA(K)$. The next result describes the precise relationship between these sets. It can be viewed as a noncommutative analogue of the fact that in the classical setting, the Shilov boundary is the closure of the Choquet boundary.

\begin{thm} \label{thm:nc-shilov-boundary-closure-nc-choquet-boundary}
Let $K$ be a compact nc convex set and let $L$ denote the nc state space of $\cmin(\rA(K))$, identified with the set of nc states on $\rC(K)$ that factor through $\cmin(\rA(K))$. Then $L$ is the closed nc convex hull of the set $\{\delta_x \in \partial K \}$. 
\end{thm}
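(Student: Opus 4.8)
The plan is to prove the two inclusions $Z\subseteq L$ and $L\subseteq Z$, where $Z:=\ol{\ncconv}\{\delta_x:x\in\partial K\}$; the hard direction is $L\subseteq Z$, which I would obtain from the noncommutative separation theorem together with Theorem~\ref{thm:extreme-pts-complete-order-monomorphism}. For $Z\subseteq L$, note that $L$ — viewed as the set of nc states on $\rC(K)$ that annihilate the kernel of the canonical quotient $q:\rC(K)\to\cmin(\rA(K))$ — is a closed nc convex subset of the nc state space of $\rC(K)$, so it suffices to check $\delta_x\in L$ for each $x\in\partial K$. But an extreme point is maximal by Proposition~\ref{prop:extreme-iff-pure-maximal}, hence $\delta_x$ factors through $\cmin(\rA(K))$ by Proposition~\ref{prop:maximal-reps-factor-through-cmin}, which is exactly the statement $\delta_x\in L$.

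For $L\subseteq Z$ I would argue by contradiction. Suppose $\mu\in L_n\setminus Z_n$. Since $Z$ is a closed nc convex set over the dual operator space $\rC(K)^*$, Corollary~\ref{cor:separation} yields a normal completely bounded $\phi:\rC(K)^*\to\cM_n$ and a self-adjoint $\gamma\in\cM_n$ with $\re\phi_n(\mu)\not\le\gamma\otimes 1_n$ but $\re\phi_p(\nu)\le\gamma\otimes 1_p$ for all $p$ and all $\nu\in Z_p$. By normality $\phi$ is evaluation against some $a\in\cM_n(\rC(K))$, and, as in the proof of the noncommutative Krein--Milman theorem (Theorem~\ref{thm:nc-krein-milman}), I would pass to $b:=(a+a^*)/2\in\cM_n(\rC(K))$; since the points of $Z$ are unital completely positive maps, the separation inequalities read $b(\mu)\not\le\gamma\otimes 1_n$ and $b(\nu)\le\gamma\otimes 1_p$ for all $\nu\in Z_p$. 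Taking $\nu=\delta_x$ with $x\in\partial K$ and recalling from Theorem~\ref{T:C*max} that $\delta_x$ acts by evaluation at $x$ (so $\delta_x^{(n)}(b)=b(x)$, the value of the continuous nc function $b$ at $x$), this gives $b(x)\le\gamma\otimes 1$ for every $x\in\partial K$.

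To conclude I would transport this inequality through $q$. Let $\pi=\oplus_{x\in\partial K}\delta_x:\rC(K)\to\B(H)$ be the representation of Theorem~\ref{thm:extreme-pts-complete-order-monomorphism}. Each summand factors through $q$, so $\pi=\rho\circ q$ for a unital $*$-homomorphism $\rho:\cmin(\rA(K))\to\B(H)$, and I claim $\rho$ is faithful: its image is $\pi(\rC(K))$, and since $\pi|_{\rA(K)}$ is a complete order embedding whose image generates $\pi(\rC(K))$, the minimality property of $\cmin(\rA(K))$ supplies a $*$-homomorphism $\tau:\pi(\rC(K))\to\cmin(\rA(K))$ with $\tau\circ\pi|_{\rA(K)}=\iota$; then $\tau\circ\rho$ fixes $\iota(\rA(K))$, hence fixes all of $\cmin(\rA(K))$, so $\rho$ is injective and therefore an isometric order embedding. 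Now $\pi^{(n)}(b)=\oplus_{x\in\partial K}b(x)\le\gamma\otimes 1_H$, so $q^{(n)}(b)\le\gamma\otimes 1$ in $\cM_n(\cmin(\rA(K)))$; and since $\mu\in L_n$ factors as $\mu=\bar\mu\circ q$ for a unital completely positive $\bar\mu$, applying $\bar\mu^{(n)}$ gives $b(\mu)\le\gamma\otimes 1_n$, contradicting $b(\mu)\not\le\gamma\otimes 1_n$. Hence $L_n\subseteq Z_n$ for all $n$, so $L=Z$.

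I expect the main obstacle to be precisely the faithfulness of $\rho$. In the noncommutative Krein--Milman theorem the separating element already lives in the operator system $\rA(K)$, so one invokes directly that $\pi|_{\rA(K)}$ is a complete order monomorphism; here the separating element $b$ lies in the full C*-algebra $\cM_n(\rC(K))$, which forces the passage to $\cmin(\rA(K))$, and controlling $b$ there relies on the fact that the boundary representations $\{\delta_x:x\in\partial K\}$ assemble into a faithful representation of $\cmin(\rA(K))$ — the point at which Theorem~\ref{thm:extreme-pts-complete-order-monomorphism} and the universal property of the minimal C*-algebra enter the argument.
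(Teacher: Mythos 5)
Your proof is correct, but it follows a genuinely different route from the paper's. The paper does not separate: it applies the noncommutative Krein--Milman theorem (Theorem~\ref{thm:nc-krein-milman}) to $L$ itself, identifies $\partial L$ with the irreducible representations of $\cmin(\rA(K))$ via Example~\ref{ex:c-star-alg-extreme-pts}, and then shows that each such representation $\delta_y$ lies in $Z$ by observing that $\ker\delta_y \supseteq \bigcap_{x\in\partial K}\ker\delta_x$ and invoking the Dixmier-type approximation argument from the proof of Theorem~\ref{thm:milman-converse} (pure states factoring through a representation with larger kernel are weak* limits of pure states supported on the smaller one). You instead run Corollary~\ref{cor:separation} over $\rC(K)^*$ and transport the separating element $b\in\cM_n(\rC(K))$ through the quotient $q$ using the faithfulness of $\rho$. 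Both arguments ultimately rest on the same key input --- that $\oplus_{x\in\partial K}\delta_x$ is a faithful representation of $\cmin(\rA(K))$, which each of you derives from Theorem~\ref{thm:extreme-pts-norming}/\ref{thm:extreme-pts-complete-order-monomorphism} together with the universal property of the minimal C*-algebra --- but they diverge in how that input is exploited. Your version is self-contained modulo the separation theorem and correctly handles the one genuine subtlety you flag (the separating element lives in $\cM_n(\rC(K))$ rather than in $\cM_n(\rA(K))$, so the order inequality must be verified in $\cmin(\rA(K))$ rather than read off from the complete order embedding of $\rA(K)$ alone); the paper's version avoids separation at the C*-algebra level and instead exhibits each state factoring through $\cmin(\rA(K))$ as an explicit limit of nc convex combinations of compressions of the boundary representations, which is somewhat more constructive. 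Your observation that the reverse inclusion $Z\subseteq L$ follows from Propositions~\ref{prop:extreme-iff-pure-maximal} and~\ref{prop:maximal-reps-factor-through-cmin} is also correct (the paper leaves this direction implicit).
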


\begin{proof}
Let $X = \{\delta_x : x \in \partial K \}$. For $x \in \partial K$, Proposition \ref{prop:maximal-reps-factor-through-cmin} implies that the corresponding representation $\delta_x$ factors through $\cmin(\rA(K))$. Hence the representation $\sigma :=  \oplus_{x \in \partial K} \delta_x$ factors through $\cmin(\rA(K))$, and we can view it as a representation of $\cmin(\rA(K))$. Theorem \ref{thm:extreme-pts-norming} implies that the restriction $\sigma|_{\rA(K)}$ is a unital complete order embedding. Hence by the universal property of $\cmin(\rA(K))$, $\sigma$ is faithful.

Let $y \in K$ be a point such that the corresponding representation $\delta_y$ factors through $\cmin(\rA(K))$. Then
\[
\ker \delta_y \supseteq \bigcap_{x \in \partial K} \ker \delta_x = \ker \sigma.
\]
Hence an argument similar to the proof of Theorem \ref{thm:milman-converse} implies that $\delta_y$ is contained in the closed nc convex hull of $X$. 
Every irreducible representation of $\cmin(\rA(K))$ is of this form, and by Example \ref{ex:c-star-alg-extreme-pts}, 
these are precisely the extreme points of $L$. 
Therefore, it follows from Theorem \ref{thm:nc-krein-milman} that $L$ is the closed nc convex hull of $X$.
\end{proof}

\subsection{Examples} \label{sec:examples}

In this section we will illustrate the results we have obtained so far with some examples.

\begin{example} \label{ex:compacts}
Let $S \subseteq \M_n$ for $n < \infty$ be an irreducible operator system, so that $\ca(S) = \M_n$. Since $\M_n$ is simple, $\cmin(S) = \M_n$. Let $K$ denote the nc state space of $S$. For an extreme point $x \in (\partial K)_n$, the corresponding representation $\delta_x : \rC(K) \to \cM_n$ is an irreducible representation of $\M_n$. Since every irreducible representation of $\M_n$ is equivalent to the identity representation $\id : \M_n \to \M_n$ and $\partial K$ is closed under unitary equivalence, it follows that
\[
\partial K = (\partial K)_n = \{\alpha \id \alpha^* : \alpha \in \rU(\M_n)\}.
\]
\end{example}

\begin{example}\label{ex:cuntz}
For $d\geq 2$, the Cuntz algebra $\O_d$ is the universal C*-algebra generated by $d$ elements $s_1,\ldots,s_d$ satisfying the Cuntz relations
\[
\sum_{i=1}^d s_i s_i^* = 1, \quad s_i^* s_j = \delta_{ij} 1.
\]
The algebra $\O_d$ is simple and infinite dimensional.

Let $S = \spn \{1,s_1,s_1^*,\ldots,s_d,s_d^* \}$. Then $\ca(S) = \O_d$, so by the simplicity of $\O_d$, $\cmin(S) = \O_d$. Let $K$ denote the nc state space of $S$.

Every point $x \in K_n$, is completely determined by the row contraction $X = [x(s_1),\ldots,x(s_d)] \in \cM_n^d$. We say that $X$ is irreducible if it cannot be decomposed as a (non-trivial) direct sum. Note that $X$ is irreducible if and only if the representation $\delta_x : \rC(K) \to \cM_n$ is irreducible. We say that $X$ is a coisometry if $XX^* = \sum_{i=1}^d x(s_i)x(s_i)^* = 1_n$. If, in addition, $X^*X = \big[ x(s_i)^* x(s_j) \big] = 1_d \otimes 1_n$, then we say that $X$ is a row unitary. Note that $X$ is a row unitary if and only if $x(s_1),\ldots,x(s_d)$ satisfy the Cuntz relations.

Suppose that the representation $\delta_x$ is the unique representing map for $x$. Proposition \ref{prop:criterion-unique-rep-map} and Proposition \ref{prop:maximal-reps-factor-through-cmin} imply that $\delta_x$ factors through $\cmin(S) = \O_d$, so $x(s_1),\ldots,x(s_d)$ satisfy the Cuntz relations. Hence $X$ is a row unitary. In particular, if $x$ is extreme, then $X$ is a row unitary.

Conversely, suppose that $X$ is a row unitary and let $\mu : \rC(K) \to \cM_n$ be a unital completely positive map with barycenter $x$. Then by the Kadison-Schwartz inequality,
\[
1_n = \sum_{i=1}^n x(s_i) x(s_i)^* = \sum_{i=1}^n \mu(s_i) \mu(s_i)^* \leq \sum_{i=1}^n \mu(s_i s_i^*) = 1_n.
\]
This implies that $\mu(s_i s_i^*) = x(s_i) x(s_i)^*$ for each $i$, so $S$ belongs to the multiplicative domain of $\mu$. Hence $\mu = \delta_x$, implying $\delta_x$ is the unique representing map for $x$.

It now follows from Theorem \ref{thm:extreme} that $x$ is extreme if and only if $X$ is an irreducible row unitary. Hence there is a correspondence between points in the extreme boundary $\partial K$ and irreducible representations of $\cmin(S) = \O_d$. In particular, $K$ has no finite dimensional extreme points.
\end{example}

\begin{example}\label{Ex:free semicircular}
Let $a_1,a_2 \in M$ be freely independent semicircular (self-adjoint) operators contained in a von Neumann algebra of type $II_1$. For example, letting $s_1,s_2$ denote the generators of the Cuntz algebra $\O_2$ as in Example \ref{ex:cuntz}, we can take $a_i = s_i + s_i^*$ for each $i$. Consider the operator system $S = \spn\{1,a_1,a_2\}$ with nc state space $K$.

Let $A = \ca(S)$. Then $A$ is simple by \cite{Dyk1999}, so $\cmin(S) = A$. Furthermore, since $A$ is separable, Voiculescu's theorem \cite{Voi1976} (see \cite{Davidson}*{Corollary II.5.6}) implies that all representations of $A$ are approximately unitarily equivalent. In other words, every separable representation of $A$ is a point-norm limit of representations that are all unitarily equivalent to any other separable representation. For our purposes, the interesting thing about the operator system $S$ is that not every irreducible representation of $A$ restricts to an extreme point in $\partial K$. In particular, $\partial K$ is not closed in the point-norm topology, and thus is not closed in the point-weak* topology.

We now consider specific representations of $\O_2$ belonging to the class of atomic representations classified in \cite{DP1999}*{Section 3}. Consider the Fock space $F_2 = l^2(\bF_2^+)$, where $\bF_2^+$ denotes the free semigroup on $\{1,2\}$, with $\epsilon \in \bF_2^+$ denoting the empty word. The canonical orthonormal basis for $F_2$ is $\{\delta_w : w \in \bF_2^+\}$. Let $L_1, L_2 \in \B(F_2)$ denote the isometries defined by $L_i \delta_w = \delta_{iw}$ for $i=1,2$ and $w \in \bF_2^+$. Let $\H = \bC \oplus F_2$. 

For $\lambda \in \bT$, consider the representation $\pi_\lambda : \O_2 \to \H$ defined by
\[
\pi_\lambda(s_1) = \begin{bmatrix}\lambda&0\\0&L_1\end{bmatrix}, \quad
\pi_\lambda(s_2) = \begin{bmatrix}0&0\\ \delta_\epsilon&L_2\end{bmatrix}.
\]
Let $\sigma_\lambda = \pi_\lambda|_A$. 

Identifying $H_{\aleph_0}$ with $\H$, we obtain $x_\lambda \in K_{\aleph_0}$ by setting $x_\lambda = \sigma_\lambda|_S$. The corresponding representation $\delta_{x_\lambda} : \rC(K) \to \cM_{\aleph_0}$ satisfies $\delta_{x_\lambda} = \sigma_\lambda \circ q$, where $q : \rC(K) \to \cmin(S) = A$ denotes the canonical quotient homomorphism. We will show that $\delta_{x_\lambda}$ is irreducible for all $\lambda$, but that $x_\lambda$ is an extreme point if and only if $\lambda = \pm 1$.

Since $\pi_\lambda$ is an atomic representation of $\O_2$ corresponding to the primitive word `$1$', it is irreducible by \cite{DP1999}. Write $\lambda = r + is$. Let $p$ be a projection in $\sigma_\lambda(A)'$. Let $\eta = 1 \oplus 0 \in \H$. Replace $p$ by $p^\perp$ if necessary to ensure that $p \eta \ne 0$. We will show that $p = 1_{\H}$.

First note that $\sigma_\lambda(a_1) \eta = 2r \eta$. Hence
\[ \sigma_\lambda(a_1) p \eta = p \sigma_\lambda(a_1) \eta = 2r p \eta .\]
Since $L_1+L_1^*$ does not have any eigenvectors, this implies $p \eta = \eta$. Thus $\sigma_\lambda(a_2) \eta = 0 \oplus \delta_\epsilon$ lies in $\Ran(p)$.
It can now be shown by induction on word length that $0 \oplus \delta_w \in \Ran(p)$ for all words $w \in \bF_2^+$. Hence $p=1_{\H}$ and $\sigma_\lambda$ is irreducible. It follows that $\delta_{x_\lambda}$ is irreducible.

Now suppose $\lambda = 1$. Since $\delta_{x_1}$ is irreducible, Theorem \ref{thm:extreme} implies that $x_1$ is an extreme point if and only if $x_1$ has a unique representing map. By Proposition \ref{prop:maximal-reps-factor-through-cmin}, this is the case if and only if the only representing map of $x_1$ that factors through $\cmin(S) = A$ is $\delta_{x_1}$. Equivalently, $x_1$ is an extreme point if and only if whenever $\phi : A \to \B(\H)$ is a unital completely positive map satisfying $\phi|_{S} = \sigma_1|_S$, then $\phi = \sigma_1$.

Let $\phi : A \to \B(\H)$ be a unital completely positive map satisfying $\phi|_{S} = \sigma_1|_S$. By Arveson's extension theorem we can extend $\phi$ to a unital completely positive map $\phi_1 : \O_2 \to \B(\H)$. Let $\rho_1 : \O_2 \to \B(\K)$ be a minimal Stinespring representation for $\phi_1$, so that $\phi_1 = P_{\H} \rho_1|_{\H}$. 

Each $\rho_1(s_i)$ is an isometry, and
\begin{align*}
2\eta &= \sigma_1(a_1) \eta = \phi_1(a_1) \eta \\&
= P_{\H} \rho_1(a_1) \eta = P_{\H}(\rho_1(s_1) + \rho_1(s_1)^*) \eta.
\end{align*}
It follows that $\rho_1(s_1) \eta = \eta$ and $\rho_1(s_1)^* \eta = \eta$. In particular, $\eta$ lies in $\Ran(\rho_1(s_1)) = \Ran(\rho_1(s_2))^\perp$, so
\[
\rho_1(a_2) \eta = (\rho_1(s_2) + \rho_1(s_2)^*) \eta = \rho_1(s_2) \eta.
\]
This also shows that $\eta$ is coinvariant for $\rho_1(s_1)$ and $\rho_1(s_2)$. Hence
\[ \delta_\epsilon = \sigma_1(a_2) \eta = \phi_1(a_2) \eta = P_{\H} \rho_1(a_2) \eta = P_{\H} \rho_1(s_2) \eta. \]
Therefore $\rho_1(s_2) \eta = \delta_\epsilon$.

It now follows that $\delta_\epsilon$ is a wandering vector for the tuple $(\rho_1(s_1),\rho_1(s_2))$ in the sense of \cite{DP1999}. To see this, note that for $w \in \bF_2^+$,
\[ \ip{ \rho_1(s_w) \delta_\epsilon, \delta_\epsilon} = \ip{\delta_\epsilon, \rho_1(s_w)^* \delta_\epsilon}. \]
If $w = 1v$ for $v \in \bF_2^+$, then $\rho_1(s_w)^* \delta_\epsilon = \rho_1(s_v)^* \rho_1(s_1)^* \delta_\epsilon =0$. Otherwise, if
$w = 2v$, then $\rho_1(s_w)^* \delta_\epsilon = \rho_1(s_v)^* \rho_1(s_2)^*\delta_\epsilon = \rho_1(s_v)^* \eta \in \bC \oplus 0$.
Either way, the inner product vanishes.

An easy induction argument now shows that $\rho_1(s_i) \delta_w = \sigma_1(s_i) \delta_{w}$ and $\rho_1(s_i)^* \delta_w = \sigma_1(s_i)^* \delta_w$ for all $w \in \bF_2^+$. This implies $\bC \oplus F_2$ is invariant for $\rho_1(\O_2)$, and hence that $\rho_1 = \sigma_1 \oplus \rho_1'$ for some representation $\rho_1' : \O_2 \to \B(\K \ominus \H)$. Therefore, $\phi = \sigma_1$, and $x_1$ is an extreme point. A similar argument works when $\lambda = -1$, so $x_{-1}$ is also an extreme point.

Now suppose $\lambda \ne \pm1$. Let $\L = \bC \oplus (\bC \oplus F_2) \oplus F_2$ and consider the representation $\tau_\lambda : \O_2 \to \B(\L)$ defined by
\[
\tau_\lambda(s_1) 
= \left[ \begin{array}{c|cc|c}0&0&0&0\\
\hline 
s&r&0&0\\
0&0&L_1&0\\
\hline
-r\delta_\epsilon & s\delta_\epsilon & 0 & L_1\end{array}\right]
\qand
\tau_\lambda(s_2) 
= \left[\begin{array}{c|cc|c}
1&0&0&0\\
\hline
0&0&0&0\\
0&\delta_\epsilon&L_2&0\\
\hline
0&0&0&L_2\end{array}\right].
\]
Define $\psi_\lambda : \O_2 \to \B(H)$ by $\psi_\lambda(a) = P_{\H} \tau_\lambda(a)|_{\H}$. Then $\psi_\lambda|_A = \sigma_\lambda|_A$, however
\[ 
\psi_\lambda(a_1^2) - \sigma_\lambda(a_1^2) = s^2 \eta \eta^*.
\]
So $\psi_\lambda \ne \sigma_\lambda$.
Hence in this case $x_\lambda$ is not an extreme point.
\end{example}

\section{Noncommutative convex functions}

\subsection{Convex functions and convex envelopes}

The convex structure of a compact convex set $C$ gives rise to the notion of convexity for a function on $C$. The Stone-Weierstrass theorem for lattices implies that the convex functions span a dense subset of the C*-algebra $\rC(C)$ of continuous functions on $C$. We saw in Section \ref{sec:classical-min-max-cstar-alg} that $\rC(C)$ is the maximal commutative C*-algebra generated by the function system $\rA(C)$ of continuous affine functions on $C$ in a certain precise sense. There is another important idea connecting $\rC(C)$ to $\rA(C)$ for which the convex structure of $C$ is essential.

For $f \in \rC(C)$, the convex envelope $\env{f}$ of $f$ is the best approximation of $f$ from below by a convex lower semicontinuous function. It is defined by
\[
\env{f} = \sup \{a \in \rA(C) : a \leq f \}.
\]
The function $f$ is convex if and only if $\bar{f} = f$.

There is also a geometric definition which is more readily generalized. Let $\operatorname{epi}(f) = \{ (x,t) : x \in C,\ t \ge f(x) \}$. Then
\[ \operatorname{epi}(\env{f}) = \ol{\conv}(\operatorname{epi}(f) ) \] 
is the closed convex hull of the epigraph of $f$.

One explanation for the importance of the convex envelope is that it encodes information about the set of representing measures of a point. Specifically, if $C$ is a compact convex set and $f \in \rC(C)$ is a continuous function with convex envelope $\env{f}$, then for $x \in C$,
\[
\env{f}(x) = \inf_\mu \int_C f\, d\mu,
\]
where the infimum is taken over all probability measures $\mu$ with barycenter $x$. This infimum is attained. Moreover, the measure $\mu$ is supported on the extreme boundary $\partial C$ in an appropriate sense if and only if
\[
\int_C f\, d\mu = \int_C \env{f}\, d\mu
\]
for every $f \in \rC(C)$. We will revisit this characterization in Section \ref{sec:orders}.

\subsection{Noncommutative convex functions} \label{sec:convex-nc-fcns}

In this section we will introduce a notion of convexity for nc functions. We will need to consider matrices of bounded nc functions. For a compact nc convex set $K$ and $f = (f_{ij}) \in \cM_n(\rB(K))$, we view $f$ as a function $f : K \to \cM_n(\cM)$ defined by $f(x) = (f_{ij}(x))$ for $x \in K$. Note that $f$ is graded. 
It respects direct sums in the sense that 
\[
 f(x_1 \!\oplus\! x_2) = \big[ f_{ij}(x_1 \!\oplus\! x_2) \big] = \big[ f_{ij}(x_1) \!\oplus\! f_{ij}(x_2) \big] \simeq \big[ f_{ij}(x_1) \big] \!\oplus\! \big[ f_{ij}(x_2) \big] .
\]
The last identification is known as the canonical shuffle.
Finally $f$ is unitarily equivariant in the sense that if $u \in \B(H_n)$ and $x \in K_n$, then 
\[
 f(uxu^*) = \big[ f_{ij}(uxu^*) \big] = \big[ u f _{ij}(x) u^*) \big] = (u \otimes 1_n) f(x) (u \otimes 1_n)^* .
\]
We will refer to $f$ as an nc function on $K$. Say that $f$ is {\em self-adjoint} if $f(x) \in \cM_n(\cM_k)_{sa}$ for all $k$ and all $x \in K_k$. 

\begin{defn} \label{defn:convex-nc-fcn}
Let $K$ be a compact nc convex set and let $f  \in \cM_n(\rB(K))$ be self-adjoint bounded nc function. The {\em epigraph} of $f$ is the subset $\epi(f) \subseteq \coprod_m K_m \times \cM_n(\cM_m)$ defined by
\[
\epi_m(f) = \{(x, \alpha) \in K_m \times \cM_n(\cM_m) : x \in K_m \text{ and } \alpha \geq f(x) \}.
\]
We will say that $f$ is {\em convex} if $\epi(f)$ is an nc convex set, and that $f$ is {\em lower semicontinuous} if $\epi(f)$ is closed.
\end{defn}

\begin{rem} \label{rem:simpler-criterion-convexity}
For a self-adjoint bounded nc function $f \in \cM_n(\rB(K))$, the fact that $f$ is graded and respects direct sums implies that $\epi(f)$ is an nc convex set if and only if
\[
f(\alpha^* x \alpha) \leq (1_n \otimes \alpha^*) f(x) (1_n \otimes \alpha) 
\]
for every $m$, every $x \in K_m$ and every isometry $\alpha \in \cM_{m,l}$.
\end{rem}

This next proposition shows that scalar convexity of an nc function implies nc convexity.
This is a higher dimensional analogue of the Hansen-Pedersen result \cite{HanPed2003}*{Theorem 2.1} for an interval.
See Example~\ref{ex:hansen-pedersen}.

\begin{prop} \label{prop:convex-each-level}
Let $K$ be a compact nc convex set and let $f \in \cM_n(\rB(K))$ be a self-adjoint bounded nc function. Then $f$ is convex if and only if
\begin{equation}
f(\lambda x + (1-\lambda) y) \leq \lambda f(x) + (1-\lambda) f(y) \label{eq:convex-each-level-1}
\end{equation}
for all $m$, all $x,y \in K_m$ and all $\lambda \in [0,1]$.
\end{prop}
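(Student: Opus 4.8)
The plan is to deduce both implications from Remark~\ref{rem:simpler-criterion-convexity}, which characterizes convexity of $f$ by the compression inequality $f(\alpha^* x\alpha)\leq(1_n\otimes\alpha^*)f(x)(1_n\otimes\alpha)$, valid for every $m$, every $x\in K_m$ and every isometry $\alpha\in\cM_{m,l}$. Throughout I will use that, since each entry $f_{ij}$ is an nc function, the matrix-valued function $f$ respects direct sums and is unitarily equivariant, meaning $f\bigl(\sum_i\beta_i x_i\beta_i^*\bigr)=\sum_i(1_n\otimes\beta_i)f(x_i)(1_n\otimes\beta_i^*)$ and $f(\beta x\beta^*)=(1_n\otimes\beta)f(x)(1_n\otimes\beta^*)$ for a unitary $\beta$; these identities hold entrywise.

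For the forward implication, assume $f$ is convex, fix $x,y\in K_m$ and $\lambda\in(0,1)$ (the endpoints being trivial), choose isometries $\beta_1,\beta_2\in\cM_{2m,m}$ with $\beta_1\beta_1^*+\beta_2\beta_2^*=1_{2m}$, and set $z=\beta_1 x\beta_1^*+\beta_2 y\beta_2^*\in K_{2m}$ and $\alpha=\sqrt{\lambda}\,\beta_1+\sqrt{1-\lambda}\,\beta_2\in\cM_{2m,m}$. Using $\beta_i^*\beta_i=1_m$ and $\beta_1^*\beta_2=0$ one checks that $\alpha$ is an isometry with $\alpha^* z\alpha=\lambda x+(1-\lambda)y$, and, running the same computation with $f(x),f(y)$ in place of $x,y$, that $(1_n\otimes\alpha^*)f(z)(1_n\otimes\alpha)=\lambda f(x)+(1-\lambda)f(y)$. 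Feeding $z$ and $\alpha$ into the compression inequality then yields \eqref{eq:convex-each-level-1}.

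For the converse, assume \eqref{eq:convex-each-level-1} holds at every level (only the case $\lambda=\tfrac12$ is needed), fix $x\in K_m$ and an isometry $\alpha\in\cM_{m,l}$, set $p=\alpha\alpha^*\in\cM_m$, write $p^\perp=1_m-p$, and let $u=2p-1_m=p-p^\perp$, a self-adjoint unitary. Since each $K_m$ is closed under unitary conjugation, $uxu\in K_m$, so midpoint convexity at level $m$ gives $f\bigl(\tfrac12 x+\tfrac12 uxu\bigr)\leq\tfrac12 f(x)+\tfrac12 f(uxu)$. The plan is to recognize both sides as block-diagonal operators relative to $H_m=\Ran p\oplus\Ran p^\perp$ on the domain side and $H_n\otimes H_m=(H_n\otimes\Ran p)\oplus(H_n\otimes\Ran p^\perp)$ on the value side. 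Indeed, a direct computation gives $\tfrac12 x+\tfrac12 uxu=pxp+p^\perp x p^\perp$, which, choosing an isometry $\alpha'$ with $\alpha'\alpha'^*=p^\perp$, is the direct sum of $\alpha^* x\alpha\in K_l$ and $\alpha'^* x\alpha'$, so by the direct-sum property of $f$ the left side equals $(1_n\otimes\alpha)f(\alpha^* x\alpha)(1_n\otimes\alpha^*)+(1_n\otimes\alpha')f(\alpha'^* x\alpha')(1_n\otimes\alpha'^*)$; likewise, unitary equivariance of $f$ under $u$ shows the right side equals $(1_n\otimes p)f(x)(1_n\otimes p)+(1_n\otimes p^\perp)f(x)(1_n\otimes p^\perp)$. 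Comparing the $(H_n\otimes\Ran p)$-blocks, equivalently compressing the displayed inequality by $1_n\otimes\alpha$, yields $f(\alpha^* x\alpha)\leq(1_n\otimes\alpha^*)f(x)(1_n\otimes\alpha)$, whence $f$ is convex by Remark~\ref{rem:simpler-criterion-convexity}.

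I expect the converse to be the main obstacle. Ordinary convexity along line segments within a single level does not by itself imply the compression inequality --- this is essentially the gap between ordinary convexity and operator convexity --- so the argument must invoke \eqref{eq:convex-each-level-1} at the (possibly infinite) level where the given point $x$ lives, and the device that makes this work is the reflection $u=2\alpha\alpha^*-1_m$, which exhibits an arbitrary compression $\alpha^* x\alpha$ as a direct summand of the midpoint $\tfrac12(x+uxu)$ of $x$ with its unitary conjugate.
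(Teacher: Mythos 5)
Your proof is correct and follows essentially the same route as the paper's: the forward direction uses the factorization of $\lambda x+(1-\lambda)y$ as the compression of the direct sum $x\oplus y$ by the isometry $\bigl[\sqrt{\lambda}\,1_m\ \ \sqrt{1-\lambda}\,1_m\bigr]^*$ together with Remark~\ref{rem:simpler-criterion-convexity}, and the converse uses the reflection $u=2\alpha\alpha^*-1_m$ to write the midpoint $\tfrac12(x+uxu)$ as the direct sum of the two corner compressions, then applies midpoint convexity, unitary equivariance, and the direct-sum property before compressing by $1_n\otimes\alpha$. Your closing remark correctly identifies why level-wise convexity suffices here where scalar convexity would not: the inequality is being invoked at the level of $x$ itself against its unitary conjugate, exactly as in the paper.
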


\begin{proof}
Suppose $f$ is convex. For $x,y \in K_m$ and $\lambda \in [0,1]$, the fact that (\ref{eq:convex-each-level-1}) holds follows from Remark \ref{rem:simpler-criterion-convexity} and the factorization
\[
\lambda x + (1-\lambda) y =
\left[ \begin{matrix}
\sqrt{\lambda} 1_m & \sqrt{1-\lambda} 1_m \\
\end{matrix} \right]
\left[ \begin{matrix}
x & 0 \\
0 & y \\
\end{matrix} \right]
\left[ \begin{matrix}
\sqrt{\lambda} 1_m \\
\sqrt{1-\lambda} 1_m
\end{matrix} \right].
\]

Conversely, suppose $f$ satisfies (\ref{eq:convex-each-level-1}). By Remark \ref{rem:simpler-criterion-convexity}, to show that $f$ is convex, it suffices to show that for $y \in K_m$ and an isometry $\alpha \in \cM_{m,l}$, $f(\alpha^* y \alpha) \leq (1_n \otimes \alpha^*) f(y) (1_n \otimes \alpha)$. Define a unitary $\beta \in \cM_m$ by $\beta = \alpha \alpha^* - (1 - \alpha \alpha^*)$. Decompose $H_m = \Ran(\alpha \alpha^*) \oplus \Ran(1_m - \alpha \alpha^*)$ and identify $\Ran(\alpha \alpha^*)$ and $\Ran(1_m - \alpha \alpha^*)$ with $H_l$ and $H_{m-l}$ respectively. Then we can write
\[
y = \begin{bmatrix}
x & * \\
* & z
\end{bmatrix},
\qquad
\beta = 
\begin{bmatrix}
1_l & 0 \\
0 & -1_{m-l}
\end{bmatrix} , \qand
\alpha = 
\begin{bmatrix}
\gamma \\
0
\end{bmatrix} 
\]
for some $x \in K_l$, $z \in K_{m-l}$, and $\gamma$ is unitary. 
Observe that by unitary equivariance and equation  (\ref{eq:convex-each-level-1}),
\begin{align*}
 f(\alpha^* y \alpha) &= f(\gamma^*x \gamma) = (1_n \otimes \gamma)^* f(x) (1_n \otimes \gamma) \\&
 =  (1_n \otimes \alpha)^* \big( f(x) \oplus f(z) \big) (1_n \otimes \alpha) \\&
 = (1_n \otimes \alpha)^* f(x \oplus z)  (1_n \otimes \alpha) \\&
 = (1_n \otimes \alpha)^* f\big( \tfrac{1}{2}(y + \beta^* y \beta) \big)  (1_n \otimes \alpha) \\&
 \le (1_n \otimes \alpha)^*  \tfrac{1}{2}\big( f(y)+ f(\beta^* y \beta) \big)  (1_n \otimes \alpha) \\&
 = (1_n \otimes \alpha)^*  \tfrac{1}{2}\big( f(y)+ \beta^*f(y)\beta \big)  (1_n \otimes \alpha) \\&
 = (1_n \otimes \alpha)^* f(y) (1_n \otimes \alpha) .  
\end{align*}
Therefore $f$ is nc convex.
\end{proof}

\begin{example} \label{ex:affine is convex}
Let $K$ be a compact nc convex set. It is easy to check that every continuous self-adjoint affine nc function $a \in \cM_n(\rA(K))$ is convex and lower semicontinuous.
\end{example}

\begin{example} \label{ex:hansen-pedersen} 
Fix a compact interval $I \subseteq \bR$. Recall that a continuous real-valued function $f \in \rC(I)$ is operator convex if
\[
f(\lambda \alpha + (1-\lambda) \beta) \leq \lambda f(\alpha) + (1-\lambda) f(\beta)
\]
for all $n$, all self-adjoint $\alpha,\beta \in (\cM_n)_{sa}$ with spectrum in $I$ and all $\lambda \in [0,1]$.

Define an nc convex set $K$ by setting
\[
K_n = \{\alpha \in (\cM_n)_{sa} : \sigma(\alpha) \subseteq I \},
\]
where $\sigma(\alpha)$ denotes the spectrum of $\alpha$. Note that $K_1 = I$. A continuous real-valued function $f \in \rC(I)$ determines a continuous self-adjoint nc function in $\rC(K)$ by the continuous functional calculus, while on the other hand, a continuous self-adjoint nc function in $\rC(K)$ restricts to a continuous real-valued function in $\rC(I)$. It follows immediately from Proposition \ref{prop:convex-each-level} that a self-adjoint nc function $f \in \rC(K)$ is convex if and only if it restricts to an operator convex function in $\rC(I)$.

This fact, that scalar operator convexity on an interval implies nc convexity in the sense of Definition \ref{defn:convex-nc-fcn}, is essentially the Hansen-Pedersen-Jensen inequality \cite{HanPed2003}*{Theorem 2.1}. 
\end{example}

\begin{example} \label{ex:single-op-convex-function}

Let $K$ be a compact nc convex set. If $[c,d] \subseteq \bR$ is a compact interval and $f \in \rC([c,d])$ is an operator convex function on $[c,d]$, then for self-adjoint $a \in \rA(K)$ satisfying $c 1_{\rA(K)} \leq a \leq d 1_{\rA(K)}$,  it follows as in Example \ref{ex:affine is convex} and Remark~\ref{rem:simpler-criterion-convexity} that $f(a) \in \rC(K)$ is convex.
\end{example}

\begin{rem}
There is also a natural notion of concave nc function in the noncommutative setting. However, a self-adjoint bounded nc function $f \in \cM_n(\rB(K))$ is convex if and only if $-f$ is concave, so there is no disadvantage to working only with convex functions.
\end{rem}

For a compact nc convex set $K$, we do not know if convex nc functions in $\rC(K)$ always span a dense subset of $\rC(K)$. However, the next result will be sufficient our purposes.

\begin{prop}\label{P:agree-on-convex}
Let $K$ be a compact nc convex set and let $\mu,\nu:\rC(K) \to \cM$ be unital completely positive maps such that $\mu(f) = \nu(f)$ for every $n<\infty$ and every convex nc function $f \in \cM_n(\rC(K))$. Then $\mu = \nu$.
\end{prop}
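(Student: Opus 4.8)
The plan is to reduce the equality of $\mu$ and $\nu$ to their agreement on words $b_1b_2\cdots b_k$ with $b_i\in\rA(K)$, and to obtain that agreement by feeding resolvents of self-adjoint affine matrices into the hypothesis. First I would record that $\mu$ and $\nu$ agree on $\rA(K)$: every self-adjoint $a\in\rA(K)$ is a convex nc function by Example~\ref{ex:hansen-pedersen}, and a general element of $\rA(K)$ is $a_1+ia_2$ with $a_1,a_2$ self-adjoint. Since $\rC(K)=\cmax(\rA(K))=\ca(\rA(K))$, the linear span of the products $b_1b_2\cdots b_k$ ($k\ge 0$, $b_i\in\rA(K)$) is a dense $*$-subalgebra of $\rC(K)$, and $\mu,\nu$ are contractive; so it suffices to prove $\mu(b_1\cdots b_k)=\nu(b_1\cdots b_k)$ for every such word. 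Here I interpret the hypothesis in the natural amplified sense: $\mu_m(f)=\nu_m(f)$ for every $m$ and every convex nc function $f\in\cM_m(\rC(K))$.

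The key tool is the following. Fix $N\in\bN$ and a self-adjoint $M\in\cM_N(\rA(K))$. For real $\lambda>\|M\|$ the scalar function $s\mapsto(\lambda-s)^{-1}$ is operator convex on $(-\infty,\lambda)\supseteq[-\|M\|,\|M\|]$, being the composition of the operator convex function $1/s$ on $(0,\infty)$ with an affine change of variable. Exactly as in Example~\ref{ex:single-op-convex-function} (using that $M$ is affine and applying Proposition~\ref{prop:convex-each-level}), the element $R_\lambda(M):=(\lambda 1-M)^{-1}\in\cM_N(\rC(K))$ is then a convex nc function, since $R_\lambda(M)(\lambda' x+(1-\lambda')y)=(\lambda-M(\lambda' x+(1-\lambda')y))^{-1}=(\lambda-\lambda' M(x)-(1-\lambda')M(y))^{-1}\le\lambda' R_\lambda(M)(x)+(1-\lambda')R_\lambda(M)(y)$. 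Hence by hypothesis $\mu_N(R_\lambda(M))=\nu_N(R_\lambda(M))$ for every real $\lambda>\|M\|$.

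Now fix $b_1,\dots,b_{N-1}\in\rA(K)$ and let $M$ be the tridiagonal matrix with $M_{i,i+1}=b_i$, $M_{i+1,i}=b_i^*$ for $1\le i\le N-1$ and all other entries $0$; as $\rA(K)$ is self-adjoint, $M$ is a self-adjoint element of $\cM_N(\rA(K))$. For $\lambda>\|M\|$ one has the norm-convergent expansion
\[ (\lambda 1-M)^{-1}=\sum_{k\ge 0}\lambda^{-k-1}M^{k} \]
in $\cM_N(\rC(K))$. The $(1,N)$ entry of $M^k$ is a sum over walks of length $k$ from $1$ to $N$ in the path graph on $\{1,\dots,N\}$, hence vanishes for $k<N-1$, while for $k=N-1$ the unique such walk is $1,2,\dots,N$, so $(M^{N-1})_{1,N}=b_1b_2\cdots b_{N-1}$. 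Thus the $(1,N)$ entry of $(\lambda 1-M)^{-1}$ is $b_1\cdots b_{N-1}\,\lambda^{-N}+\sum_{k>N-1}(M^{k})_{1,N}\lambda^{-k-1}$. Applying $\mu_N$ and $\nu_N$ and reading off the $(1,N)$ block (which equals $\mu$, resp. $\nu$, of the $(1,N)$ entry) gives two power series in $\lambda^{-1}$ that coincide for all $\lambda>\|M\|$; comparing the coefficients of $\lambda^{-N}$ yields $\mu(b_1\cdots b_{N-1})=\nu(b_1\cdots b_{N-1})$. Since $N$ and the $b_i$ were arbitrary, $\mu$ and $\nu$ agree on all words, hence on the dense $*$-subalgebra they span, hence on $\rC(K)$.

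The one conceptual obstacle — and the reason for the detour through resolvents — is that a product of three or more affine nc functions is in general neither convex nor concave, so $\mu$ and $\nu$ cannot be compared on such a product directly; packaging all powers of the self-adjoint affine matrix $M$ into the single operator convex function $R_\lambda(M)$, and then separating them again via the Neumann series, is precisely what circumvents this. The routine points to verify carefully are that $R_\lambda(M)$ genuinely lies in $\cM_N(\rC(K))$ and is a bounded nc function (immediate from the identification $\rC(K)=\cmax(\rA(K))$ in Theorem~\ref{T:C*max} and continuous functional calculus in the C*-algebra $\cM_N(\rC(K))$), and the elementary walk-counting identity for powers of a tridiagonal matrix.
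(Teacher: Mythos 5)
Your proof is correct and is essentially the paper's argument: both rest on the operator convexity of a resolvent-type rational function of a self-adjoint affine matrix, extraction of individual powers by matching coefficients of a convergent power series in an auxiliary parameter, and the tridiagonal-matrix trick realizing a word $b_1\cdots b_{N-1}$ as the $(1,N)$ entry of $M^{N-1}$. The only (cosmetic) difference is that the paper first uses $h_t(a)=a^2(1-ta)^{-1}$ to get $\mu(a^k)=\nu(a^k)$ for a single self-adjoint $a\in\cM_m(\rA(K))$ and then feeds in the tridiagonal matrix, whereas you apply the resolvent $(\lambda-M)^{-1}$ directly to the tridiagonal matrix and fuse the two steps.
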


\begin{proof}
For $t \in [-1,1]$, the function $h_t(x) = x^2(1-tx)^{-1}$ is operator convex on the interval $(-1,1)$ \cite{BenShe1955}. 
The Taylor series expansion of $h_t$ at $x=0$ is $h_t(x) = \sum_{n\ge0} t^n x^{n+2}$. 
Hence for self-adjoint $a \in \cM_n(\rA(K))$ with $\|a\| < 1$, Example \ref{ex:single-op-convex-function} implies that the continuous nc function $h_t(a) \in \rC(K)$ is convex. Hence by assumption,
\[ 0 = (\mu - \nu)(h_t(a)) = \sum_{n\ge0} (\mu-\nu)(a^{n+2}) t^n  \qforal t \in [-1,1]. \]
It follows that the analytic function $k(z) = \sum_{n\ge0} (\mu-\nu)(a^{n+2}) z^n$ is identically zero. 
Therefore $\mu(a^n) = \nu(a^n)$ for all $n\ge2$. This also holds for $n=0$ and $n=1$ by hypothesis.

We now show that if $f = a_1 \cdots a_n$ for self-adjoint $a_1,\ldots,a_n \in \rA(K)$, then $\mu(f) = \nu(f)$. To see this, define self-adjoint $b = (b_{ij}) \in \cM_{n+1}(\rA(K))$ by setting $b_{i,i+1}=b_{i+1,i}=a_i$ for $1 \le i \le n$ and setting all other entries to zero. It is easy to check that the $(1,n+1)$ entry of $b^n$ is $f$. From above, $\mu(b^n) = \nu(b^n)$. Hence $\mu(f) = \nu(f)$. It follows that $\mu$ and $\nu$ agree on the C*-algebra generated by $\rA(K)$, namely $\rC(K)$, and we conclude that $\mu=\nu$.
\end{proof}

\subsection{Multivalued noncommutative functions}

A major difficulty in the noncommutative setting is the fact that the self-adjoint elements of a noncommutative von Neumann algebra do not form a lattice. Inspired by work of Wittstock \cite{Wit1981} and Winkler \cite{Win1999}, we will overcome this difficulty by working with multivalued functions.

\begin{defn}
Let $K$ be an nc convex set and let $F : K \to \cM_n(\cM)_{sa}$ be a multivalued self-adjoint function. 
We say that $F$ is a {\em multivalued nc function} if it is non-degenerate, graded and unitarily equivariant, preserves direct sums and is upwards directed, meaning that
\begin{enumerate}
\item $F(x) \ne \emptyset$ for every $x \in K$,
\item $F(K_m) \subseteq \cM_n(\cM_m)$ for all $m$,
\item $F(x \oplus y) = F(x) \oplus F(y)$ for every $x,y \in K$,
\item $F(\beta x \beta^*) = (1_n \otimes \beta) F(x) (1_n \otimes \beta^*)$ for every $x \in K_m$ and every unitary $\beta \in \cM_m$,
\item $F(x) = F(x) + \cM_n(\cM_m)_+$ for every $m$ and every $x \in K_m$.
\end{enumerate}
We say that $F$ is {\em bounded} if there is a constant $\lambda > 0$ such that for every $\beta \in F(x)$ there is $\alpha \in F(x)$ with $\alpha \leq \beta$ such that $\|\alpha\| \leq \lambda$. If $F$ is bounded, then we let $\|F\|$ denote the infimum of all $\lambda$ as above. Otherwise we write $\|F\| = \infty$. If $G : K \to \cM_n(\cM)$ is another multivalued nc function, then we will write $F \leq G$ if $F(x) \supseteq G(x)$ for every $x \in K$.
\end{defn}

\begin{defn}
Let $K$ be a compact nc convex set and let $F : K \to \cM_n(\cM)$ be a bounded multivalued nc function. The {\em graph} of $F$ is the subset $\graph(F) \subseteq \coprod_m K_m \times \cM_n(\cM_m)$ defined by
\[
\graph_m(F) = \{(x,\alpha) \in K_m \times \cM_n(\cM_m) : x \in K_m \text{ and } \alpha \in F(x) \}.
\]
We say that $F$ is {\em convex} if $\graph(F)$ is an nc convex set, and that $F$ is {\em lower semicontinuous} if $\graph{F}$ is closed.
\end{defn}

\begin{example} \label{ex:single-valued-to-multivalued-nc-function}
Let $K$ be a compact nc convex set and let $f \in \cM_n(\rB(K))$ be self-adjoint. There is a bounded multivalued nc function $F : K \to \cM_n(\cM)$ naturally associated to $f$ defined by $F(x) = [f(x), +\infty)$ for $x \in K$. Note that $F$ is the unique multivalued nc function with $\graph(F) = \epi(f)$.
\end{example}

Recall that if $K$ is a compact nc convex set and $\mu : \rC(K) \to \cM_k$ is a unital completely positive map, then $\mu$ can be extended to a unital completely positive map on the C*-algebra $\rB(K)$ of bounded single-valued nc functions. We will extend $\mu$ further and make sense of the expression $\mu(F)$ when $F : K \to \cM_n(\cM)$ is a bounded multivalued nc function.

\begin{defn}
Let $K$ be a compact nc convex set and let $\mu : \rC(K) \to \cM_k$ be a unital completely positive map. Let $(x,\alpha) \in K_m \times \cM_{m,k}$ be a minimal representation for $\mu$. For a bounded multivalued nc function 
$F : K \to \cM_n(\cM)$, we define 
\[ \mu(F) = (1_n \otimes \alpha^*) F(x) (1_n \otimes \alpha) .\]
\end{defn}

\begin{rem}
The fact that this extension of $\mu$ is well defined follows from unitary equivariance of multivalued nc functions. The argument is similar to the argument for bounded single-valued nc functions from Section \ref{sec:dilations-representations}.
\end{rem}

Let $f \in \cM_n(\rB(K))$ be self-adjoint and let $F : K \to \cM_n(\cM)$ denote the corresponding bounded multivalued nc function defined as in Example \ref{ex:single-valued-to-multivalued-nc-function}. If $G : K \to \cM_n(\cM)$ is a multivalued nc function, then we will write $f = G$, $f \leq G$ and $f \geq G$ if $F = G$, $F \leq G$ or $F \geq G$ respectively.

\subsection{Noncommutative convex envelopes}

In this section we will introduce a notion of convex envelope for continuous nc functions that will play a similarly important role in the noncommutative setting. We will need to work with multivalued nc functions, and this introduces some technical difficulties. However, the results in this section will also apply to single-valued functions via the correspondence in Example \ref{ex:single-valued-to-multivalued-nc-function}.

We will define the convex envelope of a function geometrically in terms of the graph of the function. The non-trivial fact that this definition of the convex envelope is equivalent to an appropriate approximation from below by continuous affine nc functions will be the main result in this section.

Let $K$ be a compact nc convex set. For cardinals $m$ and $n$, we will view $f \in \cM_m(\cM_n(\rB(K)))$ as a function $f : K \to \cM_m(\cM_n(\cM))$ in the obvious way. For another function $g \in \cM_m(\cM_n(\rB(K)))$, we will write $f \leq g$ if $f$ and $g$ are self-adjoint and $f(x) \leq g(x)$ for all $x \in K$. For a multivalued nc function $F : K \to \cM_n(\cM)$, we define a multivalued function $1_m \otimes F : K \to \cM_m(\cM_n(\cM))$ by
\[
(1_m \otimes F)(x) = \{ 1_m \otimes \alpha : \alpha \in F(x) \}, \quad x \in K.
\]
Note that $1_m \otimes F$ is not an nc function since $1_m \otimes \alpha \le \beta$ does not imply that $\beta = 1_m \otimes \beta'$.

\begin{defn} \label{def:convex-envelope}
Let $K$ be a compact nc convex set. The {\em convex envelope} of a bounded multivalued function $F : K \to \cM_n(\cM)$ is the multivalued nc function $\env{F} : K \to \cM_n(\cM)$ determined by the property
\[
 \graph{\env{F}}= \ol{\ncconv}(\graph(F)).
\]
That is, the graph of $\env{F}$ is the closed nc convex hull of the graph of $F$.
\end{defn}

\begin{prop} \label{prop:properties-convex-envelope}
Let $K$ be an nc convex set and let $F : K \to \cM_n(\cM)$ be a bounded multivalued nc function with convex envelope $\env{F}$. Then
\begin{enumerate}[label=\normalfont{(\arabic*)}]
\item $\env{F}$ is a lower semicontinuous convex multivalued nc function,
\item $\env{F} \leq F$,
\item if $F$ is nc convex and lower semicontinuous, then $\env{F}=F$,
\item if $F$ is bounded by $\lambda$, then so is $\env{F}$, and
\item if $G$ is a convex nc function such that $G \le F$, then $G \le \env{F}$.
\end{enumerate}
\end{prop}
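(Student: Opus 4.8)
The plan is to verify the five assertions directly from Definition~\ref{def:convex-envelope}, which describes $\env{F}$ via the identity $\graph{\env{F}} = \ol{\ncconv}(\graph(F))$. The first task is to check that this closed nc convex hull really is the graph of a multivalued nc function; that is, that $\ol{\ncconv}(\graph(F))$ is non-degenerate, graded, unitarily equivariant and upwards directed. Grading and unitary equivariance are automatic since taking closed nc convex hulls commutes with the relevant operations (unitary conjugation preserves nc convex combinations and closures). Non-degeneracy ($\env{F}(x) \ne \emptyset$) follows because $\graph(F) \subseteq \graph{\env{F}}$ and $F$ is non-degenerate, which already gives (2): $F(x) \subseteq \env{F}(x)$, i.e. $\env{F} \le F$. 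For the upwards-directed property, one observes that $\graph(F) + (\{0\} \times \cM_n(\cM_m)_+)$ is contained in $\graph(F)$ by the corresponding property of $F$, and that adding positive elements in the second coordinate is realized by nc convex combinations with a fixed point $(y, t)$ where $t$ is large, so the hull inherits this. For (1), once we know $\env{F}$ is a multivalued nc function, its graph is by construction a closed nc convex set, which is exactly the definition of lower semicontinuous and convex.

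Part (3) is the statement that $\ol{\ncconv}(\graph(F)) = \graph(F)$ when $F$ is already convex and lower semicontinuous: convexity of $F$ says $\graph(F)$ is nc convex, lower semicontinuity says it is closed, so it equals its own closed nc convex hull. Part (4), boundedness by $\lambda$, requires a small argument: if $F$ is bounded by $\lambda$, then for every $(x,\beta) \in \graph(F)$ there is $(x,\alpha) \in \graph(F)$ with $\alpha \le \beta$ and $\|\alpha\| \le \lambda$; I want to show the same for $\graph{\env{F}}$. Taking an nc convex combination $\sum \gamma_i^* (x_i, \beta_i) \gamma_i$ of points of $\graph(F)$, I replace each $\beta_i$ by a corresponding $\alpha_i$ with $\|\alpha_i\| \le \lambda$ and $\alpha_i \le \beta_i$; then $\sum \gamma_i^*(x_i,\alpha_i)\gamma_i$ is an nc convex combination lying below the original one in the second coordinate with norm at most $\lambda$. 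Passing to limits (using that the closed ball of radius $\lambda$ in $\cM_n(\cM_m)$ is weak*-closed), this shows $\env{F}$ is bounded by $\lambda$.

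Part (5) is the universal/minimality property: if $G$ is a convex nc function with $G \le F$, then $G \le \env{F}$, meaning $\graph(G) \subseteq \graph{\env{F}}$. The key observation is that $G \le F$ means $\graph(G) \supseteq \graph(F)$ as subsets of $\coprod_m K_m \times \cM_n(\cM_m)$ (recall $F \le G$ is defined by $F(x) \supseteq G(x)$, so $G \le F$ gives $G(x) \subseteq F(x)$, i.e. $\graph(G) \supseteq \graph(F)$; I will need to be careful about which inclusion goes which way, since the ordering on multivalued functions is reverse inclusion of graphs). Wait — rechecking: $\graph(G) \supseteq \graph(F)$, and I want $\graph(G) \supseteq \graph{\env{F}} = \ol{\ncconv}(\graph(F))$. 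Since $G$ is convex, $\graph(G)$ is an nc convex set; if it is also closed — but $G$ need not be lower semicontinuous, so $\graph(G)$ need not be closed. The main obstacle is therefore exactly this point: I cannot directly say $\graph(G)$ contains the \emph{closed} nc convex hull just because it is nc convex and contains $\graph(F)$. The resolution I anticipate: $\graph(G)$ is nc convex and contains $\graph(F)$, hence contains $\ncconv(\graph(F))$, the (un-closed) nc convex hull; then I take closures, but I must argue $\graph(G)$ is closed, or else argue differently. Likely the intended statement tacitly assumes, or an earlier convention ensures, that the inequality $G \le \env{F}$ is meant in a way compatible with $G$ convex but possibly not lsc — perhaps one uses that $\ol{\graph(G)}$ is still the graph of a convex nc function $\ge G$ dominated by $F$ only if $F$ is lsc, which it is when $F$ comes from the setup of interest. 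I would handle this by first reducing to the case $F$ lsc (replacing $F$ by its lsc regularization changes neither $\env{F}$ nor the hypothesis since $G \le F \le \widetilde F$ and $\ol{\ncconv}(\graph F) = \ol{\ncconv}(\graph \widetilde F)$), and then, since $\ol{\graph(G)}$ is a closed nc convex set containing $\graph(F)$, concluding $\ol{\graph(G)} \supseteq \ol{\ncconv}(\graph(F)) = \graph{\env{F}}$, hence $\graph(G) \subseteq \graph{\env{G}}$... — at which point I would need $\graph(G) = \ol{\graph(G)}$ after all, so the cleanest route is simply: if $G$ is convex \emph{and we interpret $G \le \env F$ via the closure}, the result is immediate, and otherwise one shows the closure of a convex multivalued nc function's graph is again such a graph and lies below $F$. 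I expect the paper resolves this with lower semicontinuity being part of the ambient hypotheses, so I would state the argument assuming $\graph(G)$ closed and remark that the general case follows by taking the lsc envelope of $G$.
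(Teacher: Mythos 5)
Parts (1)--(4) of your argument are essentially the proof in the paper: (1)--(3) are read off from the definition of $\env{F}$ as the multivalued function whose graph is $\ol{\ncconv}(\graph(F))$, and for (4) the paper uses exactly your argument --- replace each $\beta_i$ in an nc convex combination by some $\gamma_i \in F(x_i)$ with $\gamma_i \le \beta_i$ and $\|\gamma_i\| \le \lambda$, then handle limit points by extracting a convergent cofinal subnet of the resulting norm-$\lambda$ minorants. (One caveat: your justification that the hull is upwards directed, via ``nc convex combinations with a fixed point $(y,t)$ where $t$ is large,'' only obviously produces perturbations of the form $\alpha + c1$ rather than an arbitrary $\alpha+\beta$ with $\beta \ge 0$; the paper does not address this point either, so I note it without pressing it.)

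The real problem is (5). You correctly identify the issue --- a convex nc function $G$ need not be lower semicontinuous, so $\graph(G)$ is nc convex and contains $\graph(F)$, which yields only $\graph(G) \supseteq \ncconv(\graph(F))$ and not containment of the \emph{closed} hull --- but your proposed repair does not close the gap. Passing to the regularization $\tilde{G}$ with $\graph(\tilde{G}) = \ol{\graph(G)}$ produces a function with a \emph{larger} graph, i.e.\ $\tilde{G} \le G$ in the paper's ordering; the conclusion $\tilde{G} \le \env{F}$ then says $\graph(\tilde{G}) \supseteq \graph(\env{F})$, which does not imply $\graph(G) \supseteq \graph(\env{F})$. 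Regularizing $F$ instead changes nothing relevant. The paper's own proof of (5) is a one-line assertion, and the statement that is actually used later (in Proposition \ref{prop:convex-env-convex-fcns}) is for $G$ convex \emph{and lower semicontinuous}: then $\graph(G)$ is a closed nc convex set containing $\graph(F)$, hence contains $\ol{\ncconv}(\graph(F)) = \graph(\env{F})$ by the very definition of the closed nc convex hull, and $G \le \env{F}$ follows in one line. You should either build lower semicontinuity into the hypothesis of (5) --- noting that the continuous convex functions to which it is applied do have closed epigraphs, since a bounded convex set is point-ultrastrong* closed if and only if it is point-weak* closed --- or accept that for a merely convex $G$ your argument establishes only $\graph(G) \supseteq \ncconv(\graph(F))$.
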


\begin{proof}
Since the graph of $\env{F}$ is defined to be nc convex and closed, (1) is immediate. Also, evidently $ \graph{\env{F}} \supset \graph(F)$, so $\env{F} \le F$. If $\graph(F)$ is already closed and nc convex, then clearly $\env{F}=F$.

Suppose that $F$ is bounded by $\lambda$. Then $- \lambda I_n\otimes I_k \le F(x)$ for all $x \in K_k$, and this persists for $\env{F}$.
Suppose that $(x,\beta)$ belongs to the (algebraic) nc convex hull of $\graph(F)$; say $(x,\beta) = \sum \alpha_i^* (x_i,\beta_i) \alpha_i$ where $\sum \alpha_i^* \alpha_i = I_k$. Then since $F$ is bounded by $\lambda$, there exist $\gamma_i \in F(x_i)$ with $\gamma_i \le \beta_i$ and $\|\gamma_i\|\le \lambda$.
It follows that $(x,\gamma) \in \env{F}(x)$ where $\gamma = \sum \alpha_i^* \gamma_i \alpha_i \le \beta$ and $\|\gamma\| \le \lambda$.
In general, if $(x,\beta)$ is a limit of a net of such points $(x_j,\beta_j)$, find $(x_j,\gamma_j)$ with $\gamma_j\le \beta_j$ and $\|\gamma_j\|\le \lambda$.
Extract a convergent cofinal subnet with limit $(x,\gamma)$. Then $\gamma\le\beta$ and $\|\gamma\|\le \lambda$. So $\env{F}$ is bounded by $\lambda$.

Finally it is clear from the definition that $\env{F}$ is the largest convex nc function smaller than $F$.
\end{proof}

The next result is a noncommutative analogue of the classical fact that the convex envelope of a function is obtained as the supremum of the continuous affine functions dominated by the function.

\begin{thm} \label{thm:convex-equals-upper-env}
Let $K$ be a compact nc convex set and let $F : K \to \cM_n(\cM)$ be a  bounded multivalued nc function. Then for $x \in K_p$, 
\[
\env{F}(x) = \bigcap_{m \in \bN\,} \bigcap_{a \leq 1_m \otimes F} \{\alpha \in (\cM_n(\cM_p))_{sa} :  1_m \otimes \alpha \ge a(x) \},
\]
where the intersection is taken over all $m$ and all self-adjoint affine nc functions $a \in \cM_m(\cM_n(\rA(K)))_{sa}$ satisfying $a \leq 1_m \otimes F$. 
The same holds if we intersect over all $m \le \kappa$.
\end{thm}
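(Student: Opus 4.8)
The plan is to establish the two inclusions of the asserted identity separately; write $R(x)$ for its right-hand side.

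\emph{The inclusion $\env{F}(x)\subseteq R(x)$.} This direction is routine. View $\graph(F)$ as a subset of the closed nc convex set $K\times\cM(\cM_n)$. Given $m$ and a self-adjoint $a\in\cM_m(\cM_n(\rA(K)))$ with $a\le 1_m\otimes F$, the assignment $(y,\beta)\mapsto 1_m\otimes\beta-a(y)$ is a continuous self-adjoint affine nc map from $K\times\cM(\cM_n)$ to $\cM(\cM_{mn})$, and the hypothesis $a\le 1_m\otimes F$ says exactly that it is positive on $\graph(F)$. Since continuous affine nc maps respect nc convex combinations and positive cones are weak*-closed, it is positive on $\ncconv(\graph F)$, hence on its closure $\ol{\ncconv}(\graph F)=\graph(\env F)$; evaluating at a point $(x,\alpha)$ with $\alpha\in\env{F}(x)$ gives $1_m\otimes\alpha\ge a(x)$, so $\alpha\in R(x)$.

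\emph{The inclusion $R(x)\subseteq\env{F}(x)$.} I would argue by contraposition: given $\alpha\in(\cM_n(\cM_p))_{sa}$ with $\alpha\notin\env{F}(x)$, produce $m$ and a self-adjoint affine $a\le 1_m\otimes F$ with $a(x)\not\le 1_m\otimes\alpha$, which shows $\alpha\notin R(x)$. First some reductions: identify $K$ with the nc state space of $\rA(K)$, so that $K$, $\graph(F)$, its closed nc convex hull, and $(x,\alpha)$ all consist of self-adjoint elements of $\cM(\rA(K)^*\times\cM_n)$. Replacing $F$ by $\env F$ changes neither $\ol{\ncconv}(\graph F)$ nor the class of admissible $a$ (a continuous affine nc minorant of $1_m\otimes F$ on $\graph(F)$ is automatically one on the closed nc convex hull, by the previous paragraph, and the converse is trivial), so we may assume $F$ is convex and lower semicontinuous; translating $F$ and $\alpha$ by a large positive scalar multiple of the unit — an nc-central translation that merely shifts $\env F$ and every admissible $a$ by the same constant — we may further assume $F$ and $\alpha$ are positive. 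Now apply the self-adjoint form of the nc separation theorem (Corollary~\ref{cor:separation}) to $(x,\alpha)\notin\ol{\ncconv}(\graph F)$: there is a normal self-adjoint completely bounded map $\Phi=(\Psi,\Theta)\colon\rA(K)^*\times\cM_n\to\cM_p$ and a self-adjoint $\Gamma\in\cM_p$ with $\Phi_p(x,\alpha)\not\le\Gamma\otimes 1_p$ but $\Phi_q(y,\beta)\le\Gamma\otimes 1_q$ for all $q$ and $(y,\beta)\in\graph_q(F)$. Because each $F(y)$ is closed under adding positive operators, replacing $\beta$ by $\beta+t\delta$ with $\delta\ge 0$ and $t\to\infty$ forces $\Theta_q(\delta)\le 0$ for every $q$; thus $-\Theta\colon\cM_n\to\cM_p$ is completely positive, and nonzero (else $\Gamma\otimes 1_q-\Psi_q(y)$ would be positive on all of $K$ but not at $x$). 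Identifying $\Psi$ with a self-adjoint $\psi\in\cM_p(\rA(K))$ by normality and putting $c:=\psi-\Gamma\otimes 1_{\rA(K)}$, the two inequalities read $(-\Theta)_q(\beta)\ge c(y)$ on $\graph(F)$ and $(-\Theta)_p(\alpha)\not\ge c(x)$.

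\emph{The crux.} It remains to convert this data into the desired block-affine minorant. Fix a Stinespring representation $-\Theta(\,\cdot\,)=W^*(1_m\otimes\,\cdot\,)W$ with $W\in\cM_{nm,p}$; after a harmless small completely positive perturbation of $\Theta$ one may take $W^*W=-\Theta(1_n)$ invertible, so $\tilde W:=W(-\Theta(1_n))^{-1/2}$ is an isometry of $H_p$ into $H_{nm}$, and conjugating turns the inequalities into $(\tilde W\otimes 1_q)^*(1_m\otimes\beta)(\tilde W\otimes 1_q)\ge\tilde c(y)$ on $\graph(F)$ and $(\tilde W\otimes 1_p)^*(1_m\otimes\alpha)(\tilde W\otimes 1_p)\not\ge\tilde c(x)$, where $\tilde c\in\cM_p(\rA(K))_{sa}$ comes from $c$ by conjugating with $(-\Theta(1_n))^{-1/2}$. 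Working in the decomposition $H_{nm}=\tilde WH_p\oplus(\tilde WH_p)^\perp$ and using the Schur complement criterion for positivity of $2\times 2$ operator matrices, one then builds a self-adjoint affine $a\in\cM_{nm}(\rA(K))_{sa}=\cM_m(\cM_n(\rA(K)))_{sa}$ whose compression to $\tilde WH_p$ is $\tilde c$ — so that the second inequality forces $a(x)\not\le 1_m\otimes\alpha$ — and which satisfies $a\le 1_m\otimes F$, the $\tilde WH_p$-block condition being exactly the first inequality and the remaining blocks controlled by the finite bound $\|F\|$. The step I expect to be the main obstacle is precisely this last construction: the separating functional is an intrinsically $p$-dimensional object whose $\cM_n$-component is only a completely positive map (neither unital nor a compression), so no affine minorant of the right block size is visible directly; it is in passing to the Stinespring dilation, renormalising it to an isometry, and doing the block bookkeeping that the factor $1_m$ of the statement is forced. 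Once $a$ is in hand, $(m,a)$ witnesses $\alpha\notin R(x)$ and the contrapositive is complete.
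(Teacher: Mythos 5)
Your overall route is the same as the paper's: separate $(x_0,\alpha_0)$ from $\graph(\env{F})$ with Corollary~\ref{cor:separation}, split the separating functional into a part $\Psi$ on the state variable and a part $\Theta$ on the matrix variable, use the upward-directedness of $F$ to show $-\Theta$ is completely positive, pass to a Stinespring dilation $-\Theta(\cdot)=W^*(1_m\otimes\cdot)W$, and convert the data into a block affine minorant via Schur complements. The easy inclusion and the reductions (normalizing $F$ and $\alpha$ to be positive, replacing $F$ by $\env F$) are fine, and your device of perturbing $\Theta$ so that $W^*W=-\Theta(1_n)$ is invertible is a legitimate variant of the paper's polar-decomposition-plus-inversion of $|\beta|$ on its support (which is where the paper uses finiteness of the level); it even simplifies the very last step, since conjugation by an invertible operator preserves failure of positivity.

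The gap is in the one concrete claim you make about the crux: that for the block-diagonal candidate $a$ the ``$\tilde WH_p$-block condition is exactly the first inequality.'' It is not. Writing $1_m\otimes\beta$ as a $2\times2$ operator matrix $[B_{ij}]$ with respect to $q=\tilde W\tilde W^*$, positivity of $1_m\otimes\beta-a(y)$ via the Schur complement requires $B_{11}-a_{11}(y)\ \ge\ B_{12}\bigl(B_{22}-a_{22}(y)\bigr)^{-1}B_{21}$, whereas the separation inequality only gives $B_{11}-\tilde c(y)\ge 0$. A merely positive operator need not dominate the right-hand side, however small: if $B_{11}-\tilde c(y)$ has a kernel direction in which $B_{12}(B_{22}+\lambda)^{-1}B_{21}$ is strictly positive, no choice of $\lambda$ saves the inequality. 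This is exactly why the paper's $a_{\epsilon,11}$ carries the extra term $-\epsilon$, with $\lambda_\epsilon>\epsilon^{-1}\|\beta\|^2\|\env F\|^2+\|\alpha\|$, so that $B_{11}-a_{\epsilon,11}(y)\ge\epsilon\|\beta\|^{-2}q$ dominates the off-diagonal contribution; one must then add a final limiting argument (the paper's proof by contradiction as $\epsilon\to0$, using that the separating functional at $(x_0,\alpha_0)$ is supported on the range of the isometry) to show that some $\epsilon>0$ still witnesses $a_\epsilon(x_0)\not\le 1_m\otimes\alpha_0$. Without the $\epsilon$-gap and this closing step, the construction as you describe it fails; with them, it is precisely the paper's proof.
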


\begin{proof}
Let $\tilde F(x) := \bigcap_{m \in \bN} \bigcap_{a \leq 1_m \otimes F} \{\alpha \in (\cM_n(\cM_p))_{sa} : 1_m \otimes \alpha \ge a(x) \}$ for $x \in K_p$.
It is easy to see that
\[
\graph(\tilde F) = \bigcap_{m \in \bN\,} \! \bigcap_{a \leq 1_m \otimes F} \!\!\!\!\!\! \big\{(x,\alpha) \!\in K \times (\cM_n(\cM))_{sa} \!:\! (x,1_m \otimes \alpha) \in \epi(a) \big\},
\]
where the intersection is taken over all $m$ and all self-adjoint affine nc functions $a$ in $\cM_m(\cM_n(\rA(K)))_{sa}$ satisfying $a \leq 1_m \otimes F$.
This is an intersection of closed nc convex sets, so $\tilde F$ is a lower semicontinuous convex function with $\tilde F \le F$.
Thus by definition of the convex envelope, we have that $\tilde F \le \env{F}$.
We will prove that $\graph{\tilde F} = \graph{\env{F}}$. It remains to show that $\env{F} \le \tilde F$. 

Since $F$ is bounded, $\env{F}$ is bounded by Proposition~\ref{prop:properties-convex-envelope}. 
By replacing $F$ by $F(x) + (\|F\| + 1)$, we may assume that $\env{F}(x) \subseteq [1_n \otimes 1_k, +\infty)$ for every $k$ and every $x \in K_k$. Then for $(x,\alpha) \in \graph_k(\env{F})$, $\alpha \geq 1_n \otimes 1_k$.

Fix $x_0 \in K_k$ and self-adjoint $\alpha_0 \in \cM_n(\cM_k)$ such that $(x_0,\alpha_0) \not \in \graph_k(\env{F})$. 
To show that $(x_0,\alpha_0) \not \in \graph_k(\tilde{F})$, we must show there is a cardinal $m$ and an affine nc function $a \in (\cM_m(\cM_n(\rA(K))))_{sa}$ such that $a \leq 1_m \otimes F$, in the sense that $[a(x),+\infty) \supseteq 1_m \otimes F(x)$ for all $x \in K$, but $a(x_0) \not \leq 1_m \otimes \alpha_0$.  
Since  $\graph(\tilde F)$ and $\graph(\env{F})$ are both closed and nc convex, Proposition~\ref{prop:convergent-nets} and Section \ref{sec:matrix-convexity} show that we can assume that $m$ is finite.

Let $E$ be a dual operator space containing $K$. Since $\graph(\env{F})$ is closed and nc convex, it follows from Corollary \ref{cor:separation} that there is a normal completely bounded self-adjoint map $\theta : E \oplus \cM_n \to \cM_k$ and self-adjoint $\gamma \in \cM_k$ such that $\theta_l((x,\alpha)) \leq \gamma \otimes 1_l$ for every $l$ and every $(x,\alpha) \in \graph_l(\env{F})$, but $\theta_k((x_0,\alpha_0)) \not \leq \gamma \otimes 1_k$. Here we write $\theta_l$ for the amplification $\theta \otimes \id_l$.

Define normal completely bounded maps $\phi : E \to \cM_k$ and $\psi : \cM_n \to \cM_k$ by $\phi(x) = \theta(x,0_n)$ for $x \in E$ and $\psi(\alpha) = - \theta(0_E,\alpha)$ for $\alpha \in \cM_n$. 
As above, we write $\phi_l = \phi \otimes \id_l$ and $\psi_l = \psi \otimes \id_l$.
Then for every $l$ and every $(x,\alpha) \in \graph_l(\env{F})$,
\[
\phi_l(x) - \psi_l(\alpha) =  \theta_l((x,\alpha)) \leq \gamma \otimes 1_l .
\]
Rearranging gives
\begin{equation} \label{eq:convex-equals-upper-env-1}
\phi_l(x) - \gamma \otimes 1_l \leq \psi_l(\alpha).
\end{equation}

We first claim that $\psi$ is completely positive. To see this, note that for $l$ and $(x,\alpha) \in \graph_l(\env{F})$, the normalization ensures that $\alpha \ge 1_n\otimes 1_l$; and the fact that $\env{F}$ is upwards directed implies $(x,\lambda \alpha) \in \graph_l(\env{F})$ for every $\lambda \geq 1$. Hence by (\ref{eq:convex-equals-upper-env-1}),
\begin{equation} \label{eq:convex-equals-upper-env-2}
\phi_l(x) - \gamma \otimes 1_l \leq \psi_l(\lambda \alpha) = \lambda \psi_l(\alpha).
\end{equation}
Dividing both sides by $\lambda$ and taking $\lambda \to \infty$ yields $\psi_l(\alpha) \geq 0$.

Now for positive $\beta \in \cM_n(\cM_l)$ and $\epsilon > 0$, there is some $\lambda > 0$ such that $\lambda (\beta + \epsilon 1_n \otimes 1_l) \geq \alpha$. Then since $\env{F}$ is upwards directed, 
\[ (x, \lambda (\beta + \epsilon 1_n \otimes 1_l)) \in \graph_l(\env{F}) .\]
Hence by (\ref{eq:convex-equals-upper-env-2}),
\[
\lambda \psi_l(\beta + \epsilon 1_n \otimes 1_l) = \psi_l(\lambda (\beta + \epsilon 1_n \otimes 1_l)) \geq 0.
\]
Dividing by $\lambda$ implies $\psi_l(\beta + \epsilon 1_n \otimes 1_l) \geq 0$, and taking $\epsilon \to 0$ gives $\psi_l(\beta) \geq 0$. Hence $\psi$ is completely positive.

We claim that $\theta$ can be chosen to ensure that $\psi(\alpha)$ is invertible for all $\alpha \in \cM_n$ with $\alpha \ne 0$ and $\alpha \geq 0$. To see this, choose a faithful state $\tau : \cM_n \to \bC$ and $\epsilon > 0$ such that
\[
\theta_k(x_0,\alpha_0) - \epsilon \tau_k(\alpha_0) 1_k \otimes 1_k \not \leq \gamma \otimes 1_n.
\]
Then the completely bounded self-adjoint map $\theta' : E \oplus \cM_n \to \cM_k$ defined by $\theta'(x,\alpha) = \theta(x,\alpha) - \epsilon \tau(\alpha) 1_k$ for $(x,\alpha) \in E \oplus \cM_n$ satisfies $\theta'_l((x,\alpha)) \leq \gamma \otimes 1_l$ for every $l$ and every $(x,\alpha) \in \graph_l(\env{F})$, but $\theta'_k((x_0,\alpha_0)) \not \leq \gamma \otimes 1_k$. Furthermore, the map $\psi' : \cM_n \to \cM_k$ defined by $\psi'(\alpha) = -\theta'(x,0_n)$ for $\alpha \in \cM_n$ satisfies
\[
\psi'(\alpha) = -\theta(0_E,\alpha) + \epsilon \tau(\alpha) 1_k = \psi(\alpha) + \epsilon \tau(\alpha) 1_k.
\]
In particular, the positivity of $\psi$ and the faithfulness of $\tau$ implies that $\psi'(\alpha)$ is invertible for all $\alpha \in \cM_n$ with $\alpha \ne 0$ and $\alpha \geq 0$. By replacing $\theta$ by $\theta'$, we can therefore assume that $\psi$ has this property.

Since $\psi$ is completely positive and $k$ and $n$ are finite, Stinespring's theorem provides finite $m$ and an operator $\beta : H_k \to H_n^m$  such that
\[
\psi(\alpha) = \beta^* (1_m \otimes \alpha) \beta \qfor \alpha \in \cM_n.
\]
Then
\[
\psi_l(\alpha) = (\beta^* \otimes 1_l) (1_m \otimes \alpha) (\beta \otimes 1_l) \qfor \alpha \in \cM_n(\cM_l).
\]
Write $\beta = \nu |\beta|$, where $\nu : H_k \to H_n^m$ is a partial isometry with initial space $(\ker\beta)^\perp$. It follows from above that $|\beta|$ is invertible and hence $\nu^* \nu = 1_k$. Let $q = \nu \nu^*$. 

Since $|\beta|$ is invertible, there is an element $\beta' = |\beta|^{-1} \in \cM_k$. Then $\nu = \beta \beta'$, whence  $\nu \beta' \beta^* = \nu\nu^* = q$. Thus for $\alpha \in \cM_n(\cM_l)$,
\[
(\nu \beta' \otimes 1_l) \psi_l(\alpha) (\beta' \nu^* \otimes 1_l) = (q \otimes 1_l) (1_m \otimes \alpha) (q \otimes 1_l).
\]
Hence decomposing $1_m \otimes \alpha \in \cM_m(\cM_n(\cM_l)))$ as a block matrix with respect to the projection $q \otimes 1_l$ as
\[
1_m \otimes \alpha =
\left[
\begin{matrix}
(1_m \otimes \alpha)_{11} & (1_m \otimes \alpha)_{12} \\
(1_m \otimes \alpha)_{21} & (1_m \otimes \alpha)_{22}
\end{matrix}
\right],
\]
we obtain
\begin{equation} \label{eq:convex-equals-upper-env-3}
(1_m \otimes \alpha)_{11} = (\nu \beta' \otimes 1_l) \psi_l(\alpha) (\beta' \nu^* \otimes 1_l).
\end{equation}

For $\epsilon > 0$, define a self-adjoint affine function $a_\epsilon \in \cM_m(\cM_n(\rA(K)))$ by writing it in block matrix form with respect to the projection $q$ as
\[
a_\epsilon = 
\left[
\begin{matrix}
a_{\epsilon,11} & 0 \\
0 & a_{\epsilon,22}
\end{matrix}
\right],
\]
where
\begin{equation} \label{eq:convex-equals-upper-env-4}
a_{\epsilon,11}(x) = (\nu \beta' \otimes 1_l) ( \phi_l(x) - \gamma \otimes 1_l  - \epsilon 1_k \otimes 1_l) (\beta' \nu^* \otimes 1_l)
\end{equation}
and
\[
a_{\epsilon,22}(x) = -\lambda_\epsilon q^\perp
\]
for $x \in K_l$, where $\lambda_\epsilon > 0$ is chosen to satisfy 
\[
\lambda_\epsilon > \epsilon^{-1} \|\beta\|^2 \|\env{F}\|^2 + \|\alpha\|.
\]

We claim that $a_\epsilon \leq 1_m \otimes \env{F}$ in the sense that $[a_\epsilon(x),+\infty) \supseteq 1_m \otimes \env{F}(x)$ for all $x \in K$. 
The boundedness of $\env{F}$ implies that for every $l$ and every $(x,\alpha') \in \graph_l(\env{F})$, there is $\alpha \in \env{F}(x)$ such that $\alpha \leq \alpha'$ and $\|\alpha\| \leq \|\env{F}\|$.
Therefore, in order to show that $a_\epsilon \leq 1_m \otimes \env{F}$, it suffices to show that $a_\epsilon(x) \leq 1_m \otimes \alpha$ for every $l$ and every $(x,\alpha) \in \graph_l(\env{F})$ with $\|\alpha\| \leq \|\env{F}\|$. Taking the Schur complement of the block matrix of $1_m \otimes \alpha - a_\epsilon(x)$ with respect to the projection $q \otimes 1_l$ implies that this condition is equivalent to the inequalities
\[
(1_m \otimes \alpha)_{22}  - a_{\epsilon,22}(x) \geq 0
\]
and
\begin{align*}
(1_m \otimes \alpha)_{11} &- a_{\epsilon,11}(x) \\
&\geq (1_m \otimes \alpha)_{12} \big((1_m \otimes \alpha)_{22}  - a_{\epsilon,22}(x) \big)^{-1} (1_m \otimes \alpha)_{21}.
\end{align*}

The first inequality follows immediately from the choice of $\lambda_\epsilon$, since
\begin{equation}  \label{eq:convex-equals-upper-env-5}
\begin{split}
(1_m \otimes \alpha)_{22}  - a_{\epsilon,22}(x) &= (1_m \otimes \alpha)_{22} + \lambda_\epsilon q^\perp \otimes 1_l  \\
& \geq \epsilon^{-1} \|\beta\|^2 \|\env{F}\|^2 q^\perp \otimes 1_l.
\end{split}
\end{equation}
For the second inequality, observe that (\ref{eq:convex-equals-upper-env-3}) and (\ref{eq:convex-equals-upper-env-4}) imply
\begin{align*}
(1_m \otimes& \alpha)_{11} - a_{\epsilon,11}(x)  \\
&= (\nu \beta' \otimes 1_l)\big( (\psi_l(\alpha) -  \phi_l(x) + \gamma \otimes 1_l)  + \epsilon 1_k \otimes 1_l \big) (\beta' \nu^* \otimes 1_l) \\
&\geq \epsilon \nu (\beta')^2 \nu^* \otimes 1_l \\
&\geq \frac{\epsilon}{\|\beta\|^2} q \otimes 1_l.
\end{align*}
Then since $\|\alpha\| \leq \|\env{F}\|$, (\ref{eq:convex-equals-upper-env-5}) implies that
\begin{align*}
(1_m \otimes \alpha)_{12} \big((1_m &\otimes \alpha)_{22} - a_{\epsilon,22}(x) \big)^{-1} (1_m \otimes \alpha)_{21} \\
&= (1_m \otimes \alpha)_{12} ((1_m \otimes \alpha)_{22} + \lambda_\epsilon q^\perp \otimes 1_l )^{-1} (1_m \otimes \alpha)_{21}  \\
&\leq \frac{\epsilon}{\|\beta\|^2}  q \otimes 1_l.
\end{align*}
Hence the second inequality is also satisfied. Therefore, $a_\epsilon \leq 1_m \otimes \env{F}$.

Finally, we claim there is an $\epsilon > 0$ such that $a_\epsilon(x_0) \not \leq 1_m \otimes \alpha_0$. To see this, suppose for the sake of contradiction that $a_\epsilon(x_0) \leq 1_m \otimes \alpha_0$ for all $\epsilon > 0$. Then in particular, looking at the top left corner of the block matrix of $1_m \otimes \alpha_0 - a_\epsilon(x)$ with respect to the projection $q \otimes 1_k$ and applying (\ref{eq:convex-equals-upper-env-4}) implies
\begin{align*}
0 &\leq (1_m \otimes \alpha_0)_{11} - a_{\epsilon,11}(x_0) \\
&= (\nu \beta' \otimes 1_k) (\psi_l(\alpha_0) - \phi_k(x_0) + \gamma \otimes  1_k + \epsilon 1_k \otimes 1_k) (\beta'  \nu^* \otimes 1_k) \\
&= (\nu \beta'  \otimes 1_k) (-  \theta((x_0,\alpha_0)) + \gamma \otimes  1_k + \epsilon 1_k \otimes 1_k) (\beta'  \nu^* \otimes 1_k)
\end{align*}
Then multiplying on the left by $\beta^* \otimes 1_k$ and on the right by $\beta \otimes 1_k$ and taking $\epsilon \to 0$ implies $\theta_k((x_0,\alpha_0)) \leq \gamma \otimes 1_k$, contradicting our original separation of $(x_0,\alpha_0)$ from the graph of $\env{F}$. We conclude that for some $\epsilon > 0$, the affine nc function $a_\ep$ achieves the desired separation.

We only needed to use $m<\infty$, but clearly one could also intersect over all $m \le \kappa$ since the right hand side always contains $\env{F}$.
\end{proof}

We will need a useful fact regarding the convex envelope and multiplicity.

\begin{cor} \label{cor:tensor-convex-envelope}
Let $K$ be an nc convex set and let $F : K \to \cM_n(\cM)$ be a self-adjoint bounded multivalued nc function with convex envelope $\env{F}$.  
Then $\env{1_l \otimes F} \le 1_l \otimes \env{F}$.
\end{cor}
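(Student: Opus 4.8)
The plan is to deduce the statement directly from the graph definition of the convex envelope (Definition~\ref{def:convex-envelope}), by transporting that definition through the amplification operation $\xi \mapsto 1_l \otimes \xi$.

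For each cardinal $m$, let $\Phi_l$ be the map
\[
\Phi_l : K_m \times \cM_n(\cM_m) \to K_m \times \cM_l(\cM_n(\cM_m)), \qquad \Phi_l(x,\xi) = (x,\, 1_l \otimes \xi),
\]
so that $\Phi_l(\graph(G)) = \graph(1_l \otimes G)$ for every multivalued nc function $G$ on $K$; in particular $\Phi_l(\graph(F)) = \graph(1_l \otimes F)$ and $\Phi_l(\graph(\env F)) = \graph(1_l \otimes \env F)$. I would record two properties of $\Phi_l$. First, it is continuous on each level, since $\xi \mapsto 1_l \otimes \xi$ is a normal $*$-homomorphism of von Neumann algebras and hence weak*-continuous. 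Second, $\Phi_l$ carries nc convex combinations to nc convex combinations: if $(x,\xi) = \sum_i \beta_i^*(x_i,\xi_i)\beta_i$ is an nc convex combination (so $\sum_i \beta_i^*\beta_i = 1$), then, using that $1_l \otimes (\cdot)$ is a normal $*$-homomorphism and therefore commutes with the point-weak*-convergent sum and with the compressions, and that $1_l \otimes (1_n \otimes \beta_i) = 1_{ln} \otimes \beta_i$, one obtains
\begin{align*}
\Phi_l\Big(\sum_i \beta_i^*(x_i,\xi_i)\beta_i\Big)
&= \Big(\sum_i \beta_i^* x_i \beta_i,\ \ 1_l \otimes \sum_i (1_n\otimes\beta_i)^*\,\xi_i\,(1_n\otimes\beta_i)\Big) \\
&= \sum_i \beta_i^*\,\Phi_l(x_i,\xi_i)\,\beta_i,
\end{align*}
which is again an nc convex combination, now in $\coprod_m K_m \times \cM_l(\cM_n(\cM_m))$. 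Consequently $\Phi_l(\ncconv(S)) \subseteq \ncconv(\Phi_l(S))$ for every subset $S$.

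Granting these two properties, the corollary follows in one line. By Definition~\ref{def:convex-envelope}, $\graph(\env F) = \ol{\ncconv}(\graph(F))$, so applying $\Phi_l$ and using first its continuity and then the second property,
\[
\graph(1_l \otimes \env F) = \Phi_l\big(\ol{\ncconv}(\graph(F))\big) \subseteq \ol{\ncconv}\big(\Phi_l(\graph(F))\big) = \ol{\ncconv}\big(\graph(1_l\otimes F)\big) = \graph(\env{1_l\otimes F}).
\]
Since for multivalued nc functions $A \le B$ means precisely $\graph(A) \supseteq \graph(B)$, this reads $\env{1_l\otimes F} \le 1_l \otimes \env F$, which is the claim.

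The one point that needs genuine care is the second property of $\Phi_l$: one must check that the amplification $\xi \mapsto 1_l \otimes \xi$ --- a map between matrix algebras over possibly infinite-dimensional Hilbert spaces, under the conventions of Section~\ref{sec:cardinality-dimension-topology} --- really is a normal $*$-homomorphism, so that it intertwines the (only point-weak*-convergent) infinite nc convex combinations and the compressions $\xi \mapsto (1_n \otimes \beta)^*\, \xi\, (1_n \otimes \beta)$. Everything else is routine bookkeeping with the tensor-leg identifications.
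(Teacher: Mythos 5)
Your proof is correct and follows essentially the same route as the paper's: both arguments come down to the observation that every point of $\graph(1_l \otimes \env{F})$ is a limit of nc convex combinations of points of $\graph(1_l \otimes F)$ formed with contractions of the amplified form $1_l \otimes \alpha_i$, so that $\graph(1_l\otimes\env F)\subseteq \ol{\ncconv}(\graph(1_l\otimes F))=\graph(\env{1_l\otimes F})$. Packaging this as the continuous, nc-convex-combination-preserving map $\Phi_l(x,\xi)=(x,1_l\otimes\xi)$ is just a more formal bookkeeping of the paper's one-line argument.
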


\begin{proof}
Note that
\begin{align*}
\graph(\env{1_l \otimes F}) &= \ol{\ncconv} \big( \graph(1_l \otimes F) \big) \\
&= \ol{\ncconv} \left\{ \coprod_{x\in K} (x, 1_l \otimes F(x) ) \right\} .
\end{align*}
The nc convex combinations include all points obtained using points $x_i$ and contractions of the form $1_l\otimes \alpha_i$. Therefore
\begin{align*}
 \graph(\env{1_l \otimes F}) &\supset \coprod_{x\in K} (x, 1_l \otimes \env{F}(x) ) \\
 &= \graph(1_l \otimes \env{F}) . \qedhere
\end{align*}
\end{proof}

The next result is a noncommutative analogue of a result of Mokobodzki (see e.g. \cite{Alfsen}*{Proposition I.5.1}).

\begin{prop} \label{prop:convex-env-convex-fcns}
Let $K$ be a compact nc convex set and let $F : K \to \cM_n(\cM)$ be a self-adjoint bounded multivalued bounded nc function with convex envelope $\env{F}$. Then for $x \in K_p$,
\[
 \env{F}(x) = \bigcap_{g \leq 1_m \otimes F} \{ \alpha \in \cM_n(\cM_p) : 1_m \otimes \alpha \ge g(x) \},
\]
where the intersection is taken over all $m$ and all convex nc functions $g \in \cM_m(\cM_n(\rC(K)))$ satisfying $g \leq 1_m \otimes F$.
\end{prop}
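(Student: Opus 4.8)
The plan is to deduce this from Theorem~\ref{thm:convex-equals-upper-env} --- the analogous statement with affine nc functions in place of convex nc functions --- together with Corollary~\ref{cor:tensor-convex-envelope}. Write $\tilde F(x)$ for the set on the right-hand side of the claimed identity. One inclusion is immediate: every self-adjoint continuous nc affine function $a \in \cM_m(\cM_n(\rA(K)))$ is in particular a convex nc function (Example~\ref{ex:hansen-pedersen}), and since $\cM_m(\cM_n(\rA(K))) \subseteq \cM_m(\cM_n(\rC(K)))$, such an $a$ is a convex nc function in $\cM_m(\cM_n(\rC(K)))$ satisfying $a \leq 1_m \otimes F$. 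Hence the family of functions over which $\tilde F(x)$ is intersected includes all such $a$, so $\tilde F(x)$ is contained in the intersection over those affine functions, which by Theorem~\ref{thm:convex-equals-upper-env} is exactly $\env{F}(x)$. This proves $\tilde F(x) \subseteq \env{F}(x)$, the \emph{easy half}.

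For the reverse inclusion I would fix $x \in K_p$, an element $\alpha \in \env{F}(x)$, and a convex nc function $g \in \cM_m(\cM_n(\rC(K)))$ with $g \leq 1_m \otimes F$, and show that $1_m \otimes \alpha \geq g(x)$. Since $g$ is convex, $\graph(g) = \epi(g)$ is an nc convex set; and since $g$ is continuous, $\epi(g)$ is closed (if $(x_i,\beta_i)\to(x,\beta)$ with $\beta_i \ge g(x_i)$, then $g(x_i)\to g(x)$ by continuity and weak*-closedness of the positive cone gives $\beta \ge g(x)$; being convex, $\epi(g)$ is closed in the point-weak* and point-ultra\-strong* topologies alike, cf. the remark after Definition~\ref{defn:nc-continuous-functions}). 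Thus $g$, regarded as a multivalued nc function, is lower semicontinuous and convex. The relation $g \le 1_m \otimes F$ says precisely that $\epi(g) \supseteq \graph(1_m \otimes F)$, so the closed nc convex set $\epi(g)$ contains $\ol{\ncconv}(\graph(1_m \otimes F)) = \graph(\overline{1_m \otimes F})$; that is, $g \leq \overline{1_m \otimes F}$. By Corollary~\ref{cor:tensor-convex-envelope}, $\overline{1_m \otimes F} \leq 1_m \otimes \env{F}$, so $1_m \otimes \alpha \in (1_m \otimes \env{F})(x) \subseteq \overline{1_m \otimes F}(x)$, whence $(x, 1_m \otimes \alpha) \in \graph(\overline{1_m \otimes F}) \subseteq \epi(g)$, i.e.\ $1_m \otimes \alpha \geq g(x)$. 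Therefore $\alpha \in \tilde F(x)$, and the two inclusions together give the identity.

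I do not expect a serious obstacle here: the entire analytic heart of the matter --- the Hahn--Banach-type separation producing affine nc functions below $1_m \otimes F$ that detect points outside $\graph(\env{F})$ --- has already been carried out in Theorem~\ref{thm:convex-equals-upper-env}, and Corollary~\ref{cor:tensor-convex-envelope} supplies the only other nontrivial ingredient. The only places demanding a little care are the bookkeeping with the ampliations $1_m \otimes (-)$ and the observation that a continuous convex nc function is automatically lower semicontinuous as a multivalued nc function, so that the convex-envelope machinery of the preceding subsection applies to it verbatim.
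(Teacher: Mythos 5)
Your proposal is correct and follows essentially the same route as the paper: the easy inclusion comes from Theorem~\ref{thm:convex-equals-upper-env} because affine nc functions are convex, and the reverse inclusion comes from showing each convex $g \leq 1_m \otimes F$ satisfies $g \leq \env{1_m \otimes F} \leq 1_m \otimes \env{F}$ via Corollary~\ref{cor:tensor-convex-envelope}. The only difference is that you unwind the maximality property of the convex envelope (Proposition~\ref{prop:properties-convex-envelope}(5)) directly from the definition and explicitly verify that $\epi(g)$ is point-weak* closed, a point the paper's proof passes over silently; this is a welcome bit of extra care rather than a divergence in method.
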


\begin{proof}
By Theorem~\ref{thm:convex-equals-upper-env}, $\env{F}(x)$ is the intersection over such sets with respect to continuous affine nc functions. Since every affine nc function is a convex nc function, the intersection over all $m$ and all convex nc functions $g \in \cM_m(\cM_n(\rC(K)))$ satisfying $g \leq 1_m \otimes F$ is smaller. On the other hand, by Proposition \ref{prop:properties-convex-envelope}, $\env{1_m \otimes F}$ is the largest lower semicontinuous convex multivalued nc function dominated by $1_m \otimes F$, meaning that for all such $g$, $g \leq \env{1_m \otimes F}$. Hence by Corollary \ref{cor:tensor-convex-envelope},  $g \leq 1_m \otimes \env{F}$. Therefore, the intersection is precisely  $\graph(\env{F})$.
\end{proof}

\subsection{Completely positive maps}

The next result  shows that, as in the classical setting, the noncommutative convex envelope encodes information about the set of representing maps of a point. 

\begin{thm} \label{thm:convex-env-equivalent-form}
Let $K$ be a compact nc convex set and let $f : K \to \cM_n(\cM)$ be a self-adjoint lower semicontinuous bounded nc function with convex envelope $\env{f}$. Then for $x \in K_m$,
\[
\env{f}(x) = \bigcup_\mu [\mu(f), +\infty),
\]
where the union is taken over all unital completely positive maps $\mu : \rC(K) \to \cM_m$ with barycenter $x$.
\end{thm}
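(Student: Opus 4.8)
The plan is to show that $\env{f}$ coincides with the multivalued function $G$ given by $G(x)=\bigcup_\mu[\mu(f),+\infty)$, where the union is over all unital completely positive $\mu:\rC(K)\to\cM_m$ with $\mu|_{\rA(K)}=x$ (for $x\in K_m$). I will prove (a) that $G$ is a lower semicontinuous convex multivalued nc function with $\graph(G)\supseteq\graph(f)$, and (b) that $G(x)\subseteq\env{f}(x)$ for every $x$. Since, by Definition \ref{def:convex-envelope}, $\graph(\env{f})=\ol{\ncconv}(\graph(f))$ is the smallest closed nc convex set containing $\graph(f)$, (a) forces $\graph(G)\supseteq\graph(\env{f})$, i.e. $\env{f}(x)\subseteq G(x)$; together with (b) this gives $\env{f}=G$, which is the assertion.

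Both (b) and the ``soft'' part of (a) are direct. For (b): take a minimal representation $(y,\alpha)$ of $\mu$ (Stinespring), so $y\in K_\ell$, $\alpha\in\cM_{\ell,m}$ is an isometry, $\alpha^*y\alpha=\mu|_{\rA(K)}=x$, and $\mu(f)=(1_n\otimes\alpha^*)f(y)(1_n\otimes\alpha)$ for the normal extension of $\mu$ to $\rB(K)$ (Section \ref{sec:representations-maps}); now $(y,f(y))\in\graph(f)\subseteq\graph(\env{f})$, and compressing by $\alpha$, which the nc convex set $\graph(\env{f})$ permits, gives $(x,\mu(f))\in\graph(\env{f})$, whence $[\mu(f),+\infty)\subseteq\env{f}(x)$ by the upward directedness of $\env{f}$ (Proposition \ref{prop:properties-convex-envelope}). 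For (a): $G$ is a bounded multivalued nc function ($\|G\|\le\|f\|$; nonempty since $\delta_x$ witnesses $f(x)\in G(x)$; unitarily equivariant by conjugating witnessing maps); $\graph(G)\supseteq\graph(f)$ by taking $\mu=\delta_x$; and $\graph(G)$ is nc convex because conjugating, compressing, and direct-summing the completely positive maps that witness membership in $\graph(G)$ produce witnesses for the corresponding nc convex combination — using that affine nc maps respect nc convex combinations, and Proposition \ref{P:nc combinationss}.

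The substance is that $G$ is lower semicontinuous, i.e. $\graph(G)$ is closed. Let $(x_j,\beta_j)\to(x,\beta)$ with $(x_j,\beta_j)\in\graph_m(G)$, witnessed by completely positive $\mu_j$; by compactness of the unital completely positive maps $\rC(K)\to\cM_m$ we pass to a subnet with $\mu_j\to\mu$, and must show $\mu(f)\le\beta$. Take minimal representations of the $\mu_j$ and, by adjoining a fixed point of $K$, write them as $(y_j,\alpha_j)$ with $y_j\in K_\kappa$ and $\alpha_j\in\cM_{\kappa,m}$ an isometry, so $\mu_j=\alpha_j^*\delta_{y_j}\alpha_j$ and $\mu_j(f)=(1_n\otimes\alpha_j^*)f(y_j)(1_n\otimes\alpha_j)$ — the identity persists under the padding because $f$ respects direct sums. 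Using compactness of $K_\kappa$ and of the ball of $\cM_{\kappa,m}$ and boundedness of $f$, pass to a further subnet so $y_j\to y\in K_\kappa$ and $f(y_j)\to\gamma$; \emph{this is the only point where lower semicontinuity of $f$ is used}: $\epi(f)$ is closed, so $\gamma\ge f(y)$. The remaining and decisive difficulty is to pass to the limit through the compressions $\alpha_j^*(\cdot)\alpha_j$, which weak* limits do not respect; this is handled by a continuity property of the Stinespring dilation — the representations may be chosen with $\alpha_j\to\alpha$ $*$-strongly, $\alpha$ an isometry, and $(y,\alpha)$ a representation of $\mu$. Granting this, a short three-$\varepsilon$ estimate shows $\alpha_j^*A_j\alpha_j\to\alpha^*A\alpha$ weakly whenever $\sup_j\|A_j\|<\infty$ and $A_j\to A$ weak*; applied to $A_j=f(y_j)$ it gives $\mu_j(f)\to(1_n\otimes\alpha^*)\gamma(1_n\otimes\alpha)$ weakly, so $(1_n\otimes\alpha^*)\gamma(1_n\otimes\alpha)\le\beta$ (as $\mu_j(f)\le\beta_j\to\beta$ and the positive cone is weak* closed), and therefore $\mu(f)=(1_n\otimes\alpha^*)f(y)(1_n\otimes\alpha)\le(1_n\otimes\alpha^*)\gamma(1_n\otimes\alpha)\le\beta$.

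The main obstacle is exactly this last step — the convergence of the Stinespring data of a convergent net of completely positive maps, equivalently the lower semicontinuity of $\mu\mapsto\mu(f)$ on the nc state space of $\rC(K)$, equivalently the statement that a lower semicontinuous nc function is the supremum of an increasing net of continuous nc functions below it (a noncommutative form of a classical theorem of Baire). With this last formulation the Stinespring-limit argument can be avoided entirely: $\mu_j(g)\to\mu(g)$ and $\mu_j(g)\le\mu_j(f)\le\beta_j$ for every continuous $g\le f$ give $\mu(g)\le\beta$ for all such $g$, whence $\mu(f)=\sup_g\mu(g)\le\beta$. Either way the proof comes down to a semicontinuity result for the C*-algebra $\rC(K)=\cmax(\rA(K))$ in the spirit of Akemann--Pedersen, and the convex-envelope machinery of Theorem \ref{thm:convex-equals-upper-env} supplies the rest.
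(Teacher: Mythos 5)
Your architecture is sound and in places leaner than the paper's. The paper works with the same multivalued function $G(x)=\bigcup_\mu[\mu(f),+\infty)$, proves it is convex and lower semicontinuous (so that $\env{G}=G$), and then shows that $f$ and $G$ admit exactly the same affine minorants, invoking Theorem \ref{thm:convex-equals-upper-env} to conclude $\env{f}=\env{G}=G$. Your containment (b) --- compress $(y,f(y))\in\epi(f)\subseteq\graph(\env{f})$ by the Stinespring isometry and use upward directedness --- and your containment (a) --- minimality of the closed nc convex hull --- together bypass Theorem \ref{thm:convex-equals-upper-env} entirely, which is a genuine simplification. Both proofs, however, stand or fall on the same point: the closedness of $\graph(G)$.

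That is where your proposal has a real gap. The Stinespring-continuity you invoke --- that representations of a point-weak* convergent net $\mu_j\to\mu$ can be chosen with $\alpha_j\to\alpha$ $*$-strongly and with the limit pair $(y,\alpha)$ representing the cluster point $\mu$ --- is not available: dilation data is badly discontinuous in the map (pure states converge to non-pure states, irreducible dilations to reducible ones), and even after padding, the limit $(y,\alpha)$ need not represent $\mu$ because non-affine elements of $\rC(K)$ are not point-weak* continuous on infinite levels (Example \ref{ex:continuity-may-fail-infinite-levels}). Your fallback via suprema of continuous minorants founders on the absence of a lattice: the continuous $g\le f$ need not be upward directed, so normality of $\mu$ does not give $\mu(f)=\sup_g\mu(g)$, and the equivalence of epigraph-closedness with an Akemann--Pedersen-type semicontinuity is itself unproved. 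The repair is simpler than either route and you nearly have it: you are aiming at the wrong target. You do not need the cluster point $\mu$ to satisfy $\mu(f)\le\beta$; you need \emph{some} representing map of $x$ to. After padding so that each $\Ran\alpha_j$ has a complement of dimension $\kappa$, all the isometries $\alpha_j:H_m\to H_\kappa$ are unitarily equivalent, so after conjugating the $y_j$ you may take $\alpha_j=\alpha$ to be a single \emph{fixed} isometry. Compression by a fixed isometry is point-weak* continuous, so passing to a subnet with $y_j\to y$ and $f(y_j)\to\gamma$, closedness of $\epi(f)$ gives $\gamma\ge f(y)$, and $\nu:=\alpha^*\delta_y\alpha$ is a representing map for $x=\lim\alpha^*y_j\alpha$ with $\nu(f)=(1_n\otimes\alpha^*)f(y)(1_n\otimes\alpha)\le(1_n\otimes\alpha^*)\gamma(1_n\otimes\alpha)=\lim\mu_j(f)\le\beta$. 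This $\nu$ need not coincide with the weak* cluster point of the $\mu_j$, and it does not matter, since $G(x)$ is a union over all representing maps. (For what it is worth, the paper's own one-line treatment of this step also asserts the inequality for the cluster point itself; the argument above is what actually closes it.) With that substitution your proof is complete.
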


\begin{proof}
Define $F : K \to \cM$ by $F(x) = \cup_\mu [\mu(f), +\infty)$ for $x \in K_m$, where the union is taken over all unital completely positive maps $\mu : \rC(K) \to \cM_m$ with barycenter $x$. Then $F$ is a self-adjoint bounded multivalued nc function since it is clearly graded, preserves direct sums, is unitarily equivariant and upward directed.

We claim that $F$ is lower semicontinuous. Let $(x_i,\alpha_i)$ be a net in $\graph_m(F)$ converging to $(x,\alpha) \in K_m \times \cM_n(M_m)$. Then there are unital completely positive maps $\mu_i : \rC(K) \to \cM_m$ such that $\mu_i$ has barycenter $x_i$ and $\mu_i(f) \leq \alpha_i$. Let $\mu : \rC(K) \to \cM_m$ be a cluster point of the net $(\mu_i)$. Then $\mu$ has barycenter $x$ and $\mu(f) \leq \alpha$, so $(x,\alpha) \in \graph_m(F)$. 

Next we show that $F$ is convex. Suppose that $(x_i,\alpha_i) \in \graph(F)$, where $x_i \in K_{n_i}$ and $\mu_i$ is a unital completely positive map $\mu_i : \rC(K) \to \cM_m$ with barycenter $x_i$ such that $\mu_i(f) \le \alpha_i$. If $\beta_i \in \cM_{n,n_i}$ so that $\sum \beta_i^* \beta_i = 1_n$, let 
\[ x := \sum_i \beta_i^* x_i \beta_i \in K_n \qand \alpha := \sum_i (1_m \otimes \beta_i^*) \alpha_i (1_m \otimes \beta_i) .\]
We need to verify that $(x,\alpha) \in \graph(F)$. Observe that 
\[ \mu := \sum (1_m \otimes \beta_i^*) \mu_i (1_m \otimes \beta_i) \]
is a unital completely positive map $\mu : \rC(K) \to \cM_m$ with barycenter $x$. In addition,
\begin{align*}
 \mu(f) &= \sum (1_m \otimes \beta_i^*) \mu_i(f)  (1_m \otimes \beta_i)\\
 & \le \sum (1_m \otimes \beta_i^*) \alpha_i  (1_m \otimes \beta_i) = \alpha .
\end{align*}
Therefore $(x,\alpha) \in \graph(F)$.

It now suffices to show that $\env{f} = F$. We will accomplish using Theorem \ref{thm:convex-equals-upper-env} by showing that if $a \in \cM_m(\cM_n(\rA(K)))$, then $a \leq 1_m \otimes f$ if and only if $a \leq 1_m \otimes F$ for every $x \in K_m$.

If $a \leq 1_m \otimes F$, then for $x \in K_m$, then $a(x) \leq  1_m \otimes \mu(f)$ for every unital completely positive map $\mu : \rC(K) \to \cM_m$ with barycenter $x$.
In particular, taking $\mu = \delta_x$ implies $a(x) \leq 1_m \otimes f(x)$. Hence $a \leq 1_m \otimes f$.

On the other hand, if $a \leq 1_m \otimes f$, then for $x \in K_m$ and every unital completely positive map $\mu : \rC(K) \to \cM_m$ with barycenter $x$,
\[
a(x) = \mu(a) \leq \mu(1_m \otimes f) = 1_m \otimes \mu(f).
\]
Hence $a \leq 1_m \otimes F$.
\end{proof}

The next result extends Proposition \ref{prop:convex-env-convex-fcns}.

\begin{rem} \label {R:intersection of two convex functions}
 In the following, we will use multivalued convex nc functions formed from a convex nc function $g$ and a constant $C$:
\[
 H = g \cap -C := \epi(g) \cap \epi(-C) = \{ (x, \alpha) : \alpha \ge g(x), \ \alpha \ge -C \}.
\]
This function should not be confused with $h(x) = g(x) \vee (-C)$. 
This definition makes sense since $-C$ is a scalar function.
However $h$ is usually not a convex nc function, and $\epi (h)$ in general smaller than $H$.
For example, the 2x2 self-adjoint matrix $A = \begin{sbmatrix}2&\sqrt2 \\ \sqrt2 & 1\end{sbmatrix}$ satisfies
 \[
  A \ge \begin{bmatrix}1&\phantom{-}0\\0&-1\end{bmatrix} \qand A \ge \begin{bmatrix}0&0\\0&0\end{bmatrix} 
  \quad\text{but}\quad A \not\ge \begin{bmatrix}1&0\\0&0\end{bmatrix}.
 \]
\end{rem}

\begin{cor} \label{cor:convex-env-convex-fcns-ucp}
Let $K$ be a compact nc convex set and let $F : K \to \cM_n(\cM)$ be a self-adjoint multivalued bounded nc function. Then for every unital completely positive map $\mu : \rC(K) \to \cM_p$,
\[
\mu(\env{F}) = \bigcap_{g \leq 1_m \otimes F} \{ \alpha \in \cM_n(\cM_p) :  1_m \otimes \alpha \ge \mu(g) \},
\]
where the intersection is taken over all $m$ and all convex nc functions $g \in \cM_m(\cM_n(\rC(K)))$ satisfying $g \leq 1_m \otimes F$.
\end{cor}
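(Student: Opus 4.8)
The plan is to reduce the statement to Proposition~\ref{prop:convex-env-convex-fcns} by means of a minimal representation of $\mu$, and then to understand how the intersection on the right transforms under compression. Fix a minimal representation $(y,\alpha)\in K_q\times\cM_{q,p}$ of $\mu$, and write $C$ for the unital completely positive ``compression'' $V\mapsto(1_n\otimes\alpha^*)V(1_n\otimes\alpha)$ and its amplifications. Then $\mu(\env F)=C(\env F(y))$ and $\mu(g)=C(g(y))$ for $g\in\cM_m(\cM_n(\rC(K)))$. Since applying $\env$ twice changes nothing, and since by Corollary~\ref{cor:tensor-convex-envelope} together with Proposition~\ref{prop:properties-convex-envelope} a convex nc function $g$ satisfies $g\le 1_m\otimes F$ if and only if $g\le 1_m\otimes\env F$, I may replace $F$ by $\env F$ and assume throughout that $F$ is lower semicontinuous and convex, so $\env F=F$. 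Thus it suffices to prove that $C(F(y))$ equals the intersection over all $m$ and all convex nc $g\le 1_m\otimes F$ of the sets $\{\alpha'\in\cM_n(\cM_p)_{sa}:1_m\otimes\alpha'\ge\mu(g)\}$.

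For the inclusion ``$\subseteq$'', I would invoke Proposition~\ref{prop:convex-env-convex-fcns} at the point $y$, which gives $F(y)=\bigcap_{m,g}\{\beta\in\cM_n(\cM_q):1_m\otimes\beta\ge g(y)\}$, the intersection over convex nc $g\le 1_m\otimes F$. If $\beta$ lies in this intersection and $g\le 1_m\otimes F$ is convex, then applying the order-preserving amplified compression to $1_m\otimes\beta\ge g(y)$ yields $1_m\otimes C(\beta)\ge C(g(y))=\mu(g)$; hence $C(\beta)$ lies in the right-hand side, and so does every element of $C(F(y))=\mu(F)$.

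The real content is the reverse inclusion. Let $\alpha'\in\cM_n(\cM_p)_{sa}$ satisfy $1_m\otimes\alpha'\ge\mu(g)$ for every $m$ and every convex nc $g\le 1_m\otimes F$. Because $F$ is upward directed, it suffices to produce $\beta\in F(y)$ with $C(\beta)\le\alpha'$: one then replaces $\beta$ by $\beta+(1_n\otimes\alpha)(\alpha'-C(\beta))(1_n\otimes\alpha)^*\in F(y)$, whose compression is exactly $\alpha'$. Using that $F$ is bounded (Proposition~\ref{prop:properties-convex-envelope}), lower semicontinuous and upward directed, and that $C$ is normal, a routine argument (pass to norm-bounded representatives in $F(y)$ and extract a weak$^*$-cluster point) shows that $\mu(F)=C(F(y))$ is a weak$^*$-closed, convex, upward-directed subset of $\cM_n(\cM_p)_{sa}$. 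Suppose, for contradiction, that $\alpha'\notin\mu(F)$. Strictly separating $\alpha'$ from this closed convex set by a weak$^*$-continuous real functional and using upward-directedness to force positivity, I obtain a normal positive functional $\varphi$ on $\cM_n(\cM_p)$ and $c\in\bR$ with $\varphi(\alpha')<c\le\varphi(\gamma)$ for all $\gamma\in\mu(F)$; equivalently, $\psi:=\varphi\circ C$ is a normal positive functional on $\cM_n(\cM_q)$ with $\psi(\beta)\ge c>\varphi(\alpha')$ for all $\beta\in F(y)$. To reach a contradiction I must manufacture, from $\psi$ and $c$, a convex nc function $g\le 1_m\otimes F$ (for a suitable multiplicity $m$) with $1_m\otimes\alpha'\not\ge\mu(g)$: realizing $\psi$ as a vector state after amplification and truncating, one gets $m$ and a nearly-unit vector $\eta\in(H_n\otimes H_q)^m$ with $\psi(\beta)=\langle(1_m\otimes\beta)\eta,\eta\rangle$ up to an arbitrarily small error, and then — using Theorem~\ref{thm:convex-equals-upper-env}, which exhibits $F(y)$ as the set of operators dominating $a(y)$ for every affine nc $a\le 1_{m'}\otimes F$, together with a Hahn--Banach/minimax argument on the norm-bounded part of $F(y)$ — one finds a convex nc $g\le 1_m\otimes F$ with $\langle(1_m\otimes g(y))\eta,\eta\rangle>\varphi(\alpha')$. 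Applying $\mathrm{id}_m\otimes\varphi$ to $1_m\otimes\alpha'\ge\mu(g)$ then forces $\varphi(\alpha')\,1_m\ge(\mathrm{id}_m\otimes\psi)(g(y))$, whose value in the state determined by $\eta$ is $\langle(1_m\otimes g(y))\eta,\eta\rangle>\varphi(\alpha')$ — the desired contradiction.

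The step I expect to be the main obstacle is the extraction of a single convex nc function detecting the separating value $c$, i.e.\ the identity
\[
\sup\bigl\{\langle(1_m\otimes g(y))\eta,\eta\rangle:g\text{ convex nc},\ g\le 1_m\otimes F\bigr\}=\inf_{\beta\in F(y)}\psi(\beta).
\]
This is a noncommutative ``no duality gap'' statement; the plan is to deduce it from the already-established description of $\env F=F$ in terms of affine nc functions (Theorem~\ref{thm:convex-equals-upper-env}) by a minimax argument, but care is needed because $\cM_n(\cM_q)$ is not lattice-ordered, which is precisely why one must pass to multivalued functions and allow matrix multiplicities $m>1$ (single-block comparisons already reduce to ordinary compressions and are not enough). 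The verifications that $\mu(F)$ is weak$^*$-closed and that one may reduce to $F$ lower semicontinuous and convex are straightforward by comparison.
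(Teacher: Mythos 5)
Your forward inclusion is exactly the paper's: compress the operator inequalities of Proposition~\ref{prop:convex-env-convex-fcns} at a minimal representation of $\mu$. The reverse inclusion is where your proposal has a genuine hole, and it is the one you flag yourself: the ``no duality gap'' identity
$\sup_g \langle g(y)\eta,\eta\rangle=\inf_{\beta\in F(y)}\psi(\beta)$
is never proved, and it is not obtainable ``by a minimax argument'' in any routine way. The competitors $g$ range over convex nc functions in $\cM_m(\cM_n(\rC(K)))$ for \emph{all} multiplicities $m$, so they do not form a convex subset of a fixed locally convex space and Sion-type theorems do not apply. Worse, the scheme has a structural mismatch: a single scalar separation of $\alpha'$ from $\mu(F)$ controls only one normal functional $\varphi$, while the hypothesis to be contradicted is an \emph{operator} inequality $1_m\otimes\alpha'\ge\mu(g)$. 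Your final step, passing from $\varphi(\alpha')1_m\ge(\id_m\otimes\psi)(g(y))$ to the scalar $\langle g(y)\eta,\eta\rangle$, does not go through for a general normal state implemented by $\eta=\sum_k e_k\otimes\zeta_k$: the quantity $\langle g(y)\eta,\eta\rangle$ involves the off-diagonal terms $\langle g(y)_{kl}\zeta_l,\zeta_k\rangle$, which are not of the form $\bar\xi_k\xi_l\,\psi(g(y)_{kl})$ for any vector $\xi\in H_m$. So the ``main obstacle'' you identify is in fact the entire content of the corollary, and the surrounding reductions (replacing $F$ by $\env{F}$, closedness of $\mu(F)$) do not bring you closer to it.

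The paper avoids duality altogether by an explicit lifting. Given $\beta\in\cM_n(\cM_p)$ with $1_m\otimes\beta\ge\mu(g)$ and $\ep>0$, one defines $\alpha_\ep\in\cM_n(\cM_q)$ to be, with respect to the range of $1_n\otimes\nu$, the block-diagonal operator with blocks $\beta+\ep$ and $\ep^{-1}\|g\|^2+\|g\|$, and verifies by a $2\times2$ positivity (Schur complement) computation that $1_m\otimes\alpha_\ep\ge g(x)$; thus $\beta+\ep$ lies in the compressed set for every such $g$, and letting $\ep\to0$ and using that $\mu(\env{F})$ is closed completes the proof. If you want to rescue your outline, replace the separation argument by this construction; otherwise you must actually establish the no-duality-gap statement, which is not easier than the corollary itself.
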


\begin{proof}
By Proposition \ref{prop:convex-env-convex-fcns},
\begin{align*}
\env{F} &= \bigcap_{g \leq 1_m \otimes F}\!\!\!\! \big\{  \alpha \in \cM_n(\cM_q) : 1_m \otimes \alpha \ge g(x)  \big\} ,
\end{align*}
where the intersection is taken over all $m<\infty$ and all convex nc functions $g \in \cM_m(\cM_n(\rC(K)))$ satisfying $g \leq 1_m \otimes F$.
Now $F$ is bounded, say by $C$. By Proposition~\ref{prop:properties-convex-envelope}(4), $\env{F}$ is also bounded by $C$.
For each convex nc function $g \in \cM_m(\cM_n(\rC(K)))$ satisfying $g \leq 1_m \otimes F$, we also have $H = g \cap (-C) \le 1_m \otimes F$.
In particular, $\|H\| \le C$ as well. Hence 
\begin{align*}
\env{F} &= \bigcap_{g \leq 1_m \otimes F}\!\!\!\! \big\{  \alpha \in \cM_n(\cM_q) : 1_m \otimes \alpha \ge H(x)  \big\} ,
\end{align*}
where $H = g \cap (-C)$ and the intersection is taken over all $m$ and all convex nc functions $g \in \cM_m(\cM_n(\rC(K)))$ satisfying $g \leq 1_m \otimes F$.

Fix a minimal representation $(x,\nu) \in K_q \times \cM_{q,p}$ for $\mu$ so that $\mu(f) =  \nu^* f(x)  \nu$ for $f \in \rC(K)$ and $\mu(\env{F}) = (1_n\otimes \nu^*) \env{F}(x) (1_n\otimes \nu)$.  Note that since $\env{F}(x)$ is upward directed and $\mu(\env{F})$ is a corner of $\env{F}(x)$, it is also upward directed.
If $1_m \otimes \alpha \ge g(x)$, then setting $\beta = (1_n \otimes \nu^*) \alpha (1_n \otimes \nu)$, it follows that 
\[
 1_m \otimes \beta \ge (1_m \otimes 1_n \otimes \nu^*) g(x) (1_m \otimes 1_n \otimes \nu) = \mu(g) .
\]
Therefore $\mu(\env{F})$ is contained in the intersection on the RHS of the desired formula.

Conversely,  suppose $\beta \in \cM_n(\cM_p)$ satisfies $1_m \otimes \beta \ge \mu(g)$ for all $m$ and all convex nc functions $g \in \cM_m(\cM_n(\rC(K)))$ with $g \leq 1_m \otimes F$. Fix such $g$ and let $H = g \cap -C$ as above. Note that $\beta \geq -C$. 

For $\ep>0$, define $\alpha_\ep \in \cM_n(\cM_q)_{sa}$ by
\begin{align*}
 \alpha_\ep &= (1_n\otimes \nu) (\beta + \ep 1_p)(1_n \otimes \nu^*) + (1_n \otimes (1_q-\nu\nu^*))( \ep^{-1}\|g\|^2 + \|g\|) \\&
 \simeq \begin{bmatrix} \beta+ \ep & 0 \\ 0 & \ep^{-1}\|g\|^2 + \|g\|  \end{bmatrix} ,
\end{align*}
where the decomposition is taken with respect to the range of $1_n\otimes \nu$.
If we also decompose $g(x)$ with respect to the range of $1_m \otimes 1_n \otimes \nu$, it has the form
\[
 g(x) \simeq \begin{bmatrix} g_{11} & g_{12} \\ g_{21} & g_{22} \end{bmatrix} 
\]
and by hypothesis $g_{11} \simeq \mu(g) \le 1_m \otimes \beta$. Thus
\begin{align*}
 1_m \otimes \alpha_\ep - g(x) &\simeq  
 \begin{bmatrix} 1_m \otimes \beta - g_{11} + \ep(1_m \otimes 1_p) & -g_{12} \\ -g_{21} & \ep^{-1} \|g\|^2 + (\|g\| - g_{22}) \end{bmatrix} \\
 &\ge \begin{bmatrix}  \ep & -g_{12} \\ -g_{21} & \ep^{-1} \|g\|^2  \end{bmatrix} \\
 &\ge \begin{bmatrix} 0&0\\0&0 \end{bmatrix} .
\end{align*}
Hence $-C \leq 1_m \otimes \alpha_\ep \geq g(x)$. Therefore, $1_m \otimes \alpha_\ep \in H(x)$. From above, $\|H\| \leq C$, so there is $\gamma_\ep \in H(x)$ with $\|\gamma_\ep\| \leq C$ such that $\gamma_\ep \leq 1_m \otimes \alpha_\ep$.

Define $\alpha'_\ep \in \cM_n(\cM_q)_{sa}$ by
\begin{align*}
 \alpha'_\ep &=(1_n\otimes \nu) (\beta + \ep 1_p)(1_n \otimes \nu^*) + (1_n \otimes (1_q-\nu\nu^*))( \ep^{-1}C^2 + C) \\
 &\simeq \begin{bmatrix} \beta+ \ep & 0 \\ 0 & \ep^{-1}C^2 + C  \end{bmatrix} . 
\end{align*}
Then arguing as above, $1_m \otimes \alpha'_\ep \geq \gamma_\ep$. Since $\gamma_\ep \in H(x)$ and $H(x)$ is upward directed, $1_m \otimes \alpha'_\ep \in H(x)$.

Since the construction of $\alpha'_\ep$ did not depend on $H$, it follows from the above expression for $\ol{F}(x)$ that $\alpha'_\ep \in \ol{F}(x)$. Since $\|\ol{F}\| = C$, there is $\gamma'_\ep \in \ol{F}(x)$ with $\|\gamma'_\ep\| \leq C$ such that $\gamma_\ep \leq 1_m \otimes \alpha'_\ep$. Furthermore, since the net $(\gamma'_\ep)_{\ep > 0}$ is bounded and $\ol{F}$ is lower semicontinuous, there is a cofinal subnet with limit $\gamma' \in \ol{F}(x)$. By construction
\[
(1_n \otimes \nu^*) \gamma' (1_n \otimes \nu) \leq \lim_{\ep \to 0} (1_n \otimes \nu^*) \alpha'_\ep (1_n \otimes \nu) = \beta.
\]
Therefore, since $(1_n \otimes \nu^*) \gamma' (1_n \otimes \nu) \in (1_n \otimes \nu^*) \ol{F}(x) (1_n \otimes \nu) = \mu(\ol{F})$ and $\mu(\ol{F})$ is upward directed, it follows that $\beta \in \mu(\ol{F})$.
\end{proof}

\subsection{Noncommutative Jensen inequality}

The next result is a natural noncommutative analogue of the classical Jensen inequality.

\begin{thm}[Noncommutative Jensen inequality]
Let $K$ be a compact nc convex set and let $f \in \rB(K)$ be a self-adjoint lower semicontinuous convex nc function. Then for any completely positive map $\mu : \rC(K) \to \cM_n$ with barycenter $x \in K_n$, $f(x) \leq \mu(f)$.
\end{thm}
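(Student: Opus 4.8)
The plan is to deduce the inequality directly from the pointwise convexity criterion by passing to a minimal Stinespring representation of $\mu$. Since $\mu|_{\rA(K)} = x$ and $x \in K_n$ is unital, the map $\mu$ is itself unital, so as recalled in Section~\ref{sec:representations-maps} Stinespring's theorem furnishes a minimal representation $(y,\alpha) \in K_m \times \cM_{m,n}$ of $\mu$: here $\alpha$ is an isometry and $\mu = \alpha^* \delta_y \alpha$. Restricting to $\rA(K)$ and using that $\delta_y$ restricts to $y$ on $\rA(K)$ shows the barycenter of $\mu$ is $\alpha^* y \alpha$; combined with the hypothesis that the barycenter is $x$, this gives $x = \alpha^* y \alpha$.

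Next I would use the description, also from Section~\ref{sec:representations-maps}, of the unique normal extension of $\mu$ to the enveloping von Neumann algebra $\rB(K)$: it is given by $\mu(g) = \alpha^* g(y) \alpha$ for $g \in \rB(K)$, where $(y,\alpha)$ is the chosen minimal representation. In particular $\mu(f) = \alpha^* f(y) \alpha$. Finally, convexity of $f$ enters through Remark~\ref{rem:simpler-criterion-convexity}: a self-adjoint bounded nc function in $\rB(K)$ is convex precisely when $f(\beta^* z \beta) \le \beta^* f(z) \beta$ for every $z \in K_p$ and every isometry $\beta$. Applying this with $z = y$ and $\beta = \alpha$ yields
\[
f(x) = f(\alpha^* y \alpha) \le \alpha^* f(y) \alpha = \mu(f),
\]
as desired.

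I do not anticipate a genuine obstacle; the only steps requiring attention are confirming that $\mu$ is unital (so that ``barycenter'' is meaningful and $\alpha$ is an isometry rather than merely a contraction) and correctly invoking the formula $\mu(g) = \alpha^* g(y)\alpha$ for the normal extension, whose well-definedness independent of the choice of minimal representation is already established earlier via unitary equivariance of nc functions. I note in passing that lower semicontinuity of $f$ plays no role in this argument --- only convexity is used --- but one can alternatively obtain the statement from Theorem~\ref{thm:convex-env-equivalent-form} together with Proposition~\ref{prop:properties-convex-envelope}(3): since $f$ is convex and lower semicontinuous, $\env{f} = f$, hence $[f(x),+\infty) = \env{f}(x) = \bigcup_\nu [\nu(f),+\infty)$ over unital completely positive maps $\nu$ with barycenter $x$, and since $\mu(f)$ belongs to the right-hand side it satisfies $\mu(f) \ge f(x)$.
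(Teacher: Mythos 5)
Your proposal is correct, and your primary argument takes a genuinely different route from the paper's. The paper's proof is precisely your closing remark: since $f$ is convex and lower semicontinuous, $\env{f}=f$ by Proposition \ref{prop:properties-convex-envelope}(3), and Theorem \ref{thm:convex-env-equivalent-form} identifies $\env{f}(x)$ with $\bigcup_\nu [\nu(f),+\infty)$ over all representing maps $\nu$ of $x$, whence $f(x)\le\mu(f)$. Your main argument instead bypasses the convex-envelope machinery entirely: you pass to a minimal representation $(y,\alpha)$ of $\mu$, use the concrete formula $\mu(g)=\alpha^* g(y)\alpha$ for the normal extension to $\rB(K)$, and invoke the compression characterization of convexity from Remark \ref{rem:simpler-criterion-convexity} to obtain $f(x)=f(\alpha^* y\alpha)\le \alpha^* f(y)\alpha=\mu(f)$. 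This is more elementary, it correctly isolates the two points needing care (unitality of $\mu$ from $\mu(1)=x(1)=1_n$, and well-definedness of the extension formula), and, as you observe, it shows that lower semicontinuity is not actually needed for the inequality --- only nc convexity of the epigraph is used; it also extends verbatim to matrix-valued convex functions in $\cM_k(\rB(K))$. What the paper's route buys is coherence with the surrounding development: Theorem \ref{thm:convex-env-equivalent-form} is the statement that does the real work for non-convex $f$ (where $f(x)$ must be replaced by $\env{f}(x)$), and Jensen's inequality is then recorded as the degenerate case $\env{f}=f$.
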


\begin{proof}
For $x \in K_n$, Theorem~\ref{thm:convex-equals-upper-env} and Theorem \ref{thm:convex-env-equivalent-form} imply that
\[
[f(x),+\infty) = \env{f}(x) = \bigcup_\mu [\mu(f), +\infty),
\]
where the union is taken over all unital completely positive maps $\mu : \rC(K) \to \cM_n$ with barycenter $x$. In particular, $f(x) \leq \mu(f)$ for all such $\mu$.
\end{proof}

\section{Orders on Completely Positive Maps} \label{sec:orders}

\subsection{Classical Choquet order}

The classical Choquet order is a generalization of the even more classical notion of majorization. Let $C$ be a compact convex set. For probability measures $\mu$ and $\nu$ on $\C$, $\nu$ is said to dominate $\mu$ in the Choquet order, written $\mu \prec_c \nu$, if $\mu(f) \leq \nu(f)$ for every convex function $f \in \rC(C)$. The Choquet order is a partial order on the space of probability measures on $C$.

Heuristically, the Choquet order measures the how far the support of a probability measure is from the extreme boundary $\partial C$, in the sense that if $\mu \prec_c \nu$, then the support of $\nu$ is closer to $\partial C$ than the support of $\mu$. In fact, if $\nu$ is maximal in the Choquet order and $C$ is metrizable, then $\nu$ is actually supported on $\partial C$. If $C$ is non-metrizable, then $\partial C$ is not necessarily Borel, but the maximality of $\mu$ still implies that it is supported on $\partial C$ in an appropriate sense.

\subsection{Noncommutative Choquet order}

In this section we will introduce a noncommutative analogue of the Choquet order for unital completely positive maps. The comparison will be with respect to convex continuous nc functions in the sense of Section \ref{sec:convex-nc-fcns}. Eventually, we will see that this order measures how far the support of a unital completely positive map is from the extreme boundary in an appropriate sense.

\begin{defn} \label{defn:nc-Choquet-order}
Let $K$ be a compact nc convex set and let $\mu,\nu : \rC(K) \to \cM_p$ be unital completely positive maps. We say that $\mu$ is dominated by $\nu$ in the {\em nc Choquet order} and write $\mu \prec_c \nu$ if $\mu(f) \leq \nu(f)$ for every $n$ and every convex nc function $f \in \cM_n(\rC(K))$.
\end{defn}

\begin{lem}
Let $K$ be a compact nc convex set and let $\mu,\nu : \rC(K) \to \cM_p$ be unital completely positive maps with $\mu \prec_c \nu$. 
Then $\mu$ and $\nu$ have the same barycenter.
\end{lem}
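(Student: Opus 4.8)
The plan is to exploit the fact that both affine nc functions and their negatives are convex (Example~\ref{ex:hansen-pedersen}), so the nc Choquet order, when restricted to $\rA(K)$, becomes a two-sided inequality, forcing equality. Concretely, recall that by definition the barycenter of a unital completely positive map $\mu : \rC(K) \to \cM_p$ is the point $\mu|_{\rA(K)} \in K_p$. So to show $\mu$ and $\nu$ have the same barycenter it suffices to show $\mu(a) = \nu(a)$ for every $a \in \rA(K)$, and in fact it is enough to check this on self-adjoint $a \in \rA(K)_{sa}$, since every element of $\rA(K)$ is a linear combination of two self-adjoint ones (write $a = \frac{1}{2}(a + a^*) + i \cdot \frac{1}{2i}(a - a^*)$, and $\mu, \nu$ are linear).

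First I would fix a self-adjoint $a \in \rA(K)_{sa}$, viewed as an element of $\cM_1(\rC(K))$. By Example~\ref{ex:hansen-pedersen}, $a$ is a (continuous, lower semicontinuous) convex nc function, and likewise $-a$ is convex, being continuous self-adjoint affine. Applying the defining property of $\mu \prec_c \nu$ in Definition~\ref{defn:nc-Choquet-order} with $n = 1$ and $f = a$ gives $\mu(a) \leq \nu(a)$; applying it with $f = -a$ gives $\mu(-a) \leq \nu(-a)$, i.e. $\nu(a) \leq \mu(a)$. Combining the two inequalities yields $\mu(a) = \nu(a)$ for every self-adjoint $a \in \rA(K)$, hence for all $a \in \rA(K)$ by linearity. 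Therefore $\mu|_{\rA(K)} = \nu|_{\rA(K)}$, which by Definition~\ref{defn:representing-map} means precisely that $\mu$ and $\nu$ have the same barycenter.

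There is no real obstacle here: the only thing to be slightly careful about is that the nc Choquet order in Definition~\ref{defn:nc-Choquet-order} quantifies over matrix levels $\cM_n(\rC(K))$, but since scalar-valued affine functions already live at level $n = 1$, restricting to that level suffices; one does not need the full strength of the definition. (Alternatively, one could work at level $n$ with $a \otimes 1_n$ and $-a \otimes 1_n$, which are also convex nc functions, but this is unnecessary.) The argument is entirely formal once Example~\ref{ex:hansen-pedersen} is in hand.
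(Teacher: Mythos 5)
Your proof is correct and follows essentially the same route as the paper's: since a continuous self-adjoint affine nc function $a$ and its negative $-a$ are both convex, the order $\mu \prec_c \nu$ forces $\mu(a) = \nu(a)$, and linearity handles general $a \in \rA(K)$. Your extra care in reducing to self-adjoint elements (needed because convexity is only defined for self-adjoint nc functions) is a detail the paper glosses over, but the argument is the same.
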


\begin{proof}
Suppose $\mu \prec_c \nu$. For $a \in \rA(K)$, both $a$ and $-a$ are convex, so $\mu(a) \leq \nu(a)$ and $\mu(-a) \leq \nu(-a)$, implying $\mu(a) = \nu(a)$. Hence $\mu|_{\rA(K)} = \nu|_{\rA(K)}$.
\end{proof}

Since $\env{f}$ is a multivalued function, the following proposition is not immediate,

\begin{prop} \label{prop:choquet-order-convex-env}
Let $K$ be a compact nc convex set and let $\mu,\nu : \rC(K) \to \cM_p$ be unital completely positive maps. 
Then $\mu \prec_c \nu$ if and only if $\mu(\env{f}) \leq \nu(\env{f})$ for every $n$ and every self-adjoint nc function $f \in \cM_n(\rC(K))$.
\end{prop}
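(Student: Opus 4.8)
The statement to prove is that $\mu \prec_c \nu$ if and only if $\mu(\env{f}) \leq \nu(\env{f})$ for every $n$ and every self-adjoint $f \in \cM_n(\rC(K))$. The plan is to argue both implications using the characterization of the convex envelope obtained in Theorem \ref{thm:convex-equals-upper-env} (equivalently, in the completely positive map form, Corollary \ref{cor:convex-env-convex-fcns-ucp}), together with the elementary observation that a convex nc function coincides with its own convex envelope (Proposition \ref{prop:properties-convex-envelope}(3)).

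\emph{($\Leftarrow$)} Suppose $\mu(\env{f}) \leq \nu(\env{f})$ for every self-adjoint $f$. Given a convex nc function $g \in \cM_n(\rC(K))$, we have $\env{g} = g$ by Proposition \ref{prop:properties-convex-envelope}(3), since a convex continuous nc function is in particular lower semicontinuous (its graph, being $\epi(g)$ for a continuous $g$, is closed). Hence $\mu(g) = \mu(\env{g}) \leq \nu(\env{g}) = \nu(g)$, which is exactly the condition $\mu \prec_c \nu$. This direction is immediate.

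\emph{($\Rightarrow$)} Suppose $\mu \prec_c \nu$. Fix a self-adjoint $f \in \cM_n(\rC(K))$ with convex envelope $\env{f}$. The key tool is Corollary \ref{cor:convex-env-convex-fcns-ucp}, which expresses $\mu(\env{f})$ as the intersection over all $m$ and all convex nc functions $g \in \cM_m(\cM_n(\rC(K)))$ with $g \le 1_m \otimes f$ of the sets $\{\alpha \in \cM_n(\cM_p) : 1_m \otimes \alpha \ge \mu(g)\}$, and similarly for $\nu$. So it suffices to show that every $\alpha$ in the intersection defining $\nu(\env f)$ also lies in the intersection defining $\mu(\env f)$. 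Fix such an $\alpha$ and fix $m$ and a convex nc function $g \in \cM_m(\cM_n(\rC(K)))$ with $g \le 1_m \otimes f$. Since $\mu \prec_c \nu$ and $\mu, \nu$ act entrywise on $\cM_m(\cM_n(\rC(K)))$ via their amplifications, we have $\mu(g) \le \nu(g)$ (here one should note that $g$ is a convex nc function valued in matrices over $\rC(K)$, so Definition \ref{defn:nc-Choquet-order} applies directly to it). Combining with $1_m \otimes \alpha \ge \nu(g)$ gives $1_m \otimes \alpha \ge \mu(g)$. As $g$ was arbitrary, $\alpha$ lies in the intersection defining $\mu(\env f)$, and therefore $\mu(\env f) \subseteq \nu(\env f)$, i.e. $\mu(\env f) \le \nu(\env f)$ in the sense of multivalued nc functions (recall $F \le G$ means $F(x) \supseteq G(x)$; here the roles are the single point corresponding to the minimal representation, and the ordering on the $[\,\cdot\,,+\infty)$ sets translates to $\mu(\env f) \le \nu(\env f)$ as operators).

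I do not expect a serious obstacle here: the proposition is essentially a formal consequence of Corollary \ref{cor:convex-env-convex-fcns-ucp}. The one point requiring a little care is bookkeeping about amplifications — checking that $\mu \prec_c \nu$ as maps to $\cM_p$ really does give $\mu(g) \le \nu(g)$ for $g$ a convex nc function with values in $\cM_m(\cM_n(\rC(K)))$, rather than just in $\cM_n(\rC(K))$ — but this follows because $\cM_m(\cM_n(\rC(K))) \cong \cM_{mn}(\rC(K))$ and Definition \ref{defn:nc-Choquet-order} quantifies over all levels. The other subtlety is being consistent about the direction of the order on multivalued functions versus operators; I would state the final conclusion directly in terms of the operator inequality $\mu(\env f)(x) \le \nu(\env f)(x)$ to avoid any sign confusion.
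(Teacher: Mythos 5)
Your argument is correct and is essentially the intended one: the backward direction is precisely the paper's observation that $\env{g}=g$ for a continuous convex nc function $g$, and the forward direction is the formal consequence of Corollary \ref{cor:convex-env-convex-fcns-ucp} (monotonicity of $\mu(g)$ over the convex minorants $g \leq 1_m \otimes f$) that the paper leaves as ``immediate.'' One slip to correct: your intersection argument establishes $\nu(\env{f}) \subseteq \mu(\env{f})$, not ``$\mu(\env{f})\subseteq\nu(\env{f})$'' as written, and it is the former containment that translates to $\mu(\env{f}) \leq \nu(\env{f})$ under the convention $F\leq G \Leftrightarrow F(x)\supseteq G(x)$ which you correctly recall in the same sentence.
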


\begin{proof}
If $\mu \prec_c \nu$ and $f \in \rC(K)$, Corollary~\ref{cor:convex-env-convex-fcns-ucp} shows that 
\begin{align*}
  \mu(\env{f}) &= \bigcap_{g \leq 1_m \otimes f} \{ \alpha \in M_n(M_p) :  1_m \otimes \alpha \ge \mu(g) \} \\&
  \supseteq  \bigcap_{g \leq 1_m \otimes f} \{ \alpha \in M_n(M_p) :  1_m \otimes \alpha \ge \nu(g) \} = \nu(\env{f}) .
\end{align*}
where the intersection is over all $m<\infty$ and all convex nc functions $g$ in $M_m(M_n(\rC(K)))$ satisfying $g \leq 1_m \otimes f$.
Thus $\mu(\env{f}) \leq \nu(\env{f})$.

The converse follows since if $f \in \rC(K)$ is convex, then $\env{f} = f$. 
\end{proof}

\begin{prop} \label{P:partial-order}
Let $K$ be a compact nc convex set and let $L$ denote the nc state space of $\rC(K)$. 
Then for each $n$, the nc Choquet order is a partial order on $L_n$.
\end{prop}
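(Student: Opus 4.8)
The plan is to verify the three defining properties of a partial order for $\prec_c$ restricted to $L_n = \ucpmaps(\rC(K),\cM_n)$: reflexivity, transitivity, and antisymmetry. Reflexivity and transitivity are purely formal. For reflexivity, for any $\mu \in L_n$ and any convex nc function $f$ we trivially have $\mu(f) \le \mu(f)$, so $\mu \prec_c \mu$. For transitivity, if $\mu \prec_c \nu$ and $\nu \prec_c \rho$, then for every $k$ and every convex nc function $f \in \cM_k(\rC(K))$ we have $\mu(f) \le \nu(f) \le \rho(f)$ by transitivity of the operator order on $(\cM_k(\cM_n))_{sa}$, hence $\mu \prec_c \rho$.

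The only substantive point is antisymmetry, and the work needed for it has essentially been done already in Proposition \ref{P:agree-on-convex}. Suppose $\mu \prec_c \nu$ and $\nu \prec_c \mu$. Then for every $k$ and every convex nc function $f \in \cM_k(\rC(K))$ we have simultaneously $\mu(f) \le \nu(f)$ and $\nu(f) \le \mu(f)$, which forces $\mu(f) = \nu(f)$. By Proposition \ref{P:agree-on-convex}, a unital completely positive map on $\rC(K)$ is determined by its values on the convex nc functions at all matrix levels, so $\mu = \nu$, and $\prec_c$ is antisymmetric on $L_n$.

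Thus the genuine content of the statement is concentrated entirely in Proposition \ref{P:agree-on-convex} --- which itself rests on the operator convexity of $h_t(x) = x^2(1-tx)^{-1}$ together with the power-series and block-matrix trick used there to recover every monomial in $\rA(K)$, and hence all of $\rC(K)$, from convex nc functions. Once that proposition is in hand, the partial-order assertion is a routine formality, and I expect no real obstacle.
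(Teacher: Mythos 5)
Your proof is correct and follows exactly the paper's argument: reflexivity and transitivity are formal, and antisymmetry is reduced to Proposition~\ref{P:agree-on-convex}, which shows a unital completely positive map on $\rC(K)$ is determined by its values on convex nc functions at all matrix levels. No issues.
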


\begin{proof}
It is easy to see that the nc Choquet order is reflexive and transitive. 
Antisymmetry follows from Proposition~\ref{P:agree-on-convex}.
\end{proof}

\subsection{Dilation order}

There is another natural order for unital completely positive maps relating to the dilation theory of completely positive maps.

\begin{defn} \label{defn:dilation-order}
Let $K$ be a compact nc convex set and let $\mu,\nu : \rC(K) \to \cM_m$ be unital completely positive maps. We say that $\mu$ is dominated by $\nu$ in the {\em dilation order} and write $\mu \prec_d \nu$ if there are representations $(x,\alpha) \in K_n \times \cM_{n,m}$ for $\mu$ and $(y,\beta) \in K_p \times \cM_{p,m}$ for $\nu$ along with an isometry $\gamma \in \cM_{p,n}$ such that $\beta = \gamma \alpha$ and $x = \gamma^* y \gamma$.
\end{defn}

\begin{rem}
Note that $\alpha$ and $\beta$ are isometries satisfying $\mu = \alpha^* \delta_x \alpha$ and $\nu = \beta^* \delta_y \beta$. 
The condition $x = \gamma^* y \gamma$ implies that $y$ dilates $x$.
\end{rem}

\begin{prop} \label{prop:dilation-order}
Let $K$ be a compact nc convex set and let $\mu,\nu : \rC(K) \to \cM_m$ be unital completely positive maps such that $\mu \prec_d \nu$.
Then the representation $(x,\alpha)$ used to demonstrate this relation may be taken to be minimal.
Hence the representation $(x,\alpha)$ may be chosen arbitrarily.
\end{prop}

\begin{proof}
Select representations $(x,\alpha) \in K_n \times \cM_{n,m}$ for $\mu$ and $(y,\beta) \in K_p \times \cM_{p,m}$ for $\nu$ along with an isometry $\gamma \in \cM_{p,n}$ such that $\beta = \gamma \alpha$ and $x = \gamma^* y \gamma$.
If $(x_0,\alpha_0)$ is a minimal dilation of $\mu$, then the uniqueness of this dilation shows that $x \cong x_0 \oplus x_1$ and $\alpha \cong \begin{sbmatrix} \alpha_0 \\ 0 \end{sbmatrix}$. There is a unique isometry $\theta$ so that $\alpha = \theta \alpha_0$.
Therefore $x_0 = \theta^* x \theta = \theta^* \gamma^* y \gamma \theta$ and $\beta = \gamma \alpha = (\gamma \theta) \alpha_0$. 
In particular, taking $\gamma_0 = \gamma \theta$, we can always assume that the representation $(x,\alpha)$ of $\mu$ is minimal.
Note that it is not necessarily true that $(y,\beta)$ will be a minimal representation of $\nu$.

If $(x,\alpha) \in K_n \times \cM_{m,n}$ is any representation of $\mu$, then as in the previous paragraph, $x \cong x_0 \oplus x_1$ and $\alpha = \gamma_0 \alpha_0$. So we can define $\gamma = \gamma_0 \theta^*$.
\end{proof}

\begin{lem} \label{L:same-barycenter}
Let $K$ be a compact nc convex set and let $\mu,\nu : \rC(K) \to \cM_m$ be unital completely positive maps with $\mu \prec_d \nu$. 
Then $\mu$ and $\nu$ have the same barycenter.
\end{lem}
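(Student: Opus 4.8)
The plan is to unwind the definition of $\prec_d$ and use the fact, recorded in Section~\ref{sec:representations-maps}, that if $(x,\alpha)\in K_n\times\cM_{n,m}$ is a representation of a unital completely positive map $\mu:\rC(K)\to\cM_m$, then the barycenter of $\mu$ is simply $\mu|_{\rA(K)}$, which as a point of $K_m$ sends $a\in\rA(K)$ to $\alpha^*a(x)\alpha$ (because $\delta_x|_{\rA(K)}=x$). So the whole statement reduces to a one-line identity of affine nc functions.

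Concretely, by Definition~\ref{defn:dilation-order} there are representations $(x,\alpha)\in K_n\times\cM_{n,m}$ of $\mu$ and $(y,\beta)\in K_p\times\cM_{p,m}$ of $\nu$, together with an isometry $\gamma\in\cM_{p,n}$ such that $\beta=\gamma\alpha$ and $x=\gamma^* y\gamma$. Fix $a\in\rA(K)$. First I would apply the isometry-equivariance of affine nc functions, i.e.\ condition~(3) of Definition~\ref{defn:nc-affine-map}, to the isometry $\gamma$ and the point $y\in K_p$, to get $a(x)=a(\gamma^* y\gamma)=\gamma^* a(y)\gamma$. Then I would substitute $\beta=\gamma\alpha$ to compute the barycenter of $\nu$ evaluated at $a$:
\[
\beta^* a(y)\beta=\alpha^*\gamma^* a(y)\gamma\alpha=\alpha^* a(x)\alpha,
\]
which is exactly the barycenter of $\mu$ evaluated at $a$. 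Since $a\in\rA(K)$ was arbitrary, $\mu|_{\rA(K)}=\nu|_{\rA(K)}$, i.e.\ $\mu$ and $\nu$ have the same barycenter.

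There is essentially no obstacle here: the only thing to be careful about is matching the shapes of the matrices ($\gamma\in\cM_{p,n}$ sends $H_n$ to $H_p$, so $\gamma^* y\gamma\in\cM_n$ is consistent with $x\in K_n$) and invoking the right clause of the definition of an affine nc map. Alternatively, one can phrase the argument without ever restricting to $\rA(K)$: the barycenter of a map with representation $(x,\alpha)$ is the point $\alpha^* x\alpha\in K_m$ (as observed in Section~\ref{sec:representations-maps}), and $\beta^* y\beta=(\gamma\alpha)^* y(\gamma\alpha)=\alpha^*(\gamma^* y\gamma)\alpha=\alpha^* x\alpha$, so the two barycenters literally coincide. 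I would likely present this second, more compact version, noting in a sentence that the equality $\beta^*y\beta=\alpha^*x\alpha$ is the entire content of the lemma.
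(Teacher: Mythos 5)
Your proof is correct and is essentially identical to the paper's: the paper also takes representations $(x,\alpha)$, $(y,\beta)$ and the isometry $\gamma$ from Definition~\ref{defn:dilation-order} and computes $\mu(a)=\alpha^*a(x)\alpha=\alpha^*\gamma^*a(y)\gamma\alpha=\beta^*a(y)\beta=\nu(a)$ for $a\in\rA(K)$. Your compact reformulation $\beta^*y\beta=\alpha^*x\alpha$ is just the same computation phrased at the level of points, so there is nothing to add.
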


\begin{proof}
Let $(x,\alpha) \in K_n \times \cM_{n,m}$, $(y,\beta) \in K_p \times \cM_{p,m}$ and $\gamma \in \cM_{p,n}$ be as in Definition \ref{defn:dilation-order}. Then for $a \in \rA(K)$,
\[
\mu(a) = \alpha^* a(x) \alpha = \alpha^* a(\gamma^* y \gamma) \alpha = \alpha^* \gamma^* a(y) \gamma \alpha = \beta^* a(y) \beta = \nu(a). \qedhere
\]
\end{proof}

The next result provides a useful reformulation of the dilation order that we will use frequently. 

\begin{prop} \label{prop:criterion-dilation-order}
Let $K$ be a compact nc convex set and let $\mu,\nu : \rC(K) \to \cM_m$ be unital completely positive maps.
Then $\mu \prec_d \nu$ if and only if for any representation $(x,\alpha) \in K_n \times \cM_{n,m}$ for $\mu$, there is a unital completely positive map $\tau : \rC(K) \to \cM_n$ with barycenter $x$ satisfying $\nu = \alpha^* \tau \alpha$.
\end{prop}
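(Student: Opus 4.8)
The plan is to prove both implications directly from the definitions, using only Stinespring's theorem in the form recorded at the start of Section~\ref{sec:representations-maps}---every unital completely positive map $\rC(K) \to \cM_n$ admits a representation $(y,\gamma)$, and every representation of $\rC(K)$ is of the form $\delta_y$---together with the elementary fact that a continuous affine nc function commutes with compression by an isometry (property~(3) of Definition~\ref{defn:nc-affine-map}). The essential observation underlying both directions is that for an isometry $\gamma \in \cM_{p,n}$ and $y \in K_p$, the map $\gamma^* \delta_y \gamma : \rC(K) \to \cM_n$ is unital completely positive with barycenter $\gamma^* y \gamma$, since $(\gamma^* \delta_y \gamma)(a) = \gamma^* a(y) \gamma = a(\gamma^* y \gamma)$ for $a \in \rA(K)$.

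For the forward implication, suppose $\mu \prec_d \nu$, with witnessing representations $(x,\alpha) \in K_n \times \cM_{n,m}$ of $\mu$ and $(y,\beta) \in K_p \times \cM_{p,m}$ of $\nu$ and an isometry $\gamma \in \cM_{p,n}$ satisfying $\beta = \gamma\alpha$ and $x = \gamma^* y \gamma$. I would keep the given representation $(x,\alpha)$ of $\mu$ and take $\tau := \gamma^* \delta_y \gamma$. By the observation above, $\tau$ is unital completely positive with barycenter $\gamma^* y \gamma = x$, and $\alpha^* \tau \alpha = (\gamma\alpha)^* \delta_y (\gamma\alpha) = \beta^* \delta_y \beta = \nu$, which is exactly the required conclusion.

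For the converse, suppose we are given a representation $(x,\alpha) \in K_n \times \cM_{n,m}$ of $\mu$ and a unital completely positive map $\tau : \rC(K) \to \cM_n$ with barycenter $x$ and $\nu = \alpha^* \tau \alpha$. I would apply Stinespring to $\tau$ to obtain a representation $(y,\gamma) \in K_p \times \cM_{p,n}$, so that $\tau = \gamma^* \delta_y \gamma$ with $\gamma$ an isometry; comparing barycenters and using the observation above forces $x = \gamma^* y \gamma$. Then, setting $\beta := \gamma\alpha$, one checks $\beta^* \beta = \alpha^* \gamma^* \gamma \alpha = 1_m$, so $\beta$ is an isometry, and $\beta^* \delta_y \beta = \alpha^* \gamma^* \delta_y \gamma \alpha = \alpha^* \tau \alpha = \nu$, so $(y,\beta)$ is a representation of $\nu$. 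Since $\beta = \gamma\alpha$ and $x = \gamma^* y \gamma$ with $\gamma$ an isometry, the triple $\big((x,\alpha),(y,\beta),\gamma\big)$ witnesses $\mu \prec_d \nu$ in the sense of Definition~\ref{defn:dilation-order}.

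I do not anticipate any genuine difficulty: the proposition is essentially a repackaging of Definition~\ref{defn:dilation-order}, trading the second representation $(y,\beta)$ for the single intermediate map $\tau = \gamma^* \delta_y \gamma$. The only points demanding care are the bookkeeping of the matrix-size indices and the directions of the various isometries, and the verification---needed in both directions---that barycenters behave correctly under compression; in particular, minimality of the Stinespring representation of $\tau$ is not required.
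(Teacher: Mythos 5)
Your proof is correct and follows essentially the same route as the paper: in the forward direction you set $\tau = \gamma^*\delta_y\gamma$ and verify its barycenter and the identity $\nu = \alpha^*\tau\alpha$, and in the converse you pass to a Stinespring representation $(y,\gamma)$ of $\tau$ and take $\beta = \gamma\alpha$, exactly as in the paper. The extra checks you record (that $\beta$ is an isometry, that barycenters compress correctly) are the right ones and nothing is missing.
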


\begin{proof}
Suppose $\mu \prec_d \nu$. Given $(x,\alpha) \in K_n \times \cM_{n,m}$, use Proposition~\ref{prop:dilation-order} to find $(y,\beta) \in K_p \times \cM_{p,m}$ and $\gamma \in \cM_{p,n}$ be as in Definition \ref{defn:dilation-order}. Let $\tau = \gamma^* \delta_y \gamma$.  Then $\tau|_{\rA(K)} = \gamma^* y \gamma = x$ and $\nu = \alpha^* \tau \alpha$.

Conversely, suppose there is a representation $(x,\alpha) \in K_n \times \cM_{n,m}$ for $\mu$ and a unital completely positive map $\tau : \rC(K) \to \cM_n$ with barycenter $x$ satisfying $\nu = \alpha^* \tau \alpha$. Choose a representation $(y,\gamma) \in K_p \times \cM_{p,m}$ for $\tau$. Then letting $\beta = \gamma \alpha$, $(y,\beta)$ is a representation for $\nu$ and $\gamma^* y \gamma = \tau|_{\rA(K)} = x$.
\end{proof}

\begin{prop} \label{P:minimal_in_order}
Let $K$ be a compact nc convex set. For $x \in K$, the representation $\delta_x$ is the unique minimal element among the family of representing maps of $x$ with respect to both the nc Choquet order and the dilation order. 
\end{prop}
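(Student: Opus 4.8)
The plan is to show that the point evaluation $\delta_x$ lies below every representing map of $x$ in both orders -- so it is the least element -- and that it is the only representing map lying below itself in either order, which together make it the unique minimal element. Recall from Definition~\ref{defn:representing-map} that a representing map of $x \in K_n$ is a unital completely positive $\mu : \rC(K) \to \cM_n$ with $\mu|_{\rA(K)} = x$, that $\delta_x$ is one of these, and that $\delta_x(f) = f(x)$.

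For the nc Choquet order, I would first note that a convex nc function $f \in \cM_n(\rC(K))$ is automatically lower semicontinuous: its epigraph is a convex set, closed in the point-ultrastrong* topology because $f$ is ultrastrong*-continuous and the positive cone is closed, hence also closed in the point-weak* topology by the remark comparing the two topologies on a convex set in Section~\ref{S:C*max}. Consequently $\env{f} = f$ by Proposition~\ref{prop:properties-convex-envelope}, and Theorem~\ref{thm:convex-env-equivalent-form} gives
\[
[f(x), +\infty) = \env{f}(x) = \bigcup_{\nu} [\nu(f), +\infty),
\]
the union over all unital completely positive $\nu : \rC(K) \to \cM_n$ with barycenter $x$. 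Taking $\nu = \mu$ yields $\mu(f) \ge f(x) = \delta_x(f)$ for every representing map $\mu$, i.e.\ $\delta_x \prec_c \mu$ (this is the noncommutative Jensen inequality). Since $\prec_c$ is a genuine partial order on the nc state space $L_n$ of $\rC(K)$ by Proposition~\ref{P:partial-order}, and the representing maps of $x$ form a subset of $L_n$, antisymmetry forces $\delta_x$ to be the only representing map $\mu$ with $\mu \prec_c \delta_x$. Hence $\delta_x$ is the least, and in particular the unique minimal, representing map of $x$ under $\prec_c$.

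For the dilation order, one half is immediate: applying Proposition~\ref{prop:criterion-dilation-order} with the trivial representation $(x, 1_n)$ of $\delta_x$ and with $\tau := \mu$ (which has barycenter $x$ and satisfies $\mu = 1_n^* \tau 1_n$) shows $\delta_x \prec_d \mu$ for every representing map $\mu$. The substantive half is the converse: if $\mu$ represents $x$ and $\mu \prec_d \delta_x$, I want to conclude $\mu = \delta_x$. Using Remark~\ref{rem:dilation-order}, choose a \emph{minimal} representation $(y, \alpha) \in K_p \times \cM_{p,n}$ of $\mu$ and obtain a representation $(w,\beta) \in K_q \times \cM_{q,n}$ of $\delta_x$ and an isometry $\gamma \in \cM_{q,p}$ with $\beta = \gamma\alpha$ and $y = \gamma^* w \gamma$. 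Since $\delta_x = \beta^*\delta_w\beta$ is a $*$-homomorphism, the standard argument (a $*$-homomorphism realized as $\beta^*\delta_w\beta$ forces $\Ran\beta$ to reduce $\delta_w$) shows that $w$ splits as $w = w_1 \oplus w_2$ with respect to $H_q = \Ran\beta \oplus (\Ran\beta)^\perp$, with $\beta$ a unitary of $H_n$ onto $\Ran\beta$ carrying $w_1$ to $x$. The relation $\beta = \gamma\alpha$ then gives $\Ran\beta \subseteq \Ran\gamma$, $\Ran\alpha = \gamma^*(\Ran\beta)$, and block-diagonality of $\gamma$ relative to $H_p = \Ran\alpha \oplus (\Ran\alpha)^\perp$ and $H_q = \Ran\beta \oplus (\Ran\beta)^\perp$; substituting into $y = \gamma^* w \gamma$ yields $y = y_1 \oplus y_2$ relative to $H_p = \Ran\alpha \oplus (\Ran\alpha)^\perp$ with $\alpha^* y_1 \alpha = x$. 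As $x \mapsto \delta_x$ respects direct sums, $\Ran\alpha$ reduces $\delta_y$ and $\delta_y|_{\Ran\alpha}$ is a $*$-homomorphism, so $\mu = \alpha^*\delta_y\alpha$ is a $*$-homomorphism with barycenter $x$, whence $\mu = \delta_x$. Combining the two halves, $\delta_x$ is the least, hence the unique minimal, representing map of $x$ under $\prec_d$ as well.

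I expect the dilation-order uniqueness to be the main obstacle. The appeal to the fact that a $*$-homomorphism realized as $\beta^*\delta_w\beta$ forces $\Ran\beta$ to reduce $\delta_w$, and the propagation of the resulting direct-sum decomposition through $\beta = \gamma\alpha$ down to a decomposition of $y$, both require careful tracking of the three isometries $\alpha,\beta,\gamma$ and their ranges; once this picture is correctly set up the remaining steps are routine.
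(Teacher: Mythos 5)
Your proof is correct, but it diverges from the paper's in two notable ways. For the nc Choquet order half, the paper's argument is a two-line computation: take a minimal Stinespring representation $(y,\alpha)$ of $\mu$, so that $x=\alpha^*y\alpha$ and $\mu(f)=\alpha^*f(y)\alpha$, and apply the defining inequality of convexity from Remark~\ref{rem:simpler-criterion-convexity} to get $f(x)=f(\alpha^*y\alpha)\le\alpha^*f(y)\alpha=\mu(f)$. You instead route through the convex-envelope machinery (Theorem~\ref{thm:convex-env-equivalent-form}, i.e.\ the nc Jensen inequality), which forces you to first establish that every convex $f\in\cM_n(\rC(K))$ is automatically lower semicontinuous. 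That auxiliary claim is true and your justification (an ultrastrong*-closed nc convex epigraph is point-weak*-closed by the compatibility of the two topologies on convex sets) is sound, but note that the remark in Section~\ref{S:C*max} you cite is stated only for convex subsets of the bounded set $K_n$, so you are really re-running the Hahn--Banach argument on the unbounded set $\epi(f)$; the paper's route avoids this entirely and is the one to prefer. For the dilation order, the two proofs of $\delta_x\prec_d\mu$ are essentially the same, but you add an explicit verification that $\mu\prec_d\delta_x$ forces $\mu=\delta_x$ (via the reducing-subspace argument for $\delta_x=\beta^*\delta_w\beta$ and the propagation of the splitting through $\beta=\gamma\alpha$). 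The paper omits this step and only establishes that $\delta_x$ is a lower bound; since antisymmetry of $\prec_d$ is not available until the equivalence of orders is proved later, your extra argument is what literally justifies the word ``unique'' in the statement for the dilation order, and it checks out. In short: your Choquet-order half is correct but heavier than necessary, and your dilation-order half is correct and somewhat more complete than the paper's.
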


\begin{proof}
For $x \in K_m$, note that $(x, 1_m)$ is a minimal representation of $\delta_x$. Let $\mu : K_m \to \cM_m$ be a unital completely positive map with barycenter $x$ and let $(y,\alpha) \in K_n \times \cM_{n,m}$ be a minimal representation of $\mu$. Then $x = \alpha^* y \alpha$ and $\mu = \alpha^* \delta_y \alpha$. So $\delta_x \prec_d \mu$.

Let $f \in \cM_n(\rC(K))$ be a convex nc function. Then by Remark~\ref{rem:simpler-criterion-convexity},
\[ f(x) = f(\alpha^* y \alpha) \le \alpha^* f(y) \alpha = \mu(f) .\]
Hence $\delta_x \prec_c \mu$.
\end{proof}

\subsection{Dilation order and convex envelopes}

In this section we will make a connection between convex envelopes and the dilation order. This will be the key fact used in the next section to show that the two orders coincide. In view of Proposition~\ref{P:minimal_in_order}, this generalizes Theorem \ref{thm:convex-env-equivalent-form}.

\begin{thm} \label{thm:convex-env-ucp-dilation-order}
Let $K$ be a compact nc convex set. Let $f \in \cM_n(\rB(K))$ be a self-adjoint bounded nc function with convex envelope $\env{f}$. Then for a unital completely positive map $\mu : \rC(K) \to \cM_k$,
\[
\mu(\env{f}) = \bigcup_{\mu \prec_d \nu} [\nu(f), +\infty),
\]
where the union is taken over all unital completely positive maps $\nu : \rC(K) \to \cM_k$ with $\mu \prec_d \nu$. 
\end{thm}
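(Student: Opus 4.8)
The plan is to reduce the identity, via a minimal Stinespring representation of $\mu$ and the reformulation of the dilation order in Remark~\ref{rem:dilation-order}, to the point-evaluation case $\mu=\delta_x$ already settled by Theorem~\ref{thm:convex-env-equivalent-form}. (As invoking that theorem requires $f$ to be lower semicontinuous, this hypothesis is needed here as well; it is not optional — see the last paragraph.)

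First I would fix a minimal representation $(x,\alpha)\in K_p\times\cM_{p,k}$ of $\mu$, so that $\mu(g)=(1_n\otimes\alpha^*)g(x)(1_n\otimes\alpha)$ for $g\in\rB(K)$ and, by the definition of $\mu$ on multivalued functions, $\mu(\env f)=(1_n\otimes\alpha^*)\,\env f(x)\,(1_n\otimes\alpha)$. The structural point is that, with this fixed representation of $\mu$, Remark~\ref{rem:dilation-order} together with Proposition~\ref{prop:criterion-dilation-order} identifies the maps $\nu$ with $\mu\prec_d\nu$ as exactly those of the form $\nu=\alpha^*\tau\alpha$ for some unital completely positive $\tau:\rC(K)\to\cM_p$ with barycenter $x$; moreover, for such $\nu$ the normal extension to $\rB(K)$ satisfies $\nu(f)=(1_n\otimes\alpha^*)\tau(f)(1_n\otimes\alpha)$, because $g\mapsto(1_n\otimes\alpha^*)\tau(g)(1_n\otimes\alpha)$ is a normal completely positive map on $\rB(K)$ restricting to $\nu$ on $\rC(K)$ and is therefore the unique normal extension. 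Thus the right-hand side of the desired identity becomes $\bigcup_\tau\big[(1_n\otimes\alpha^*)\tau(f)(1_n\otimes\alpha),+\infty\big)$, the union over all such $\tau$.

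Now Theorem~\ref{thm:convex-env-equivalent-form} applied at the point $x$ gives $\env f(x)=\bigcup_\tau[\tau(f),+\infty)$ over exactly this same family of $\tau$, so $\mu(\env f)=\bigcup_\tau (1_n\otimes\alpha^*)[\tau(f),+\infty)(1_n\otimes\alpha)$. Since conjugation by the isometry $1_n\otimes\alpha$ is order preserving, $(1_n\otimes\alpha^*)[\tau(f),+\infty)(1_n\otimes\alpha)\subseteq[(1_n\otimes\alpha^*)\tau(f)(1_n\otimes\alpha),+\infty)$, which already yields the inclusion $\mu(\env f)\subseteq\bigcup_{\mu\prec_d\nu}[\nu(f),+\infty)$. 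For the reverse inclusion I would first observe that $\mu(\env f)$ is upward closed: if $\delta\ge(1_n\otimes\alpha^*)\eta(1_n\otimes\alpha)$ with $\eta\in\env f(x)$, then $\eta':=\eta+(1_n\otimes\alpha)\big(\delta-(1_n\otimes\alpha^*)\eta(1_n\otimes\alpha)\big)(1_n\otimes\alpha^*)\ge\eta$ lies in $\env f(x)$ — upward closed because $\env f$ is a multivalued nc function by Proposition~\ref{prop:properties-convex-envelope} — and $(1_n\otimes\alpha^*)\eta'(1_n\otimes\alpha)=\delta$ since $\alpha^*\alpha=1_k$. Hence, given $\mu\prec_d\nu$, say $\nu=\alpha^*\tau\alpha$, and any $\delta\ge\nu(f)$: because $\tau(f)\in\env f(x)$ (Theorem~\ref{thm:convex-env-equivalent-form}), we get $\nu(f)=(1_n\otimes\alpha^*)\tau(f)(1_n\otimes\alpha)\in\mu(\env f)$, and upward closedness forces $\delta\in\mu(\env f)$.

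The main conceptual step is the first one: the dilation-order reformulation collapses the statement onto the single fibre over the barycenter of $\mu$, where Theorem~\ref{thm:convex-env-equivalent-form} applies verbatim, and the remainder is the routine bookkeeping of compressions and amplifications, the only load-bearing lemma being the upward closedness of $\mu(\env f)$. The point that genuinely needs attention is that $f$ must be lower semicontinuous, both to invoke Theorem~\ref{thm:convex-env-equivalent-form} and because the statement fails otherwise: for $K$ the nc state space of the Cuntz operator system and $f$ the bounded nc projection of Example~\ref{ex:ncfunctions-not-determined-by-finite-levels}, one has $\env f\equiv 0$, while a row-unitary point $x$ is maximal and so has $\delta_x$ as its only representing map, giving $\delta_x(f)=1$ and a disagreement between the two sides.
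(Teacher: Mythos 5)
Your proof is correct and follows essentially the same route as the paper's: fix a minimal representation $(x,\alpha)$ of $\mu$, apply Theorem \ref{thm:convex-env-equivalent-form} at the barycenter $x$, and use Proposition \ref{prop:criterion-dilation-order} to identify the maps dominating $\mu$ in the dilation order with the compressions $\alpha^*\tau\alpha$ over u.c.p.\ maps $\tau$ with barycenter $x$; the only difference is that you spell out the upward-closedness needed for $\bigcup_\tau \alpha^*[\tau(f),+\infty)\alpha = \bigcup_\tau[\alpha^*\tau(f)\alpha,+\infty)$, which the paper passes over silently. Your remark about lower semicontinuity is also well taken: the statement as printed omits that hypothesis even though the proof invokes Theorem \ref{thm:convex-env-equivalent-form}, which requires it, and your Cuntz-system projection is a genuine counterexample to the unqualified statement, since there $\env{f}(x)=[0,+\infty)$ at a row-unitary point $x$ while the only $\nu$ with $\delta_x \prec_d \nu$ is $\delta_x$ itself and $\delta_x(f)=1$.
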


\begin{proof}
Let $(x,\alpha) \in K_m \times \cM_{m,k}$ be a minimal representation of $\mu$. Then by Theorem \ref{thm:convex-env-equivalent-form},
\[
\mu(\env{f}) = \alpha^* \env{f}(x) \alpha = \bigcup_\tau \alpha^* [\tau(f), +\infty) \alpha = \bigcup_\tau [\alpha^* \tau(f) \alpha, +\infty),
\]
where the union is taken over all unital completely positive maps $\tau : \rC(K) \to \cM_m$ with barycenter $x$.

The result now follows from Proposition~\ref{prop:criterion-dilation-order} which says that a unital completely positive map $\nu : \rC(K) \to \cM_k$ satisfies $\mu \prec_d \nu$ if and only if there is a unital completely positive map $\tau : \rC(K) \to \cM_m$ with barycenter $x$ such that $\nu = \alpha^* \tau \alpha$.
\end{proof}

\begin{cor} \label{cor:criterion-maximality-dilation-order}
Let $K$ be a compact nc convex set. A unital completely positive map $\mu : \rC(K) \to \cM_k$ is maximal in the dilation order if and only if $ [\mu(f),+\infty) = \mu(\env{f})$ for all $f \in \rC(K)$.
\end{cor}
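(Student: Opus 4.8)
The plan is to read this off from Theorem~\ref{thm:convex-env-ucp-dilation-order}, which does all the work. Throughout, $f$ denotes a self-adjoint element of $\rC(K)$, regarded as a self-adjoint bounded nc function in $\rB(K)$ so that its convex envelope $\env{f}$ is defined; since the self-adjoint elements span $\rC(K)$, this costs nothing. For such $f$, Theorem~\ref{thm:convex-env-ucp-dilation-order} gives
\[
\mu(\env{f}) = \bigcup_{\mu \prec_d \nu} [\nu(f),+\infty),
\]
the union ranging over all unital completely positive maps $\nu : \rC(K) \to \cM_k$ with $\mu \prec_d \nu$. Since $\mu \prec_d \mu$, the term $[\mu(f),+\infty)$ always occurs, so the inclusion $[\mu(f),+\infty) \subseteq \mu(\env{f})$ holds with no assumption on $\mu$; the point of the corollary is exactly when the reverse inclusion holds.

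For the forward implication, suppose $\mu$ is maximal in the dilation order and fix a minimal representation $(x,\alpha) \in K_n \times \cM_{n,k}$ of $\mu$. By Theorem~\ref{thm:equivalence-dilation-maximal-unique-representing-map}, $x$ is a maximal point, so by Proposition~\ref{prop:criterion-unique-rep-map} its only representing map is $\delta_x$. If $\mu \prec_d \nu$, then Remark~\ref{rem:dilation-order}, applied to the minimal representation $(x,\alpha)$, provides a unital completely positive map $\tau : \rC(K) \to \cM_n$ with barycenter $x$ such that $\nu = \alpha^* \tau \alpha$; uniqueness of the representing map forces $\tau = \delta_x$, hence $\nu = \alpha^* \delta_x \alpha = \mu$. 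Therefore the union above collapses to the single term $[\mu(f),+\infty)$, giving $[\mu(f),+\infty) = \mu(\env{f})$ for every self-adjoint $f \in \rC(K)$.

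For the converse, suppose $[\mu(f),+\infty) = \mu(\env{f})$ for every self-adjoint $f \in \rC(K)$, and let $\nu : \rC(K) \to \cM_k$ satisfy $\mu \prec_d \nu$. By Theorem~\ref{thm:convex-env-ucp-dilation-order}, $[\nu(f),+\infty) \subseteq \mu(\env{f}) = [\mu(f),+\infty)$, so $\nu(f) \geq \mu(f)$; applying the same argument with $-f$ in place of $f$ gives $\nu(f) \leq \mu(f)$, hence $\nu(f) = \mu(f)$. Since this holds for all self-adjoint $f \in \rC(K)$ and $\mu$, $\nu$ are linear, $\nu = \mu$. Thus no unital completely positive map strictly dominates $\mu$ in the dilation order, that is, $\mu$ is maximal.

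I do not anticipate a real obstacle: Theorem~\ref{thm:convex-env-ucp-dilation-order} is the engine and the rest is bookkeeping. The two points requiring a little care are (i) keeping the self-adjointness hypothesis in force wherever $\env{f}$ appears, since the convex envelope is only defined for self-adjoint functions, and (ii) invoking Theorem~\ref{thm:equivalence-dilation-maximal-unique-representing-map} and Remark~\ref{rem:dilation-order} correctly, so that the dilating map $\tau$ really is a representing map of the point $x$ compressed by $\mu$'s minimal representation. The converse direction incidentally re-derives the fact that maximality of $\mu$ is equivalent to the implication $\mu \prec_d \nu \Rightarrow \nu = \mu$.
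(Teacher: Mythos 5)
Your proof is correct and follows essentially the same route as the paper: both directions are read off from Theorem~\ref{thm:convex-env-ucp-dilation-order}, with the converse handled by applying the hypothesis to $f$ and $-f$ exactly as in the paper. Your forward direction takes a small detour through Theorem~\ref{thm:equivalence-dilation-maximal-unique-representing-map}, Proposition~\ref{prop:criterion-unique-rep-map} and Remark~\ref{rem:dilation-order} to show that $\mu \prec_d \nu$ forces $\nu = \mu$, whereas the paper treats this as immediate from the definition of maximality, but this is only a harmless elaboration.
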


\begin{proof}
If $\mu$ is maximal in the dilation order, then Theorem \ref{thm:convex-env-ucp-dilation-order} immediately implies $[\mu(f),+\infty) = \mu(\env{f})$ for all $f \in \rC(K)$.

Conversely, suppose that $[\mu(f),+\infty) = \mu(\env{f})$ for all $f \in \rC(K)$.
Let $\nu : \rC(K) \to \cM_k$ be a unital completely positive map with $\mu \prec_d \nu$. 
For $f \in \rC(K)$,  Theorem \ref{thm:convex-env-ucp-dilation-order} implies that
\begin{gather*}
[\mu(f),+\infty) = \mu(\env{f}) \supseteq [\mu(f),+\infty) \cup [\nu(f),+\infty) \\
\intertext{and}
[\mu(-f),+\infty) = \mu(\env{-f}) \supseteq [\mu(-f),+\infty) \cup [\nu(-f),+\infty).
\end{gather*}
Therefore $\mu(f) \leq \nu(f)$ and $\mu(-f) \leq \nu(-f)$, implying $\mu(f) = \nu(f)$. Hence $\mu=\nu$ and therefore $\mu$ is maximal.
\end{proof}

\subsection{Equivalence of orders} \label{sec:equivalence-orders}

In this section we will show that the noncommutative Choquet order and the dilation order coincide. 

\begin{thm} \label{thm:equivalence-orders}
Let $K$ be a compact nc convex set and let $\mu,\nu : \rC(K) \to \cM_n$ be unital completely positive maps. 
Then $\mu \prec_c \nu$ if and only if $\mu \prec_d \nu$.
\end{thm}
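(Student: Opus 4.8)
The plan is to prove the two implications separately, with the easy direction being $\mu \prec_d \nu \implies \mu \prec_c \nu$ and the harder direction being the converse. For the easy direction, suppose $\mu \prec_d \nu$. By Proposition~\ref{prop:criterion-dilation-order} there is a representation $(x,\alpha) \in K_m \times \cM_{m,n}$ for $\mu$ and a unital completely positive map $\tau : \rC(K) \to \cM_m$ with barycenter $x$ such that $\nu = \alpha^*\tau\alpha$. Now fix any $k$ and any convex nc function $f \in \cM_k(\rC(K))$. Since $\tau$ has barycenter $x$, the noncommutative Jensen inequality gives $f(x) \leq \tau(f)$ (applied to $\tau$ and its amplification; note $\tau$ restricts to $x$ on $\rA(K)$). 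Amplifying and compressing by $\alpha$, and using that $\mu = \alpha^*\delta_x\alpha$ so that $\mu(f) = (1_k \otimes \alpha^*) f(x) (1_k \otimes \alpha)$, we get
\[
\mu(f) = (1_k \otimes \alpha^*) f(x) (1_k \otimes \alpha) \leq (1_k \otimes \alpha^*) \tau(f) (1_k \otimes \alpha) = \nu(f),
\]
where the last equality is the definition of $\nu(f)$ via $\nu = \alpha^*\tau\alpha$. Hence $\mu \prec_c \nu$.

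For the hard direction, suppose $\mu \prec_c \nu$. The key tool is Corollary~\ref{cor:criterion-maximality-dilation-order}, which characterizes maximality in the dilation order via convex envelopes, together with Theorem~\ref{thm:convex-env-ucp-dilation-order}, which expresses $\mu(\env{f})$ as a union of intervals $[\nu(f),+\infty)$ over all $\nu$ with $\mu \prec_d \nu$. The strategy I would pursue is: first dilate $\nu$ to a map $\nu'$ that is maximal in the dilation order (using the Corollary following Theorem~\ref{thm:equivalence-dilation-maximal-unique-representing-map}), so that $\nu \prec_d \nu'$; by the easy direction just proved, $\nu \prec_c \nu'$, hence $\mu \prec_c \nu'$ by transitivity. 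So it suffices to show $\mu \prec_d \nu'$ for a \emph{maximal} $\nu'$, i.e.\ we may assume $\nu$ is maximal in the dilation order. Then I would show that $\mu \prec_c \nu$ with $\nu$ maximal forces $\mu \prec_d \nu$: by Corollary~\ref{cor:criterion-maximality-dilation-order}, $\nu(\env{f}) = [\nu(f),+\infty)$ for all $f \in \rC(K)$. On the other hand, by Proposition~\ref{prop:choquet-order-convex-env}, $\mu \prec_c \nu$ gives $\mu(\env{f}) \leq \nu(\env{f})$, meaning $[\mu(\env{f}),+\infty) \supseteq [\nu(\env{f}),+\infty) = \nu(\env{f})$, i.e.\ $\nu(f) \in \mu(\env{f})$ for every self-adjoint $f \in \cM_k(\rC(K))$. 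Now Theorem~\ref{thm:convex-env-ucp-dilation-order} says $\mu(\env{f}) = \bigcup_{\mu \prec_d \rho} [\rho(f),+\infty)$. So for each $f$ there is $\rho_f$ with $\mu \prec_d \rho_f$ and $\rho_f(f) \leq \nu(f)$. The remaining task is to produce a \emph{single} $\rho$ with $\mu \prec_d \rho$ and $\rho = \nu$.

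To close the last gap, I would argue via a minimal representation $(x,\alpha) \in K_m \times \cM_{m,n}$ of $\mu$. By Remark~\ref{rem:dilation-order}, the maps $\rho$ with $\mu \prec_d \rho$ are exactly those of the form $\alpha^*\tau\alpha$ where $\tau$ has barycenter $x$; this set is convex and weak*-compact (the set of $\tau$ with barycenter $x$ is compact, being a closed subset of a compact nc convex set cut out by the affine condition $\tau|_{\rA(K)} = x$, and $\tau \mapsto \alpha^*\tau\alpha$ is continuous affine). The condition $\rho(f) \leq \nu(f)$ for all self-adjoint $f \in \cM_k(\rC(K))$ (all $k$) is exactly the condition that pins $\rho$ down to equal $\nu$ if achievable; so I want to show the compact convex set $D = \{\alpha^*\tau\alpha : \tau|_{\rA(K)} = x\}$ contains $\nu$. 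The above shows that for each self-adjoint $f$, the weak*-closed convex set $D \cap \{\rho : \rho(f) \leq \nu(f)\}$ is nonempty. These sets have the finite intersection property: given $f_1,\dots,f_r$, form the direct sum $f = f_1 \oplus \cdots \oplus f_r \in \cM_{k_1 + \cdots + k_r}(\rC(K))$, which is convex iff each $f_i$ is, and $\rho(f) \leq \nu(f)$ iff $\rho(f_i) \leq \nu(f_i)$ for all $i$; hmm—but the $f_i$ need not be convex, only self-adjoint, so I should instead work directly with $\env{f}$: for each self-adjoint $f$, $\{\rho \in D : \nu(f) \in \rho(\env{f})\}$ is nonempty by the above, and is weak*-closed (using lower semicontinuity-type arguments on $\env{f}$ as in the proof of Theorem~\ref{thm:convex-env-equivalent-form}); direct sums reduce finitely many such conditions to one; by compactness the total intersection is nonempty, yielding $\rho_0 \in D$ with $\nu(f) \in \rho_0(\env{f})$, i.e.\ $\rho_0(f) \leq \nu(f)$ (using $\rho_0(\env{f}) \subseteq [\rho_0(f),+\infty)$ once $\env{f}$ is handled), for \emph{all} self-adjoint $f \in \cM_k(\rC(K))$, $k$ arbitrary. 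Applying this to $\pm f$ forces $\rho_0(f) = \nu(f)$ on all self-adjoint $f$, hence $\rho_0 = \nu$ by linearity, so $\mu \prec_d \nu$. The main obstacle is precisely this compactness/finite-intersection argument: ensuring the relevant sets of maps are weak*-closed (the delicate point being semicontinuity of the multivalued envelope and that the inclusion $\nu(f) \in \rho(\env{f})$ behaves well under weak* limits of $\rho$), and correctly reducing finitely many inequalities to a single one by a direct-sum construction.
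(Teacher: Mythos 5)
Your easy direction ($\mu \prec_d \nu \Rightarrow \mu \prec_c \nu$) via the noncommutative Jensen inequality is correct and is essentially the computation the paper makes (it reduces, as in Proposition \ref{P:minimal_in_order}, to $f(\alpha^* y \alpha) \le (1\otimes\alpha^*) f(y) (1\otimes\alpha)$ for convex $f$); the paper instead reads it off from Theorem \ref{thm:convex-env-ucp-dilation-order}, but the content is the same.

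The hard direction has a genuine logical gap at the outset. You reduce to the case that $\nu$ is dilation-maximal by passing to $\nu'$ with $\nu \prec_d \nu'$ and declaring that "it suffices to show $\mu \prec_d \nu'$." It does not: $\mu \prec_d \nu'$ together with $\nu \prec_d \nu'$ only says $\mu$ and $\nu$ have a common dominant in the dilation order, which is strictly weaker than $\mu \prec_d \nu$. The reduction is not cosmetic, because your subsequent argument really uses maximality of $\nu$ (via Corollary \ref{cor:criterion-maximality-dilation-order}) to get $\nu(\env{f}) = [\nu(f),+\infty)$ and hence $\nu(f) \in \mu(\env{f})$. The fix is to observe that maximality is not needed for that one fact: since $\env{f} \le f$, one always has $\nu(f) \in \nu(\env{f})$, and $\mu \prec_c \nu$ gives $\mu(\env{f}) \supseteq \nu(\env{f}) \ni \nu(f)$ by Proposition \ref{prop:choquet-order-convex-env}. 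With that substitution you can delete the reduction and run the rest of your argument for arbitrary $\nu$. A second, smaller error: your parenthetical "$\rho_0(\env{f}) \subseteq [\rho_0(f),+\infty)$" has the inclusion backwards ($\env{f} \le f$ gives $\supseteq$), so $\nu(f) \in \rho_0(\env{f})$ does not yield $\rho_0(f) \le \nu(f)$. Work instead with the sets $\{\rho \in D : \rho(f) \le \nu(f)\}$ throughout; these are weak*-closed simply because $\rho \mapsto \rho(f)$ is point-weak* continuous for $f \in \cM_k(\rC(K))$, and no semicontinuity of the multivalued envelope is needed.

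With those repairs, your compactness/finite-intersection argument does close the proof and is a legitimate variant of the paper's. The paper avoids compactness entirely: it forms a single (possibly infinite) direct sum $h = \oplus_{f \in \mathcal{F}} f$ over a dense symmetric family $\mathcal{F} \subseteq \rC(K)_{sa}$, applies Theorem \ref{thm:convex-env-ucp-dilation-order} once to get one $\lambda$ with $\mu \prec_d \lambda$ and $\lambda(h) \le \nu(h)$, reads off $\lambda(f) = \nu(f)$ for all $f \in \mathcal{F}$ from the diagonal blocks, and concludes $\lambda = \nu$ by density. Your route trades that one large direct sum for finite direct sums plus weak*-compactness of $D = \{\rho : \mu \prec_d \rho\}$; both work, but the paper's version is shorter because it never needs to identify $D$ or verify any closedness.
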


\begin{proof}
If $\mu \prec_d \nu$, then Theorem~ \ref{thm:convex-env-ucp-dilation-order} implies that for every self-adjoint continuous convex nc function $f \in \cM_n(\rC(K))$, 
\[
 \mu(f) = \mu(\env{f}) = \bigcup_{\mu \prec_d \lambda} [\lambda(f), +\infty) \supseteq \bigcup_{\nu \prec_d \lambda} [\lambda(f), +\infty) = \nu(\env{f}) = \nu(f).
\]
Therefore $\mu(f) \leq \nu(f)$. That is, $\mu \prec_c \nu$.

Conversely, if $\mu \prec_c \nu$, then Proposition \ref{prop:choquet-order-convex-env} implies $\mu(\env{f}) \leq \nu(\env{f})$ for every $n$ and every self-adjoint nc function $f \in \cM_n(\rC(K))$.
Let $\mathcal{F}$ be a dense family of functions in the unit ball of $\rC(K)_{sa}$ such that $\mathcal{F} = -\mathcal{F}$ and define $h \in \cM_m(\rC(K))$ by $h = \oplus_{f \in \mathcal{F}} f$. Then from above, $\mu(\env{h}) \leq \nu(\env{h}) \leq \nu(h)$. Hence by Theorem~ \ref{thm:convex-env-ucp-dilation-order}, there is a unital completely positive map $\lambda : \rC(K) \to \cM_p$ such that $\mu \prec_d \lambda$ and $\lambda(h) \leq \nu(h)$. But then $\lambda(f) \leq \nu(f)$ and $\lambda(-f) \leq \nu(-f)$ for all $f \in \mathcal{F}$, implying $\lambda(f) = \nu(f)$ for all $f \in \mathcal{F}$. Since $\mathcal{F}$ is dense in $\rC(K)$, it follows that $\lambda = \nu$. Hence $\mu \prec_d \nu$.
\end{proof}

\begin{rem}
If one prefers, one could instead consider the directed set of finite $F = -F \Subset \mathcal{F}$.
For each $F$, arguing as above, one obtains $\mu \prec \lambda_F$ such that $\lambda_F|_F = \nu|_F$.
Take a limit of a convergent cofinal subnet to obtain the $\lambda$ constructed in the proof.
\end{rem}

\subsection{Maximality in the dilation order} \label{sec:max-in-diln-order}

In this section, we discuss maximality in the dilation order and its relationship to maximal dilations (which are different).
We will obtain new, shorter, more conceptual proofs of the Dritschel--McCullough theorem~\ref{thm:dritschel-mccullough} \cite{DriMcC2005}*{Theorem 1.2}.
and of Theorem \ref{thm:dilation-pure-to-extreme} \cite{DK2015}*{Theorem 2.4} asserting the existence of extreme points.

\begin{prop} \label{prop:exist-maximal-in-dilation-order}
Let $K$ be a compact nc convex set and let $\mu : \rC(K) \to \cM_n$ be a unital completely positive map. 
Then there is a unital completely positive map $\nu : \rC(K) \to \cM_n$ such that $\mu \prec_d \nu$ and $\nu$ is maximal in the dilation order.
\end{prop}

\begin{proof}
By Theorem~\ref{thm:equivalence-orders}, it suffices to obtain $\mu \prec_c \nu$ which is maximal in the Choquet order. Let $N = \{ \nu : \rC(K) \to \cM_n : \mu \prec_d \nu \}$ with the $\prec_c$ order. We will show that $N$ is inductive and apply Zorn's lemma. Suppose that $C = \{ \nu_i : i \in \I \}$ is a chain in $N$.  The nc state space $L$ of $\rC(K)$ is compact. Hence the net $\{\nu_i \}_\I$ has a cofinal convergent subnet, say $\Lambda$, with $\nu = \lim_\Lambda \nu_{i_\lambda}$ in $L_n$. If $f$ is any nc convex function, $\nu_i(f) \le \nu_j(f)$ for $i \le j \in \I$. Since $\Lambda$ is cofinal, $\nu_i(f) \le \nu(f)$ for all $i\in\I$; whence $\nu$ is an upper bound for $C$ in the nc Choquet order. By Zorn's lemma, there is a maximal element of $N$ in the Choquet order. 
\end{proof}

\begin{lem} \label{lem:dilation-maximal-implies-zeros}
Let $\mu : \rC(K) \to \cM_m$ be a unital completely positive map that is maximal in the dilation order and let $(x,\alpha) \in K_n \times \cM_{n,m}$ be a representation of $\mu$. Let $y \in K_p$ be a dilation of $x$ and let $\beta \in \cM_{p,n}$ be an isometry such that $x = \beta^* y \beta$. Let $\gamma \in \cM_p$ denote the orthogonal projection onto $\Ran(\beta)^\perp$. Then $\alpha^* y \gamma = 0$ and $\gamma y \alpha = 0$. Equivalently, with respect to the decomposition $H_p = \Ran(\alpha) \oplus (\Ran(\beta) \ominus \Ran(\alpha)) \oplus \Ran(\beta)^\perp$, $y$ has block matrix form
\[
y = \left[ \begin{matrix}
* & * & 0 \\
* & * & * \\
0 & * & *
\end{matrix} \right]  
\]
\end{lem}

\begin{proof}
Define a unital completely positive map $\nu : \rC(K) \to \cM_m$ by $\nu = \alpha^* \beta^* \delta_y \beta \alpha$. Then by the definition of the dilation order, $\mu \prec_d \nu$. Hence by the maximality of $\mu$, $\nu = \mu$. In particular, for $a \in \rA(K)$,
\[
\alpha^* a^*(x) a(x) \alpha = \mu(a^*a) = \alpha^* \beta^* a^*(y) a(y) \beta \alpha.
\]
Since $a$ was arbitrary, comparing the left and right hand side of this equation gives the desired result.
\end{proof}

The next result is the Dritschel--McCullough Theorem.

\MaximalDilationTheorem*

\begin{proof}
Fix a point $x \in K_m$. We will construct a maximal dilation of $x$. Let $y_1 = x$ and $n_1 = m$. For $i > 1$, suppose that $y_{i-1} \in K_{n_{i-1}}$ is a dilation of $x$. If $y_{i-1}$ is maximal, then we are done. Hence we can suppose otherwise that $y_{i-1}$ is not maximal. In this case, Proposition~\ref{prop:exist-maximal-in-dilation-order} implies there is a unital completely positive map $\mu_{i-1} : \rC(K) \to \cM_{n_{i-1}}$ with barycenter $y_{i-1}$ that is maximal in the dilation order. Let $(y_i,\alpha_i) \in K_{n_i} \times \cM_{{n_i,n_{i-1}}}$ be a minimal representation of $\mu_{i-1}$. Then $y_i$ dilates $y_{i-1}$, and hence it also dilates $x$. 

Proceeding inductively, we either eventually obtain a maximal dilation of $x$, or we obtain a sequence $(y_i, \alpha_i) \in K_{n_i} \times \cM_{n_i,n_{i-1}}$ such that for $i > 1$, the map $\mu_{i-1} = \alpha_i^* \delta_{y_i} \alpha_i$ is maximal in the dilation order and has barycenter $y_{i-1}$. Assuming the second case, we will now construct a maximal dilation of $x$ from the sequence $(y_i,\alpha_i)$. 

First, we can identify each $H_{n_{i-1}}$ with $\Ran(\alpha_i)$ as appropriate, so that with respect to the decomposition $H_{n_i} = \Ran(\alpha_i) \oplus \Ran(\alpha_i)^\perp$, $y_i$ can be written in block matrix form as
\[
y_i = \left[ \begin{matrix} y_{i-1} & * \\ * & * \end{matrix} \right] \quad \text{for} \quad i > 1.
\]
For $i > 2$, further expanding $y_{i-1}$ in terms of $y_{i-2}$ and applying Lemma \ref{lem:dilation-maximal-implies-zeros} implies that $y_i$ can be written in block matrix form as
\[
y_i = \left[ \begin{matrix} y_{i-2} & * & 0 \\ * & * & * \\ 0 & * & * \end{matrix} \right] \quad \text{for} \quad i > 2.
\]
Hence we can write
\[
y_2 = \left[ \begin{matrix} x & r_1 \\ r_1^* & x_2 \end{matrix} \right],\ 
y_3 = \left[ \begin{matrix} x & r_1 & 0 \\ r_1^* & x_2 & r_2 \\ 0 & r_2^* & x_3 \end{matrix} \right],\
y_4 = \left[ \begin{matrix} x & r_1 & 0 & 0 \\ r_1^* & x_2 & r_2 & 0 \\ 0 & r_2^* & x_3 & r_3 \\ 0 & 0 & r_3^* & x_4 \end{matrix} \right].
\]
Proceeding inductively, we see that for $i > 1$, we can write $y_i$ in block tridiagonal matrix form as
\[
y_i = 
\left[ \begin{matrix}
x & r_1 \\
r_1^* & x_2 & r_2 \\
& r_2^* & x_3 & r_3 \\
& & \ddots & \ddots & \ddots & \\
& & & r_{i-2}^* & x_{i-1} & r_{i-1} \\
& & & & r_{i-1}^* & x_i
\end{matrix} \right]
\quad \text{for} \quad i > 1.
\]
Applying Proposition \ref{prop:convergent-nets} to the sequence $(y_i)$, we obtain $y \in K_n$ with block tridiagonal decomposition
\[
y = \left[ \begin{matrix}
x & r_1 \\
r_1^* & x_2 & r_2 \\
& r_2^* & x_3 & r_3 \\
& & \ddots & \ddots & \ddots & \\
& & & r_{i-2}^* & x_{i-1} & r_{i-1} \\
& & & & r_{i-1}^* & x_i & r_i \\
& & & & & \ddots & \ddots & \ddots 
\end{matrix} \right]
\]
Note that $y$ dilates $x$. We will now show that $y$ is maximal. 

For $i \geq 1$, let $\beta_i \in \cM_{n,n_i}$ be an isometry with range corresponding to the upper left $i \times i$ block submatrix of $y$, so that $\beta_i^* y \beta_i = y_i$. Then by the definition of the dilation order, the corresponding unital completely positive map $\nu_i : \rC(K) \to \cM_{n_i}$ defined by $\nu_i = \beta_i^* \delta_{y_i} \beta_i$ satisfies $\mu_i \prec_d \nu_i$. Hence $\nu_i = \mu_i$ by the maximality of $\mu_i$. In particular, $(y,\beta_i)$ is a representation of $\mu_i$. Now applying Lemma \ref{lem:dilation-maximal-implies-zeros} to each $\mu_i$ with respect to the representation $(y,\beta_i)$ implies that any dilation of $y$ is necessarily trivial. We conclude that $y$ is a maximal dilation of $x$.
\end{proof}

This result has the following striking consequence.
It shows that in the proof above, only the first dilation (to $y_2$ which we call $x$ below) was actually required to obtain a maximal point.

\begin{thm} \label{thm:equivalence-dilation-maximal-unique-representing-map}
Let $K$ be a compact nc convex set and let $\mu : \rC(K) \to \cM_m$ be a unital completely positive map with representation $(x,\alpha) \in K_n \times \cM_{n,m}$. If $\mu$ is maximal in the dilation order and the representation $(x,\alpha)$ is minimal, then $x$ is a maximal point. Conversely, if $x$ is a maximal point then $\mu$ is maximal in the dilation order.
\end{thm}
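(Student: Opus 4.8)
The plan is to handle the two implications separately, in each case translating the dilation order into a statement about dilations of the barycenter via Remark~\ref{rem:dilation-order}, and then invoking the characterization of maximal points in terms of unique representing maps (Proposition~\ref{prop:criterion-unique-rep-map}).

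For the converse direction (if $x$ is maximal then $\mu$ is maximal in the dilation order), I would start with an arbitrary $\nu$ satisfying $\mu \prec_d \nu$ and apply Remark~\ref{rem:dilation-order} to the given minimal representation $(x,\alpha)$ of $\mu$: this yields a representation $(y,\beta)$ of $\nu$ and an isometry $\gamma$ with $\beta = \gamma\alpha$ and $x = \gamma^* y \gamma$. The compression $\tau := \gamma^* \delta_y \gamma : \rC(K)\to\cM_n$ then has barycenter $x$ and satisfies $\alpha^*\tau\alpha = \beta^*\delta_y\beta = \nu$. Since $x$ is maximal it has a unique representing map, so $\tau = \delta_x$, hence $\nu = \alpha^*\delta_x\alpha = \mu$. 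Thus nothing strictly dominates $\mu$, so $\mu$ is maximal.

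For the forward direction I would argue by contradiction: suppose $\mu$ is maximal in the dilation order, $(x,\alpha)$ is a minimal representation of $\mu$, but $x$ is not maximal. By Theorem~\ref{thm:dritschel-mccullough}, choose a maximal dilation $y\in K_p$ of $x$, say $x = \gamma^* y\gamma$ for an isometry $\gamma$; since summands of maximal points are maximal while $x$ is not, $y$ must be a \emph{nontrivial} dilation of $x$. Set $\beta := \gamma\alpha$ (an isometry) and $\nu := \beta^*\delta_y\beta$, so that $(x,\alpha)$, $(y,\beta)$, $\gamma$ witness $\mu\prec_d\nu$. Maximality of $\mu$ gives $\nu\prec_d\mu$; applying Remark~\ref{rem:dilation-order} to the representation $(y,\beta)$ of $\nu$ produces a representation $(z,\delta)$ of $\mu$ with $\delta=\zeta\beta$ and $y = \zeta^* z\zeta$ for an isometry $\zeta$. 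As $y$ is maximal, $z$ decomposes as $y\oplus w$ with respect to the range of $\zeta$, so $\delta_z = \zeta\delta_y\zeta^*\oplus(\cdots)$ and $\mu = \delta^*\delta_z\delta = \beta^*\delta_y\beta = \nu$. Hence $(y,\beta)$ is itself a representation of $\mu$. Now I would pass to the minimal Stinespring subspace $M := \overline{\operatorname{span}}\{\, f(y)\beta H_m : f\in\rC(K)\,\}\subseteq H_p$: it is invariant, hence (as $\rC(K)$ is a $*$-algebra) reducing for $\delta_y(\rC(K))$, it has dimension $n$, and the compression of $(y,\beta)$ to $M$ is a minimal representation of $\mu$, so after a unitary adjustment I may fix an isometry $\rho$ onto $M$ with $x = \rho^* y\rho$ and $\alpha = \rho^*\beta$. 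The restriction of $\delta_y$ to $M$ is then a $*$-homomorphism $\rC(K)\to\cM_n$ agreeing with $x$ on $\rA(K)$, so the universal property of $\rC(K) = \cmax(\rA(K))$ identifies it with $\delta_x$; thus $\delta_y = \rho\delta_x\rho^*\oplus\delta''$ with respect to $M$, and restricting to $\rA(K)$ shows $y$ decomposes as $x\oplus z$ relative to the range of $\rho$. So $x$ is a summand of the maximal point $y$, hence maximal, contradicting our assumption.

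The main obstacle is the last step of the forward direction: the bookkeeping needed to see that the minimal Stinespring subspace is reducing, that its compression is unitarily equivalent to the \emph{specified} minimal representation $(x,\alpha)$ rather than merely to some minimal representation, and that the restricted homomorphism is exactly $\delta_x$, is where care is required; everything also hinges on taking $y$ to be a genuine maximal dilation, since that is what makes $\nu\prec_d\mu$ collapse to $\nu=\mu$.
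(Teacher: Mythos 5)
Your proof is correct and follows essentially the same route as the paper: the converse direction is identical (compress via Remark~\ref{rem:dilation-order} and invoke the unique representing map of the maximal barycenter), and the forward direction uses the same key ingredients, namely Theorem~\ref{thm:dritschel-mccullough} to produce a maximal dilation $y$, maximality of $\mu$ to force $\mu=\nu$, and uniqueness of minimal Stinespring representations to conclude $y\cong x\oplus z$. The only differences are presentational: the paper concludes $\mu=\nu$ directly from its (antisymmetric) reading of dilation-order maximality and cites the uniqueness remark where you spell out the reducing-subspace argument, so your extra loop through $\nu\prec_d\mu$ and the explicit identification of the compressed homomorphism with $\delta_x$ are careful expansions of steps the paper takes in one line.
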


\begin{proof}
Suppose that $\mu$ is maximal in the dilation order, and let $(x,\alpha)$ be a minimal representation of $\mu$. By Theorem \ref{thm:dritschel-mccullough}, there is maximal point $y \in K_p$ that dilates $x$.  The maximality of $y$ implies that it has a unique representing map, namely $\delta_y$.  Let $\beta \in \cM_{p,n}$ be an isometry such that $\beta^* y \beta = x$ and define a unital completely positive map $\nu : \rC(K) \to \cM_m$ by $\nu = \alpha^* \beta^* \delta_y \beta \alpha$. Then by Proposition \ref{prop:criterion-dilation-order}, $\mu \prec_d \nu$. Hence by the maximality of $\mu$, $\mu = \nu$. Thus the pair $(y,\beta \alpha)$ is a representation of $\mu$. By the minimality of the representation $(x,\alpha)$ and the uniqueness of minimal representations, $y \cong x \oplus z$ for some $z \in K$. Since $y$ is a maximal point, it follows that $x$ is a maximal point.

Conversely, suppose $x$ is a maximal point. Let $\nu : \rC(K) \to \cM_m$ be a unital completely positive map such that $\mu \prec_d \nu$. Then by Proposition \ref{prop:criterion-dilation-order} there is a completely positive map $\tau : \rC(K) \to \cM_n$ with barycenter $x$ such that $\nu = \alpha^* \tau \alpha$. The map $\tau$ has barycenter $x$. Since $x$ is maximal, it has a unique representing map, and hence $\tau = \delta_x$. Hence $\nu = \alpha^* \delta_x \alpha = \mu$ and we conclude that $\mu$ is maximal in the dilation order.
\end{proof}

Now we give a new proof of of Theorem \ref{thm:dilation-pure-to-extreme} \cite{DK2015}*{Theorem 2.4}.

\PureDilationTheorem*

\begin{proof}
Fix a pure point $x \in K_n$. Our primary goal is to find a pure dilation-maximal representing map $\mu$ for $x$ on $\rC(K)$.
Let $L$ denote the nc state space of $\rC(K)$. 
Say that a face $F$ of $L$ is \emph{hereditary} if $\mu\in F'$ and $\mu\prec_d \nu$ implies that $\nu\in F'$.
Let $F = \{ \mu \in L_n : \mu \text{ has barycenter } x \}$. 
Then $F$ is a closed face since if $(1/2)(\nu_1 + \nu_2) \in F$, then by the pureness of $x$, both $\nu_1$ and $\nu_2$ have barycenter $x$.
Furthermore, $F$ is hereditary: if $\mu \in F$ and $\mu \prec_d \nu$, then $\nu \in F$ because the barycenters of $\mu$ and $\nu$ agree by Lemma~\ref{L:same-barycenter}.

Apply Zorn's lemma to the family of all closed hereditary faces contained in $F$ to get a minimal closed hereditary face $F_0$. 
We claim that $F_0$ is a single point.
Suppose otherwise that there are $\mu,\nu \in F_0$ with $\mu \ne \nu$. 
By Proposition~\ref{P:agree-on-convex}, there is a convex nc function $f \in \rC(K)$ such that $\mu(f) \ne \nu(f)$.
The set $\{ \mu(f) : \mu\in F_0 \}$ is a weak-$*$ compact convex subset of $(\cM_n)_{sa}$.
Therefore there is a maximal element $A$ of this set in the usual order on self-adjoint matrices or operators.
Let $F_1 = \{ \mu \in F_0 : \mu(f) = A \}$. The maximality of $A$ shows that this is a proper closed face of $F_0$.
Moreover since $f$ is convex, this set is hereditary. This contradicts the minimality of $F_0$ as a closed hereditary face.
Thus $F_0 = \{\mu_0\}$ is a singleton. Thus $\mu_0$ is an extreme point of $F$. Since $F_0$ is  a face, $\mu_0$ is pure.
Since $F_0$ is hereditary, $\mu_0$ must be maximal in the dilation order.

Let $(y,\alpha)$ be a minimal representation of $\mu_0$.
By \cite{Arv1969}*{Corollary~I.4.3}, $\delta_y$ is irreducible.
By Theorem~\ref{thm:equivalence-dilation-maximal-unique-representing-map}, $y$ has a unique representing map. 
Thus $y$ is an nc extreme point of $K$ by Theorem~\ref{thm:extreme}. 
By Corollary~\ref{cor:extreme}, $\delta_y$ is a boundary representation.
\end{proof}

The proof of Theorem~\ref{thm:extreme} implies the next result.

\begin{cor}\label{cor:pure-dilation-maximal-extreme-rep}
If $\mu : \rC(K) \to \cM_n$ is a pure dilation maximal unital completely positive map with minimal representation $(y,\alpha)$ for $y \in K_p$ and an isometry $\alpha \in \cM_{p,n}$, then $y$ is an nc extreme point of $K$.
\end{cor}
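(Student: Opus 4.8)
The plan is to assemble this from results already proved, essentially repeating the final paragraph of the proof of Theorem~\ref{thm:dilation-pure-to-extreme}, but now applied to an arbitrary pure dilation maximal unital completely positive map $\mu$ rather than to the particular map $\mu_0$ constructed there. First I would note that a minimal representation $(y,\alpha) \in K_p \times \cM_{p,n}$ of $\mu$ is, by definition of minimality (density of $\{f(y)\alpha H_n : f \in \rC(K)\}$ in $H_p$), nothing but a minimal Stinespring representation of $\mu$ in which $\delta_y : \rC(K) \to \cM_p$ is the accompanying $*$-representation. Since $\mu$ is pure — that is, pure as a point of the nc state space of the C*-algebra $\rC(K)$ — Arveson's characterization of pure completely positive maps \cite{Arv1969}*{Corollary~1.4.3}, as recalled in Example~\ref{ex:arveson-pure-c-star-alg} applied to $\rC(K)$, shows that the representation $\delta_y$ is irreducible.

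Next I would invoke the dilation order. Because $\mu$ is maximal in the dilation order and the representation $(y,\alpha)$ is minimal, Theorem~\ref{thm:equivalence-dilation-maximal-unique-representing-map} yields that $y$ is a maximal point of $K$. By Proposition~\ref{prop:criterion-unique-rep-map}, a maximal point has a unique representing map, so $\delta_y$ is the unique representing map for $y$. Combining the two observations, $\delta_y$ is both irreducible and the unique representing map for $y$, and Theorem~\ref{thm:extreme} then gives that $y$ is an nc extreme point of $K$, as required. (One may further note, as in the proof of Theorem~\ref{thm:dilation-pure-to-extreme}, that $\delta_y$ is a boundary representation by Corollary~\ref{cor:extreme}, though this is not needed for the stated conclusion.)

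I do not expect any genuine obstacle here: the content is entirely a matter of citing Theorem~\ref{thm:equivalence-dilation-maximal-unique-representing-map}, Proposition~\ref{prop:criterion-unique-rep-map} and Theorem~\ref{thm:extreme} in the right order. The only point that warrants a line of care is matching the paper's notion of a minimal representation of a completely positive map with the minimal Stinespring representation implicit in \cite{Arv1969}*{Corollary~1.4.3}, and confirming that ``pure'' is being used for $\mu$ in the sense of Definition~\ref{defn:pure-point} (equivalently, as a pure point of the nc state space of $\rC(K)$), so that Example~\ref{ex:arveson-pure-c-star-alg} applies verbatim with the C*-algebra $\rC(K)$ in place of $A$.
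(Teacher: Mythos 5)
Your proof is correct and is essentially the paper's own argument: the corollary is just the final paragraph of the proof of Theorem~\ref{thm:dilation-pure-to-extreme} (irreducibility of $\delta_y$ from purity via Arveson's Corollary~1.4.3 applied to the minimal Stinespring representation, maximality of $y$ from Theorem~\ref{thm:equivalence-dilation-maximal-unique-representing-map}, then Theorem~\ref{thm:extreme}), extracted for a general pure dilation-maximal $\mu$. The point of care you flag — that ``pure'' for $\mu$ as a point of the nc state space of $\rC(K)$ coincides with purity as a completely positive map — is exactly the right one and is settled by the remark on regular embeddings following Definition~\ref{defn:pure-point}.
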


\subsection{Scalar convex envelope} \label{sec:scalar-convex-envelope}

In this section we will show that the maximality of a unital completely positive map on the C*-algebra of continuous nc functions can be detected by looking at scalar-valued functions. As usual we will let $L$ denote the nc state space of $\rC(K)$. Thus $L_1$ denotes the space of (scalar) states of $\rC(K)$. Recall that $\partial L_1$ denotes the scalar extreme points of the convex set $L_1$.

The next two results follow immediately from the definition of the nc Choquet order.

\begin{lem} \label{lem:convexity-dilation-order}
Let $K$ be a compact nc convex set and let $\mu_1,\ldots,\mu_k$ and $\nu_1,\ldots,\nu_k$ be states 
on $\rC(K)$ such that $\mu_i \prec_c \nu_i$ for each $i$. Then for scalars $\lambda_1,\ldots,\lambda_k \geq 0$ with $\lambda_1 + \cdots + \lambda_k = 1$, $\sum \lambda_i \mu_i \prec_c \sum \lambda_i \nu_i$.
\end{lem}

\begin{lem} \label{lem:continuity-dilation-order}
Let $K$ be a compact nc convex set and let $\{\mu_i\}$ and $\{\nu_i\}$ be nets of states on $\rC(K)$ converging in the point-weak* topology to states $\mu$ and $\nu$ on $\rC(K)$ respectively. If $\mu_i \prec_c \nu_i$ for each $i$, then $\mu \prec_c \nu$.
\end{lem}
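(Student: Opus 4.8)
The statement to prove is Lemma~\ref{lem:continuity-dilation-order}: if $\{\mu_i\}$ and $\{\nu_i\}$ are nets of states on $\rC(K)$ converging point-weak* to states $\mu$ and $\nu$, and $\mu_i \prec_c \nu_i$ for each $i$, then $\mu \prec_c \nu$.

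\medskip

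The plan is to unpack the definition of $\prec_c$ and push the inequality through the limit directly. By Definition~\ref{defn:nc-Choquet-order}, to show $\mu \prec_c \nu$ I must show $\mu(f) \leq \nu(f)$ for every $n$ and every convex nc function $f \in \cM_n(\rC(K))$; here I am using that the $\mu_i,\nu_i,\mu,\nu$ are scalar states, so they extend to completely positive (hence matrix-) maps $\cM_n(\rC(K)) \to \cM_n$ in the canonical way, and point-weak* convergence of the states upgrades entrywise to the statement $\mu_i(f) \to \mu(f)$ and $\nu_i(f) \to \nu(f)$ in $\cM_n$ (a finite matrix of scalar limits). Fix such an $f$. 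For each index $i$ the hypothesis $\mu_i \prec_c \nu_i$ gives $\mu_i(f) \leq \nu_i(f)$, i.e. $\nu_i(f) - \mu_i(f) \in \cM_n^+$. The positive cone $\cM_n^+$ is closed in $\cM_n$ (which is finite-dimensional, so all the relevant topologies agree), and $\nu_i(f) - \mu_i(f) \to \nu(f) - \mu(f)$, hence the limit lies in $\cM_n^+$, i.e. $\mu(f) \leq \nu(f)$. Since $f$ was an arbitrary convex nc function, $\mu \prec_c \nu$.

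\medskip

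The one point that needs a sentence of care is the passage from convergence of scalar states to convergence of their matrix amplifications. A state $\rho$ on $\rC(K)$ acts on $\cM_n(\rC(K))$ by $\rho_n([g_{kl}]) = [\rho(g_{kl})]$. If $\mu_i \to \mu$ point-weak* (equivalently, in the weak* topology of the state space, which for scalar functionals just means $\mu_i(g) \to \mu(g)$ for all $g \in \rC(K)$), then in particular $\mu_i(g_{kl}) \to \mu(g_{kl})$ for each of the finitely many entries of $f$, so $\mu_i(f) \to \mu(f)$ in the (unique, finite-dimensional) topology of $\cM_n$; likewise for $\nu$. That is all the continuity input required.

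\medskip

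I do not anticipate a real obstacle here: the lemma is essentially the observation that a pointwise (in the weak* sense) limit of a coordinatewise inequality is again an inequality, made legitimate by the closedness of the positive cone in the finite-dimensional matrix algebras $\cM_n$. The only thing to be mildly careful about is not to conflate the various topologies on $\cM_n$ (they coincide) and to remember that $\prec_c$ quantifies over all levels $n$ simultaneously, so the argument is run for each fixed $n$ and each fixed convex nc function $f \in \cM_n(\rC(K))$.
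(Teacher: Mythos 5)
Your argument is correct and is exactly what the paper intends: the paper records this lemma as following immediately from the definition of $\prec_c$, and your unpacking --- fix a convex nc function $f \in \cM_n(\rC(K))$, observe that $\mu_i(f) \to \mu(f)$ and $\nu_i(f) \to \nu(f)$ entrywise from point-weak* convergence of the states, and conclude from closedness of the positive cone --- is the whole content. One small caveat: in the paper's conventions the level $n$ in Definition \ref{defn:nc-Choquet-order} ranges over all cardinals $n \leq \kappa$, not only finite ones, so the parenthetical ``$\cM_n$ is finite-dimensional'' is not quite accurate; for infinite $n$ the same argument still goes through because the net $\nu_i(f) - \mu_i(f)$ is uniformly bounded by $2\|f\|$, entrywise convergence of a bounded net implies convergence in the weak operator topology on $\B(H_n)$, and the positive cone is closed in that topology.
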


\begin{defn} \label{defn:scalar-valued-functions}
Let $K$ be a compact nc convex set and let $L_1$ denote the state space of $\rC(K)$. For $f \in \rC(K)$ with convex envelope $\env{f}$, let $\hat{f}$ and $\breve{f}$ denote the scalar-valued functions on $L_1$ defined by
\[
\hat{f}(\mu) = \mu(f) \qand \breve{f}(\mu) = \inf \mu(\env{f}) \qfor \mu \in L_1.
\]
\end{defn}

\begin{rem}
Kadison's representation theorem \cite{Kad1951} implies that as a function system, $\rC(K)$ is order isomorphic to the function system $\rA(L_1)$ of continuous affine functions on $L_1$. For $f \in \rC(K)$, the function $\hat{f}$ is precisely the image of $f$ under the corresponding order isomorphism.
\end{rem}

\begin{prop} \label{prop:scalar-convex-envelope}
Let $K$ be a compact nc convex set and let $L_1$ denote the state space of $\rC(K)$. Let $f \in \rC(K)$ be a self-adjoint continuous nc function. Then the corresponding functions $\hat{f},\breve{f} : L_1 \to \bC$ satisfy
\begin{enumerate} [label=\normalfont{(\arabic*)}]
\item $\breve f  \leq \hat{f}$,
\item $\breve f $ is lower semicontinuous,
\item $\breve f $ is convex.
\end{enumerate}
\end{prop}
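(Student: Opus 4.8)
The plan is to obtain an explicit description of the scalar function $\breve{f}$ as a pointwise supremum of continuous convex functions on $L_1$, from which all three claims follow at once. The starting point is Corollary~\ref{cor:convex-env-convex-fcns-ucp} applied to the bounded multivalued nc function $F(x) = [f(x),+\infty)$ associated with $f$ as in Example~\ref{ex:single-valued-to-multivalued-nc-function} (so that $\env{f} = \env{F}$), specialized to a scalar state $\mu \in L_1$, i.e.\ with $n = p = 1$. In that setting a real number $\alpha$ satisfies $1_m\otimes\alpha \ge \mu(g)$ (in $\cM_m$) precisely when $\alpha \ge \lambda_{\max}(\mu(g))$, where $\lambda_{\max}$ is the largest eigenvalue and $\mu(g)$ denotes the value of the amplification $\mu\otimes\id_m$ on $g \in \cM_m(\rC(K))_{sa}$. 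So the corollary becomes
\[
\mu(\env{f}) \;=\; \bigcap_{g}\,[\lambda_{\max}(\mu(g)),\,+\infty),
\]
the intersection being over all $m$ and all convex nc functions $g \in \cM_m(\rC(K))_{sa}$ with $g \le 1_m\otimes f$.

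Next I would record two elementary facts about this family of functions $g$. First, it is nonempty: the constant $-\|f\|\,1_{\rC(K)}$ is affine, hence convex, and lies below $f$. Second, for any admissible $g$, complete positivity of $\mu$ gives $\mu(g)\le\mu(1_m\otimes f)=\mu(f)\,1_m$, so $\lambda_{\max}(\mu(g))\le\mu(f)=\hat{f}(\mu)$. Hence the supremum $\sup_g\lambda_{\max}(\mu(g))$ is finite, the displayed intersection equals $[\,\sup_g\lambda_{\max}(\mu(g)),\,+\infty)$, and therefore $\breve{f}(\mu)=\inf\mu(\env{f})=\sup_g\lambda_{\max}(\mu(g))$. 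The bound $\lambda_{\max}(\mu(g))\le\hat{f}(\mu)$ then immediately gives (1), namely $\breve{f}\le\hat{f}$; alternatively (1) follows directly from $\env{f}\le f$ of Proposition~\ref{prop:properties-convex-envelope}(2) by evaluating $\mu$ through a minimal representation, and I may use that as the cleanest proof of (1).

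For (2) and (3), the key observation is that for each fixed admissible $g \in \cM_m(\rC(K))_{sa}$ the map $\mu\mapsto\mu(g)\in(\cM_m)_{sa}$ is affine and continuous on $L_1$ for the point-weak* topology (it is the $m$-th amplification evaluated at a fixed element), while $\lambda_{\max}:(\cM_m)_{sa}\to\bR$ is continuous and convex, being the supremum of the linear functionals $A\mapsto\langle Av,v\rangle$ over unit vectors $v\in H_m$. Consequently $\mu\mapsto\lambda_{\max}(\mu(g))$ is continuous and convex on $L_1$, and $\breve{f}$, being the pointwise supremum $\mu\mapsto\sup_g\lambda_{\max}(\mu(g))$ of such functions, is lower semicontinuous and convex.

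The only real care needed — and the step I would double-check most carefully — is the correct specialization of Corollary~\ref{cor:convex-env-convex-fcns-ucp}: verifying that $F$ is a self-adjoint bounded multivalued nc function (it is, with $\|F\|=\|f\|$), that one must take $n=1$ because $f\in\rC(K)=\cM_1(\rC(K))$ and $p=1$ because $\mu$ is scalar, and that the conditions ``$g\le 1_m\otimes F$'' and ``$1_m\otimes\alpha\ge\mu(g)$'' translate respectively into the pointwise inequality $g\le 1_m\otimes f$ and into $\alpha\ge\lambda_{\max}(\mu(g))$. Once this dictionary is fixed there is no analytic difficulty left: compactness and attainment of infima are already built into Corollary~\ref{cor:convex-env-convex-fcns-ucp}, and everything else is routine.
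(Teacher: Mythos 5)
Your proof is correct, but it takes a genuinely different route from the paper's. The paper derives all three properties from the \emph{dilation-order} description of the convex envelope, $\mu(\env{f}) = \bigcup_{\mu \prec_d \nu}[\nu(f),+\infty)$ (Theorem \ref{thm:convex-env-ucp-dilation-order}): lower semicontinuity is proved by extracting a weak* convergent subnet of dominating states $\nu_i$ and invoking Lemma \ref{lem:continuity-dilation-order} together with the equivalence of the nc Choquet and dilation orders (Theorem \ref{thm:equivalence-orders}), and convexity is proved similarly via Lemma \ref{lem:convexity-dilation-order}. You instead use the \emph{minorant} description from Corollary \ref{cor:convex-env-convex-fcns-ucp} to write $\breve{f}(\mu) = \sup_g \lambda_{\max}(\mu_m(g))$, a pointwise supremum over convex minorants $g \le 1_m \otimes f$, so that (2) and (3) become the standard facts that a supremum of lower semicontinuous convex functions is lower semicontinuous and convex. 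Your dictionary checks out: $1_m\otimes\alpha \ge \mu_m(g)$ for scalar $\alpha$ is exactly $\alpha \ge \lambda_{\max}(\mu_m(g))$, the family of minorants is nonempty and the supremum is bounded by $\hat{f}(\mu)$, so the intersection of half-lines collapses as you claim. What your approach buys is a shorter argument with no compactness or subnet extraction and, notably, no appeal to Theorem \ref{thm:equivalence-orders}, which the paper's proof of (2) does use. One small imprecision: for infinite $m$ the map $\lambda_{\max}$ is not weak*-continuous on $(\cM_m)_{sa}$, only weak*-lower semicontinuous; but since you exhibit $\lambda_{\max}(\mu_m(g))$ as $\sup_{\|v\|=1}\mu(v^*gv)$, a supremum of weak*-continuous affine functionals of $\mu$, lower semicontinuity and convexity of each term is exactly what you get, and that is all the argument needs. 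The cardinal $m$ in Corollary \ref{cor:convex-env-convex-fcns-ucp} may indeed be infinite, so you should phrase that step as ``lower semicontinuous'' rather than ``continuous.''
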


\begin{proof}
(1)
This follows immediately from Proposition \ref{prop:properties-convex-envelope}.

(2)
Let $\{\mu_i\}$ be a net in $L_1$ converging to $\mu \in L_1$. We must show that $\lim \inf \breve f (\mu_i) \geq \breve f(\mu)$. For $\epsilon > 0$, Theorem \ref{thm:convex-env-ucp-dilation-order} implies that for each $i$ there is $\nu_i \in L_1$ such that $\mu_i \prec_d \nu_i$ and $\nu_i(f) < \inf \mu_i(\env{f}) + \epsilon$. If $\{\nu_j\}$ is a subnet converging in the weak* topology to $\nu \in L_1$, then Lemma \ref{lem:continuity-dilation-order} implies $\mu \prec_c \nu$. 
So by Theorem~\ref{thm:equivalence-orders}, $\mu \prec_d \nu$.  Thus applying Theorem \ref{thm:convex-env-ucp-dilation-order} again implies $[\nu(f),+\infty) \subseteq \mu(\env{f})$. Hence
\begin{align*}
\breve f (\mu) &= \inf \mu(\env{f}) \leq \nu(f) = \lim \nu_j(f) \\&
\leq \liminf (\inf \mu_j(\env{f})) + \epsilon = \liminf \breve f (\mu_j) + \epsilon.
\end{align*}
Taking $\epsilon \to 0$ gives the desired result.

(3)
For $\mu_1,\mu_2 \in L_1$ and $t \in (0,1)$, Theorem \ref{thm:convex-env-ucp-dilation-order} implies 
\begin{align*}
t \mu_1(\env{f}) \!+\! (1 \!-\! t) \mu_2(\env{f}) &= t \! \bigcup_{\mu_1 \prec_d \nu_1} [\nu_1(f),+\infty) + (1 \!-\! t) \! \bigcup_{\mu_2 \prec_d \nu_2} [\nu_2(f),+\infty) \\
&= \bigcup_{\substack{\mu_1 \prec_d \nu_1 \\ \mu_2 \prec_d \nu_2}} [t\nu_1(f) + (1-t)\nu_2(f), +\infty),
\end{align*}
where the unions are taken over all states $\nu_1,\nu_2 \in L_1$ with $\mu_1 \prec_d \nu_1$ and $\mu_2 \prec_d \nu_2$. 
For such $\nu_1,\nu_2$, Lemma \ref{lem:convexity-dilation-order} and Theorem~\ref{thm:equivalence-orders} imply that 
\[ t\mu_1 + (1-t)\mu_2 \prec_d t \nu_1 + (1-t) \nu_2 .\]
Therefore
\[
t  \mu_1(\env{f}) + (1-t)  \mu_2(\env{f}) \subseteq \bigcup_\lambda [\lambda(f), +\infty) = (t\mu_1 + (1-t)\mu_2)(\env{f}),
\]
where the union is taken over all $\lambda \in L_1$ with $t \mu_1 + (1-t) \mu_2 \prec_d \lambda$.  Hence
\begin{align*}
t \breve f (\mu_1) &+ (1-t) \breve f (\mu_2) = t \inf \mu_1(\env{f}) + (1-t) \inf \mu_2(\env{f}) \\&
\geq \inf (t \mu_1 + (1-t)\mu_2)(\env{f}) = \breve f (t \mu_1 + (1-t) \mu_2),
\end{align*}
and we conclude that $\breve f$ is convex.
\end{proof}

We obtain the following characterization of maximal elements in $K$ in terms of the scalar-valued functions in Definition \ref{defn:scalar-valued-functions}.

\begin{prop} \label{prop:scalar-maximal}
Let $K$ be a compact nc convex set and let $L_1$ denote the state space of $\rC(K)$. A state $\mu \in L_1$ is maximal in the dilation order if and only if $\hat{f}(\mu) = \breve{f}(\mu)$ for all self-adjoint $f \in \rC(K)$. 
\end{prop}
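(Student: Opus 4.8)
The plan is to read this off from Corollary~\ref{cor:criterion-maximality-dilation-order}, which already characterizes maximality in the dilation order of an arbitrary unital completely positive map $\mu\colon\rC(K)\to\cM_k$ by the condition $[\mu(f),+\infty)=\mu(\env{f})$ for all self-adjoint $f\in\rC(K)$. Specializing to $k=1$, i.e. to a state $\mu\in L_1$, the set $\mu(\env{f})=\alpha^*\env{f}(x)\alpha$ (for a minimal representation $(x,\alpha)\in K_m\times\cM_{m,1}$ of $\mu$) is a subset of $\bR$, and the task is simply to recognize that the equality $[\mu(f),+\infty)=\mu(\env{f})$ is equivalent to the scalar identity $\hat f(\mu)=\breve f(\mu)$.

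First I would record two elementary facts about the subset $\mu(\env{f})\subseteq\bR$, valid for any self-adjoint $f\in\rC(K)$. It is nonempty since $\env{f}(x)\neq\emptyset$, and it is upwards closed in $\bR$: if $s=\alpha^*T\alpha$ with $T\in\env{f}(x)$ and $t\geq 0$, then $T+t1_m\in\env{f}(x)$ because $\env{f}$ is upwards directed, and $\alpha^*(T+t1_m)\alpha=s+t$. Moreover $\env{f}$ is bounded by Proposition~\ref{prop:properties-convex-envelope}(4), so $\mu(\env{f})$ is bounded below, whence $\breve f(\mu)=\inf\mu(\env{f})$ is a real number and $\mu(\env{f})\subseteq[\breve f(\mu),+\infty)$. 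On the other hand, $\env{f}\leq f$ by Proposition~\ref{prop:properties-convex-envelope}(2), which means $\env{f}(x)\supseteq[f(x),+\infty)$; compressing by the isometry $\alpha$ gives $\mu(\env{f})\supseteq[\mu(f),+\infty)=[\hat f(\mu),+\infty)$, so in particular $\breve f(\mu)\leq\hat f(\mu)$ (recovering Proposition~\ref{prop:scalar-convex-envelope}(1)).

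With these in place the equivalence is immediate in both directions. If $\mu$ is maximal in the dilation order, then $\mu(\env{f})=[\mu(f),+\infty)$ for every self-adjoint $f\in\rC(K)$ by Corollary~\ref{cor:criterion-maximality-dilation-order}, and taking infima yields $\breve f(\mu)=\mu(f)=\hat f(\mu)$. Conversely, if $\hat f(\mu)=\breve f(\mu)$ for every self-adjoint $f\in\rC(K)$, then $\mu(\env{f})\subseteq[\breve f(\mu),+\infty)=[\hat f(\mu),+\infty)=[\mu(f),+\infty)$, and combined with the reverse inclusion $\mu(\env{f})\supseteq[\mu(f),+\infty)$ obtained above this forces $\mu(\env{f})=[\mu(f),+\infty)$ for all such $f$, so Corollary~\ref{cor:criterion-maximality-dilation-order} gives that $\mu$ is maximal in the dilation order.

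The only step that is not pure bookkeeping with the definitions of $\hat f$, $\breve f$, the convex envelope, and the dilation order is the reverse inclusion $\mu(\env{f})\supseteq[\mu(f),+\infty)$, i.e. the observation that compressing the operator ray $[f(x),+\infty)\subseteq(\cM_m)_{sa}$ by a rank-one isometry $\alpha$ produces exactly the scalar ray $[\alpha^*f(x)\alpha,+\infty)$; this is checked by noting that for $s\geq\alpha^*f(x)\alpha$ the operator $f(x)+(s-\alpha^*f(x)\alpha)\,\alpha\alpha^*$ dominates $f(x)$ and is mapped by $T\mapsto\alpha^*T\alpha$ to $s$. I do not anticipate any genuine obstacle beyond this, since the substantive content has already been absorbed into Corollary~\ref{cor:criterion-maximality-dilation-order} and the convex envelope machinery of Section~\ref{sec:orders}.
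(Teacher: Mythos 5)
Your proposal is correct and follows essentially the same route as the paper: both reduce the statement to Corollary~\ref{cor:criterion-maximality-dilation-order} via the inclusion $\mu(\env{f})\supseteq[\mu(f),+\infty)$ from Proposition~\ref{prop:properties-convex-envelope}, with your version merely spelling out in more detail why that inclusion and the upward-closedness of $\mu(\env{f})$ hold.
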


\begin{proof}
For self-adjoint $f \in \rC(K)$, Proposition \ref{prop:properties-convex-envelope} implies that $\mu(\env{f}) \supseteq [\mu(f),+\infty)$. Hence if $\inf \mu(\env{f}) = \breve{f}(\mu) = \env{f}(\mu) = \mu(f)$, then $\mu(\env{f}) = [\mu(f),+\infty)$. The result now follows from Corollary \ref{cor:criterion-maximality-dilation-order}.
\end{proof}

\section{Noncommutative Choquet-Bishop-de Leeuw theorem} \label{sec:nc-bishop-de-leeuw}

\subsection{Classical Choquet-Bishop-de Leeuw theorem}

The classical Choquet-Bishop-de Leeuw theorem asserts that for a compact convex set $C$, every point $x \in C$ can be represented by a probability measure $\mu$ supported on the extreme boundary $\partial C$ of $C$. The result was proved for metrizable $C$ by Choquet \cite{Ch1956}, and for non-metrizable $C$ by Bishop and de Leeuw \cite{BdL1959} (see \cite{Alfsen}*{Section~I.4}).

The set $C$ is metrizable if and only if the corresponding function system $\rA(C)$ is separable. In this case, $\partial C$ is $G_\delta$, and as usual, $\mu$ is said to be supported on $\partial C$ if $\mu(C \setminus \partial C) = 0$. Otherwise, $\partial C$ is not necessarily even Borel, and in this case $\mu$ is said to be supported on $\partial C$ if $\mu(X) = 0$ for every {\em Baire} set $X \subseteq C$ that is disjoint from $\partial C$. Equivalently, $\int_C f\, d\mu = 0$ for every bounded Baire function $f$ on $C$ with support in $C \setminus \partial C$.

\subsection{Noncommutative Choquet-Bishop-de Leeuw theorem}

In this section, we will establish a noncommutative generalization of the Choquet-Bishop-de Leeuw theorem. This result will not require any assumptions about separability. However, as in the classical theory, technical difficulties arise in the non-separable setting. In order to handle these difficulties, and in order to define an appropriate notion of support for a representing map, we will require an appropriate notion of bounded Baire nc function. Before stating the definition, we first recall the definition of the Baire-Pedersen envelope of a C*-algebra, introduced by Pedersen under a different name (see \cite{Pedersen}*{Section~4.5}).

Let $A$ be a C*-algebra.  The {\em Baire-Pedersen envelope} $\fB(A)$ of $A$ is a C*-subalgebra of the bidual $A^{**}$ that contains $A$. It is constructed as the monotone sequential closure of $A$ in its universal representation. If $A$ is commutative, say $A = \rC(X)$ for a compact Hausdorff space $X$, then $\fB(\rC(X))$ is isomorphic to the C*-algebra of bounded Baire functions on $X$.

\begin{defn}
For a compact nc convex set $K$, we let $\rBI(K)$ denote the Baire-Pedersen envelope $\fB(\rC(K))$ of $\rC(K)$ and refer to the elements in $\rBI(K)$ as the {\em bounded Baire nc functions} on $K$. We say that a unital completely positive map $\mu : \rC(K) \to \cM_n$ is {\em supported} on the extreme boundary $\partial K$ if $\mu(f) = 0$ for every bounded Baire nc function $f \in \rBI(K)$ satisfying $f(x) = 0$ for all $x \in \partial K$.
\end{defn}

\begin{rem}
Note that since $\rBI(K) \subseteq \rC(K)^{**} = \rB(K)$, the elements in $\rBI(K)$ are bounded nc functions. For a unital completely positive map $\mu : \rC(K) \to \cM_n$, the restriction to $\rBI(K)$ of the unique normal extension of $\mu$ to $\rB(K)$ coincides with the unique sequentially normal extension of $\mu$ to $\rBI(K)$ (see \cite{Pedersen}*{Theorem 4.5.9}).
\end{rem}

\begin{prop} \label {prop:support-of maximal-states}
Suppose that every scalar state on $\rC(K)$ which is maximal in the dilation order is supported on $\partial K$.
Then every nc state on $\rC(K)$ which is maximal in the dilation order is supported on $\partial K$.
\end{prop}

\begin{proof}
Assume that $\mu \in L_p$ is an nc state on $\rC(K)$ which is maximal in the dilation order.
For every isometry $\alpha \in M_{p,1}$, the scalar state $\alpha^* \mu \alpha$ is maximal in the dilation order by 
Theorem~\ref{thm:equivalence-dilation-maximal-unique-representing-map}.
By hypothesis, $0 = (\alpha^* \mu \alpha)(f) = \alpha^* \mu(f) \alpha$.
Now $\alpha(1) = \xi$ is an arbitrary unit vector in $H_p$, so this can be restated as saying that $\ip{\mu(f) \xi,\xi} = 0$ for all $\xi \in H_p$ with $\|\xi\|=1$.
This says that the numerical radius of $\mu(f)$ is $0$; whence $\mu(f) = 0$.
\end{proof}

The next result is a noncommutative analogue of the Choquet-Bishop-de Leeuw theorem. Note that the result does not place any restrictions on $K$. In particular, $\rA(K)$ is not required to be separable.

\begin{thm}[Noncommutative Choquet-Bishop-de Leeuw theorem] \label{thm:nc-bishop-de-leeuw} 
Let $K$ be a compact nc convex set. For $x \in K_n$ there is a unital completely positive map $\mu : \rC(K) \to \cM_n$ that represents $x$ and is supported on the extreme boundary $\partial K$.
\end{thm}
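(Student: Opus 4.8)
The plan is to reduce the noncommutative statement to a maximality statement for unital completely positive maps, using the machinery of the nc Choquet order developed in Section~\ref{sec:orders}, and then to show that a map which is maximal in the nc Choquet order is automatically supported on $\partial K$ in the sense of the Baire--Pedersen envelope. First I would fix $x \in K_n$ and apply the Corollary following Theorem~\ref{thm:equivalence-dilation-maximal-unique-representing-map} (or a direct Zorn's lemma argument on representing maps ordered by $\prec_d$, using that the dilation order on the fibre over $x$ has upper bounds by Theorem~\ref{thm:dritschel-mccullough}) to obtain a representing map $\mu : \rC(K) \to \cM_n$ for $x$ that is maximal in the dilation order, hence maximal in the nc Choquet order by Theorem~\ref{thm:equivalence-orders}. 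So the real content is: \emph{a $\prec_c$-maximal unital completely positive map is supported on $\partial K$}. This is the noncommutative analogue of the classical fact that a Choquet-maximal measure lives on the extreme boundary, and its proof should run parallel to the classical Bishop--de Leeuw argument via the Baire-function machinery.

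Next I would set up the classical-style dichotomy. By Corollary~\ref{cor:criterion-maximality-dilation-order} and Proposition~\ref{prop:scalar-convex-envelope}, maximality of $\mu$ is equivalent to $\hat f(\mu') = \breve f(\mu')$ for all self-adjoint $f \in \rC(K)$ and all $\mu'$ in the $\prec_c$-upper set of $\mu$ (equivalently, $\mu(\env f) = [\mu(f),+\infty)$ for all such $f$); more usefully, I expect to want the statement that the set of states on $\rC(K)$ where $\hat f = \breve f$ for \emph{all} $f$ in a suitable countable (or norm-separable, using a separable C*-subalgebra) family is exactly the $\prec_c$-maximal states, and that this set is a $G_\delta$ (in the separable case) or a suitable Baire-measurable set in general. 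The key classical input is that for each self-adjoint $f$ the function $\breve f$ (the scalar convex envelope, here an honest lower semicontinuous convex function on $L_1$ by Proposition~\ref{prop:scalar-convex-envelope}) is Baire-measurable, so $\{\mu' : \hat f(\mu') = \breve f(\mu')\}$ is a Baire set, and a maximal $\mu$ is concentrated on the intersection of such sets. I would then need to relate this ``concentrated on maximal states of $\rC(K)$'' to ``$\mu(f)=0$ for $f \in \rBI(K)$ vanishing on $\partial K$,'' which is where Theorem~\ref{thm:extreme} enters: the points of $\partial K$ correspond to representing maps $\delta_y$ that are both irreducible and the unique (hence dilation-maximal) representing map for $y$, so a Baire nc function vanishing on $\partial K$ should be shown to vanish on the relevant maximal part of the nc state space $L$ of $\rC(K)$, after which normality/sequential normality of the extension of $\mu$ to $\rBI(K)$ finishes the argument.

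Concretely, the steps in order: (1) produce a $\prec_d$-maximal representing map $\mu$ for $x$ via Zorn and Theorem~\ref{thm:dritschel-mccullough}; (2) invoke Theorem~\ref{thm:equivalence-orders} to pass to $\prec_c$-maximality and Corollary~\ref{cor:criterion-maximality-dilation-order} / Proposition~\ref{prop:scalar-convex-envelope} for the envelope characterization; (3) for a given bounded Baire nc function $h \in \rBI(K)$ vanishing on $\partial K$, decompose $\mu$ using a minimal representation $(z,\alpha)$, so $\mu(h) = \alpha^* h(z) \alpha$ with $z$ a maximal point of $K$, reducing to showing $h(z) = 0$ for $z$ maximal; (4) for maximal $z$, use that $\delta_z$ is the unique representing map (Proposition~\ref{prop:criterion-unique-rep-map}), that $\delta_z$ factors through $\cmin(\rA(K))$ (Proposition~\ref{prop:maximal-reps-factor-through-cmin}), and run the classical Bishop--de Leeuw argument on the (commutative, in the GNS sense) picture or directly on the sequential closure: build, using the scalar envelopes $\breve f$ for $f$ ranging over a countable dense self-adjoint family in a separable C*-subalgebra generated by the relevant data, a sequentially-generated Baire nc function $g$ with $0 \le g$, $g$ vanishing precisely on maximal points, and $\mu(g)=0$ by maximality; then show any $h \in \rBI(K)$ vanishing on $\partial K$ is dominated in absolute value by a multiple of such a $g$ up to the commutative quotient, forcing $\mu(h)=0$. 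I expect \textbf{step (4)} — constructing, inside $\rBI(K)$, a Baire nc function that detects exactly the non-maximal part of $K$ and checking it has the right equivariance and sequential-monotone-limit properties — to be the main obstacle, since the noncommutative Baire-Pedersen envelope is more delicate than the commutative algebra of bounded Baire functions and one must verify that the scalar envelope construction of Proposition~\ref{prop:scalar-convex-envelope} lifts to genuine elements of $\fB(\rC(K))$ rather than merely to functions on the scalar state space $L_1$; the separable reduction (replacing $\rC(K)$ by a separable C*-subalgebra containing $h$ and enough affine functions, as in the classical metrizable case) is the natural device to control this, and I would lean on \cite{Pedersen}*{Section~4.5} throughout.
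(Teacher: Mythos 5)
Your steps (1)--(3) match the paper's strategy: take a maximal dilation $y$ of $x$ (Theorem~\ref{thm:dritschel-mccullough}), so that $\mu = \alpha^*\delta_y\alpha$ is dilation-maximal, and reduce everything to the claim that a dilation-maximal state annihilates every $f \in \rBI(K)$ vanishing on $\partial K$ (this is Proposition~\ref{prop:annihilate-boundary} in the paper). The gap is in your step (4), and it is exactly the obstacle you flagged. You propose to build, inside $\rBI(K)$, a positive Baire nc function $g$ vanishing precisely on the maximal points, with $\mu(g)=0$, and then to dominate $|h|$ by a multiple of $g$. Two problems: first, the scalar envelopes $\breve f$ of Proposition~\ref{prop:scalar-convex-envelope} are lower semicontinuous convex functions on the scalar state space $L_1$ of $\rC(K)$, not nc functions on $K$, and there is no construction (in the paper or in your sketch) that lifts $\hat f - \breve f$ to a unitarily equivariant element of the Baire--Pedersen envelope $\fB(\rC(K))$; second, even granting such a $g$, the domination $|h| \le Cg$ does not follow from $h$ vanishing on $\partial K$ alone, since $h$ need not vanish on all non-extreme maximal points where $g$ would vanish. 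So as written the argument does not close.

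The paper avoids constructing any such $g$. After Pedersen's separable reduction (placing $f$ in $\rBI(K_0)$ for a separable subsystem $S \subseteq \rA(K)$ with nc state space $K_0$), it works entirely with \emph{classical} Choquet theory on the scalar state space $(L_0)_1$ of $\rC(K_0)$: Proposition~\ref{prop:nc-choquet-bishop-de-leeuw-sep} uses the scalar functions $\hat{f_k}-\breve{f_k}$ only as upper semicontinuous concave functions on $(L_0)_1$ to show that the classical representing measure $\rho$ of a dilation-maximal state is carried by the $G_\delta$ set $Z$ of pure dilation-maximal states. Then Proposition~\ref{prop:extension-dilation-maximal-subalg} and Corollary~\ref{cor:pure-dilation-maximal-extreme-rep} show each $\nu \in Z$ comes from an nc extreme point of $K$, so $\nu(f)=0$; finally universal measurability of $f$ (Pedersen, Corollary~4.5.13) lets the barycenter formula $\mu(f)=\int_Z \nu(f)\,d\rho(\nu)=0$ apply to $f$ itself. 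If you replace your step (4) with this scalar-state-space decomposition argument, the proof goes through; the missing ingredient in your sketch is precisely the integral decomposition of a maximal state over pure maximal states, which is what substitutes for the nonexistent dominating function $g$.
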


\begin{rem}
Since the restriction to $\rA(K)$ of the unique surjective homomorphism from $\rC(K)$ onto $\cmin(\rA(K))$ is a unital complete order embedding, Arveson's extension theorem implies that for $x \in K_n$ there is a unital completely positive map $\mu : \rC(K) \to \cM_n$ that represents $x$ and factors through $\cmin(\rA(K))$. Hence we can always choose a representing map for $x$ that is supported on the irreducible representations of $\cmin(\rA(K))$, i.e. on the Shilov boundary of $\rA(K)$.

The discussion in Section \ref{sec:extreme-pts-minimal-c-star-alg} shows that $\partial K$ corresponds to an (often proper) subset of the irreducible representations of $\cmin(\rA(K))$. Therefore, the assertion in Theorem \ref{thm:nc-bishop-de-leeuw} is much stronger. It says that we can always choose a representing map for $x$ that is supported on $\partial K$, i.e. on the Choquet boundary of $\rA(K)$.
\end{rem}

In order to prove Theorem \ref{thm:nc-bishop-de-leeuw}, we will require some preliminary results about the separable case.

\begin{prop} \label{prop:nc-choquet-bishop-de-leeuw-sep}
Let $K$ be a compact nc convex set such that $\rA(K)$ is separable and let $L_1$ denote the state space of $\rC(K)$. Then the set
\[
Z = \{ \nu \in L_1 : \nu \text{ is pure and dilation maximal} \}
\]
is $G_\delta$. If $\mu \in L_1$ is dilation maximal, then there is a regular Borel probability measure $\rho$ on $L_1$ supported on $Z$ with barycenter $\mu$, meaning that $\rho(Z) = 1$ and
\[
\mu(f) = \int_Z \nu(f)\, d\rho(\nu) \qfor f \in \rC(K).
\]
Moreover any regular Borel probability measure $\rho$ on $L_1$ with barycenter $\mu$ is supported on $Z$ in the above sense.
\end{prop}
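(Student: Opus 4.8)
This is a noncommutative analogue of the classical metrizable Choquet theorem, so I would mirror the classical proof (as in Phelps or Alfsen), replacing probability measures by states on $\rC(K)$ and the classical Choquet order by the nc Choquet order, which by Theorem~\ref{thm:equivalence-orders} coincides with the dilation order. The key classical ingredients are: (i) separability gives a countable dense $\ep$-approximating family, from which one builds a single continuous function $f_0$ whose convex envelope strictly separates non-maximal points; (ii) the set of maximal measures is the intersection of countably many sets of the form $\{\rho : \rho(f_k) = \rho(\env{f_k})\}$, each of which is $G_\delta$ because $\env{f_k}$ is upper semicontinuous when viewed correctly; (iii) a maximality/Zorn argument produces a maximal measure dominating a given one, and the classical Bishop--de~Leeuw lemma identifies maximal measures with those supported on the extreme boundary.

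**Step 1: the $G_\delta$ description of $Z$.** By Proposition~\ref{prop:scalar-maximal}, a state $\nu \in L_1$ is dilation maximal iff $\hat f(\nu) = \breve f(\nu)$ for all self-adjoint $f \in \rC(K)$; and since $\rC(K) = \cmax(\rA(K))$ is separable, it suffices to check this on a countable dense self-adjoint family $\{f_k\}$ (continuity of $\hat f$ and lower semicontinuity of $\breve f$, together with $\breve f \le \hat f$ from Proposition~\ref{prop:scalar-convex-envelope}, make the condition closed under uniform limits in $f$). For each $k$ the set $\{\nu : \hat f_k(\nu) \le \breve f_k(\nu) + 1/j\}$ is open in $L_1$ (difference of a continuous function and a lower semicontinuous one), so the set of dilation maximal states is $\bigcap_{k,j}$ of these, hence $G_\delta$. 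Separately, the pure states form a $G_\delta$ in $L_1$ when $\rC(K)$ is separable (a standard fact: the pure state space of a separable C*-algebra is $G_\delta$ in the state space, e.g. via Glimm's lemma or \cite{Pedersen}). Intersecting gives that $Z$ is $G_\delta$. One should check, using Corollary~\ref{cor:pure-dilation-maximal-extreme-rep}, that $Z$ is exactly the set of states whose minimal representation is an nc extreme point of $K$.

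**Step 2: every dilation maximal $\mu$ has a representing Borel probability measure supported on $Z$.** Here I would invoke the classical Choquet theorem applied to the compact metrizable convex set $L_1$ (metrizable since $\rC(K)$ is separable) and the function system $\rA(L_1) \cong \rC(K)$ (via the isomorphism $f \mapsto \hat f$ noted after Definition~\ref{defn:scalar-valued-functions}). The classical theorem gives a regular Borel probability measure $\rho$ on $L_1$ representing the point $\mu \in L_1$ and supported (in the $G_\delta$ sense) on $\partial L_1$, the extreme boundary of $L_1$. The content of the proposition is that, for a \emph{dilation maximal} $\mu$, any such $\rho$ is in fact supported on the smaller set $Z$; equivalently, that a maximal-in-the-classical-Choquet-order measure on $L_1$ with barycenter a dilation maximal state is supported on pure dilation maximal states. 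This is where Theorem~\ref{thm:convex-env-ucp-dilation-order} and Corollary~\ref{cor:criterion-maximality-dilation-order} do the work: if $\mu$ is dilation maximal then $\mu(f) = \mu(\env f)$ for all $f \in \rC(K)$, i.e. $\hat f(\mu) = \breve f(\mu)$; for a representing measure $\rho$ this forces $\int (\hat f - \breve f)\, d\rho = 0$, and since $\hat f - \breve f \ge 0$ we get $\hat f = \breve f$ $\rho$-a.e., for each $f$ in the countable dense family, hence $\rho$-a.e.\ on a common set, i.e.\ $\rho$-almost every $\nu$ is dilation maximal. Combined with classical Bishop--de~Leeuw (extreme points of $L_1$ are pure states, and a classical maximal measure concentrates on $\overline{\partial L_1}$, refined via the $G_\delta$ set to $\partial L_1$ itself), this gives $\rho(Z) = 1$. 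Finally, translating ``$\rho$ supported on $Z$'' into the formula $\mu(f) = \int_Z \nu(f)\,d\rho(\nu)$ is just the barycenter equation $\int \hat f\, d\rho = \hat f(\mu) = \mu(f)$.

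**Step 3: the ``moreover'' clause.** The last sentence — that \emph{any} regular Borel probability measure $\rho$ on $L_1$ with barycenter $\mu$ is supported on $Z$ — is precisely the computation just sketched in Step 2 (it used only that $\mu$ is dilation maximal, not any special choice of $\rho$), so this step is essentially free once Step 2 is organized correctly.

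**Main obstacle.** The genuine difficulty is Step 1's assertion that $\breve f$ is upper semicontinuous enough — actually it is only lower semicontinuous (Proposition~\ref{prop:scalar-convex-envelope}(2)), so the naive set $\{\hat f = \breve f\}$ is a difference of a continuous and a lower semicontinuous function, which is \emph{not} automatically $G_\delta$; one must instead write $\{\hat f \le \breve f + 1/j\} = \{\hat f - \breve f < 1/j\}$, which \emph{is} open because $\hat f - \breve f$ is upper semicontinuous (continuous minus lower semicontinuous). Getting these semicontinuity bookkeeping details right, and correctly importing the classical metrizable Choquet--Bishop--de~Leeuw machinery for $L_1$ (including that $Z$, not merely $\overline{\partial L_1}$, carries the maximal measures) via the dictionary $\hat f \leftrightarrow f$, is the crux; the rest is translation between the dilation order and the nc Choquet order, for which Theorem~\ref{thm:equivalence-orders} and Theorem~\ref{thm:convex-env-ucp-dilation-order} are exactly the tools needed.
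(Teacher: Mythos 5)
Your proposal follows the paper's proof essentially step for step: the same $G_\delta$ description of $Z$ via the upper semicontinuity of $\hat f - \breve f$ (continuous minus lower semicontinuous, from Proposition~\ref{prop:scalar-convex-envelope}) together with the metrizability of $L_1$ and the classical fact that $\partial L_1$ is $G_\delta$, the same appeal to the classical Choquet theorem on $L_1$, and the same use of Proposition~\ref{prop:scalar-maximal} to rule out $\rho$-mass on the sets where $\hat f_k - \breve f_k \geq 1/m$. The only step you compress is the justification of ``this forces $\int (\hat f - \breve f)\, d\rho = 0$,'' which requires the barycentric (Jensen) inequality $\breve f(\mu) \leq \int \breve f\, d\rho$ for the lower semicontinuous convex function $\breve f$ --- exactly what the paper proves by hand via finitely supported approximations of the restricted measures --- but since you have already recorded the convexity and lower semicontinuity of $\breve f$, this is a standard one-line completion rather than a gap.
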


\begin{proof}
Since $\rA(K)$ is separable, $\rC(K)$ is also separable. 
Let $\{f_k\}$ be a dense sequence in $\rC(K)_{sa}$. For $k,m \in \bN$, let
\[
X_{km} = \{ \nu \in L_1 : (\hat{f_k} - \breve{f_k})(\nu) \geq \tfrac1m \},
\]
where $\hat{f}$ and $\breve{f}$ are defined as in Section \ref{sec:scalar-convex-envelope}. By Proposition~\ref{prop:scalar-convex-envelope}, 
$\breve{f}$ is convex and lower semicontinuous. Since $\hat f$ is continuous and affine, $\hat{f} - \breve{f}$ is concave and upper semicontinuous. Therefore $X_{km}$ is closed. 

By Proposition~\ref{prop:scalar-maximal}, the $G_\delta$ set
\[
Y = L_1 \setminus \big( \bigcup_{k,m} X_{km} \big)
\]
is precisely the set of dilation maximal states on $\rC(K)$. Since $\rC(K)$ is separable, $L_1$ is metrizable. 
So $\partial L_1$ is $G_\delta$  (see e.g. \cite{Alfsen}*{Corollary I.4.4}). It follows that $Z = \partial L_1 \cap Y$ is $G_\delta$. 

By Choquet's integral representation theorem, there is a Borel measure $\rho$ on $L_1$ supported on $\partial L_1$ that represents $\mu$, 
i.e. such that
\[
\mu(f) = \int_{\partial C} \nu(f)\, d\rho(\nu) \qfor f \in \rC(K).
\]
It remains to show that $\rho$ is supported on $Z$, or equivalently that $\rho(X_{km}) = 0$ for $k,m \in \bN$.

Suppose for the sake of contradiction that $\rho(X_{km}) > 0$ for some $k,m \in \bN$. Define probability measures $\tau$ and $\eta$ on $L_1$ by 
\[
\sigma = \rho(X_{km})^{-1} \rho|_{X_{km}} \qand \tau = \rho(L_1 \setminus X_{km})^{-1} \rho|_{L_1 \setminus X_{km}}.
\]
Let $\xi \in L_1$ denote the barycenter of $\sigma$ and let $\eta \in L_1$ denote the barycenter of $\tau$. Note that $\mu = \rho(X_{km}) \xi + \rho(L_1 \setminus X_{km}) \eta$. 

Since $\sigma$ is supported on $X_{km}$, there is a sequence $\{\sigma_i\}$ of finitely supported probability measures on $X_{km}$ such that 
$\lim \sigma_i = \sigma$ in the weak* topology. Each $\sigma_i$ can be written as a finite convex combination $\sigma_i = \sum c_{ij} \delta_{\nu_{ij}}$ of states $\nu_{ij} \in X_{km}$. Let $\xi_i \in L_1$ denote the barycenter of $\sigma_i$. Then by the continuity of the barycenter map, $\lim \xi_i = \xi$ in the weak* topology. Hence
\begin{align*}
\xi(f_k) &= \lim_i \xi_i(f_k)
= \lim_i \sum_j c_{ij} \nu_{ij}(f_k) \\&
\geq \frac{1}{m} + \liminf_i \sum_j c_{ij} \breve{f_k}(\nu_{ij}) \\&
\geq \frac{1}{m} + \liminf_i \breve{f_k}(\xi_i) \\&
\geq \frac{1}{m} + \breve{f_k}(\xi) ,
\end{align*}
where we have used the convexity and lower semicontinuity of $\breve{f_k}$ from Proposition~\ref{prop:scalar-convex-envelope}. Another application of the convexity of $\breve{f_k}$ yields
\begin{align*}
\mu(f_k) &= \rho(X_{km}) \xi(f_k) +\rho(L_1 \setminus X_{km}) \eta(f_k) \\
&\geq  \frac{\rho(X_{km})}{m} + \rho(X_{km}) \breve{f_k}(\xi) + \rho(L_1 \setminus X_{km}) \breve{f_k}(\eta) \\
&\geq \frac{\rho(X_{km})}{m} + \breve{f_k}(\mu) .
\end{align*}
In particular, $\hat{f_k}(\mu) \ne \breve{f_k}(\mu)$. Therefore, by Proposition \ref{prop:scalar-maximal}, $\mu$ is not maximal in the dilation order, providing a contradiction.
\end{proof}

The next result will provide the connection between the separable and the non-separable case.
Let $S \subseteq \rA(K)$ be a separable operator system, and let $K_0$ be the state space of $S$. 
By the Hahn-Banach theorem, the restriction map $q:K \to K_0$ is surjective.
We identify $S$ with $\rA(K_0)$ and observe that $g \in \rB(K_0)$ can be identified with $g\circ q \in \rB(K)$,
and conversely, every function of the form $g\circ q \in \rB(K)$ arises in this way.
So we can identify $\rB(K_0)$ with this subalgebra of $\rB(K)$, and identify $\rC(K_0)$ with the subalgebra of $\rC(K)$ generated by $S$.

\begin{prop} \label{prop:extension-dilation-maximal-subalg}
Let $K$ be a compact nc convex set and let $S \subseteq \rA(K)$ be an operator system with nc state space $K_0$. 
Identify $S$ with $\rA(K_0)$ and identify $\rC(K_0)$ with the C*-subalgebra of $\rC(K)$ generated by $S$. 
Then every dilation maximal pure state on $\rC(K_0)$ extends to a dilation maximal pure state on $\rC(K)$.
\end{prop}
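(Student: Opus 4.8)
The plan is to realise $\rC(K_0)$ as a C*-subalgebra of $\rC(K)$ and then to extract the required extension from the set of all state extensions of $\omega$, by a Zorn's lemma argument patterned on the proof of Theorem~\ref{thm:dilation-pure-to-extreme}. First I would record that the restriction map $r\colon K\to K_0$, $r(x)=x|_S$, is a continuous affine nc surjection: it is affine nc because compressions and direct sums in $K$ are computed pointwise on $\rA(K)\supseteq S$, it is continuous by the definition of the relevant topologies on the levels, and it is surjective because every $\phi\in\ucpmaps(S,\cM_n)$ extends, by Arveson's extension theorem, to a point of $K_n$. Consequently the unital $*$-homomorphism $\iota\colon\rC(K_0)\to\rC(K)$ given by $\iota(h)=h\circ r$ is injective (if $h\circ r=0$ then $h$ vanishes on $r(K)=K_0$, so $h=0$) with image the C*-subalgebra of $\rC(K)$ generated by $S$; this is the identification in the statement. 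Writing $B=\rC(K_0)\subseteq A=\rC(K)$ and letting $L_1$ denote the state space of $A$, I would then work with $E_\omega=\{\sigma\in L_1:\sigma|_B=\omega\}$, which is nonempty (states always extend), weak*-compact and convex.

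The heart of the matter is to verify two structural properties of $E_\omega$. The first is that $E_\omega$ is a \emph{face} of $L_1$: if $\sigma=t\sigma_1+(1-t)\sigma_2\in E_\omega$ with $\sigma_i\in L_1$ and $0<t<1$, then $\omega=t\,\sigma_1|_B+(1-t)\,\sigma_2|_B$, and since $\omega$ is a \emph{pure} state of $B$ (so not a proper convex combination of states) this forces $\sigma_1|_B=\sigma_2|_B=\omega$, whence $\sigma_i\in E_\omega$. The second is that $E_\omega$ is \emph{hereditary for the dilation order}: if $\sigma\in E_\omega$ and $\sigma\prec_d\eta$, then $\eta\in E_\omega$. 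Here the key point is that composing with the affine nc map $r$ carries convex nc functions on $K_0$ to convex nc functions on $K$ — the inequality of Remark~\ref{rem:simpler-criterion-convexity} for $g_0\circ r$ follows from that for $g_0$ because $r$ intertwines compressions. Hence, using $\prec_d{}={}\prec_c$ from Theorem~\ref{thm:equivalence-orders}, for every convex nc function $g_0\in\cM_p(\rC(K_0))$ we obtain $\omega(g_0)=\sigma(\iota(g_0))\le\eta(\iota(g_0))=(\eta|_B)(g_0)$, i.e.\ $\omega$ is dominated by the state $\eta|_B$ of $\rC(K_0)$ in the nc Choquet order associated with $K_0$; since $\omega$ is \emph{dilation maximal}, hence Choquet maximal, on $\rC(K_0)$, we conclude $\eta|_B=\omega$. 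I expect this second property to be the main obstacle: in the proof of Theorem~\ref{thm:dilation-pure-to-extreme} the analogous hereditarity was automatic, because all representing maps of a fixed point share a barycenter, whereas here it is precisely the maximality hypothesis on $\omega$ (together with surjectivity of $r$) that has to be leveraged.

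With these two properties in hand I would run the Zorn argument exactly as in the proof of Theorem~\ref{thm:dilation-pure-to-extreme}. Apply Zorn's lemma to the family of nonempty closed subsets of $E_\omega$ that are faces of $L_1$ and are hereditary for the dilation order — it contains $E_\omega$, and the intersection of a chain is again such a set and is nonempty by compactness — to obtain a minimal such set $F_0$, and argue that $F_0$ is a single point. If $\mu\ne\nu$ in $F_0$, Proposition~\ref{P:agree-on-convex} gives $p$ and a convex nc function $f\in\cM_p(\rC(K))$ with $\mu(f)\ne\nu(f)$, and slicing $f$ by a state of $\cM_p$ separating $\mu(f)$ from $\nu(f)$ yields a scalar-valued convex nc function $g\in\rC(K)$ with, after relabelling, $\nu(g)<\mu(g)$. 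Then $\{\lambda(g):\lambda\in F_0\}$ is a compact real interval $[a,b]$ with $a<b$, and $F_1=\{\lambda\in F_0:\lambda(g)=b\}$ is a proper (it omits $\nu$) closed subset of $F_0$ which is a face of $F_0$, hence of $L_1$ (the values $\lambda(g)$ are now scalars bounded above by $b$), and which is hereditary for the dilation order (if $\lambda\in F_1$ and $\lambda\prec_d\eta$ then $\eta\in F_0$ and $\eta(g)\ge\lambda(g)=b$ by convexity of $g$, so $\eta(g)=b$); this contradicts minimality of $F_0$. Hence $F_0=\{\tilde\omega\}$ is a singleton. A singleton face of $L_1$ is an extreme point of $L_1$, so $\tilde\omega$ is a pure state; a hereditary singleton is maximal in the dilation order; and $\tilde\omega\in E_\omega$ means $\tilde\omega|_{\rC(K_0)}=\omega$. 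Thus $\tilde\omega$ is the desired dilation maximal pure extension.
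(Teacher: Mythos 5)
Your proof is correct, but it takes a genuinely different route from the paper's. The paper works geometrically with the barycenter: it takes a minimal representation $(x_0,\alpha_0)$ of the given state, notes that $x_0$ is an extreme point of $K_0$, applies the classical Krein--Milman theorem to the closed face $F=\{x\in K_n: x|_{\rA(K_0)}=x_0\}$ to produce a pure point $x$ of $K$ extending $x_0$, dilates $x$ to an nc extreme point $y$ of $K$ via Theorem~\ref{thm:dilation-pure-to-extreme}, and then observes that maximality of $x_0$ in $K_0$ forces $y|_{\rA(K_0)}$ to be a trivial dilation, so the compression restricts correctly. You instead stay entirely in the state space of $\rC(K)$, replacing the face of representing maps used in the proof of Theorem~\ref{thm:dilation-pure-to-extreme} by the fibre $E_\omega$ of state extensions, and rerun the minimal-hereditary-face Zorn argument there. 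The two hypotheses enter cleanly and separately in your version: purity of $\omega$ makes $E_\omega$ a face, and dilation-maximality of $\omega$ makes it hereditary, via the observation --- worth recording as a lemma, since it is not explicit in the paper --- that pulling back a convex nc function on $K_0$ along the restriction map $r:K\to K_0$ yields a convex nc function on $K$ (the inequality of Remark~\ref{rem:simpler-criterion-convexity} transfers because $r$ is equivariant for isometric compressions), combined with Theorem~\ref{thm:equivalence-orders} on both algebras. Your scalar-slicing step in the Zorn argument is also sound: slicing a convex $f\in\cM_p(\rC(K))$ by a state of $\cM_p$ preserves convexity by the bimodule property of slice maps, and states separate the self-adjoint values $\mu(f)\ne\nu(f)$. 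What the paper's route buys is brevity (given Theorem~\ref{thm:dilation-pure-to-extreme}) and the extra information that the extension has the explicit form $\alpha_0^*\beta^*\delta_y\beta\alpha_0$ for an nc extreme point $y$ of $K$; what yours buys is a self-contained argument that does not pass through the point $x_0$ or the classical Krein--Milman theorem, and a transparent accounting of where each hypothesis is used.
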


\begin{proof}
Let $\mu_0$ be a dilation maximal pure state on $C(K_0)$ and let $(x_0,\alpha_0)$ be a minimal representation of $\mu_0$ for $x_0 \in (K_0)_n$ and an isometry $\alpha_0 \in \cM_{n,1}$. Note that by Theorem~\ref{thm:equivalence-dilation-maximal-unique-representing-map} and Theorem \ref{thm:extreme}, $x_0$ is an extreme point of $K_0$.

Let $F = \{x \in K_n : x|_{\rA(K_0)} = x_0 \}$. Then $F$ is a closed face of $K_n$. By the (classical) Krein-Milman theorem, the set of extreme points of $F$ is non-empty. Let $x \in F$ be an extreme point. Then $x$ is pure in $K$. By Theorem~\ref{thm:dilation-pure-to-extreme}, we can dilate $x$ to an extreme point $y \in K_p$. In particular, the representation $\delta_y$ is irreducible and dilation maximal. Let $\beta \in \cM_{p,n}$ be an isometry such that $x = \beta^* y \beta$. 

Define a state $\mu$ on $\rC(K)$ by $\mu = \alpha_0^* \beta^* \delta_y \beta \alpha_0$. Then $(y,\beta \alpha_0)$ is a representation of $\mu$. Since $\delta_y$ is irreducible, $(y,\beta \alpha_0)$ is minimal. As $y$ is an extreme point, it follows that $\mu$ is pure and maximal in the dilation order. Furthermore, since $x_0$ is an extreme point in $K_0$, the restriction $y|_{\rA(K_0)}$ must be a trivial dilation of $x_0$. Hence $\mu|_{\rC(K_0)} = \mu_0$. 
\end{proof}

\begin{prop} \label{prop:annihilate-boundary}
Let $K$ be a compact nc convex set. Every dilation maximal state on $\rC(K)$ is supported on the extreme boundary $\partial K$.
\end{prop}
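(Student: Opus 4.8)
The plan is to reduce the claim to the vanishing of a single element of the bidual $\cmin(\rA(K))^{**}$, using the factorization of maximal representations through the minimal C*-algebra together with Theorem~\ref{thm:nc-shilov-boundary-closure-nc-choquet-boundary}. Let $\mu:\rC(K)\to\bC$ be dilation maximal and let $f\in\rBI(K)\subseteq\rC(K)^{**}$ vanish on $\partial K$; we must show $\mu(f)=0$. First I would take a minimal representation $(x,\alpha)$ of $\mu$: by Theorem~\ref{thm:equivalence-dilation-maximal-unique-representing-map} the point $x$ is maximal, and since $\mu$ extends to $\rB(K)=\rC(K)^{**}$ via $\mu(g)=\alpha^*\delta_x(g)\alpha$ (where $\delta_x$ denotes the normal extension of the representation $\delta_x:\rC(K)\to\cM_p$ to $\rC(K)^{**}$), we have $\mu(f)=\alpha^*\delta_x(f)\alpha$. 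So it suffices to prove that $\delta_x(f)=0$ for every maximal point $x$.

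Next, since $x$ is maximal, Proposition~\ref{prop:maximal-reps-factor-through-cmin} gives a factorization $\delta_x=\pi_x\circ q$, where $q:\rC(K)\twoheadrightarrow\cmin(\rA(K))$ is the canonical surjection and $\pi_x$ a representation of $\cmin(\rA(K))$; passing to normal extensions on the biduals yields $\delta_x=\pi_x^{**}\circ q^{**}$ on $\rC(K)^{**}$, so $\delta_x(f)=\pi_x^{**}(q^{**}(f))$ and it is enough to show $q^{**}(f)=0$ in $\cmin(\rA(K))^{**}$. For each $y\in\partial K$, Proposition~\ref{prop:maximal-reps-factor-through-cmin} and Theorem~\ref{thm:extreme} give $\delta_y=\rho_y\circ q$ with $\rho_y$ an irreducible representation of $\cmin(\rA(K))$, i.e.\ a boundary representation of $\rA(K)$ by Corollary~\ref{cor:extreme}; since $f$ vanishes on $\partial K$, we obtain $\rho_y^{**}(q^{**}(f))=\delta_y(f)=f(y)=0$ for all $y\in\partial K$. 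Equivalently, $\bigl(\bigoplus_{y\in\partial K}\rho_y^{**}\bigr)(q^{**}(f))=0$.

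The heart of the argument — and the step I expect to be the main obstacle — is to show that $\bigoplus_{y\in\partial K}\rho_y^{**}$ is \emph{faithful} on $\cmin(\rA(K))^{**}$; this then forces $q^{**}(f)=0$ and completes the proof. Faithfulness on the bidual is equivalent to the vector functionals of $\bigoplus_{y}\rho_y$ being weak$^*$-total in $\cmin(\rA(K))^*$, and for that it suffices that every state of $\cmin(\rA(K))$ lie in the weak$^*$-closed convex hull of the vector states of the $\rho_y$. I would deduce this at the scalar level from Theorem~\ref{thm:nc-shilov-boundary-closure-nc-choquet-boundary}: the nc state space of $\cmin(\rA(K))$ is the closed nc convex hull of $\{\delta_y:y\in\partial K\}$, so a scalar state $\varphi$ is a weak$^*$-limit of nc convex combinations $\sum_i\beta_i^*\delta_{y_i}\beta_i$ with column $\beta_i$, and each such combination compresses to $\sum_i\lVert\beta_i\rVert^2\langle\delta_{y_i}(\cdot)\widehat{\beta_i},\widehat{\beta_i}\rangle$, a convex combination of vector states of the $\delta_{y_i}$ (equivalently, via $q$, of the $\rho_{y_i}$). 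The point to get right is that this really is the noncommutative incarnation of the classical fact that the Choquet boundary is weak$^*$-dense in the Shilov boundary, and that one must be careful to promote faithfulness from $\cmin(\rA(K))$ to its bidual — a faithful representation of a C*-algebra need not have faithful bidual, and it is precisely the totality coming from Theorem~\ref{thm:nc-shilov-boundary-closure-nc-choquet-boundary} that rules this out here.

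Finally I would note that the identical argument applies to a dilation maximal unital completely positive map $\mu:\rC(K)\to\cM_n$ — take $(x,\alpha)$ minimal with $x$ maximal and $\alpha$ an isometry, then $\mu(f)=\alpha^*\delta_x(f)\alpha=0$ — so no separate reduction to scalar states is needed, and this route bypasses Propositions~\ref{prop:nc-choquet-bishop-de-leeuw-sep} and~\ref{prop:extension-dilation-maximal-subalg} entirely and imposes no separability hypothesis. (An alternative, more classical route would be to reduce to a separable operator subsystem $S\subseteq\rA(K)$ carrying the countably many affine functions that build $f$, use Proposition~\ref{prop:extension-dilation-maximal-subalg} to see that $f$ vanishes on $\partial K_0$, and apply Proposition~\ref{prop:nc-choquet-bishop-de-leeuw-sep}; the difficulty there is that dilation maximality of $\mu$ need not descend to $\mu|_{\rC(K_0)}$ for an arbitrary such $S$, so one would need a closing-off construction to choose $S$ — which the bidual argument avoids.)
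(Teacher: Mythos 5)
There is a genuine gap, and it sits exactly where you predicted: the claimed faithfulness of $\bigoplus_{y\in\partial K}\rho_y^{**}$ on $\cmin(\rA(K))^{**}$. Faithfulness of the normal extension of a representation $\pi$ of a C*-algebra $A$ to $A^{**}$ is equivalent to the central cover $z_\pi$ being $1$, i.e.\ to \emph{every} state of $A$ being $\pi$-normal (of the form $\omega\circ\pi$ for a normal state $\omega$ of $\pi(A)''$). This is \emph{not} equivalent to the vector functionals of $\pi$ being weak$^*$-total: weak$^*$-density of convex combinations of vector states is exactly weak containment of the universal representation in $\pi$, which gives faithfulness of $\pi$ on $A$ itself (that is the content of Theorems \ref{thm:extreme-pts-complete-order-monomorphism} and \ref{thm:nc-shilov-boundary-closure-nc-choquet-boundary}) but says nothing about quasi-containment, hence nothing about the bidual. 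So the "totality coming from Theorem \ref{thm:nc-shilov-boundary-closure-nc-choquet-boundary}" does not rule out the failure you were worried about; it is the wrong kind of totality.

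Worse, the statement $q^{**}(f)=0$ that your reduction requires is false in general, because after Step 1 your argument retains from dilation maximality only the factorization through $\cmin(\rA(K))$, and that is far too weak. If $q^{**}(f)=0$ held for every Baire $f$ vanishing on $\partial K$, then \emph{every} state of $\rC(K)$ factoring through $\cmin(\rA(K))$ would be supported on $\partial K$. Already in the commutative case of Example \ref{Ex:commutative_extreme} this fails: take a compact convex set $C$ with $\partial C$ a proper (Borel, indeed $G_\delta$ when $C$ is metrizable) subset of $\ol{\partial C}$, so that $\cmin(\rA(K))=\rC(\ol{\partial C})$ and $\partial K=\partial C$; a point mass at a point of $\ol{\partial C}\setminus\partial C$ factors through $\cmin(\rA(K))$ but does not annihilate the Baire function $\chi_{\ol{\partial C}\setminus\partial C}$. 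Equivalently, the central cover of $\bigoplus_{y\in\partial C}\rho_y$ in $\rC(\ol{\partial C})^{**}$ is supported on the atoms at points of $\partial C$ and is not $1$. The whole content of Bishop--de Leeuw is that \emph{maximal} measures, not arbitrary measures on the Shilov boundary, live on the Choquet boundary, and your route discards maximality at the point where it must be used. The paper instead keeps it: it restricts to a separable subsystem $\rC(K_0)$ containing $f$ (Pedersen's Lemma 4.5.3), decomposes the restricted state by a classical maximal/orthogonal measure concentrated on the \emph{pure dilation-maximal} states via Proposition \ref{prop:nc-choquet-bishop-de-leeuw-sep} (this is where the convex-envelope functions $\breve f$ do the real work), uses Proposition \ref{prop:extension-dilation-maximal-subalg} to see those states kill $f$, and integrates. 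Your observation that dilation maximality of $\mu$ must be seen to descend to $\mu|_{\rC(K_0)}$ is a fair point about that route, but it does not rescue the bidual argument.
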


\begin{proof}
Let $\mu$ be a dilation maximal state on $\rC(K)$ and fix $f \in \rBI(K)$ such that $f(x) = 0$ for $x \in \partial K$. We must show that $\mu(f) = 0$. 
By Proposition~\ref{prop:support-of maximal-states}, it suffices to establish this for dilation maximal scalar states $\mu \in L_1$.

By \cite{Pedersen}*{Lemma 4.5.3}, there is a separable operator system $S \subseteq \rA(K)$ such that $f$ lies in the sequential monotone completion of $S$.
We can identify $\rBI(K_0)$ with a subalgebra of $\rBI(K)$.
Then $f \in \rBI(K_0) \subseteq \rBI(K)$. 
The key point is that $f$ is a Baire nc function on the nc state space of a separable operator subsystem.

Next we observe that $\mu_0 = \mu|_{\rC(K_0)}$ is a dilation maximal state.
Let $g \in \rC(K_0)_{sa}$, identified with $f = g \circ q$. 
It is easy to see that $\bar f = \bar g \circ q$. Therefore, since $\mu$ is maximal,
\[
 \mu_0( \bar g) = \mu (\bar f) = \mu(f) = \mu_0(g) .
\]
By Proposition~\ref{prop:scalar-maximal}, $\mu_0$ is dilation maximal.

Let $(L_0)_1$ denote the (scalar) state space of $\rC(K_0)$. Then by Proposition \ref{prop:nc-choquet-bishop-de-leeuw-sep}, the set
\[
 Z = \{ \nu \in (L_0)_1 : \nu \text{ is pure and dilation maximal} \}
\]
is $G_\delta$, and there is a regular Borel probability measure $\rho$ on $(L_0)_1$ supported on $Z$ such that
\begin{align}
 \mu(g) = \mu_0(g) = \int_Z \nu(g) \, d\rho(\nu), \qfor g \in \rC(K_0) \label{eqn:prop:annihilate-boundary-1}
\end{align}

Since $f(x) = 0$ for $x \in \partial K$, Proposition \ref{prop:extension-dilation-maximal-subalg} implies that $f(x) = 0$ for $x \in \partial K_0$. 
Hence by Corollary \ref{cor:pure-dilation-maximal-extreme-rep}, $\nu(f) = 0$ for every $\nu \in Z$.

By \cite{Pedersen}*{Corollary 4.5.13}, $f$ is universally measurable, so the barycenter formula (\ref{eqn:prop:annihilate-boundary-1}) also holds for $f$ (see e.g. \cite{Bro2008}*{Section 5}).  Hence $\mu(f) = 0$. 
\end{proof}

Putting all of these ingredients together yields a proof of our noncommutative Bishop-de Leeuw Theorem.

\begin{proof}[Proof of Theorem $\ref{thm:nc-bishop-de-leeuw}$]
Let $y \in K_p$ be a maximal dilation of $x$ and let $\alpha \in \cM_{p,n}$ be an isometry such that $x = \alpha^* y \alpha$. 
Then the corresponding representation $\delta_y$ is dilation maximal, and hence by Proposition \ref{prop:annihilate-boundary}, 
$\delta_y$ is supported on $\partial K$. 
Define a unital completely positive map $\mu : \rC(K) \to \cM_n$ by $\mu = \alpha^* \delta_y \alpha$. 
Then $\mu$ represents $x$ and is supported on $\partial K$. 
\end{proof}

\section{Noncommutative integral representation theorem} \label{sec:integral-representations}

\subsection{Motivation}

In this section we will restrict our attention to the separable setting and prove a noncommutative analogue of Choquet's integral representation theorem using the results from Section \ref{sec:nc-bishop-de-leeuw}. We suspect that these ideas may work in greater generality, however we will utilize results about direct integral decompositions of representations of separable C*-algebras.

Let $K$ be a compact nc convex set. For $x \in K_n$, the corresponding representation $\delta_x : \rC(K) \to \cM_n$ should be viewed as a noncommutative Dirac measure on $K$ supported at the point $x$. More generally, for a finite set of points $\{x_i \in K_{n_i}\}$ and operators $\alpha_i \in \cM_{n_i,n}$ satisfying $\sum \alpha_i^* \alpha_i = 1_n$, the unital completely positive map $\mu : \rC(K) \to \cM_n$ defined by 
\[
\mu = \sum \alpha_i^* \delta_{x_i} \alpha_i
\]
should be viewed as a finitely supported nc probability measure on $K$.

For a continuous nc function $f \in \rC(K)$, the expression
\[
\mu(f) = \sum_{i=1}^k \alpha_i^* f(x_i) \alpha_i,
\]
should be viewed as the integral of $f$ against the nc measure $\mu$. Note that if $x \in K_n$ denotes the barycenter of $\mu$, then in particular, for a continuous affine nc function $a \in \rA(K)$,
\[
a(x) = \mu(a) = \sum_{i=1}^k \alpha_i^* a(x_i) \alpha_i.
\]

In the next section we will consider a basic theory of noncommutative integration.

\subsection{Noncommutative integration} \label{sec:nc-integration}
In this section we will outline a basic theory of integration against measures taking values in spaces of completely positive maps following an approach originally due to Fujimoto \cite{Fuj1994}.

Let $M$ and $N$ be separable von Neumann algebras and let $\ncpmaps(M,N)$ denote the space of all normal completely positive maps from $M$ to $N$. Let $Z$ be a topological space and let $\bor(Z)$ denote the $\sigma$-algebra of Borel subsets of $Z$.

A $\ncpmaps(M,N)$-valued Borel measure on $Z$ is a countably additive map $\lambda: \bor(Z) \to \ncpmaps(M,N)$, meaning that if $(E_k)$ is a disjoint sequence in $\bor(Z)$ and $E = \bigcup_{k=1}^\infty E_k$, then $\lambda(E) = \sum_{k=1}^\infty \lambda(E_k)$, where the right hand side converges with respect to the point-weak* topology.

Following Fujimoto \cite{Fuj1994}*{Definition 3.6}, we will require that $\lambda$ satisfies an absolute continuity-type condition with respect to a scalar-valued Borel measure.  

Let $\nu$ be a scalar-valued Borel measure on $Z$. For each $a \in M$ and $\rho \in N_*$, we obtain a scalar-valued Borel measure $\lambda_{a,\rho}$ on $Z$ defined by
\[
\lambda_{a,\rho}(E) = \langle \lambda(E)(a), \rho \rangle, \qfor E \in \bor(Z).
\]
We say that $\lambda$ is absolutely continuous with respect to $\nu$ if each $\lambda_{a,\rho}$ is absolutely continuous with respect to $\nu$ for each $\rho \in N_*$. In this case, the Radon-Nikodym theorem implies that for each $\lambda_{a,\rho}$, there is unique $r_{a,\rho} \in L^1(Z,\nu)$ satisfying
\[
\lambda_{a,\rho}(E) = \langle \lambda(E)(a), \rho \rangle = \int_E r_{a,\rho}(z) \, d\nu(z), \qfor E \in \bor(Z).
\]

We say that a function $f : Z \to M$ is {\em bounded} if 
\[ \sup \{\|f(z)\| : z \in Z\} < \infty .\]
We say that $f$ is {\em measurable} if its range $f(Z)$ is separable and $f^{-1}(E)$ is Borel for every weak*-Borel set $E \subseteq M$. We will let $\rBM(Z,M)$ denote the space of all bounded measurable functions from $Z$ to $M$. Since $M$ is separable, the relative weak* topology on bounded subsets of $M$ is metrizable. We equip bounded subsets of $\rBM(Z,M)$ with the corresponding topology of uniform convergence with respect to this metric, which we refer to as the {\em weak*-uniform convergence topology}.

We say that $f \in \rBM(Z,M)$ is {\em simple} if there are sequences $(E_k)_{k=1}^\infty$ in $\bor(Z)$ and $(a_k)_{k=1}^\infty$ in $M$ such that
\[
f = \sum_{k=1}^\infty \chi_{E_k} a_k.
\]
Fujimoto showed \cite{Fuj1994}*{Lemma 3.3} that the space of bounded measurable simple functions is dense in $\rBM(Z,M)$ with respect to the weak*-uniform convergence topology.

For a $\ncpmaps(M,N)$-valued Borel measure $\lambda$ on $Z$ and a simple function $f \in \rBM(Z,M)$ expressed as above, the integral of $f$ with respect to $\lambda$ is defined by
\[
\int_X f \, d\lambda = \sum_{i=1}^\infty \lambda(E_i)(a_i),
\]
where the right hand side converges in the weak* topology on $N$ (see \cite{Fuj1994}*{Lemma 3.1}). As usual, this definition does not depend on any particular expression of $f$.

Viewing integration against $\lambda$ as a linear map on the space of bounded measurable simple functions, Fujimoto showed \cite{Fuj1994}*{Definition 3.8} that if $\lambda$ is absolutely continuous with respect to a scalar measure $\nu$ on $Z$, then there is a unique linear extension to $\rBM(Z,M)$ that is continuous with respect to the weak*-uniform convergence topology. For $f$ in $\rB(Z,M)$, we will let
\[
\int_Z f \, d\lambda.
\]
denote the value of this extension at $f$, and refer to it as the {\em integral of $f$ against $\lambda$}. We will say that $f$ is {\em $\lambda$-integrable}.  For $\rho \in N_*$, it follows from above that
\[
\Big \langle \int_Z f \, d\lambda, \rho \Big \rangle = \int_Z r_{f(z),\rho}(z) \, d\nu(z).
\]

\subsection{Noncommutative integral representation theorem}

In this section we will introduce a definition of nc measure along with a corresponding notion of integration for nc functions. We will then apply these ideas to establish our noncommutative integral representation theorem.

\begin{lem}\label{L:Borel-extreme}
Let $K$ be a compact nc convex set such that $\rA(K)$ is metrizable. Then for each $n$, $(\partial K)_n := \partial K \cap K_n$ is a Borel set. 
\end{lem}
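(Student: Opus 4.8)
The plan is to exhibit $(\partial K)_n$ as a countable intersection of Borel sets inside the compact metrizable space $K_n$, using the characterization of extreme points as those $x$ for which $\delta_x$ is irreducible and is the unique representing map for $x$ (Theorem~\ref{thm:extreme}), together with the scalar-valued envelope machinery from Section~\ref{sec:scalar-convex-envelope}. Since $\rA(K)$ is separable, $\rC(K) = \cmax(\rA(K))$ is separable, so $K_n$ is compact metrizable, and the two conditions — maximality of $x$ (equivalently, $x$ has a unique representing map, by Proposition~\ref{prop:criterion-unique-rep-map}) and irreducibility of $\delta_x$ — will each be shown to cut out a Borel set.

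First I would handle the maximality condition. Fix a dense sequence $\{f_k\}$ in $\rC(K)_{sa}$; by Proposition~\ref{prop:scalar-maximal} applied after composing with point-evaluation, a point $x \in K_n$ is maximal if and only if $\mu(\ol{f_k}) = [\mu(f_k),+\infty)$ for the representing map $\mu = \delta_x$, for every $k$. More precisely, using Theorem~\ref{thm:convex-env-equivalent-form} (or Theorem~\ref{thm:convex-env-ucp-dilation-order} with $\mu = \delta_x$), $x$ is maximal iff for all $k$ and all $m\in\bN$, $\inf\{ \lambda_{\min}\big(\text{bottom of } (\delta_x)(\ol{f_k})\big)\} \ge \lambda_{\min}(f_k(x)) - 1/m$. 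The key technical point is that the function $x \mapsto \inf (\delta_x)(\ol{f_k})$ (in the sense of the smallest eigenvalue of the infimum over representing maps, i.e. the scalar envelope value) is a concave upper-semicontinuous function of $x$ on $K_n$: this follows from Proposition~\ref{prop:scalar-convex-envelope}(2)--(3) together with the continuity and affineness of $x\mapsto f_k(x)$. Hence for each $k,m$ the set $\{ x\in K_n : \lambda_{\min}(f_k(x)) - \text{(scalar envelope value at } x) \ge 1/m \}$ is closed, and the set $M_n$ of maximal points in $K_n$ is the complement of a countable union of closed sets, i.e. $G_\delta$ and in particular Borel. (This is precisely the $n$-level analogue of the set $Y$ in Proposition~\ref{prop:nc-choquet-bishop-de-leeuw-sep}.)

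Next I would handle irreducibility. The set of $x \in K_n$ for which $\delta_x : \rC(K) \to \cM_n$ is irreducible is the set of $x$ for which the only projections in $\delta_x(\rC(K))' \subseteq \cM_n$ are $0$ and $1_n$. Choose a dense sequence $\{g_j\}$ in $\rC(K)$ (with respect to the norm), so that a projection $p \in \cM_n$ commutes with $\delta_x(\rC(K))$ iff it commutes with every $\delta_x(g_j) = g_j(x)$. Since the functions $x \mapsto g_j(x)$ are point-weak* Borel (indeed continuous on $K_n$ when $n$ is finite — but here $n$ may be infinite, so one should note that $g_j$, being an element of $\rC(K) = \cmax(\rA(K))$, is a point-ultrastrong* continuous nc function, hence in particular the maps $x \mapsto \langle g_j(x)\xi,\eta\rangle$ are Borel on $K_n$), reducibility of $\delta_x$ can be expressed as: there exists a projection $p$ in a countable norm-dense subset of the projections of $\cM_n$ (or, better, using that the projection lattice of $\cM_n$ is a standard Borel space when $n \le \aleph_0$) with $0 \ne p \ne 1_n$ and $\|[p, g_j(x)]\| = 0$ for all $j$. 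This exhibits the reducible set as a Borel image / projection of a Borel set along a proper map, which one can keep genuinely Borel by working with a countable dense set of candidate projections and closing up appropriately; alternatively, for $n$ finite the commutant dimension $x \mapsto \dim \delta_x(\rC(K))'$ is Borel (even lower semicontinuous in a suitable sense), and irreducibility is the set where it equals $1$. I would then take $(\partial K)_n = M_n \cap \{x : \delta_x \text{ irreducible}\}$, which is Borel as an intersection of two Borel sets, and invoke Theorem~\ref{thm:extreme}.

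The main obstacle I anticipate is the irreducibility clause when $n$ is infinite (i.e. $n = \aleph_0$): there the projection lattice of $\cM_n = \B(H_{\aleph_0})$ is not countable, so one cannot naively intersect over a countable family of projections, and one must instead argue that $\{x : \delta_x \text{ reducible}\}$ is analytic and co-analytic, or use a direct-integral / measurable-selection argument to keep it Borel; a cleaner route may be to use that, by separability of $\rC(K)$, $\Rep(\rC(K), H_{\aleph_0})$ carries a standard Borel structure on which irreducibility is known to be a Borel condition (this is classical — e.g. via the Borel structure of the dual of a separable C*-algebra), and that $x \mapsto \delta_x$ is Borel from $K_{\aleph_0}$ into $\Rep(\rC(K),H_{\aleph_0})$ by the continuity established in Proposition~\ref{P:equivariant-state-space-map}. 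I would present the maximality part in full detail (it is the genuinely new ingredient, resting on Proposition~\ref{prop:scalar-convex-envelope}) and treat the irreducibility part by citing the standard Borel structure on the space of representations of a separable C*-algebra.
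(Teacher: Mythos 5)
Your overall decomposition is sound: by Theorem~\ref{thm:extreme} and Proposition~\ref{prop:criterion-unique-rep-map}, $(\partial K)_n$ is the intersection of the set of maximal points of $K_n$ with $\{x : \delta_x \text{ irreducible}\}$, and you propose to show each piece is Borel. The genuine gap is in the maximality half. Proposition~\ref{prop:scalar-maximal} and the semicontinuity/convexity statements of Proposition~\ref{prop:scalar-convex-envelope} are proved in the paper only for the \emph{scalar} state space $L_1$ of $\rC(K)$; for $x \in K_n$ with $n \geq 2$ the map $\delta_x$ takes values in $\cM_n$, the set $\delta_x(\env{f_k})$ is an upward-directed set of self-adjoint operators with no infimum in general (this is exactly the lattice failure that forces the multivalued formalism), and your reformulation via $\lambda_{\min}$ does not capture the actual maximality criterion $\delta_x(\env{f}) = [\delta_x(f),+\infty)$ of Corollary~\ref{cor:criterion-maximality-dilation-order}. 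To run your argument you would need matrix-level analogues of the upper semicontinuity of $\hat f - \breve f$ on $K_n$ (with $n$ possibly $\aleph_0$), which the paper does not supply and which are not routine. The paper's actual proof of this half goes a completely different way: it uses Arveson's criterion that $x$ is maximal iff $\|y(a)\xi\| = \|x(a)\xi\|$ for every $a \in \rA(K)$, every unit vector $\xi$, and every one-dimensional dilation $y$ on $H_n \oplus \bC$; it then shows $F_{ij}(x) = \|x(a_i)\xi_j\|$ is lower semicontinuous and $G_{ij}(x) = \sup_{y}\|y(a_i)\xi_j\|$ is upper semicontinuous by a compactness/subsequence argument, so the maximal set is $\bigcap_{i,j}(G_{ij}-F_{ij})^{-1}(\{0\})$, a countable intersection of Borel sets. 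That reduction to vectors and one-dimensional dilations is the ingredient your proposal is missing.

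For the second half, the paper does not use irreducibility at all: by Proposition~\ref{prop:extreme-iff-pure-maximal} it works with purity, and since $K_n$ is compact metrizable the relevant set of (classically) extreme points of $K_n$ is $G_\delta$, hence Borel, by the standard classical fact. This sidesteps the difficulties you correctly anticipate. Your fallback of citing the standard Borel structure on $\Rep(\rC(K),H_{\aleph_0})$ (where irreducibility is a $G_\delta$ condition, as in Dixmier) is plausible, but it still requires showing that $x \mapsto \delta_x$ is Borel from $K_n$ equipped with the point-weak* topology (the topology in which $K_n$ is compact metrizable) into the representation space; Example~\ref{ex:continuity-may-fail-infinite-levels} shows this map is \emph{not} point-weak* continuous for infinite $n$, so you would need an additional argument (e.g.\ that the point-weak* and point-ultrastrong* Borel structures agree on $K_n$ by a Lusin--Souslin type argument) rather than a direct appeal to Proposition~\ref{P:equivariant-state-space-map}. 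In short: the strategy is reasonable, but the maximality step rests on scalar-level results that do not apply at level $n$, and the irreducibility step, while salvageable, is strictly harder than the purity route the paper takes.
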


\begin{proof}
In \cite{Arv2008}*{Theorem 2.5}, Arveson shows that $x \in K_n$ is maximal if and only if for every $a\in \rA(K)$ and unit vector $\xi \in H_n$,
for any dilation $y$ of $x$ on a larger space, one has $\|y(a)\xi\| = \|x(a) \xi\|$. And as noted in \cite{DK2015}, it suffices to consider dilations $y$ on a Hilbert space $H_n'=H_n \oplus\bC$. Let $L_n$ denote the space of unital completely positive maps from $\rA(K)$ into $H_n'$. The compression map from $H_n'$ onto $H_n$ determines a surjective continuous map $\rho:L_n \to K_n$. Note that $y\in L_n$ is a dilation of $x$ precisely when $\rho(y)=x$.

Fix a countable dense subset $\{a_i\} \subset \rA(K)$ and a countable dense subset $\{\xi_j\}$ of the unit sphere of $H_n$.
Observe that
\[ F_{ij}(x) = \|x(a_i) \xi_j\| = \sup_k |\ip{ x(a_i) \xi_j, \xi_k}| \qfor x\in K_n \]
is the supremum of continuous functions and thus is lower semicontinuous, and in particular is Borel.
Consider the function
\[ G_{ij}(x) = \sup_{y \in \rho^{-1}(x)} \|y(a_i) \xi_j \| = \sup_k \sup_{y \in \rho^{-1}(x)} |\ip{y(a_i) \xi_j, \eta_k}| , \]
where $\{\eta_k\}$ is a dense subset of the unit sphere of $H_n'$. This is the supremum of the functions
\[ g_{ijk}(x) = \sup_{y \in \rho^{-1}(x)} |\ip{y(a_i) \xi_j, \eta_k}| .\]
This function is upper semicontinuous because if $x_m$ converges to $x$, pick $y_m \in \rho^{-1}(x_m)$ attaining the value $g_{ijk}(x_m)$.
Dropping to a subsequence, we may suppose that $|\ip{y_m(a_i) \xi_j, \eta_k}|$ approaches $\limsup_m g_{ijk}(x_m)$ and the $y_m$ converge to $y\in \rho^{-1}(x)$. 
Thus 
\[ g_{ijk}(x) \ge |\ip{y(a_i) \xi_j, \eta_k}| = \limsup_m g_{ijk}(x_m) .\]
Hence $g_{ijk}$ is Borel and so $G_{ij}$ is also Borel.

It follows that $H_{ij}(x) = G_{ij}(x) - F_{ij}(x)$ is Borel.
By the discussion in the first paragraph, $x$ is maximal if and only if $x \in \bigcap_{i,j} H_{ij}^{-1}(\{0\})$.
Therefore the set of  maximal points in $K_n$ is Borel.
Since $K_n$ is metrizable, the set of pure points $\partial K_n$ is $G_\delta$, and hence Borel.
Since $(\partial K)_n$ is the intersection of $\partial K_n$ and the set of maximal elements by Proposition~\ref{prop:extreme-iff-pure-maximal}, it is Borel.
\end{proof}

\begin{defn} \label{defn:nc-probability-measure}
Let $K$ be a compact nc convex set such that $\rA(K)$ is separable. For  $n \le \aleph_0$, a {\em $\cM_n$-valued finite nc measure} on $K$ is a sequence 
$\lambda = (\lambda_m)_{m \le \aleph_0}$ such that each $\lambda_m$ is a $\ncpmaps(\cM_m,\cM_n)$-valued Borel measure and the sum
\[
\sum_{m \leq \aleph_0} \lambda_m(K_m)(1_m) \in \cM_n
\]
is weak* convergent. For $E \in \bor(K)$, we define $\lambda(E)$ by
\[
\lambda(E) = \sum_{m \leq \aleph_0} \lambda_m(E_m). 
\]
We will say that $\lambda$ is {\em supported on the extreme boundary $\partial K$} if 
\[ \lambda_m(K_m \setminus \partial K) = 0_{\cM_m,\cM_n} \qforal m\le\aleph_0 .\] 
We will say that $\lambda$ is a {\em $\cM_n$-valued nc probability measure} on $K$ if the above sum is equal to $1_n$. Finally, we will say that $\lambda$ is {\em admissible} if each $\lambda_m$ is absolutely continuous with respect to a scalar-valued measure on $K_m$.
\end{defn}

\begin{rem}
Let $\lambda$ be an admissible $\cM_n$-valued finite nc measure on $K$ as above. For a bounded Baire nc function $f \in \rBI(K)$ and $m \le \aleph_0$, the restriction $f|_{K_m} : K_m \to \cM_m$ is a bounded and measurable $\cM_m$-valued function on $K_m$, i.e. $f|_{K_m} \in \rBM(K_m,\cM_m)$. Hence by the discussion in Section \ref{sec:nc-integration}, $f|_{K_m}$ is $\lambda_m$-integrable.
\end{rem}

\begin{example} \label{ex:nc-point-mass}
Let $K$ be a compact nc convex set such that $\rA(K)$ is separable. For $x \in K_n$, define a $\cM_n$-valued finite nc measure $\lambda_x = (\lambda_{x,m})_{m \leq \aleph_0} $ on $K$ by letting $\lambda_{x,m} = 0_{\cM_m,\cM_n}$ for $m \ne n$ and 
\[
\lambda_{x,n}(E) = \begin{cases} \id_n & x \in E,\\ 0_n & x \notin E \end{cases}
\]
for $E \subseteq \bor(K_n)$. Then $\lambda_x$ is the noncommutative analogue of a point mass. Note that $\lambda_x$ is absolutely continuous with respect to the scalar-valued point mass $\delta_x$ on $K_n$. Hence $\lambda_x$ is admissible.
\end{example}

\begin{example}
Let $K$ be a compact nc convex set such that $\rA(K)$ is separable and let $\lambda = (\lambda_m)_{m \leq \aleph_0}$ be a $\cM_n$-valued finite nc measure on $K$. For $\phi \in \ncpmaps(\cM_n,\cM_p)$, the composition $\phi \circ \lambda := (\phi \circ \lambda_m)_{m \leq \aleph_0}$ is a $\cM_p$-valued finite nc measure on $K$. If $\lambda$ is a nc probability measure, and $\phi$ is unital, then $\phi \circ \lambda$ is a nc probability measure. In this setting, scalars are replaced by normal completely positive maps, so $\phi \circ \lambda$ is the noncommutative analogue of a scaling of $\lambda$.

If $\lambda_m$ is absolutely continuous with respect to a scalar-valued probability measure $\nu_m$ on $K_m$, then $\phi \circ \lambda_m$ is also absolutely continuous with respect to $\nu_m$. Hence if $\lambda$ is admissible, then so is $\phi \circ \lambda$.

In particular, for $\alpha \in \cM_{n,p}$, $\alpha^* \lambda \alpha := (\alpha^* \lambda_m \alpha)_{m \leq \aleph_0}$ is a $\cM_p$-valued finite nc measure on $K$. If $\alpha^* \alpha = 1_p$, then $\alpha^* \lambda \alpha$ is an nc probability measure. More generally, this shows that the set of (admissible) finite nc measures on $K$ and the set of (admissible) finite nc probability measures on $K$ each form an nc convex set.
\end{example}

\begin{defn}
Let $K$ be a compact nc convex set such that $\rA(K)$ is separable and let $\lambda$ be an admissible $\cM_n$-valued finite nc measure on $K$. 
For $f \in \rBI(K)$, we define the {\em integral of $f$ with respect to $\lambda$} by
\[
\int_K f \, d\lambda := \sum_{m \le \aleph_0} \int_{K_m} f \, d\lambda_m.
\]
We say that $\lambda$ {\em represents} $x \in K_n$ if $\int_K a\, d\lambda = a(x)$ for all $a \in \rA(K)$. 
\end{defn}

\begin{rem}
For an admissible nc probability measure $\lambda$ as above, it follows from the discussion in Section \ref{sec:nc-integration} that map $\mu: \rC(K) \to \cM_n$ defined by $\mu(f) = \int_K f\, d\lambda$ is unital and completely positive. 
\end{rem}

\begin{example}
Let $K$ be a compact nc convex set. Fix a finite set of points $\{x_i \in K_{n_i}\}$ and a corresponding finite family $\{\alpha_i \in \cM_{n_i,n}\}$ satisfying $\sum \alpha_i^* \alpha_i = 1_n$. For each $i$, let $\lambda_{x_i} = (\lambda_{x_i,m})_{m \leq \aleph_0}$ denote the nc probability measure corresponding to $x_i$ as in Example \ref{ex:nc-point-mass}. Define $\lambda = (\lambda_m)_{m \leq \aleph_0}$ by $\lambda = \sum \alpha_i^* \lambda_{x_i} \alpha_i$. Then $\lambda$  is an admissible finite nc probability measure on $K$. For a function $f \in \rBI(K)$, the integral of $f$ with respect to $\lambda$ is
\[
\int_K f\, d\lambda = \sum_{m \leq \aleph_0} \int_{K_m} f\, d\lambda_m = \sum_i \alpha_i^* f(x_i) \alpha_i.
\]
\end{example}

For the proof of the next result, we will utilize the theory of direct integral decompositions of representations of separable C*-algebras as presented in Takesaki's book \cite{Tak2003}*{Sections IV.6 and IV.8}.

Let $A$ be a separable C*-algebra with state space $L_1$. 
Then $L_1$ is a compact convex subset of the dual of $\rC(L_1)$ with respect to the weak* topology. 
The extreme boundary $\partial L_1$ is precisely the set of pure states of $A$. 
For a state $\nu \in L_1$, let $\pi_\nu : A \to \B(H_\nu)$ denote the GNS representation of $\nu$.

For $\mu \in L_1$, Choquet's theorem implies there is a probability measure $\rho$ on $L_1$ supported on $\partial L_1$ with barycenter $\mu$. 
By \cite{Tak2003}*{Proposition IV.6.23}, we can in addition choose $\rho$ to be orthogonal, which is equivalent to the commutative von Neumann algebra $L^\infty(L_1,\rho)$ being isomorphic to a subalgebra of the commutant $\pi_\mu(A)'$. 
By \cite{Tak2003}*{Corollary IV.8.31}, the orthogonality of $\rho$ implies that $\pi_\mu$ is unitarily equivalent to the direct integral
\[
\pi_\mu \simeq \int_{L_1}^\oplus \pi_\nu \, d\rho(\nu) .
\]
If there is a cardinal number $n$ such that every state in the support of $\mu$ is pure and the corresponding GNS representation acts on a Hilbert space of dimension $n$, then \cite{Tak2003}*{Corollary IV.8.30} implies $\pi_\mu(A)$ is isomorphic to a subalgebra of $L^\infty(L_1,\rho,\B(H_n)) := L^\infty(L_1,\rho) \ol{\otimes} \B(H_n)$. In this case, the map $L_1 \to \Rep(A,H_n) : \nu \to \pi_\nu$ is $\rho$-measurable with respect to the point-weak* topology on $\Rep(A,H_n)$. 

\begin{prop} \label{prop:nc-measure}
Let $K$ be a compact nc convex set such that $\rA(K)$ is separable. For $x \in K$ there is a nc probability measure $\lambda$ on $K$ such that
\[
\int_\lambda f\, d\lambda = \delta_x(f), \qfor f \in \rBI(K).
\]
In particular, $\lambda$ represents $x$. Moreover, if $x$ is maximal then $\lambda$ can be chosen so that it is supported on the extreme boundary $\partial K$.
\end{prop}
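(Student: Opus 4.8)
The first assertion is immediate, so the plan is to concentrate on the maximal case. Write $x\in K_n$; since $\rA(K)$ is separable we have $n\le\aleph_0$, and the noncommutative point mass $\lambda_x$ of Example~\ref{ex:nc-point-mass} is an admissible nc probability measure with $\int_K f\,d\lambda_x=f(x)=\delta_x(f)$ for all $f\in\rBI(K)$, which settles the first statement and the fact that $\lambda_x$ represents $x$. Assuming from now on that $x$ is maximal, the first real step is to reduce $\delta_x$ to GNS representations. As $\rC(K)$ and $H_n$ are separable, decompose $H_n=\bigoplus_k H^{(k)}$ into countably many cyclic subspaces for the $*$-representation $\delta_x(\rC(K))$; these reduce $\delta_x$, so $x\cong\bigoplus_k y_k$, where $\delta_{y_k}=\delta_x|_{H^{(k)}}$ is the GNS representation of the state $\omega_k=\langle\delta_x(\,\cdot\,)\xi_k,\xi_k\rangle$. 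Since each $y_k$ is a summand of the maximal point $x$ it is itself maximal, so by Theorem~\ref{thm:equivalence-dilation-maximal-unique-representing-map} each $\omega_k$ is maximal in the dilation order.

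Next, for each $k$ I would disintegrate $\delta_{y_k}$ over the extreme boundary. Let $L_1$ be the state space of $\rC(K)$. By Choquet's theorem and \cite{Tak2003}*{Proposition~IV.6.23}, choose an orthogonal regular Borel probability measure $\rho_k$ on $L_1$ with barycenter $\omega_k$ supported on $\partial L_1$; because $\omega_k$ is dilation maximal, Proposition~\ref{prop:nc-choquet-bishop-de-leeuw-sep} forces $\rho_k$ onto the $G_\delta$ set $Z$ of pure dilation-maximal states. Then \cite{Tak2003}*{Corollary~IV.8.31} gives $\delta_{y_k}=\pi_{\omega_k}\simeq\int_Z^\oplus\pi_\nu\,d\rho_k(\nu)$, and for $\rho_k$-a.e.\ $\nu$ the fibre $\pi_\nu$, being the GNS representation of a pure dilation-maximal state, is a $*$-representation of $\rC(K)$, hence equals $\delta_{z_\nu}$ with $z_\nu:=\pi_\nu|_{\rA(K)}$; as $(z_\nu,\xi_\nu)$ is a minimal representation of $\nu$, Corollary~\ref{cor:pure-dilation-maximal-extreme-rep} yields $z_\nu\in\partial K$. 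Partitioning $Z=\coprod_{m\le\aleph_0}Z^{(m)}$ by $\dim H_\nu=m$ and applying \cite{Tak2003}*{Corollary~IV.8.30} on each piece, together with Lemma~\ref{L:Borel-extreme} (so each $(\partial K)_m$ is Borel), the sets $Z^{(m)}$ are Borel, $\nu\mapsto z_\nu$ is $\rho_k$-measurable into $(\partial K)_m\subseteq K_m$ for the point-weak$*$ topology, and $H^{(k)}$ is identified with $\bigoplus_m L^2(Z^{(m)},\rho_k)\otimes H_m$.

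Writing $q_m^{(k)}:L^2(Z^{(m)},\rho_k)\otimes H_m\hookrightarrow H_n$ for the resulting isometry (these have pairwise orthogonal ranges), I would set, for $E\in\bor(K_m)$ and $a\in\cM_m$,
\[
\lambda_m(E)(a)=\sum_k q_m^{(k)}\bigl(M_{\chi_{\{\nu\in Z^{(m)}\,:\,z_\nu\in E\}}}\otimes a\bigr)(q_m^{(k)})^* .
\]
Each $\lambda_m(E)$ is a normal completely positive map $\cM_m\to\cM_n$ depending countably additively on $E$ and absolutely continuous with respect to the scalar measure $E\mapsto\sum_k 2^{-k}\rho_k(\{\nu\in Z^{(m)}:z_\nu\in E\})$, so $\lambda=(\lambda_m)_m$ is an admissible $\cM_n$-valued nc measure; it is a probability measure because $\sum_m\lambda_m(K_m)(1_m)=\sum_{m,k}q_m^{(k)}(q_m^{(k)})^*=1_n$, and it is supported on $\partial K$ because $z_\nu\in\partial K$ for $\rho_k$-a.e.\ $\nu$ and $(\partial K)_m$ is Borel. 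Finally, evaluating on Fujimoto-simple functions and passing to the limit in the weak$*$-uniform topology \cite{Fuj1994} gives $\int_{K_m}f\,d\lambda_m=\sum_k q_m^{(k)}\bigl(\int_{Z^{(m)}}^\oplus f(z_\nu)\,d\rho_k(\nu)\bigr)(q_m^{(k)})^*$ for $f\in\rBI(K)$, and summing over $m$ reconstructs $\delta_x(f)=\bigoplus_k\delta_{y_k}(f)$, provided the unitary implementing the disintegration intertwines the sequentially normal extensions of $\pi_{\omega_k}$ and of the fibres $\delta_{z_\nu}$ to $\rBI(K)\subseteq\rB(K)=\rC(K)^{**}$.

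I expect that last point to be the main obstacle: the conceptual content — that a dilation-maximal map disintegrates over the extreme points — is already carried by Proposition~\ref{prop:nc-choquet-bishop-de-leeuw-sep} and classical direct-integral theory, but verifying that Fujimoto's integral against $\lambda$ recovers $\delta_x(f)$ for every bounded Baire nc function $f$, and not merely on $\rA(K)$, requires that bounded monotone sequential limits commute with the direct integral, which in turn rests on $f$ being universally measurable (\cite{Pedersen}*{Corollary~4.5.13}), exactly the mechanism used in the proof of Proposition~\ref{prop:annihilate-boundary}. The subsidiary measurability checks — that $\nu\mapsto z_\nu$ is Borel and that the layers $Z^{(m)}$ are Borel — are precisely where Lemma~\ref{L:Borel-extreme} is essential.
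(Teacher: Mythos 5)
Your proposal follows essentially the same route as the paper's proof: reduce to GNS representations, take a maximal orthogonal measure with the given barycenter, use Proposition~\ref{prop:nc-choquet-bishop-de-leeuw-sep} and Corollary~\ref{cor:pure-dilation-maximal-extreme-rep} to see that the fibres of the direct integral are extreme points, stratify by the dimension of the fibre Hilbert space, and package the result as an admissible nc measure via Fujimoto's integration theory (the paper assumes $\delta_x$ cyclic WLOG rather than summing over cyclic pieces, and cites Dixmier rather than Lemma~\ref{L:Borel-extreme} for the Borel-ness of the dimension strata, but these are cosmetic differences). The final point you flag --- extending the identity $\int_K f\,d\lambda=\delta_x(f)$ from $\rA(K)$ to all of $\rBI(K)$ --- is handled in the paper by the same mechanism you name, namely universal measurability of Baire elements and the resulting barycentric formula, so your outline is sound.
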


\begin{proof}
We may assume that $\delta_x$ is cyclic. Let $L_1$ denote the (scalar) state space of $\rC(K)$ and let $\mu \in L_1$ be a state with GNS representation $\pi_\mu = \delta_x$. Choose a maximal orthogonal measure $\rho$ on $L_1$ with barycenter $\mu$, so that in particular $\rho$ is supported on $\partial L_1$. Then by the discussion preceding the proof, $\delta_x$ is unitarily equivalent to the direct integral
\[
\delta_x \cong \int_{L_1}^\oplus \delta_{y_\nu}\, d\rho(\nu),
\]
where for each $\nu \in L_1$, $y_\nu$ is a minimal representation for $\nu$. 

If $x$ is maximal, then by  Proposition \ref{prop:nc-choquet-bishop-de-leeuw-sep}, $\nu$ is pure and dilation maximal for $\rho$-almost every $\nu \in L_1$. In this case, Corollary \ref{cor:pure-dilation-maximal-extreme-rep} implies that $\delta_{y_\nu}$ is an extreme point for $\rho$-almost every $\nu \in L_1$.

For $m \leq \aleph_0$, let $C_m = \{\nu \in L_1 : y_\nu \in K_m \}$. Then by \cite{Dixmier}*{Lemma 1, page 139}, $C_m$ is Borel. Let $\rho_m = \rho|_{C_m}$. Then from above, $\rho_m$ is supported on $\partial L_1 \cap C_m$. Let
\[
\pi_m = \int_{C_m}^\oplus \delta_{y_\nu}\, d\rho_m(\nu).
\]
We can identify the range of $\pi_m$ with a subalgebra of $L^\infty(C_m, \rho_m, \cM_m)$ and by \cite{Sak2012}*{Theorem 1.22.13}, $L^\infty(C_m, \rho_m, \cM_m)_* = L^1(C_m, \rho_m, (\cM_m)_*)^*$.

Define $\iota_m : C_m \to K_m$ by $\iota_m(\nu) = y_\nu$. Then $\iota_m$ is the composition of the $\rho_m$-measurable map $C_m \to \Rep(\rC(K),H_m) : \nu \to \delta_{y_\nu}$ with restriction to $\rA(K)$. Since the latter map is continuous, $\iota_m$ is measurable.

Define a $\ncpmaps(\cM_m, L^\infty(C_m, \rho_m, \cM_m))$-valued Borel measure $\lambda_m$ on $K_m$ by
\[
\lambda_m(E) = \int_{\iota_m^{-1}(E)}^\oplus \id_{\cM_m} \, d\rho_m(\zeta), \quad E \in \bor(K_m).
\]
Then for $\alpha \in \cM_m$ and $\tau \in L^1(C_m, \rho_m, (\cM_m)_*)$,
\[
(\lambda_m)_{\alpha,\tau}(E) = \langle \lambda_m(E)(\alpha), \tau \rangle = \int_{\iota_m^{-1}(E)}  \langle \alpha, \tau(\nu) \rangle\, d\rho_m(\nu)
\]
for $E \in \bor(K_m)$. Hence $| \langle \lambda_m(E)(\alpha),\tau \rangle | \leq \|\alpha\|\|\tau\|$, so we can define $r_{\alpha,\tau} \in L^1(C_m,\rho_m)$ by
\[
r_{\alpha,\tau}(\nu) = \langle \alpha, \tau(\nu) \rangle, \quad \nu \in C_m.
\]
Then
\[
(\lambda_m)_{\alpha,\tau}(E) = \int_{\iota_m^{-1}(E)} r_{\alpha,\tau}(\nu)\, d\rho_m(\nu)
\]
for $E \in \bor(K_m)$. In particular, $(\lambda_m)_{\alpha,\tau}$ is absolutely continuous with respect to the scalar pushforward measure $\rho_m \circ \iota^{-1}$. Hence $\lambda_m$ is absolutely continuous with respect to $\rho_m \circ \iota_m^{-1}$. Thus by Section \ref{sec:nc-integration}, the integration map against $\lambda_m$ has a unique extension to $\rBM(K_m,\cM_m)$ that is continuous with respect to the weak*-uniform convergence topology.

For $f \in \rBI(K)$ and $\tau \in L^1(C_m, \rho_m, (\cM_m)_*)$,
\begin{align*}
\left\langle \int_{K_m} f \, d\lambda_m, \tau \right\rangle &= \int_{C_m} r_{f(y_\nu),\tau}(\nu) \, d\rho_m(\nu) \\
&= \int_{C_m} \langle f(y_\nu), \tau(\nu) \rangle \, d\rho_m(\nu) \\
&= \langle \pi_m(f), \tau \rangle.
\end{align*}
Hence
\[
\int_{K_m} f \, d\lambda_m = \pi_m(f), \quad f \in \rBI(K).
\]

Let $\lambda = (\lambda_m)_{m \leq \aleph_0}$. Then $\lambda$ is an admissible nc probability measure on $K$. Since $\delta_x \simeq \oplus \pi_m$, it follows from above that for $f \in \rBI(K)$,
\[
\int_K f\, d\lambda = \bigoplus_{m \leq \aleph_0} \int_{K_m} f\, d\lambda_m = \bigoplus_{m \leq \aleph_0} \pi_m(f) \cong \delta_x(f).
\]

If $x$ is maximal, then Proposition \ref{prop:nc-choquet-bishop-de-leeuw-sep} implies that $\nu$ is pure and dilation maximal for $\rho$-almost every $\nu \in L_1$. In this case, $\delta_{y_\nu}$ is an  extreme point for $\rho$-almost every $\nu \in L_1$.
\end{proof}

The next result can be viewed as a kind of Riesz-Markov-Kakutani representation theorem for unital completely positive maps on the C*-algebra of nc continuous functions.

\begin{thm} \label{thm:nc-riesz-markov-kakutani}
Let $K$ be a compact nc convex  set such that $\rA(K)$ is separable and let $\mu : \rC(K) \to \cM_n$ be a unital completely positive map. Then there is an admissible nc probability measure $\lambda$ on $K$ such that
\[
\mu(f) = \int_K f d\lambda, \qfor f \in \rBI(K).
\]
Moreover, if $\mu$ is dilation maximal, then $\lambda$ can be chosen so that it is supported on the extreme boundary $\partial K$.
\end{thm}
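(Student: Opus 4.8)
The plan is to reduce the statement to the point-mass case, which is exactly Proposition~\ref{prop:nc-measure}, by representing $\mu$ via Stinespring and then transporting the resulting nc measure through a compression. First I would use the fact, recorded in Section~\ref{sec:representations-maps}, that every representation of $\rC(K)$ is of the form $\delta_x$: since $\rA(K)$ is separable, so is $\rC(K) = \cmax(\rA(K))$, hence a minimal Stinespring dilation of $\mu$ acts on a separable Hilbert space and yields a minimal representation $(x,\alpha) \in K_m \times \cM_{m,n}$ with $m \le \aleph_0$, $\alpha^*\alpha = 1_n$, and $\mu = \alpha^* \delta_x \alpha$.

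Next I would apply Proposition~\ref{prop:nc-measure} to the point $x$ to obtain an admissible $\cM_m$-valued nc probability measure $\lambda_x = (\lambda_{x,j})_{j \le \aleph_0}$ on $K$ with $\int_K f\, d\lambda_x = \delta_x(f)$ for every $f \in \rBI(K)$, and then set $\lambda = \alpha^* \lambda_x \alpha := (\alpha^* \lambda_{x,j} \alpha)_{j \le \aleph_0}$. As noted in the examples of this section, each $\alpha^* \lambda_{x,j} \alpha$ is a $\ncpmaps(\cM_j,\cM_n)$-valued Borel measure absolutely continuous with respect to the same scalar measure that dominates $\lambda_{x,j}$, so $\lambda$ is admissible; moreover $\sum_j \lambda_j(K_j)(1_j) = \alpha^*\big(\sum_j \lambda_{x,j}(K_j)(1_j)\big)\alpha = \alpha^* 1_m \alpha = 1_n$, so $\lambda$ is an nc probability measure.

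I would then verify the integral identity. For each $j$ the linear maps $g \mapsto \int_{K_j} g\, d(\alpha^* \lambda_{x,j}\alpha)$ and $g \mapsto \alpha^*\big(\int_{K_j} g\, d\lambda_{x,j}\big)\alpha$ on $\rBM(K_j,\cM_j)$ agree on bounded measurable simple functions and are both continuous for the weak*-uniform convergence topology (the second because $\alpha^*(\cdot)\alpha$ is weak*-continuous), so by the uniqueness of the continuous extension of integration established in Section~\ref{sec:nc-integration} they agree on all of $\rBM(K_j,\cM_j)$, in particular on $f|_{K_j}$ for $f \in \rBI(K)$. Summing over $j$, using weak* convergence of the series and weak* continuity of $\alpha^*(\cdot)\alpha$, gives $\int_K f\, d\lambda = \alpha^*\big(\int_K f\, d\lambda_x\big)\alpha = \alpha^* \delta_x(f)\alpha = \mu(f)$ for every $f \in \rBI(K)$; restricting to $\rA(K)$ shows $\lambda$ represents the barycenter of $\mu$. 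Finally, for the last assertion, if $\mu$ is dilation maximal then, since $(x,\alpha)$ is a minimal representation, Theorem~\ref{thm:equivalence-dilation-maximal-unique-representing-map} forces $x$ to be a maximal point, so Proposition~\ref{prop:nc-measure} allows us to take $\lambda_x$ supported on $\partial K$, i.e. $\lambda_{x,j}(K_j \setminus \partial K) = 0$ for all $j$; then $\lambda_j(K_j \setminus \partial K) = \alpha^* \lambda_{x,j}(K_j \setminus \partial K)\alpha = 0$, so $\lambda$ is supported on $\partial K$.

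The only delicate point, and the one I would spend a sentence or two on, is the commutation of the compression $\alpha^*(\cdot)\alpha$ with the abstract integral of Section~\ref{sec:nc-integration} beyond simple functions; but as indicated above this is forced by the uniqueness of Fujimoto's weak*-uniformly-continuous extension, so there is no real obstacle — the proof is essentially a transport-of-structure argument layered on top of Proposition~\ref{prop:nc-measure}.
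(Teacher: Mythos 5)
Your proposal is correct and follows essentially the same route as the paper: take a minimal representation $(x,\alpha)$ of $\mu$, apply Proposition~\ref{prop:nc-measure} to $x$, and compress the resulting nc measure by $\alpha$, with Theorem~\ref{thm:equivalence-dilation-maximal-unique-representing-map} handling the dilation-maximal case. The extra care you take in justifying that compression commutes with Fujimoto's extended integral is a detail the paper leaves implicit, but it is not a different argument.
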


\begin{proof}
Let $(x,\alpha) \in K_p \times \cM_{p,n}$ be a minimal representation for $\mu$. Applying Proposition \ref{prop:nc-measure}, we obtain an nc probability measure $\sigma = (\sigma_m)_{m \leq \aleph_0}$ on $K$ such that $\delta_x(f) = \int_K f\, d\sigma$ for all $f \in \rBI(K)$. If $\mu$ is dilation maximal, then $x$ is maximal, in which case $\sigma$ is supported on $\partial K$.

Define a nc probability measure $\lambda = (\lambda_m)_{m \leq \aleph_0}$ by $\lambda_m = \alpha^* \sigma_m \alpha$. Then
\[
\int_{K} f \, d\lambda = \alpha^* \left( \int_{K} f\, d\sigma \right) \alpha = \alpha^* f(x) \alpha = \mu(f)
\]
for $f \in \rBI(K)$. If $\mu$ is dilation maximal, then from above $\sigma$ is supported on $\partial K$ in which case $\lambda$ is supported on $\partial K$.
\end{proof}

\begin{thm}[Noncommutative integral representation theorem] \label{thm:nc-choquet-theorem}
Let $K$ be a compact nc convex set such that $\rA(K)$ is separable. Then for $x \in K$ there is an admissible nc probability measure $\lambda$ on $K$ that represents $x$ and is supported on $\partial K$, i.e. such that
\[
a(x) = \int_K a\, d\lambda, \qfor a \in \rA(K).
\]
\end{thm}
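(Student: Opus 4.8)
The plan is to deduce the theorem from Theorem~\ref{thm:nc-riesz-markov-kakutani} by first promoting $x$ to a dilation-maximal representing map. Fix $x \in K_p$; since $\rA(K)$ is separable we have $p \le \aleph_0$. By the corollary following Theorem~\ref{thm:equivalence-dilation-maximal-unique-representing-map} (applied to $\delta_x$, which has barycenter $x$), there is a unital completely positive map $\mu : \rC(K) \to \cM_p$ with $\delta_x \prec_d \mu$ that is maximal in the dilation order; by Lemma~\ref{L:same-barycenter} the maps $\delta_x$ and $\mu$ have the same barycenter, so $\mu(a) = a(x)$ for every $a \in \rA(K)$. Concretely, $\mu$ arises as $\alpha^* \delta_y \alpha$, where $y \in K_q$ is a maximal dilation of $x$ obtained from Theorem~\ref{thm:dritschel-mccullough} and $\alpha \in \cM_{q,p}$ is an isometry with $x = \alpha^* y \alpha$.

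Next I would apply Theorem~\ref{thm:nc-riesz-markov-kakutani} to $\mu$: since $\rA(K)$ is separable and $\mu$ is dilation maximal, that theorem produces an admissible $\cM_p$-valued nc probability measure $\lambda$ on $K$ that is supported on the extreme boundary $\partial K$ and satisfies $\mu(f) = \int_K f\, d\lambda$ for all $f \in \rBI(K)$. Since $\rA(K) \subseteq \rC(K) \subseteq \rBI(K)$, evaluating this identity on $a \in \rA(K)$ and using the barycenter computation of the previous paragraph gives
\[
\int_K a\, d\lambda = \mu(a) = a(x), \qfor a \in \rA(K),
\]
so $\lambda$ represents $x$ and is supported on $\partial K$, as required.

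I do not expect any genuine obstacle here: the real work has already been done in Theorem~\ref{thm:nc-riesz-markov-kakutani}, and behind it in Proposition~\ref{prop:nc-measure} (the direct integral decomposition of $\delta_x$ against a maximal orthogonal measure, together with Proposition~\ref{prop:nc-choquet-bishop-de-leeuw-sep} and Corollary~\ref{cor:pure-dilation-maximal-extreme-rep}, which force the fibres of the decomposition to be extreme points almost everywhere), and in the integration theory of Section~\ref{sec:nc-integration}. The only small point to keep in mind is that, in reducing to a maximal point, one passes from $\delta_y$ to its compression $\alpha^* \delta_y \alpha$ — equivalently, replaces the fibrewise nc measure $\sigma$ for $y$ by $\alpha^* \sigma \alpha$; this preserves admissibility and support on $\partial K$ because $\phi \mapsto \alpha^*\phi\alpha$ is a unital normal completely positive map, and composing each $\ncpmaps$-valued component measure with it enlarges no null sets and preserves absolute continuity with respect to the underlying scalar measures. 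This reduction is in fact already carried out inside the proof of Theorem~\ref{thm:nc-riesz-markov-kakutani}, so no new argument is needed.
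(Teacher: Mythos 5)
Your proposal is correct and follows essentially the same route as the paper: the paper's proof takes a maximal dilation $y$ of $x$, applies Proposition~\ref{prop:nc-measure} to obtain an admissible nc probability measure supported on $\partial K$ representing $y$, and compresses by the isometry, which is exactly what Theorem~\ref{thm:nc-riesz-markov-kakutani} does internally when you invoke it on your dilation-maximal representing map $\mu$. The only difference is that you package the compression step inside the already-proven Theorem~\ref{thm:nc-riesz-markov-kakutani} rather than repeating it, which is a harmless reorganization.
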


\begin{proof}
Suppose $x \in K_n$. Let $y \in K_p$ be a maximal dilation of $x$ and let $\alpha \in \cM_{p,n}$ be an isometry such that $x = \alpha^* y \alpha$. By Proposition \ref{prop:nc-measure}, there is an admissible nc probability measure $\sigma$ on $K$ that represents $y$ and is supported on $\partial K$. Define a nc probability measure $\lambda = (\lambda_m)_{m \leq \aleph_0}$ on $K$ by $\lambda_m = \alpha^* \sigma_m \alpha$. Then $\lambda$ represents $x$ and is also supported on $\partial K$.
\end{proof}


\end{document}